\colorlet{lightgray}{black!15}
\tikzset{->-/.style={decoration={
  markings,
  mark=at position .5 with {\arrow{>}}},postaction={decorate}}}
\tikzset{midarrow/.style={decoration={
    markings,
    mark=at position {#1} with {\arrow{>}}},postaction={decorate}} }
\newtheorem{theorem}{Theorem}[subsection]
\newenvironment{mythm}[1]
  {\innercustomthm}
  {\endinnercustomthm}
\newtheorem{prop}[theorem]{Proposition}
\newtheorem{lemma}[theorem]{Lemma}
\newtheorem{cor}[theorem]{Corollary}
\newtheorem{conj}{Conjecture}
\numberwithin{equation}{subsection}
\theoremstyle{definition}
\newtheorem{definition}[theorem]{Definition}
\newtheorem{ack}[theorem]{Acknowledgments}
\newtheorem{observation}[theorem]{Observation}
\newtheorem{warning}[theorem]{Warning}
\newtheorem{remark}[theorem]{Remark}
\newtheorem{problem}{Problem}
\newtheorem{example}[theorem]{Example}
\newtheorem{notation}[theorem]{Notation}
\newtheorem*{conventions*}{Conventions}
\newtheorem{convention}{Convention}
\theoremstyle{remark}
\definecolor{orange}{rgb}{.95,0.5,0}
\definecolor{light-gray}{gray}{0.75}
\definecolor{brown}{cmyk}{0, 0.8, 1, 0.6}
\definecolor{plum}{rgb}{.5,0,1}
\DeclareMathOperator{\Fin}{\sf Fin}
\DeclareMathOperator{\pr}{\mathsf{pr}}
\DeclareMathOperator{\ev}{\mathsf{ev}}
\DeclareMathOperator{\bBar}{\sf Bar}
\DeclareMathOperator{\Alg}{\sf Alg}
\DeclareMathOperator{\Mod}{\sf Mod}
\DeclareMathOperator{\CAlg}{\sf CAlg}
\DeclareMathOperator{\Aut}{\sf Aut}
\DeclareMathOperator{\colim}{{\sf colim}}
\DeclareMathOperator{\Hom}{\sf Hom}
\DeclareMathOperator{\End}{\sf End}
\DeclareMathOperator{\Fun}{{\sf Fun}}
\DeclareMathOperator{\Iso}{\sf Iso}
\DeclareMathOperator{\Map}{{\sf Map}}
\DeclareMathOperator{\Ker}{\sf Ker}
\newcommand{\bit}[1]{\textbf{\textit{#1}}}
\newcommand{\racts}{\curvearrowleft}
\newcommand{\lacts}{\curvearrowright}
\DeclareMathOperator{\Cat}{{\sf Cat}}
\DeclareMathOperator{\Ar}{{\sf Ar}}
\DeclareMathOperator{\Diff}{{\sf Diff}}
\DeclareMathOperator{\op}{\mathsf{op}}
\DeclareMathOperator{\sk}{\mathsf{sk}}
\DeclareMathOperator{\Mfd}{{\cM}\mathsf{fd}}
\DeclareMathOperator{\Mfld}{{\sf Mfld}}
\DeclareMathOperator{\Emb}{\mathsf{Emb}}
\DeclareMathOperator{\Spaces}{\cS\mathsf{paces}}
\DeclareMathOperator{\Sets}{\cS\mathsf{ets}}
\DeclareMathOperator{\Disk}{{\sf Disk}}
\DeclareMathOperator{\fr}{\sf fr}
\DeclareMathOperator{\sfr}{\sf sfr}
\def\ot{\otimes}
\DeclareMathOperator{\oo}{\infty}
\DeclareMathOperator{\tl}{\triangleleft}
\newcommand{\lag}{\langle}
\newcommand{\rag}{\rangle}
\newcommand{\w}{\widetilde}
\newcommand{\un}{\underline}
\newcommand{\ov}{\overline}
\newcommand{\xra}{\xrightarrow}
\newcommand{\xla}{\xleftarrow}
\def\cE{\mathcal E}
\def\cK{\mathcal K}
\def\cM{\mathcal M}
\def\cS{\mathcal S}
\def\cU{\mathcal U}\def\cV{\mathcal V}\def\cW{\mathcal W}\def\cX{\mathcal X}
\def\BB{\mathbb B}\def\CC{\mathbb C}\def\DD{\mathbb D}
\def\LL{\mathbb L}
\def\NN{\mathbb N}\def\PP{\mathbb P}
\def\QQ{\mathbb Q}\def\RR{\mathbb R}\def\SS{\mathbb S}\def\TT{\mathbb T}
\def\ZZ{\mathbb Z}
\def\sB{\mathsf B}\def\sC{\mathsf C}\def\sD{\mathsf D}
\def\sE{\mathsf E}\def\sH{\mathsf H}
\def\sK{\mathsf K}
\def\sO{\mathsf O}\def\sP{\mathsf P}
\def\sR{\mathsf R}\def\sT{\mathsf T}
\def\bDelta{\mathbf\Delta}
\def\fB{\mathfrak B}
\def\bcD{\boldsymbol{\mathcal D}}
\def\bcL{\boldsymbol{\mathcal L}}
\def\bcM{\boldsymbol{\mathcal M}}
\def\bcM{\boldsymbol{\mathcal M}}
\DeclareMathOperator{\para}{\bDelta_{\circlearrowleft}}
\DeclareMathOperator{\bLambda}{\bf \Lambda}
\DeclareMathOperator{\uno}{\mathbbm{1}}
\DeclareMathOperator{\Braid}{{\sf Braid}_3}
\DeclareMathOperator{\Ebraid}{\w{\sf E}_{2}^{+}(\ZZ)}
\DeclareMathOperator{\GL}{\sf GL}
\DeclareMathOperator{\BGL}{\sf BGL}
\DeclareMathOperator{\SP}{\sf SP}
\DeclareMathOperator{\SL}{\sf SL}
\DeclareMathOperator{\BSL}{\sf BSL}
\DeclareMathOperator{\quot}{\sf quot}
\DeclareMathOperator{\Fr}{\sf Fr}
\DeclareMathOperator{\id}{\sf id}
\DeclareMathOperator{\Act}{\sf Act}
\DeclareMathOperator{\trans}{\sf trans}
\DeclareMathOperator{\Bdl}{\sf Bdl}
\DeclareMathOperator{\sHH}{\sf HH}
\DeclareMathOperator{\HHt}{{\sf HH}^{(2)}}
\DeclareMathOperator{\Obj}{\sf Obj}
\DeclareMathOperator{\Imm}{\sf Imm}
\DeclareMathOperator{\Aff}{\sf Aff}
\DeclareMathOperator{\EZ}{\sE_2(\ZZ)}
\DeclareMathOperator{\EpZ}{\sE^+_2(\ZZ)}
\DeclareMathOperator{\PShv}{\sf PShv}
\begin{document}

\title{Natural symmetries of secondary Hochschild homology}

\author{David Ayala, John Francis, and Adam Howard}

\address{Department of Mathematics\\Montana State University\\Bozeman, MT 59717}
\email{david.ayala@montana.edu}
\address{Department of Mathematics\\Northwestern University\\Evanston, IL 60208}
\email{jnkf@northwestern.edu}
\address{Department of Mathematics\\Montana State University\\Bozeman, MT 59717}
\email{adam.howard1@montana.edu}
\thanks{DA was supported by the National Science Foundation under awards 1812055 and 1945639.  JF was supported by the National Science Foundation under award 1812057.
This material is based upon work supported by the National Science Foundation under Grant No. DMS-1440140, while DA and JF were in residence at the Mathematical Sciences Research Institute in Berkeley, California, during the Spring 2020 semester.}

\begin{abstract}
We identify the group of framed diffeomorphisms of the torus as a semi-direct product of the torus with the braid group on 3 strands; we also identify the topological monoid of framed local-diffeomorphisms of the torus in similar terms.
It follows that the framed mapping class group is this braid group.
We show that the group of framed diffeomorphisms of the torus acts on twice-iterated Hochschild homology, and explain how this recovers a host of familiar symmetries.  
In the case of Cartesian monoidal structures, we show that this action extends to the monoid of framed local-diffeomorphisms of the torus.
Based on this, we propose a definition of an unstable secondary cyclotomic structure, and show that iterated Hochschild homology possesses such in the Cartesian monoidal setting.

\end{abstract}

\keywords{Factorization homology.  Hochschild homology.  Cyclic operator.  Cyclic homology.  Topological cyclic homology.  Secondary Hochschild homology.  Secondary Chern character.  Secondary $\sK$-theory.  Secondary trace.  Moduli of elliptic curves.  Mapping class group.  Framings.  Isogeny.}

\subjclass[2020]{Primary 58D05. Secondary 58D27, 16E40.}

\maketitle

\tableofcontents

\begin{ack}
We thank Oscar Randal-Williams and Sam Nariman for input on earlier drafts of this paper.
\end{ack}

\section*{Introduction}

Here are the five main results in this article, all of which are motivated by the study of \bit{factorization homology} as developed in~\cite{old.fact}.  
We direct a reader to the body of the paper for definitions of terms and notation, in particular of the bolded terms, as well as precise statements and proofs.

Regard the 2-torus $\TT^2$ as a framed 2-manifold via a translation-invariant framing.
\begin{itemize}
\item[]
{\bf Theorem~\ref{Theorem A}(2a).}
There is an equivalence between continuous groups:
\[
\TT^2 \rtimes \Braid
\xra{~\simeq~}
\Diff^{\fr}(\TT^2)
~.
\]
This homomorphism is given as follows.
\begin{itemize}

\item
Translation in the group $\TT^2$ defines a continuous homomorphism $\TT^2 \to \Diff^{\fr}(\TT^2)$.

\item
Sheering in each coordinate supplies two extensions from semi-direct products,
\[
\TT^2 \underset{U_1}\rtimes \ZZ 
\longrightarrow 
\Diff^{\fr}(\TT^2)
\longleftarrow
\TT^2 \underset{U_2}\rtimes \ZZ 
~,
\]
where $U_1 = \begin{bmatrix}1 & 1 \\ 0 & 1 \end{bmatrix}$ and $U_2 = \begin{bmatrix}1 & 0 \\ -1 & 1 \end{bmatrix}$,
thereby resulting in a single extension 
\begin{equation}
\label{e100}
\TT^2 \rtimes \lag U_1,U_2\rag \longrightarrow \Diff^{\fr}(\TT^2)
\end{equation}
involving the free group on the two abstract generators $U_1$ and $U_2$.

\item
As there is an equality of matrices $U_1U_2U_1 = \begin{bmatrix} 0 & -1 \\ 1 & 0 \end{bmatrix} = U_2 U_1 U_2$, 
the restrictions of~(\ref{e100}) along the two abstractly isomorphic subgroups $\TT^2 \rtimes \lag U_1U_2U_1 \rag \cong \TT^2 \rtimes \ZZ \cong \TT^2 \rtimes \lag U_2U_1U_2 \rag$ can be identified, thereby supplying a morphism from the coequalizer among continuous groups:
\begin{equation}
\label{e101}
\TT^2 \rtimes
\Braid
~\simeq~
\TT^2 \rtimes \lag U_1,U_2 \mid U_1U_2U_2 = U_2U_1U_2\rag
\longrightarrow
\Diff^{\fr}(\TT^2)
~,
\end{equation}
involving a standard presentation of the braid group on 3 strands.
\\

\end{itemize}

\item[]
{\bf Theorem~\ref{Theorem A}(2b).}
There is an equivalence between continuous monoids:
\[
\TT^2 \rtimes \Ebraid
\xra{~\simeq~}
\Imm^{\fr}(\TT^2)
\]
involving a central extension among monoids
\[
\ZZ
\longrightarrow
\Ebraid
\longrightarrow
\sE_2^+(\ZZ)
~:=~
\Bigl\{
A \in {\sf Mat}_{2\times 2}(\ZZ) \mid
{\sf det}(A) > 0
\Bigr\}
~.
\\
\]

\item[]
{\bf Proposition~\ref{t69}.}
Let $\cX$ be an $\infty$-category.
The morphism $\Ebraid \to \sE_2^+(\ZZ) \to \End_{\sf Groups}( \TT^2)$ determines an action by $\Ebraid$ on the $\infty$-category $\cX^{{\sf g.fin}\TT^2}$ of \bit{genuine finite $\TT^2$-modules} in $\cX$.
A genuine finite $\TT^2$-module in $\cX$ that is coherently invariant with respect to this $\Ebraid$-action is simply a $\Imm^{\fr}(\TT^2)^{\op}$-module in $\cX$ (see Remark~\ref{r16}):
\[
\Mod_{\Imm^{\fr}(\TT^2)^{\op}}(\cX)
~\simeq~
\bigl(\cX^{{\sf g.fin}\TT^2}\bigr)^{\Ebraid}
~.
\]
In particular, there is a forgetful functor:
\[
\Mod_{\Imm^{\fr}(\TT^2)^{\op}}(\cX)
\longrightarrow
\cX^{{\sf g.fin}\TT^2}
~.
\]
We define an \bit{unstable secondary cyclotomic structure} to be an $\Ebraid$-invariant genuine finite $\TT^2$-module.  (See Remark~\ref{r15}.)
\\

\item[]
{\bf Theorem~\ref{t36}.}
Let $\cV$ be a symmetric monoidal $\infty$-category that is $\ot$-presentable.
Let $A$ be a \bit{2-algebra} in $\cV$.
Via factorization homology, there is a canonical action
\[
\TT^2 \rtimes \Braid \simeq \Diff^{\fr}(\TT^2) 
~\lacts~ 
\HHt(A)
\]
on the twice-iterated Hochschild homology of $A$.
This action is given as follows.
\begin{itemize}
\item
The action $\TT^2 \lacts \HHt(A)$ is Connes' cyclic operators.

\item
For $i=1,2$, the extension $\TT^2 \underset{U_i} \rtimes \ZZ \lacts \HHt(A)$ is a canonical sheering action of the Connes cyclic operators.

\item
There is an identification between the actions $\ZZ \underset{U_1 U_2 U_1} \lacts \HHt(A)$ and $\ZZ \underset{U_2 U_1 U_2} \lacts \HHt(A)$, thereby supplying the action $\TT^2 \rtimes \Braid \lacts \HHt(A)$.  
\\

\end{itemize}

\item[]
{\bf Theorem~\ref{t51}.}
Let $\cX$ be a presentable $\infty$-category in which products distribute over colimits.
Regard $\cX$ as a symmetric monoidal $\infty$-category via its Cartesian monoidal structure. 
Let $A$ be a 2-algebra in $\cX$.
Via factorization homology, the twice-iterated Hochschild homology of $A$ is canonically endowed with an unstable secondary cyclotomic structure:
\[
\Bigl(
~
(\TT^2 \rtimes \Ebraid)^{\op} \simeq \Imm^{\fr}(\TT^2)^{\op}
~\lacts~ 
\HHt(A)
~
\Bigr)
~\in~
\bigl(\cX^{{\sf g.fin}\TT^2}\bigr)^{\Ebraid}
~.
\]
In other words, $\HHt(A)$ canonically has the structure of a genuine finite $\TT^2$-module that is $\Ebraid$-invariant.  
\\

\end{itemize}
The remainder of this introduction contextualizes, then restates, these results.

\begin{conventions*}
{\small
\begin{itemize}
\item[~]

\item
We work in the $\infty$-category $\Spaces$ of spaces, or $\infty$-groupoids, an object in which is a \bit{space}.  
This $\infty$-category can be presented as the $\infty$-categorical localization of the ordinary category of compactly-generated Hausdorff topological spaces that are homotopy-equivalent with a CW complex, localized on the weak homotopy-equivalences.  
So we present some objects in $\Spaces$ by naming a topological space.  

\item
By a pullback square among spaces we mean a pullback square in the $\infty$-category $\Spaces$.
Should the square be presented by a homotopy-commutative square among topological spaces, then the canonical map from the initial term in the square to the homotopy-pullback is a weak homotopy-equivalence.  

\item
By a \bit{continuous group} (resp. \bit{continuous monoid}) we mean a group-object (resp. monoid-object) in $\Spaces$.
A continuous monoid $N$ determines a pointed $(\infty,1)$-category $\fB N$, which can be presented by the Segal space $\bDelta^{\op}\xra{\bBar_\bullet(N)}\Spaces$ which is the bar construction of $N$.
For $X \in \cX$ an object in an $\infty$-category, and for $N$ a continuous monoid, 
an \bit{action of $N$ on $X$}, denoted $N \lacts X$, is an extension
$
\lag X \rag 
\colon 
\ast 
\to 
\fB N
\xra{ \lag N \lacts X \rag}
\cX
$.
The $\infty$-category of \bit{(left) $N$-modules in $\cX$} is
\[
\Mod_N(\cX)
~:=~
\Fun( \fB N , \cX)
~.
\]
Every continuous group can be strictified to a topological group (i.e., a group-object in the ordinary category of topological spaces), but maps among such are more flexible (corresponding to maps of loop spaces), as not all topological groups are cofibrant with respect to the usual model structure.

\item
For $G\lacts X$ an action of a continuous group on a space, the space of \bit{coinvariants} is the colimit
\[
X_{/G}
~:=~
\colim\bigl(
\fB G
\xra{~\lag G \lacts X \rag~}
\Spaces
\bigr)
~\in~
\Spaces
~.
\]
Should the action $G\lacts X$ be presented by a continuous action of a topological group on a topological space, then this space of coinvariants can be presented by the homotopy-coinvariants.

\item
We work with $\infty$-operads, as developed in~\cite{HA}.
As such, they are implicitly symmetric.
Some $\infty$-operads are presented as discrete operads, such as ${\sf Assoc}$, while some are presented as topological operads, such as the little 2-disks operad $\cE_2$.

\end{itemize}

}

\end{conventions*}

\subsection{Moduli and isogeny of framed tori}
Here we restate our first result, which identifies the entire symmetries of a framed torus.

The \bit{braid group on 3 strands} can be presented as
\begin{equation}
\label{e67}
\Braid 
~\cong~
\Bigl \lag~ \tau_{1}~,~ \tau_{2}~ \mid ~ \tau_{1}\tau_{2}\tau_{1} ~=~ \tau_{2} \tau_{1} \tau_{2} ~\Bigr\rag
~.
\end{equation}
Through this presentation, there is a standard representation
\begin{equation}
\label{e63}
\Phi
\colon
\Braid
\xra{\bigl\lag~\tau_{1} ~\mapsto ~U_1~,~\tau_{2} ~ \mapsto ~U_2  \bigr\rag}
\GL_2(\ZZ)
~,
\hspace{7pt}
\text{ where }
U_1
~:=~
\begin{bmatrix} 
1 & 1
\\
0 & 1
\end{bmatrix}
\hspace{7pt}
\text{and}
\hspace{7pt}
U_2~:=~
\begin{bmatrix} 
1 & 0
\\
-1 & 1
\end{bmatrix}
~.
\end{equation} 
The homomorphism $\Phi$ defines an action $\Braid \xra{\Phi} \GL_2(\ZZ) \lacts \TT^2$
as a topological group.  
This action defines a topological group:
\[
\TT^2 \rtimes \Braid 
~.
\]

The following result, which is essentially due to Milnor, is the starting point of this paper.  
\begin{prop}
[see~\S10 of \cite{mil}]
\label{t32}
The image of $\Phi$ is the subgroup $\SL_2(\ZZ)$; the kernel of $\Phi$ is central, and is freely generated by the element $(\tau_{1}\tau_{2})^6  \in \Braid$.
Equivalently, $\Phi$ fits into a central extension among groups:
\begin{equation}
\label{e46}
1
\longrightarrow
\ZZ
\xra{~\bigl\lag (\tau_{1}\tau_{2})^6 \bigr\rag~}
\Braid
\xra{~\Phi~}
\SL_2(\ZZ)
\longrightarrow  
1
~.
\end{equation}
Furthermore, this central extension~(\ref{e46}) is classified by the element
\[
\Bigl[\BSL_2(\ZZ) 
{\xra{\sB \bigl( \RR\underset{\ZZ}\ot \bigr)}} \BSL_2(\RR) \simeq \sB^2 \ZZ
\Bigr] 
~\in~ 
\sH^2\bigl( \SL_2(\ZZ) ; \ZZ \bigr)
~.
\]
That is, there is a canonical top horizontal homomorphism defining a pullback among groups:
\[
\xymatrix{
\Braid
\ar@{-->}[rr]
\ar[d]_-{\Phi}
&&
\w{\SL}_2(\RR)  \ar[d]^-{\rm universal~cover}
\\
\SL_2(\ZZ)
\ar[rr]_-{\rm standard}^-{\RR\underset{\ZZ}\ot}
&&
\SL_2(\RR)
.
}
\]

\end{prop}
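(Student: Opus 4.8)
The plan is to establish the three assertions of Proposition~\ref{t32} in sequence---(i) $\mathrm{image}(\Phi)=\SL_2(\ZZ)$, (ii) $\ker(\Phi)=\lag (\tau_1\tau_2)^6\rag$ is central and infinite cyclic, (iii) the identification of the extension class---leveraging the fact that the relevant data for $\Braid=\mathsf{Braid}_3$ is classical and almost entirely recorded in Milnor's book. For (i), I would observe that $U_1,U_2$ visibly lie in $\SL_2(\ZZ)$, so $\mathrm{image}(\Phi)\subseteq \SL_2(\ZZ)$; for the reverse inclusion it suffices to recall the standard fact that $\SL_2(\ZZ)$ is generated by $\begin{bmatrix}1&1\\0&1\end{bmatrix}$ and $\begin{bmatrix}1&0\\1&1\end{bmatrix}$ (equivalently by $S$ and $T$), and note that $U_2^{-1}=\begin{bmatrix}1&0\\1&1\end{bmatrix}$ is already in the image; hence $\Phi$ is surjective onto $\SL_2(\ZZ)$. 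Equivalently one can cite the well-known presentation $\SL_2(\ZZ)\cong\Braid/\lag (\tau_1\tau_2)^6\rag$ (or $\mathrm{PSL}_2(\ZZ)\cong\ZZ/2 \ast \ZZ/3$ together with the $\ZZ/2$-center), which simultaneously gives surjectivity and pins down the kernel.

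For (ii), the key input is that $(\tau_1\tau_2)^3=(\tau_1\tau_2\tau_1)^2$ generates the center $Z(\Braid)\cong\ZZ$---a classical fact about the braid group $\mathsf{Braid}_n$, here for $n=3$---and that its image under $\Phi$ is $-\mathbbm{1}\in\SL_2(\ZZ)$ (a direct $2\times 2$ computation: $(U_1U_2)^3=-\mathbbm{1}$), which has order $2$. Therefore $(\tau_1\tau_2)^6\in\ker\Phi$. To see that this element generates the whole kernel, I would compare cardinalities/structure: $\ker\Phi$ is a subgroup of the central $\ZZ=\lag(\tau_1\tau_2)^3\rag$ (since anything in the kernel maps to $\mathbbm{1}$, and $\Phi$ restricted to the center is the surjection $\ZZ\twoheadrightarrow\{\pm\mathbbm{1}\}$ sending $(\tau_1\tau_2)^3\mapsto-\mathbbm{1}$, whose kernel is exactly $2\ZZ=\lag(\tau_1\tau_2)^6\rag$)---wait, one must first know $\ker\Phi$ is \emph{contained} in the center. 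That containment follows because $\Braid/Z(\Braid)\cong\mathrm{PSL}_2(\ZZ)$ has trivial center-complement structure as a free product $\ZZ/2\ast\ZZ/3$, so the composite $\Braid\to\mathrm{PSL}_2(\ZZ)$ has kernel exactly $Z(\Braid)$; since $\Phi$ factors as $\Braid\xra{\Phi}\SL_2(\ZZ)\to\mathrm{PSL}_2(\ZZ)$ and the second map has kernel $\{\pm\mathbbm{1}\}$, we get $\ker\Phi\subseteq Z(\Braid)=\lag(\tau_1\tau_2)^3\rag$, and then the explicit description of $\Phi|_{Z(\Braid)}$ finishes it. This yields the central extension~(\ref{e46}).

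For (iii), I would identify the extension class by exhibiting the pullback square directly. The universal cover $\w{\SL}_2(\RR)\to\SL_2(\RR)$ is a central $\ZZ$-extension classified by a generator of $\sH^2(\SL_2(\RR);\ZZ)\cong\ZZ$ (this is the Euler/Maslov class; note $\SL_2(\RR)\simeq S^1$ so $\mathsf{BSL}_2(\RR)\simeq \CC P^\infty=\sB^2\ZZ$). Pulling back along $\sB(\RR\underset{\ZZ}\ot)\colon \mathsf{BSL}_2(\ZZ)\to\mathsf{BSL}_2(\RR)$ gives a central $\ZZ$-extension of $\SL_2(\ZZ)$; to see it is \emph{the} extension~(\ref{e46}), it suffices to produce a compatible lift $\Braid\to\w{\SL}_2(\RR)$ inducing an isomorphism on the $\ZZ$-fibers. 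Such a lift exists because $\Braid$ acts on the universal cover of the punctured plane / Teichm\"uller space $\HH\simeq\RR^2$ compatibly with its image in $\SL_2(\ZZ)\subset\SL_2(\RR)$ acting on $\HH$; concretely, $\Braid\to\w{\SL}_2(\RR)$ is the map from $\mathsf{Braid}_3$ realized as a subgroup of the universal cover via the standard hyperbolic-plane picture. The map on fibers sends the generator $(\tau_1\tau_2)^6$ of $\ker\Phi$ to the generator of $\pi_1(\SL_2(\RR))=\ZZ$ because $(\tau_1\tau_2)^3$ rotates by a half-turn ($\mapsto-\mathbbm{1}$) and so $(\tau_1\tau_2)^6$ is precisely one full loop. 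Finally, the square is a pullback because $\Braid\to\SL_2(\ZZ)\times_{\SL_2(\RR)}\w{\SL}_2(\RR)$ is a map of central $\ZZ$-extensions of $\SL_2(\ZZ)$ that is an isomorphism on $\ZZ$ and on the quotient $\SL_2(\ZZ)$, hence an isomorphism by the five lemma.

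The main obstacle I anticipate is \emph{(ii)}---specifically, cleanly justifying that $\ker\Phi$ is contained in the center of $\Braid$ without simply quoting the presentation $\SL_2(\ZZ)\cong\Braid/\lag(\tau_1\tau_2)^6\rag$ as a black box; one wants either the free-product structure of $\mathrm{PSL}_2(\ZZ)$ plus the computation of $Z(\mathsf{Braid}_3)$, or a direct argument via the action on the Farey tree, and keeping this self-contained yet short requires care. The cohomological identification in (iii) is conceptually clean but demands bookkeeping to match orientations/generators so that the fiber map is the identity on $\ZZ$ rather than multiplication by $\pm1$ or (worse) a composite that is off by a factor; getting the "$6$" versus "$3$" correct (half-turn vs.\ full loop) is the crucial sign-of-life check.
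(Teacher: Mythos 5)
The paper itself offers no proof of Proposition~\ref{t32}; it is stated with the citation ``see \S10 of \cite{mil}'' and used as a black box. So there is no ``paper's proof'' to compare against --- what you are proposing is, in effect, a reconstruction of Milnor's argument, and in broad strokes your outline does track what Milnor does: show surjectivity directly, identify the kernel using $Z(\Braid) = \lag(\tau_1\tau_2)^3\rag$ and $\Braid/Z(\Braid) \cong \mathrm{PSL}_2(\ZZ) \cong \ZZ/2 * \ZZ/3$, and then establish the pullback square by lifting the generators and comparing.

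Parts (i) and (ii) are fine, modulo one small caveat you should make explicit: for (ii) you need not merely an abstract isomorphism $\Braid/Z(\Braid) \cong \mathrm{PSL}_2(\ZZ)$ but one compatible with $\Phi$, i.e.\ that $\Phi$ carries $Z(\Braid)$ into $\{\pm\mathbbm{1}\}$ (you verify this: $\Phi((\tau_1\tau_2)^3) = -\mathbbm{1}$) \emph{and} that the induced map $\Braid/Z(\Braid) \to \mathrm{PSL}_2(\ZZ)$ is the standard identification sending $[\tau_1\tau_2\tau_1]$ and $[\tau_1\tau_2]$ to the order-$2$ and order-$3$ generators of the free product. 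You state enough of this that the argument goes through.

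The genuine gap is in (iii), specifically in your justification for the existence of a lift $\Braid \to \w{\SL}_2(\RR)$ over $\Phi$. You write that the lift exists ``because $\Braid$ acts on the universal cover of the punctured plane / Teichm\"uller space $\HH \simeq \RR^2$ compatibly with $\SL_2(\ZZ) \subset \SL_2(\RR)$ acting on $\HH$.'' This does not make sense as written: $\HH$ is contractible, so it has no nontrivial covers, and the action of $\SL_2(\ZZ)$ on $\HH$ gives no way to see $\w{\SL}_2(\RR)$. The space whose topology matters is $\SL_2(\RR)$ itself (equivalently the unit tangent bundle $T^1\HH$, on which $\SL_2(\RR)$ acts simply transitively), not $\HH$. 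What actually needs to be checked is the following: lift $U_1$ and $U_2$ to elements $\w U_1, \w U_2 \in \w{\SL}_2(\RR)$ via the canonical unipotent paths $t \mapsto \exp(tN_i)$ from $\mathbbm{1}$, and then verify that the braid relation $\w U_1 \w U_2 \w U_1 = \w U_2 \w U_1 \w U_2$ holds upstairs --- equivalently, that the two concatenated paths from $\mathbbm{1}$ to $U_1U_2U_1 = U_2U_1U_2$ in $\SL_2(\RR)$ are homotopic rel endpoints, which amounts to a winding-number computation (both wind a quarter turn; neither crosses zero extra times). This is the substantive calculation in Milnor's \S10 and it is the step your sketch elides. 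Once the lift exists, your five-lemma argument is correct, and your ``half-turn versus full-loop'' heuristic for matching the generator $(\tau_1\tau_2)^6$ with the generator of $\pi_1(\SL_2(\RR)) = \ZZ$ is the right sanity check (though it should be derived from the same explicit path homotopy, not asserted independently). Alternatively, one can invoke Milnor's identification $\SL_2(\RR)/\SL_2(\ZZ) \cong \SS^3 \smallsetminus \mathsf{Trefoil}$ and the long exact sequence of the fibration $\SL_2(\ZZ) \to \SL_2(\RR) \to \SL_2(\RR)/\SL_2(\ZZ)$ to produce the pullback square at once; but that trades one nontrivial input for another.
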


Consider the subgroup $\GL^+_2(\RR)\subset \GL_2(\RR)$ consisting of those $2\times 2$ matrices with positive determinant -- it is the connected component of the identity matrix.  
Consider the submonoid
\[
\RR\underset{\ZZ}\ot
\colon
\EpZ
~\subset~
\GL^+_2(\RR)
\]
consisting of those $2\times 2$ matrices with positive determinant whose entries are integers.
Consider the pullback\footnote{See Remark~\ref{r4} for an explicit description of the monoid $\Ebraid$.} among monoids:
\begin{equation}
\label{e77}
\xymatrix{
\Ebraid
\ar[d]_-{\Psi} \ar[rr]
&&
\w{\GL}^+_2(\RR)  \ar[d]^-{\rm universal~cover}
\\
\EpZ
\ar[rr]^-{\RR \underset{\ZZ}\ot}
&&
\GL^+_2(\RR)
.
}
\end{equation}
This morphism $\Psi$ supplies a canonical action $\Ebraid \xra{\Psi} \EpZ \lacts \TT^2$ as a topological group.
This action defines a topological monoid
\[
\TT^2 \rtimes \Ebraid
~.
\]
\begin{convention}
\label{r6}
By way of~\S\ref{ambidex}, in particular Corollary~\ref{t61}, we regard all actions of $\Braid$ and $\Ebraid$ as \bit{left}-actions.  
\end{convention}

For $\varphi \colon  \tau_{\TT^2} \cong \epsilon^2_{\TT^2}$ a framing of the torus, we introduce as Definition~\ref{d1} the continuous group of \bit{framed diffeomorphisms}, and the continuous monoid of \bit{framed local-diffeomorphisms} of the torus:
\[
\Diff^{\fr}(\TT^2,\varphi)
\qquad
\text{ and }
\qquad
\Imm^{\fr}(\TT^2,\varphi)
~.
\]
For $\varphi_0$ the \bit{standard framing} of $\TT^2$, which is invariant with respect to translation in the torus, we simply write 
\[
\Diff^{\fr}(\TT^2)
~:=~
\Diff^{\fr}(\TT^2,\varphi_0)
\qquad
\text{ and }
\qquad
\Imm^{\fr}(\TT^2)
~:=~
\Imm^{\fr}(\TT^2,\varphi_0)
~.
\]

\begin{mythm}{X}
\label{Theorem A}

\begin{enumerate}

\item[~]

\item
The map from the set of homotopy-classes of framings of $\TT^2$ to the set of framed-diffeomorphism-types of tori,
\[
\pi_0 \Fr(\TT^2)
\longrightarrow
\pi_0 \bcM^{\fr}_1
~,
\]
is canonically equivalent to the map
\[
\ZZ^2 
\times
\ZZ_{/2\ZZ}
\longrightarrow
\ZZ_{\geq 0}
~,\qquad
\left(
~
\begin{bmatrix}
u \\ v
\end{bmatrix}
~,~
\sigma
~
\right)
\longmapsto
{\sf gcd}(u,v)
~.
\]
Furthermore, a framing $\varphi\in \Fr(\TT^2)$ is homotopic to one that is translation invariant if and only if it is carried to the $0$-component of $\bcM_1^{\fr}$.

\item
Let $\varphi\colon \tau_{\TT^2} \cong \epsilon_{\TT^2} $ be a framing of the torus.

\begin{enumerate}

\item
There is a canonical identification of the continuous group of framed diffeomorphisms of $(\TT^2,\varphi)$
\[
\Diff^{\fr}(\TT^2,\varphi)
~\simeq~
\begin{cases}
\TT^2 \rtimes \Braid
&
,~
\text{if $\varphi$ is homotopic to a translation invariant framing} 
\\
( \TT^2 
\rtimes \ZZ ) \times \ZZ
&
,~
\text{if $\varphi$ is not homotopic to a translation invariant framing} 
\end{cases}
~.
\]
(See~Notation~\ref{d6} for a description of lower semi-direct product.)

\item
There is a canonical identification of the continuous monoid of framed local-diffeomorphisms of $(\TT^2,\varphi)$:
\[
\Imm^{\fr}(\TT^2,\varphi)
~\simeq~
\begin{cases}
\TT^2 \rtimes \Ebraid
&
,~
\text{if $\varphi$ is homotopic to a translation invariant framing} 
\\
\left( \TT^2 
\rtimes (\ZZ \rtimes \NN^\times ) \right) \times \ZZ
&
,~
\text{if $\varphi$ is not homotopic to a translation invariant framing} 
\end{cases}
~.
\]
(See~Notation~\ref{d6} for a description of lower semi-direct products.)

\end{enumerate}

\end{enumerate}

\end{mythm}

Taking path-components, Theorem~\ref{Theorem A}(2a) has the following immediate consequence.
\begin{cor}
\label{t38}
Let $\varphi$ be a framing of the torus.
There is a canonical identification of the framed mapping class group of $(\TT^2,\varphi)$ as a subgroup of the braid group on 3 strands:
\[
{\sf MCG}^{\fr}(\TT^2,\varphi)
~\subset~
\Braid
~.
\]
If $\varphi$ is homotopic with a translation-invariant framing, this subgroup is entire.
If $\varphi$ is not homotopic with a translation-invariant framing, this subgroup is conjugate with a standard subgroup,
\[
{\sf MCG}^{\fr}(\TT^2,\varphi)
~\underset{\rm conjugate}\cong~
\bigl\lag \tau_1 , (\tau_1 \tau_2)^6 \bigr\rag
~\cong~
\ZZ \times \ZZ
~,
\]
which is abstractly isomorphic with $\ZZ\times \ZZ$.

\end{cor}

\begin{remark}
Consider the moduli space $\bcM^{\fr}_1$ of framed tori.  
Theorem~\ref{Theorem A}(1) \& (2a) can be phrased as the assertion that $\bcM^{\fr}_1$ has $\ZZ_{\geq 0}$-many path-components, with the $0$-path-component the space of homotopy-coinvariants ${(\CC\PP^\infty)^2}_{/ \Braid}$ with respect to the action $\Braid \xra{\Phi}\GL_2(\ZZ) \lacts \sB^2 \ZZ^2 \simeq (\CC\PP^\infty)^{\times 2}$, and each other path-component the space ${(\CC\PP^\infty)^2}_{/\ZZ} \times \sB \ZZ$ in which the coinvariants are with respect to the action $\ZZ \xra{\lag U_1\rag }\GL_2(\ZZ) \lacts \sB^2 \ZZ^2 \simeq (\CC\PP^\infty)^{\times 2}.$
A neat result of Milnor (see~\S10 of~\cite{mil}) gives an isomorphism between groups: 
\[
\Braid
~\cong~ 
\pi_1 (\SS^3 \smallsetminus {\sf Trefoil} )
~.
\]
Using that $\SS^3 \smallsetminus {\sf Trefoil}$ is a path-connected 1-type, this isomorphism reveals that the $0$-path-component $(\bcM^{\fr}_1)_{0} \subset \bcM^{\fr}_1$ fits into a fiber sequence of spaces:
\[
(\CC\PP^\infty)^2
\longrightarrow
(\bcM^{\fr}_1)_{0}
\longrightarrow
\bigl( \SS^3 \smallsetminus {\sf Trefoil} \bigr)
~.
\]

\end{remark}

In~\S6 of~\cite{Dehn}, 
Dehn
identifies the oriented mapping class group of a punctured torus with parametrized boundary as the braid group on 3 strands, as it is equipped with a homomorphism to the oriented mapping class group of the torus.  
Through Corollary~\ref{t38}, this results in an identification between these mapping class groups.  
The next result lifts this identification to continuous groups; it is proved in~\S\ref{sec.proofs}.

\begin{cor}
\label{t40}
Fix a smooth framed embedding from the closed 2-disk $\DD^2 \hookrightarrow \TT^2$ extending the inclusion $\{0\} \hookrightarrow \TT^2$ of the identity element.
There are canonical identifications among continuous groups over $\Diff(\TT^2)$:
\[
\Diff^{\fr}(\TT^2~{\sf rel}~0)
~
\simeq
~
\Braid
~\simeq~
\Diff(\TT^2 ~{\sf rel}~ \DD^2)
~.\footnote{This composite equivalence between continuous groups can be witnessed by a span among continuous groups, $\Diff^{\fr}(\TT^2~{\sf rel}~0) \xla{\simeq} \Diff^{\fr}(\TT^2~{\sf rel}~\DD^2) \to \Diff(\TT^2 ~{\sf rel}~ \DD^2)$ in which the leftward map is an equivalence via routine methods.
The more novel aspect of this result can then be rephrased as the rightward map being an equivalence.  
A quick explanation of this fact is that the space of framings of $\TT^2$, fixed at $0\in \TT^2$, has contractible path-components (see Theorem~\ref{Theorem A}(1)).
}
\]
In particular, there are canonical isomorphisms among groups over ${\sf MCG}(\TT^2)$:
\[
{\sf MCG}^{\fr}(\TT^2)
~\cong~
\Braid
~\cong~
{\sf MCG}(\TT^2 \smallsetminus \BB^2 ~{\sf rel}~ \partial)
~,
\]
where $\BB^2 \subset \DD^2$ is the open 2-ball.

\end{cor}

Using Theorem~\ref{Theorem A}(2a), 
the presentation~(\ref{e67}) of the braid group $\Braid$ lends to a simple (fully homotopy coherent) description of an action by $\Diff^{\fr}(\TT^2)$.
We articulate this description as the following result, which is proved at the end of~\S\ref{sec.sheering}, and requires a bit of set-up to state.
\begin{itemize}
\item[]
{\bf Set-up.}
Let $\cX$ be an $\infty$-category.
Let $G$ be a continuous group.
Consider the $\infty$-category $\Mod_{G}(\cX)$ of $G$-modules in $\cX$.
Let $T$ be an automorphism of the continuous group $G$.
Via pullback, $T$ determines an automorphism $T^\ast \colon (G \lacts X) \mapsto  (G \xra{T} G \lacts X)$ of $\Mod_{G}(\cX)$.
Denote the $\infty$-category of $T$-invariant $G$-modules as $\Mod_{G}(\cX)^{\lag T \rag}$, an object in which is a $G$-module $(G \lacts X)$ in $\cX$ together with an identification $(G \xra{T} G \lacts X)  \simeq (G \lacts X)$ between $G$-modules in $\cX$.
Similarly, for $S$ and $T$ automorphisms of $G$, 
the $\infty$-category of $G$-modules that are both $S$ and $T$ invariant is $\Mod_{G}(\cX)^{\lag S,T \rag}$, an object in which is a $G$-module $(G \lacts  X)$ in $\cX$ together with identifications $(G \xra{S} G \lacts X)  \underset{\gamma_S}\simeq (G \lacts X)$ and $(G \xra{T} G \lacts X)  \underset{\gamma_T}\simeq (G \lacts X)$ between $G$-modules in $\cX$.
\end{itemize}
Now, via the standard homomorphism $\GL_2(\ZZ) \to \Aut_{\sf Groups}(\TT^2)$, 
regard the matrices
\[
U_1
=
\begin{bmatrix} 
1 & 1
\\
0 & 1
\end{bmatrix}
\qquad
\text{ and }
\qquad
U_2
=
\begin{bmatrix} 
1 & 0
\\
-1 & 1
\end{bmatrix}
\qquad
\text{ and }
\qquad
R
=
\begin{bmatrix} 
0 & 1
\\
-1 & 0
\end{bmatrix}
\]
as automorphisms of the continuous group $\TT^2$. 
\begin{cor}
\label{t49'}
Let $\cX$ be an $\infty$-category.
There is a pullback diagram among $\infty$-categories:
\[
\xymatrix{
\Mod_{\Diff^{\fr}(\TT^2)}(\cX)
\ar[rr]
\ar[d]
&&
\Mod_{\TT^2}(\cX)^{\lag U_1 , U_2 \rag }
\ar[d]
\\
\Mod_{\TT^2}(\cX)^{ \lag R \rag }
\ar[rr]
&&
\Mod_{\TT^2}(\cX)^{ \lag R , R \rag }
.
}
\]
In particular, for $X\in \cX$ an object, an action $\Diff^{\fr}(\TT^2) \lacts X$ is 
\begin{enumerate}
\item
an action $\TT^2 \underset{\alpha}\lacts X$~,

\item
an identification $\alpha \circ R \underset{\gamma_R}\simeq \alpha$ of this action $\alpha$ with the action $\TT^2 \xra{R} \TT^2 \underset{\alpha}\lacts X$~,

\item
for $i=1,2$, extensions of this identification $\gamma_R$ to identifications $\alpha \circ U_i \underset{\gamma_{U_i}}\simeq \alpha$~.

\end{enumerate}

\end{cor}

A generalization of Smale's conjecture to Haken manifolds, proved by Hatcher 
(see~\cite{hatcher.haken} and~\cite{hatcher}), 
gives that the standard inclusion is an equivalence between continuous groups:
\[
\Aff
\colon
\TT^3 \rtimes \GL_3(\ZZ)
\xra{~\simeq~}
\Diff(\TT^3)
~.
\]
In particular, there is an identification of the mapping class group:
$
{\sf MCG}(\TT^3)
\cong
\GL_3(\ZZ)
$.
Using these identifications, we expect our methods could be used to prove the following.  
\begin{conj}\label{conj.threetorus}
Consider the 3-torus $\TT^3 \cong \RR^3_{/\ZZ^3}$ as it is equipped with its standard framing.
There is a canonical identification between continuous groups:
\[
\Diff^{\fr}(\TT^3)
~\simeq~
\Bigl(
\TT^3
\rtimes 
\Omega \bigl( \SL_3(\RR)_{/\SL_3(\ZZ)} \bigr)
\Bigr) 
\times
\Bigl(
\Omega^2 \SS^3 \times \Omega^3 \SS^3 
\Bigr)^3
\times
\Omega^4 \SS^3
~,
\]
in which the semi-direct product is with respect to the action $\Omega \bigl( \SL_3(\RR)_{/\SL_3(\ZZ)} \bigr) \xra{\rm Puppe} \SL_3(\ZZ) \lacts \TT^3$.
In particular, there is a central extension among groups:
\[
1
\longrightarrow
\ZZ^3 \times (\ZZ_{/2\ZZ})^2
\longrightarrow
{\sf MCG}^{\fr}(\TT^3) 
\longrightarrow
\SL_3(\ZZ)
\longrightarrow
1
~.
\]

\end{conj}

\subsection{Natural symmetries of secondary Hochschild homology}
\label{sec.second}

\subsubsection{Hochschild homology}\label{sec.hoch}

\begin{notation}
Throughout~\S\ref{sec.hoch}, we fix $\cW$ to be a $\ot$-presentable symmetric monoidal $\infty$-category.  

\end{notation}

We briefly recall a definition of the Hochschild homology and record its natural symmetries.
(See~\cite{loday} for a complete account.)
Let $B \in \Alg_{\sf Assoc}(\cW)$ be an associative algebra.
Via left and right translation, regard the underlying object $B \in \cW$ as a $(B,B)$-bimodule.
For $M$ a $(B,B)$-bimodule for $B$, the \bit{Hochschild homology (of $B$ with coefficients in $M$)} is
\[
\sHH(B,M)
~:=~
B
\underset{B^{\op} \ot B} 
\ot
M
~\simeq~
\colim
\Bigl(
\bDelta^{\op}
\xra{B^{\ot \bullet} \ot M}
\cW
\Bigr)
~,
\footnote{
For $0<i<p$, the $i^{th}$ face map of this simplicial object is
\[
B^{\ot \{1,\dots,p\}}
\ot
M
~\simeq~
B^{\ot \{1,\dots, i\}}
\ot
B^{\ot \{i,i+1\}}
\ot
B^{\ot \{i+2,\dots,p\}}
\ot
M
\xra{~\id \ot \mu \ot \id \ot \id~}
B^{\ot \{1,\dots, i\}}
\ot
B
\ot
B^{\ot \{i+2,\dots,p\}}
\ot
M
\]
where $\mu$ is the binary multiplication of $A$;
the $0^{th}$ face map is
\[
B^{\ot \{1,\dots,p\}}
\ot
M
~\simeq~
B^{\{1\}}
\ot
B^{\ot \{2,\dots,p\}}
\ot
M
~\simeq~
B^{\ot \{2,\dots,p\}}
\ot
M
\ot
B^{\{1\}}
\xra{~\id \ot {\sf r.act}~}
B^{\ot \{2,\dots,p\}}
\ot
M
\]
where ${\sf r.act}$ is the right action of $B$ on $M$;
the $p^{th}$ face map is
\[
B^{\ot \{1,\dots,p\}}
\ot
M
~\simeq~
B^{\ot \{1,\dots,p-1\}}
\ot
B^{\{p\}}
\ot
M
\xra{~\id \ot {\sf l.act}~}
B^{\ot \{1,\dots,p-1\}}
\ot
M
\]
where ${\sf l.act}$ is the left action of $B$ on $M$.
}
\]
which can be constructed as the colimit of a simplicial object in $\cW$ naturally associated to the pair $(B,M)$.
This is functorial in the $(B,B)$-bimodule:
\[
{\sf BiMod}_{(B,B)}
\xra{~\sHH(B,-)~}
\cW
~.
\]

The \bit{Hochschild homology (of $B$)} is the instance in which $M=B$ as a $(B,B)$-bimodule:
\[
\sHH(B)
~:=~
B
\underset{B^{\op} \ot B} 
\ot
B
~=:~
\sHH(B,B)
~\simeq~
\bigl|
\bBar_\bullet^{\sf cyc}(B)
\bigr|
~,
\]
which can be constructed as a geometric realization of the \bit{cyclic bar complex} of $B$, as recalled in~\S\ref{sec.hoch1}.
Also recalled in~\S\ref{sec.hoch1} is a canonical action $\TT \simeq \sB \ZZ \lacts \sHH(B)$,
\[
\TT
\xra{~\bigl \lag \TT \lacts \sHH(B) \bigr \rag~}
\Aut_{\cW}\bigl( \sHH(B) \bigr)
~,
\]
which is \bit{Connes' cyclic operator} (see~\cite{connes}), and this is canonically functorial in the argument $B$:
\begin{equation}
\label{e102}
\Alg_{\sf Assoc}(\cW)
\longrightarrow
\Mod_{\TT}(\cW)
~,\qquad
B
\longmapsto
\bigl(
\TT
\lacts
\sHH(B)
\bigr)
~.
\end{equation}

\subsubsection{Secondary Hochschild homology}\label{sec.sechoch}

\begin{notation}
Throughout~\S\ref{sec.sechoch}, we fix $\cV$ to be a $\ot$-presentable symmetric monoidal $\infty$-category.  

\end{notation}

Apply~\S\ref{sec.hoch} to the case $\cW := \Alg_{\sf Assoc}(\cV)$.
For this situation, denote the $\infty$-category
\[
\Alg_2(\cV)
~:=~
\Alg_{\sf Assoc}(\cW) = \Alg_{\sf Assoc}\bigl( \Alg_{\sf Assoc}(\cV) \bigr)
~,\footnote{
Dunn's additivity (see Theorem~\ref{t52})
supplies a host of examples of 2-algebras.
In particular, a commutative algebra canonically determines a 2-algebra.
}
\]
an object in which is a \bit{2-algebra (in $\cV$)},
which is simply an associative algebra in associative algebras in $\cV$.
Using that Hochschild homology is symmetric monoidal, the Hochschild homology of the underlying associative algebra of a 2-algebra retains the structure of an associative algebra.  
For $A$ a 2-algebra in $\cV$, the \bit{secondary Hochschild homology (of $A$)} is the value 
\begin{equation}
\label{e95}
\HHt(A)~:=~\sHH\bigl( \sHH(A) \bigr)
~,
\end{equation}
This is evidently functorial in the 2-algebra, as it is equipped with the {\it two} Connes cyclic operators:
\[
\medmath{
\HHt
~\colon~
\Alg_2(\cV)
~:=~
\Alg_{\sf Assoc}\bigl( \Alg_{\sf Assoc}(\cV) \bigr)
\xra{~\Alg_{\sf Assoc}(\sHH)~}
\Mod_{\TT}\bigl( \Alg_{\sf Assoc}(\cV) \bigr)
\xra{~\sHH~}
\Mod_{\TT} \bigl( \Mod_{\TT} (\cV) \bigr)
~\simeq~
\Mod_{\TT^2}(\cV)
~.
}
\]

\begin{remark}
\label{r14}
In~\S\ref{sec.fact.hmlgy}, we show that our definition~(\ref{e95}) of secondary Hochschild homology (see Definition~\ref{d5} in the body),
agrees with factorization homology over a torus: 
$\HHt(A) \simeq \int_{\TT^2} A$.  
As such, our definition of secondary Hochschild homology is fit to 
receive a \bit{secondary trace} map, which is related to a \bit{secondary Chern character} map, from secondary $\sK$-theory.  
(See~\cite{TV} 
and
~\cite{HSS},
and~\S\ref{sec.traces} below.)

\end{remark}

\begin{warning}
Our definition of secondary Hochschild homology does not appear to agree with the definition introduced by Staic
in
~\cite{st},
and further studied in
~\cite{LJ},
where its cohomological version parametrizes certain algebraic deformations.
Indeed, their definitions are more akin to factorization homology of a pair $\int_{\SS^1 \subset \DD^2}(B \to A)$ (see
~\cite{CSS}, where this is established in the commutative context, in the language of 
higher order Hochschild homology introduced by Pirashvili
~\cite{Pi}), 
which is more similar to factorization homology $\int_{\SS^2} B$ over the 2-sphere.

\end{warning}

Theorem~\ref{Theorem A}(2a) has the following consequence, proved in~\S\ref{sec.fact.hmlgy} using factorization homology.
\begin{mythm}{Y.1}
\label{t36}
Let $A \in \Alg_2(\cV) $ be a $2$-algebra in a $\ot$-presentable symmetric monoidal $\infty$-category $\cV$.
There is a canonical action of the continuous group $\TT^2 \rtimes \Braid$ on secondary Hochschild homology:
\begin{equation}
\label{e49}
\TT^2 \rtimes \Braid
~\lacts~
\HHt(A)
~.
\end{equation}

\end{mythm}

We now explain how Theorem~\ref{t36} extends familiar, or at least expected, symmetries of $\HHt(A)$, and how the action can be phrased in terms of these expected symmetries.

Let $\cW$ be an $\ot$-presentable symmetric monoidal $\infty$-category. 
Let $B$ be an associative algebra in $\cW$.
Each endomorphism $B\xra{\sigma}B$ of the associative algebra $B$ determines a $(B,B)$-bimodule structure $B_\sigma$ on the underlying object $B$, which is characterized by $B\xra{\id}B$ being equivariant with respect to $(B,B)\xra{(\id, \sigma )} (B,B)$.  
This assignment $\sigma\mapsto B_\sigma$ canonically assembles as a functor from the space of endomorphisms of $B$ to the $\infty$-category of $(B,B)$-bimodules:
\[
\End_{\Alg(\cW)}(B)
\longrightarrow
{\sf BiMod}_{(B,B)}
~,\qquad
\sigma
\mapsto 
B_\sigma
~.
\]
This results in a composite functor
\[
\End_{\Alg(\cW)}(B)
\xra{~\sigma\mapsto B_\sigma~}
{\sf BiMod}_{(B,B)}
\xra{~\sHH(B,-)~}
\cW
~,\qquad
\sigma
\mapsto 
\sHH(B,B_\sigma)
~.
\]
This functor restricts to automorphisms of $\id\mapsto \sHH(B,B_{\id}) = \sHH(B)$ as a morphism between continuous groups:
\begin{equation}
\label{e56}
\Omega_{\id} \Aut_{\Alg(\cW)}(B)
~=~
\Omega_{\id} \End_{\Alg(\cW)}(B)
\longrightarrow
\Aut_{\cW}\bigl( \sHH(B) \bigr)
~.
\end{equation}

Now, take $\cW = \Alg(\cV)$ to be the $\infty$-category of associative algebras in an $\ot$-presentable symmetric monoidal $\infty$-category $\cV$, and take $B = \sHH(A)$ to be the Hochschild homology of a $2$-algebra $A\in \Alg_2(\cV) :=\Alg\bigl( \Alg(\cV) \bigr)$. 
The above discussion yields the \bit{sheer symmetry}:
\begin{equation}
\label{e112}
{\sf Sheer}_1
\colon
\ZZ
~\simeq~
\Omega_0 \TT
\xra{~\Omega \bigl \lag \TT \lacts \sHH(A) \bigr \rag ~}
\Omega_{\id} \Aut_{\Alg(\cV)}\bigl( \sHH(A) \bigr)
\xra{~(\ref{e56})~}
\Aut_{\cV}\Bigl( \HHt(A) \Bigr)
~.
\end{equation}
The functoriality of Connes' cyclic operators yield a $\TT^2$-action on secondary Hochschild homology of $A$:
\begin{equation}
\label{e111}
\text{ Connes'}
\colon
\TT^2
\xra{~\bigl \lag \TT^2 \lacts \HHt(A) \bigr \rag~}
\Aut_{\cV}\bigl( \HHt(A) \bigr)
~.
\end{equation}
Corollary~\ref{t45} states that the swapped iteration of Hochschild homology results in the same secondary Hochschild homology.
This yields yet another \bit{sheer symmetry}:
\begin{equation}
\label{e113}
{\sf Sheer}_2
\colon
\ZZ
~\simeq~
\Omega_0 \TT
\xra{~\Omega \bigl \lag \TT \lacts \sHH(A) \bigr \rag ~}
\Omega_{\id} \Aut_{\Alg(\cV)}\bigl( \sHH(A) \bigr)
\xra{~(\ref{e56})~}
\Aut_{\cV}\Bigl( \HHt(A) \Bigr)
~.
\end{equation}
Using Theorem~\ref{t36}, the presentation~(\ref{e67}) of the braid group $\Braid$ lends to the following result, which is proved in~\S\ref{sec.fact.hmlgy}.
\begin{cor}
\label{t55}
Let $A$ be a 2-algebra in $\cV$.
The sheer actions (\ref{e112}) \& (\ref{e113}) and Connes' cyclic operators (\ref{e111}) generate the action $\TT^2 \rtimes \Braid \underset{(\ref{e49})} \lacts \HHt(A)$ of Theorem~\ref{t36}.  
More specifically, the sheer actions and Connes' cyclic operators satisfy the following three relations, thereafter drawing the final conclusion:
\begin{enumerate}

\item
Consider the action defined by the symmetries ${\sf Sheer}_1$ and ${\sf Sheer}_2^{-1}$:
\begin{equation}
\label{e59}
{\sf Sheers}
\colon
\ZZ \amalg \ZZ
~\lacts~
\HHt(A)
~.
~
\footnote{The pushout appearing here is in the category of groups, where it is often referred to as a \emph{free product}.}
\end{equation}
Denoting the generators $\lag \tau_{1} , \tau_{2} \rag = \ZZ \amalg \ZZ$, 
consider the two natural actions
\[
\ZZ
\stackrel[\mathrm{\lag \tau_{2}\tau_{1}\tau_{2} \rag}]{\mathrm{ \lag \tau_{1}\tau_{2} \tau_{1} \rag}}{\overrightarrow{\underrightarrow{\hspace{2cm}}}}
\ZZ \amalg \ZZ
~\underset{(\ref{e59})}\lacts~
\HHt(A)
~.
\]
These two symmetries are coequalized:
\[
\Braid
\underset{(\ref{e67})}{~\cong~}
\Bigl\lag
\tau_1 , \tau_2 
\mid
\tau_1 \tau_2 \tau_1
=
\tau_2 \tau_1 \tau_2
\Bigr\rag
~\lacts~
\HHt(A)
~.
~\footnote{
Phrased more plainly, there is an identification between automorphisms of $\HHt(A)$:
\[
{\sf Sheer}_1
\circ
{\sf Sheer}_2^{-1}
\circ
{\sf Sheer}_1
~\simeq~
{\sf Sheer}_2^{-1}
\circ
{\sf Sheer}_1
\circ
{\sf Sheer}_2^{-1}
~.
\]
}
\]

\item
The actions 
$\ZZ \underset{{\sf Sheer}_1} \lacts \HHt(A)$
and
$\TT^2 \underset{\rm Connes'} \lacts \HHt(A)$
intertwine as an action 
\[
\TT^2
\underset{U_1}\rtimes
\ZZ 
~\lacts~
\HHt(A) 
~,
\]
where this semi-direct product is defined by $\ZZ\xra{\lag  U_1 \rag} \GL_2(\ZZ)\simeq \Aut_{\sf Groups}(\TT^2)$ (see~(\ref{e63})).

\item
The actions 
$\ZZ \underset{{\sf Sheer}_2} \lacts \HHt(A)$
and
$\TT^2 \underset{\text{Connes'}} \lacts \HHt(A)$
intertwine as an action 
\[
\TT^2
\underset{U_2}\rtimes
\ZZ
\underset{\cong}{\xra{\id \rtimes  (-1) }}
\TT^2 
\underset{U_2^{-1}}\rtimes
\ZZ
~\lacts~
\HHt(A) 
~,
\]
where this semi-direct product is defined by $\ZZ\xra{\lag  U_2 \rag} \GL_2(\ZZ)\simeq \Aut_{\sf Groups}(\TT^2)$ (see~(\ref{e63})).

\item[$\bullet$]
Denoting $R := U_1U_2U_2 = \begin{bmatrix} 0 & 1 \\ -1 & 0 \end{bmatrix} = U_2 U_1 U_2 \in  \GL_2(\ZZ)\simeq \Aut_{\sf Groups}(\TT^2)$, the above three points imply the two actions
\[
\TT^2 \underset{R} \rtimes  \ZZ
\stackrel[\mathrm{\id  \rtimes \lag \tau_{2}\tau_{1}\tau_{2} \rag}]{\mathrm{ \id \rtimes \lag \tau_{1}\tau_{2} \tau_{1} \rag}}{\overrightarrow{\underrightarrow{\hspace{2cm}}}}
\TT^2 \underset{U_1,U_2} \rtimes (\ZZ \amalg \ZZ)
~\underset{(\ref{e59})}\lacts~
\HHt(A)
\]
are coequalized under $\TT^2$, thusly generating the action
\[
\TT^2 
\rtimes
\Braid
\underset{\id \rtimes (\ref{e67})}{~\cong~}
\TT
\underset{U_1,U_2}
\rtimes
\Bigl\lag
\tau_1 , \tau_2 
\mid
\tau_1 \tau_2 \tau_1
=
\tau_2 \tau_1 \tau_2
\Bigr\rag
~\lacts~
\HHt(A)
~.
\]

\end{enumerate}

\end{cor}

Next, the short exact sequence~(\ref{e46}) of Proposition~\ref{t32} implies an identification between moduli spaces:
\[
  \left\{
    \begin{tabular}{c}
      extensions of $\Braid\lacts \HHt(A)$ along $\Phi$ 
      \\
      to an action $\SL_2(\ZZ)\lacts \HHt(A)$ 
    \end{tabular}
  \right\}
  ~\simeq~
  \left\{
  	\begin{tabular}{c}
    		trivializations of 
		\\
		$\ZZ\cong \Ker(\Phi) \lacts \HHt(A)$
  	\end{tabular}
  \right \}		
  ~.
\]

\begin{remark}
The action $\ZZ\cong \Ker(\Phi) \lacts \HHt(A)$ is simply an automorphism $\rho \in \Aut_{\cV}\bigl(\HHt(A) \bigr)$.
So an extension of $\Braid \lacts \HHt(A)$ along $\Phi$ to $\SL_2(\ZZ) \lacts \HHt(A)$ exists if and only if there is an equality in the set of path-components of the space of endomorphisms: $[\id_{\HHt(A)}] = [\rho]\in \pi_0 \bigl( \End_{\cV}\bigl(\HHt(A) \bigr) $.
In the case that the ambient $\infty$-category of $\cV$ is stable, this set of path-components has the canonical structure of a ring\footnote{
For example,
let $\Bbbk$ be a commutative ring and take $\cV = (\Mod_{\Bbbk}, \underset{\Bbbk} \ot )$.
Then $\HHt(A)$ may be presented as a projective chain complex over $\Bbbk$;
the ring $\pi_0 (\End_{\cV}\bigl(\HHt(A) \bigr) = \sH_0\bigl(\un{\End}^{\Bbbk}\bigl( \HHt(A) \bigr) \bigr)$ is the $0^{\rm th}$ homology of the chain complex over $\Bbbk$ of self-maps of a such a presentation of $\HHt(A)$.  
} (in which $[\rho]$ is a unit), and so the difference $[\rho]-[\id_{\HHt(A)}] \in \pi_0 (\End_{\cV}\bigl(\HHt(A) \bigr)$ obstructs such an extension to an $\SL_2(\ZZ)$-action.

\end{remark}

So we are interested in identifying the action $\Ker(\Phi)\lacts \HHt(A)$ in familiar, or at least expected, terms.
Corollary~\ref{t54} does just this, in terms of the familiar/expected symmetry of secondary Hochschild homology given by \bit{braiding-conjugation}, as we now explain.  
A starting point for this symmetry is given from the following result which was essentially due to Dunn.
Recall the topological operad $\cE_2$ of little 2-disks.
\begin{theorem}[\cite{dunn}; Theorem~5.1.2.2 of~\cite{HA}]
\label{t52}
There is a canonical equivalence from the $\infty$-category of $\cE_2$-algebras in $\cV$ to that of 2-algebras in $\cV$:
\[
\Alg_{\cE_2}(\cV)
\xra{~\simeq~}
\Alg_2(\cV)
~.
\]

\end{theorem}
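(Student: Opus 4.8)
The statement to be proved is Theorem~\ref{t52} (Dunn additivity): the canonical equivalence $\Alg_{\cE_2}(\cV) \xra{\sim} \Alg_2(\cV)$.

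\textbf{Note.} The statement to be proved is Theorem~\ref{t52} (Dunn additivity): the canonical equivalence $\Alg_{\cE_2}(\cV) \xra{\sim} \Alg_2(\cV)$.

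The plan is to deduce this from the Boardman--Vogt tensor product of $\infty$-operads together with Dunn's geometric additivity statement $\cE_1 \otimes \cE_1 \simeq \cE_2$. Recall (see~\S2.2.5 of~\cite{HA}) that for $\infty$-operads $\cO$ and $\cO'$ there is a tensor product $\cO \otimes \cO'$ characterized by a natural equivalence $\Alg_{\cO \otimes \cO'}(\cW) \simeq \Alg_{\cO}\bigl( \Alg_{\cO'}(\cW) \bigr)$ for every symmetric monoidal $\infty$-category $\cW$; and recall the standard identification of $\infty$-operads $\cE_1 \simeq {\sf Assoc}$, which holds because the space of configurations of $k$ linearly-ordered disjoint little intervals in $\RR^1$ is contractible for each $k$. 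Granting for the moment a canonical equivalence of $\infty$-operads $\cE_1 \otimes \cE_1 \simeq \cE_2$, we obtain a canonical chain of equivalences
\[
\Alg_{\cE_2}(\cV)
~\simeq~
\Alg_{\cE_1 \otimes \cE_1}(\cV)
~\simeq~
\Alg_{\cE_1}\bigl( \Alg_{\cE_1}(\cV) \bigr)
~\simeq~
\Alg_{\sf Assoc}\bigl( \Alg_{\sf Assoc}(\cV) \bigr)
~=:~
\Alg_2(\cV)
~,
\]
whose composite is the functor remembering the two commuting multiplications carried by an $\cE_2$-algebra.

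It remains to construct the equivalence $\cE_1 \otimes \cE_1 \simeq \cE_2$. By the universal property of $\otimes$, a map of $\infty$-operads $\cE_1 \otimes \cE_1 \to \cE_2$ is the same as a pair of compatible maps $\cE_1 \to \cE_2$; we take the two coordinate inclusions, sending a little interval $I$ to the little rectangle $I \times (-1,1)$ and to $(-1,1) \times I$ respectively, which are compatible because any ``horizontal'' little rectangle can be translated to be disjoint from any ``vertical'' one. To see the resulting map is an equivalence one works in the little cubes model and checks it induces a weak homotopy equivalence on each space of $k$-ary operations: the space of configurations of $k$ disjoint little $2$-cubes in the unit square deformation retracts, compatibly with operadic composition, onto the subspace of configurations whose images under the two coordinate projections are again configurations of disjoint little intervals, and on that subspace a configuration of little $2$-cubes is visibly the same datum as a configuration of little intervals in the first coordinate together with, over each such interval, a configuration of little intervals in the second -- this is Dunn's original argument~\cite{dunn}.

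I expect the geometric input of the previous paragraph to be the main obstacle: producing the canonical, functorial straightening of a configuration of little squares so that both of its shadows on the coordinate axes become configurations of disjoint little intervals, and verifying it is a deformation retraction compatibly with operadic composition. One can carry this out by inducting on the combinatorial overlap pattern of the two coordinate projections and contracting one overlap at a time, in the style of the Eckmann--Hilton argument; alternatively one may invoke the fully $\infty$-categorical treatment in~\S5.1.2 of~\cite{HA}. The other ingredients -- the universal property of the tensor product of $\infty$-operads and the identification $\cE_1 \simeq {\sf Assoc}$ -- are formal, so once $\cE_1 \otimes \cE_1 \simeq \cE_2$ is in hand the displayed chain of equivalences completes the proof.
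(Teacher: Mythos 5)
The paper does not prove this theorem; it cites it to Dunn~\cite{dunn} and to Theorem~5.1.2.2 of~\cite{HA}, so there is no proof in the paper against which to compare. Your reduction is the standard one and has the right architecture: identify $\Alg_2(\cV)$ with $\Alg_{\sf Assoc\otimes Assoc}(\cV)$ via the hom-tensor adjunction for the Boardman--Vogt tensor product (this is also how the paper sets things up in Observation~\ref{t46}), identify $\cE_1 \simeq {\sf Assoc}$, and reduce the theorem to the equivalence of $\infty$-operads $\cE_1\otimes\cE_1\simeq\cE_2$, citing Dunn/Lurie for that input.

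However, the geometric sketch you give of $\cE_1\otimes\cE_1\simeq\cE_2$ contains a genuine error. You assert that the space of $k$ disjoint little $2$-cubes ``deformation retracts, compatibly with operadic composition, onto the subspace of configurations whose images under the two coordinate projections are again configurations of disjoint little intervals.'' That subspace is not a deformation retract: already for $k=2$, the space $\cE_2(2)$ is homotopy equivalent to $S^1$, while the subspace where both projections give disjoint intervals is homotopy equivalent to four points (the four quadrant placements of one square relative to the other), so the inclusion cannot even be a weak homotopy equivalence, let alone a deformation retract. The grid-like subspace you describe models the levelwise product $\cE_1\times\cE_1$, not the Boardman--Vogt tensor product, and these are quite different operads. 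Dunn's actual proof, and Lurie's, proceed by analyzing the (nontrivially defined) tensor product operad directly and comparing it to $\cE_2$ via an interchange map together with a careful contractibility argument; they do not rest on a retraction of $\cE_2$ onto a grid suboperad. Since you already flag this step as the main obstacle and point to~\cite{HA} as a fallback, the right fix is simply to drop the incorrect retraction claim and invoke~\cite{dunn} or~\cite{HA} for $\cE_1\otimes\cE_1\simeq\cE_2$ outright, as the paper itself does.
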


After Theorem~\ref{t52}, the standard continuous action $\sO(2) \lacts \cE_2$ on the topological operad immediately implies the following.  
\begin{cor}
\label{t53}
There is a canonical action of the continuous group $\sO(2) \lacts \Alg_2(\cV)$.  
In particular, for each 2-algebra $A$ in $\cV$, 
the orbit map with respect to this action lends to a canonical 
symmetry of $A$:
\[
\beta_A
\colon
\ZZ
~\simeq~
\Omega_{\uno} {\sf SO}(2)
\xra{\simeq}
\Omega_{\uno} \sO(2)
\xra{~\Omega {\sf Orbit}_{A}~}
\Aut_{\Alg_2(\cV)}(A)
~.
\]

\end{cor}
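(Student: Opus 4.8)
The plan is to obtain the $\sO(2)$-action on $\Alg_2(\cV)$ by transporting, along the equivalence of Theorem~\ref{t52}, the action on $\Alg_{\cE_2}(\cV)$ induced by the standard continuous action $\sO(2)\lacts\cE_2$ on the topological operad of little $2$-disks (which in turn comes from the linear action of $\sO(2)$ on $\RR^2$). So the first step is to record this as a continuous homomorphism $\sO(2)\to\Aut(\cE_2)$ of topological operads, compatible with operadic composition and with the $\Sigma_k$-actions, and then, applying the operadic nerve of~\cite{HA}, to package it as an action in $\infty$-operads --- a functor $\fB\sO(2)\to\mathrm{Op}_{\oo}$ selecting $\cE_2$ together with its $\sO(2)$-symmetry.

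Next I would invoke the functoriality of $\cO\mapsto\Alg_{\cO}(\cV)$ in the $\infty$-operad $\cO$: a morphism of $\infty$-operads induces restriction of algebras along it. Composing with the functor from the previous step --- the resulting variance being immaterial because $\sO(2)$ is a group, cf.\ the discussion of~\S\ref{ambidex} --- produces a functor $\fB\sO(2)\to\Cat$ selecting $\Alg_{\cE_2}(\cV)$ with a canonical $\sO(2)$-action. Transporting this action across the canonical equivalence $\Alg_{\cE_2}(\cV)\xra{\simeq}\Alg_2(\cV)$ of Theorem~\ref{t52} gives the asserted $\sO(2)\lacts\Alg_2(\cV)$; canonicity is inherited from that of the three ingredients (the action on $\cE_2$, the functor $\Alg_{(-)}(\cV)$, and Theorem~\ref{t52}).

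For the final assertion I would fix $A\in\Alg_2(\cV)$, restrict the action to the maximal sub-$\infty$-groupoid $\Alg_2(\cV)^{\simeq}$, and form the pointed orbit map ${\sf Orbit}_A\colon\sO(2)\to\Alg_2(\cV)^{\simeq}$ with ${\sf Orbit}_A(\uno)=A$. Based looping at $\uno$ then yields a homomorphism of continuous groups $\Omega_{\uno}\sO(2)\xra{\Omega{\sf Orbit}_A}\Omega_A\bigl(\Alg_2(\cV)^{\simeq}\bigr)=\Aut_{\Alg_2(\cV)}(A)$; since $\uno$ lies in the identity component ${\sf SO}(2)\subset\sO(2)$, one has $\Omega_{\uno}\sO(2)\simeq\Omega_{\uno}{\sf SO}(2)\simeq\Omega\TT\simeq\ZZ$, and precomposing gives $\beta_A$.

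The only step that takes genuine care is the passage from the \emph{strict} topological $\sO(2)$-action on $\cE_2$ to a coherent $\sO(2)$-action on the $\infty$-category $\Alg_{\cE_2}(\cV)$ --- equivalently, checking that $\Alg_{(-)}(\cV)$ extends to a functor on the $\infty$-category of $\infty$-operads through which the $\sO(2)$-action is visible. This is standard; it is the same coherence that, upon taking homotopy fixed points, expresses $\cE_2^{\fr}$-algebras (framed little-disks algebras) in terms of this very action. Accordingly I do not expect a real obstacle here: the content is bookkeeping rather than a new idea.
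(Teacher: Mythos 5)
Your proposal is correct and matches the paper's (very brief) justification exactly: the paper likewise obtains the $\sO(2)$-action on $\Alg_2(\cV)$ by transporting the standard $\sO(2)$-action on the topological operad $\cE_2$ through the equivalence $\Alg_{\cE_2}(\cV)\simeq\Alg_2(\cV)$ of Theorem~\ref{t52}, and then defines $\beta_A$ by looping the orbit map at the identity component. The coherence point you flag at the end is indeed the only place requiring care, and your treatment of it is consistent with what the paper takes for granted.
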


\begin{remark}
This symmetry $\beta_A$ on each 2-algebra $A$ is \bit{braiding-conjugation}.
For instance, this symmetry $\beta_A$ is the identity on the underlying object (so, $\beta_A(1) = \id_A$), and for $\mu\in \cE_2(2)$ it supplies the commutativity of the diagram in $\cV$,
\[
\xymatrix{
A \ot A
\ar[rr]^-{ \id \ot \id}
\ar[d]_-{\mu_A}
&&
A \ot A
\ar[d]^-{\mu_A}
&&
\text{ given by the point }
\\
A
\ar[rr]^-{\id}
&&
A
,
&&
\beta_A(2)\colon \ast \xra{\lag 1 \rag} \ZZ \simeq \Omega_{\mu} \cE_2(2) \to \Omega_{\mu_A} \Hom_{\cV}(A\ot A , A)
~.
}
\]
\end{remark}

The following result is a direct consequence of Observation~\ref{t43}, 
and inspection of the action $\Braid\lacts \HHt(A)$ of Theorem~\ref{t36}, proved in~\S\ref{sec.fact.hmlgy}. 
\begin{cor}
\label{t54}
Let $A$ be a 2-algebra in $\cV$.
Through the action of Theorem~\ref{t36}, the kernel of $\Phi$ acts on $\HHt(A)$ as $\beta_A$.
Specifically, there is a canonically commutative diagram among continuous groups:
\[
\xymatrix{
\ZZ
\ar[d]^-{\cong}_-{\bigl\lag (\tau_1\tau_2)^6 \bigr\rag}
\ar[rrrr]^-{\beta_A}
&&
&&
\Aut_{\Alg_2(\cV)}(A)
\ar[d]^-{\HHt}
\\
\Ker(\Phi)
\ar[rr]
&&
\Braid
\ar[rr]^-{\rm Thm~\ref{t36}}
&&
\Aut_{\cV}\bigl(
\HHt(A)
\bigr)
.
}
\]

\end{cor}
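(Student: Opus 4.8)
The plan is to identify the homomorphism $\Braid \to \Aut_{\cV}(\HHt(A))$ of Theorem~\ref{t36} explicitly on the distinguished central element $(\tau_1\tau_2)^6$ and match it against $\HHt(\beta_A)$. By construction in~\S\ref{sec.fact.hmlgy}, the action of Theorem~\ref{t36} arises from the identification $\HHt(A) \simeq \int_{\TT^2} A$ of Remark~\ref{r14} together with the identification $\Diff^{\fr}(\TT^2,\varphi) \simeq \TT^2 \rtimes \Braid$ of Theorem~\ref{Theorem A}(2a) and the functoriality of factorization homology in framed diffeomorphisms. So the first step is to unwind, via Theorem~\ref{Theorem A}(2a), which framed diffeomorphism (or rather, which path-class of framed self-maps in $\Diff^{\fr}(\TT^2,\varphi)$) the generator $(\tau_1\tau_2)^6 \in \Braid$ corresponds to. By Proposition~\ref{t32} this element is precisely the generator of $\Ker(\Phi)$, i.e. the element whose image in $\SL_2(\ZZ)$ is trivial — so it corresponds to the framed self-diffeomorphism which is isotopic to the identity \emph{as a plain diffeomorphism}, but which traverses a nontrivial loop of framings, namely the loop measured by the universal-cover/central-extension structure of~(\ref{e46}).

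The second step is to identify that loop of framings with the braiding-conjugation loop. Concretely: the central extension~(\ref{e46}) is classified by $\sB(\RR\ot_{\ZZ}-)\colon {\sf BSL}_2(\ZZ)\to {\sf BSL}_2(\RR)\simeq \sB^2\ZZ$, so the element $(\tau_1\tau_2)^6$ maps under $\Braid\to \w{\SL}_2(\RR)$ to the generator of $\pi_1(\SL_2(\RR))\cong\ZZ$, which is the full rotation loop in $\sO(2)\subset\GL_2^+(\RR)$. Under the action $\Diff^{\fr}(\TT^2,\varphi)\lacts \int_{\TT^2}A$ this rotation loop acts by rotating the framing of $\TT^2$, and by naturality of factorization homology in the structure group of the tangential structure, rotating the framing through the full $\sO(2)$-loop is exactly the operation induced by the $\sO(2)\lacts \cE_2$-action on the operad, hence — via Theorem~\ref{t52} and Corollary~\ref{t53} — by $\beta_A \in \Aut_{\Alg_2(\cV)}(A)$ applied to the local-coefficient system, then pushed forward through $\HHt$. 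This is precisely the content of Observation~\ref{t43}, which the corollary cites: it should record the compatibility between the $\sO(2)$-action on framings of $\TT^2$ (seen through $\Diff^{\fr}(\TT^2,\varphi)$) and the $\sO(2)$-action on $\cE_2$-algebras. So the third step is simply to assemble these identifications into the commuting square: the left vertical is the identification of $\ZZ$ with $\Ker(\Phi)$ from Proposition~\ref{t32}, the top horizontal is $\beta_A$, the right vertical is $\HHt$, and the bottom is the composite $\Ker(\Phi)\hookrightarrow \Braid \xra{\text{Thm~\ref{t36}}}\Aut_{\cV}(\HHt(A))$; commutativity is exactly the statement that the Theorem~\ref{t36}-action on the central $\ZZ$ is $\HHt(\beta_A)$.

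The main obstacle I expect is making the second step precise: one must carefully track how the framing-rotation loop sitting inside $\Diff^{\fr}(\TT^2,\varphi)$ (produced, somewhat abstractly, by the pullback square~(\ref{e77}) and the universal cover of $\GL_2^+(\RR)$) acts on $\int_{\TT^2}A$, and verify that it coincides on the nose — not merely up to an unspecified automorphism — with the braiding-conjugation symmetry $\beta_A$ post-composed with $\HHt$. This requires knowing that the functoriality of $\int_{(-)}A$ in framed manifolds, restricted to changes of framing, is implemented through the $\sO(2)$-action on $\cE_n$-algebras in the way predicted by the Dunn additivity equivalence of Theorem~\ref{t52}; granting Observation~\ref{t43}, which is exactly this compatibility, the remaining work is bookkeeping with the explicit generator $(\tau_1\tau_2)^6$ and the presentation~(\ref{e67}). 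Everything else — the identification of $\Ker(\Phi)$, the classifying class of the extension, the fiber-sequence interpretation — is supplied by Proposition~\ref{t32} and Theorem~\ref{Theorem A}, and is used as a black box.
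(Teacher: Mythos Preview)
Your proposal is correct and follows essentially the same route as the paper. The paper's proof resolves your ``main obstacle'' cleanly: rather than tracking the framing-rotation loop through $\Diff^{\fr}(\TT^2,\varphi)$ by hand, it observes that the change-of-framing $\sO(2)$-action on $\Mfld_2^{\fr}$ restricts to $\Disk_2^{\fr}$, so the left adjoint $\int$ is $\sO(2)$-equivariant; this yields a commuting square $\sO(2)\to \Alg_2(\cV)\to \cV$ versus $\sO(2)\to \Mfld_2^{\fr}\to \cV$, and restricting along $\sB\ZZ\simeq \sB\Omega_{\uno}\sO(2)\to \sO(2)$ together with Observation~\ref{t43} gives the asserted diagram directly.
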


In particular, there is the following immediate consequence of Proposition~\ref{t32}.
\begin{cor}
\label{t37}
Let $A$ be a 2-algebra in $\cV$.
An ${\sf SO}(2)$-invariant-structure on $A\in \Alg_2(\cV)$ 
determines a trivialization of the action $\Ker(\Phi) \lacts \HHt(A)$, and thereafter an extension along $\Phi$ of the actions $\Braid \to \TT^2 \rtimes \Braid \lacts \HHt(A)$ to 
actions
\[
\SL_2(\ZZ) \to \TT^2 \rtimes \SL_2(\ZZ) ~ \lacts~ \HHt(A)
~.  
\]

\end{cor}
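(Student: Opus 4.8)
The plan is to chain together Corollary~\ref{t54} with Proposition~\ref{t32}. Recall that an ${\sf SO}(2)$-invariant-structure on $A \in \Alg_2(\cV)$ is, by definition (via Corollary~\ref{t53} and the action $\sO(2) \lacts \Alg_2(\cV)$), an extension of the canonical action along the inclusion $\ast \to \fB{\sf SO}(2)$; equivalently, it is a homotopy-coherent trivialization of the orbit map ${\sf SO}(2) \xra{{\sf Orbit}_A} \Aut_{\Alg_2(\cV)}(A)$, and in particular a nullhomotopy of the composite $\Omega_{\uno}{\sf SO}(2) \xra{\Omega{\sf Orbit}_A} \Aut_{\Alg_2(\cV)}(A)$ as a map of continuous groups. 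But this composite is exactly $\beta_A \colon \ZZ \simeq \Omega_{\uno}{\sf SO}(2) \to \Aut_{\Alg_2(\cV)}(A)$ of Corollary~\ref{t53}.

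So, first I would observe that an ${\sf SO}(2)$-invariant-structure supplies a trivialization of $\beta_A$ as a homomorphism of continuous groups. Next, apply the functor $\HHt$ and invoke Corollary~\ref{t54}: the commutative square there identifies the composite $\ZZ \xra{\bigl\lag(\tau_1\tau_2)^6\bigr\rag} \Ker(\Phi) \hookrightarrow \Braid \xra{\rm Thm~\ref{t36}} \Aut_\cV\bigl(\HHt(A)\bigr)$ with $\HHt(\beta_A)$. Since the left-hand vertical map $\ZZ \xra{\cong} \Ker(\Phi)$ is an isomorphism of groups, a trivialization of $\beta_A$ pushes forward under $\HHt$ to a trivialization of $\HHt(\beta_A)$, which is therefore a trivialization of the action $\Ker(\Phi) \lacts \HHt(A)$ obtained by restricting the action of Theorem~\ref{t36} along $\Ker(\Phi) \hookrightarrow \Braid$.

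Finally, I would feed this trivialization into the moduli-space identification recorded just above Corollary~\ref{t54} (the consequence of the central extension~(\ref{e46}) of Proposition~\ref{t32}): trivializations of $\ZZ \cong \Ker(\Phi) \lacts \HHt(A)$ correspond to extensions of $\Braid \lacts \HHt(A)$ along $\Phi$ to $\SL_2(\ZZ) \lacts \HHt(A)$. Since $\Phi$ is surjective with kernel precisely $\Ker(\Phi)$, a nullhomotopy of the $\Ker(\Phi)$-action is exactly the data needed to descend the $\Braid$-action through the quotient. Running the same argument on the semi-direct product $\TT^2 \rtimes \Braid$ — whose quotient by the (central, hence normal) subgroup $\Ker(\Phi) \subset \Braid$ is $\TT^2 \rtimes \SL_2(\ZZ)$, using that $\Ker(\Phi)$ acts trivially on $\TT^2$ via $\Phi$ — produces the desired extension $\TT^2 \rtimes \SL_2(\ZZ) \lacts \HHt(A)$.

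The main obstacle I anticipate is purely bookkeeping at the level of homotopy-coherence: one must check that the trivialization produced is genuinely a trivialization of the $\Ker(\Phi)$-action \emph{as an action} — i.e.\ compatible with the higher coherences packaged in $\fB(\Ker(\Phi))$ — and not merely a trivialization of the underlying automorphism $\rho = \HHt(\beta_A(1)) \in \pi_0\Aut_\cV\bigl(\HHt(A)\bigr)$. This is exactly why Corollary~\ref{t54} is phrased as a commutative diagram of \emph{continuous groups} rather than a statement about $\pi_0$: the diagram transports the coherent nullhomotopy, not just its component. With that diagram in hand the argument is a formal manipulation of fiber sequences of classifying spaces, so no serious computation is required.
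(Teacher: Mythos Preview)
Your proposal is correct and follows exactly the route the paper intends: the paper states Corollary~\ref{t37} as an ``immediate consequence of Proposition~\ref{t32}'' (with Corollary~\ref{t54} and the moduli-space identification preceding it supplying the bridge), and you have simply spelled out that immediate consequence in full. The only content you add beyond the paper is the explicit handling of the semi-direct product $\TT^2 \rtimes \Braid \to \TT^2 \rtimes \SL_2(\ZZ)$, which is indeed a formal consequence of $\Ker(\Phi)$ being central and acting trivially on $\TT^2$.
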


\begin{example}
\label{r50}
Here is an example demonstrating that the action $\Braid \lacts \HHt(A)$ does not generally
extend along $\Phi$ as an action $\SL_2(\ZZ) \lacts \HHt(A)$.
Indeed, as a tautologous case, take $A = \Disk^{\fr}_{2/\RR^2}$, regarded as a $2$-algebra in $\Cat_{\infty/\Disk^{\fr}_2}$.
The unstraightening of the functor $\Disk^{\fr}_{2/\TT^2} \xra{\rm forget} \Disk^{\fr}_2 \xra{A} \Cat_{\infty/\Disk^{\fr}_2}$ is the coCartesian fibration
$
\Ar(\Disk^{\fr}_{2/\TT^2}) \xra{\ev_t} \Disk^{\fr}_{2/\TT^2}
,
$
as it is equipped with the functor $\Ar(\Disk^{\fr}_{2/\TT^2}) \xra{\ev_s} \Disk^{\fr}_{2/\TT^2}$.
This functor $\ev_s$ is a localization on the $\ev_t$-coCartesian morphisms.
Using that a colimit of a diagram in $\Cat_{\infty}$ is the localization on the coCartesian morphsims of its unstraightening, there is an equivalence in $\Cat_{\infty/\Disk^{\fr}_2}$:
\[
\int_{\TT^2} \Disk^{\fr}_{2/\RR^2}
~:=~
\colim\Bigl(
\Disk^{\fr}_{2/\TT^2} \xra{\rm forget} \Disk^{\fr}_2 \xra{A} \Cat_{\infty/\Disk^{\fr}_2}
\Bigr)
\xra{~\simeq~}
\Disk^{\fr}_{2/\TT^2}
~,
\]
which is evidently $\Diff^{\fr}(\TT^2)$-equivariant.
We therefore wish to show the action $\Ker(\Phi) \lacts \Disk^{\fr}_{2/\TT^2}$ in $\Cat_{\infty/\Disk^{\fr}_2}$ is not trivializable.
Consider the composite functor
\[
\Cat_{\infty/\Disk^{\fr}_2}
\xra{\sf Mor}
\Spaces_{/{\sf Mor}( \Disk^{\fr}_2 )}
\xra{ {\rm fiber~over~}\un{2} \to \un{1} }
\Spaces_{/\SS^1}
~,
\]
where ${\sf Mor}$ is given by taking spaces of morphisms, and the last functor is given by taking fibers along $\Disk^{\fr}_2 \xra{\pi_0} \Fin$ over the morphism $\un{2} = \{1,2\} \xra{!} \ast = \un{1}$ in $\Fin$, recognizing that ${\sf Mor}\bigl( \Disk^{\fr}_2 \bigr)_{|(\un{2} \to \un{1})} \simeq \SS^1$ is the space of 2-ary operations of the $\infty$-operad $\cE_2$.
Note that this composite functor carries the object of interest
$
\Disk^{\fr}_{2/\TT^2} \in \Cat_{\infty/\Disk^{\fr}_2}
$
to the object in $\Spaces_{/\SS^1}$,
\[
\pr
\colon
\TT^2
\times
\SS^1
~\simeq~
\SS^{\sf fib}(\sT \TT^2)
~\simeq~
{\sf Mor}\bigl( \Disk^{\fr}_{2/\TT^2} \bigr)_{|(\un{2}\to \un{1})}
\longrightarrow
{\sf Mor}\bigl( \Disk^{\fr}_2 \bigr)_{|(\un{2}\to \un{1})}
~\simeq~
\SS^1
~,
\]
involving the unit tangent bundle of $\TT^2$ and its standard framing, which is simply the projection 
Through this identification, the $\Diff^{\fr}(\TT^2)$-action is the canonical one on the unit tangent bundle $\SS^{\sf fib}(\sT \TT^2)$ as it maps to $\SS^1$.  
In particular, the restricted $\ZZ \cong \Ker(\Phi)$-action is generated by the automorphism of $(\TT^2 \times \SS^1 \xra{\pr} \SS^1) \in \Spaces_{/\SS^1}$ that is the diagram
\[
\xymatrix{
\TT^2 \times \SS^1 
\ar[rr]^-{\id}
\ar[dr]_-{\pr}
&&
\TT^2 \times \SS^1
\ar[dl]^-{\pr}
\\
&
\SS^1
&
}
\]
in which the homotopy witnessing commutativity is the image of $1\in \ZZ$ via the ma between spaces:
\[
\ZZ \simeq \Omega_{\sf id} \Map(\SS^1,\SS^1) 
\xra{~\TT^2 \times -~}
\Omega_{\pr} \Map(\TT^2 \times \SS^1 , \SS^1)
~.
\]
It is routine to verify that this map is a monomorhpism.  
In particular, this action $\ZZ \lacts (\TT^2 \times \SS^1) \in \Spaces_{/\SS^1}$ is not trivializable.
Therefore, the action by $\ZZ \cong {\sf Ker}(\Phi)$ on $\int_{\TT^2} \Disk^{\fr}_{2/\RR^2} \in \Cat_{/\Disk^{\fr}_2}$ is not trivializable.
\\

\end{example}

\subsection{Isogenic symmetries of secondary Hochschild homology}
Let $\cX$ be an $\infty$-category.
The action $\Ebraid \to \EZ \lacts \TT^2$ as a topological group determines, via precomposition, an action
\begin{equation}
\label{e86}
\Ebraid
~
\underset{\rm Obs~\ref{t60}}{\overset{(-)^T}\simeq}
~
\Ebraid^{\op}
~\lacts~
\Mod_{\TT^2}( \cX )
~.
\end{equation}
We propose the following.
(See Appendix~A of~\cite{non-com-geom} for a definition of \bit{left-lax invariance}.)
\begin{definition}
\label{d2}
The $\infty$-category of \bit{unstable secondary cyclotomic objects} in an $\infty$-category $\cX$ is that of $\TT^{2}$-modules in $\cX$ that are left-laxly invariant with respect to the action~(\ref{e86}):
\[
{\sf Cyc}^{{\sf un} (2)}(\cX)
~:=~
\Mod_{\TT^2}(\cX)^{{\sf l.lax}\Ebraid}
~.
\]

\end{definition}

\begin{remark}
\label{r15}
Informally, an unstable secondary cyclotomic object in $\cX$ consists of the following.
\begin{itemize}
\item
A $\TT^2$-module $\left( \TT^2 \underset{\alpha}\lacts X \right)$ in $\cX$~.

\item
For each $\w{A}\in \Ebraid$, a morphism between $\TT^2$-modules in $\cX$:
\[
(\w{A}^T)^\ast 
\left(\TT^2 \underset{ \alpha} \lacts X \right)
~:=~
\left(\TT^2 \xra{\Psi(\w{A}^T)} \TT^2 \underset{ \alpha} \lacts X \right)
\xra{~c_{\w{A}}~}
\left(\TT^2 \underset{ \alpha} \lacts X \right)
~.
\]

\item
For each pair $\w{A},\w{B}\in \Ebraid$, 
a commutative square among $\TT^2$-modules in $\cX$:
\[
\xymatrix{
(\w{A}^T)^\ast 
(\w{B}^T)^\ast 
\left(\TT^2 \underset{\alpha} \lacts X \right)
\ar[rr]^-{(\w{A}^T)^\ast c_{\w{B}}}
\ar[d]_-\simeq
&&
(\w{A}^T)^\ast \left(\TT^2 \underset{\alpha} \lacts X \right)
\ar[d]^-{c_{\w{A}}}
\\
\left( (\w{A}\w{B})^T \right)^\ast 
\left(\TT^2 \underset{\alpha} \lacts X \right)
\ar[rr]^-{c_{\w{A} \w{B}}}
&&
\left(\TT^2 \underset{ \alpha} \lacts X \right)
.
}
\]

\item
For each triple $\w{A},\w{B},\w{C} \in \Ebraid$, a similar commutative cube among $\TT^2$-modules in $\cX$ whose faces are (possibly pulled back from) the above commutative squares.  

\item
Etcetera.

\end{itemize}

\end{remark}

After Corollary~\ref{f6} which is proved in Appendix~\ref{sec.A}, 
Theorem~\ref{Theorem A}(2b) implies the following.
\begin{cor}
\label{t30}
For each $\infty$-category $\cX$ there are canonical equivalences among $\infty$-categories over $\cX$: 
\[
{\sf Cyc}^{{\sf un} (2)}(\cX)
~\underset{\rm Cor~\ref{f6}}\simeq~
\Mod_{\left(\TT^2 \rtimes \Ebraid \right)^{\op}}(\cX)
~\underset{\rm Thm~\ref{Theorem A}(2b)}\simeq~
\Mod_{\Imm^{\fr}(\TT^2)^{\op}}(\cX)
~.
\]

\end{cor}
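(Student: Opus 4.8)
The plan is to deduce Corollary~\ref{t30} by stringing together two identifications, both of which have already been assembled in the excerpt. First, Corollary~\ref{f6} (proved in Appendix~\ref{sec.A}) is precisely the assertion that the left-lax invariance construction unwinds as a functor $\infty$-category: for the action $\Ebraid \simeq \Ebraid^{\op} \lacts \Mod_{\TT^2}(\cX)$ of~(\ref{e86}), one has
\[
\Mod_{\TT^2}(\cX)^{{\sf l.lax}\Ebraid}
~\simeq~
\Mod_{\left(\TT^2 \rtimes \Ebraid\right)^{\op}}(\cX)
~.
\]
Here the semi-direct product $\TT^2 \rtimes \Ebraid$ is the continuous monoid built from the action $\Ebraid \xra{\Psi} \EpZ \lacts \TT^2$ of~(\ref{e77}), and the $\op$ records that the $\Ebraid$-action on $\TT^2$-modules is by \emph{precomposition} along $\Psi(\w{A}^T)$, i.e.\ it is a right action which we transpose via the isomorphism $\Ebraid \overset{(-)^T}\simeq \Ebraid^{\op}$ of Observation~\ref{t60}. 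By Definition~\ref{d2}, the left-hand side is exactly ${\sf Cyc}^{{\sf un}(2)}(\cX)$, which gives the first equivalence.

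Second, Theorem~\ref{Theorem A}(2b) supplies a canonical identification of continuous monoids $\Imm^{\fr}(\TT^2,\varphi) \simeq \TT^2 \rtimes \Ebraid$. I would apply the functor $\fB(-)$ to this equivalence, obtaining an equivalence of pointed $(\infty,1)$-categories $\fB\Imm^{\fr}(\TT^2,\varphi) \simeq \fB(\TT^2 \rtimes \Ebraid)$, then pass to opposites and form $\Fun(-,\cX)$; since $\Fun(\fB N^{\op},\cX) = \Mod_{N^{\op}}(\cX)$ by the Conventions, this yields
\[
\Mod_{\left(\TT^2 \rtimes \Ebraid\right)^{\op}}(\cX)
~\simeq~
\Mod_{\Imm^{\fr}(\TT^2)^{\op}}(\cX)
~.
\]
Composing the two equivalences gives the statement. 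Naturality in $\cX$ — i.e.\ that these equivalences lie over $\cX$ via the forgetful functors remembering the underlying object — is automatic: both equivalences are induced by equivalences of the indexing $\infty$-categories ($\fB$ of a monoid equivalence, respectively the Appendix~\ref{sec.A} identification), and restriction along the basepoint $\ast \to \fB N$ is compatible with such base changes.

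The only genuine subtlety, and the step I expect to require the most care, is bookkeeping the variances: one must check that the right action of $\Ebraid$ on $\Mod_{\TT^2}(\cX)$ appearing in~(\ref{e86}) — precomposition along $\Psi$, twisted by transpose — matches, under Corollary~\ref{f6}, the \emph{left}-module structure over $(\TT^2 \rtimes \Ebraid)^{\op}$, and that this in turn matches the $\op$ of the monoid equivalence of Theorem~\ref{Theorem A}(2b). Concretely, one must confirm that the transpose isomorphism of Observation~\ref{t60} is exactly the one implicit in the semi-direct-product presentation $\TT^2 \rtimes \Ebraid$ and in the statement of Theorem~\ref{Theorem A}(2b), so that no stray inverse or transpose is introduced when the two equivalences are concatenated. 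Since Theorem~\ref{Theorem A}(2b) and Observation~\ref{t60} are already stated with compatible conventions (cf.\ Convention~\ref{r6}), this amounts to tracking definitions rather than proving anything new, so the corollary follows formally.
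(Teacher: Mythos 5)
Your proposal is correct and follows exactly the route the paper intends: the first equivalence is Definition~\ref{d2} unwound via Corollary~\ref{f6}, and the second is $\Mod_{(-)^{\op}}(\cX)$ applied to the monoid equivalence of Theorem~\ref{Theorem A}(2b). The paper presents the corollary as an immediate consequence of those two inputs without further argument, and your variance bookkeeping (via Observation~\ref{t60} and the commutativity of $\TT^2$) correctly confirms that nothing goes astray in the concatenation.
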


For $\cX$ an $\infty$-category, the $\infty$-category of \bit{finite-genuine $\TT^2$-modules in $\cX$} is
\[
\cX^{{\sf g_{fin}.} \TT^2} ~:=~ \Fun\bigl( ({\sf Orbit}^{\sf fin}_{\TT^2})^{\op} , \cX \bigr)
~,
\]
the $\infty$-category of functors from the opposite of the $\infty$-category ${\sf Orbit}^{\sf fin}_{\TT^2}$ of transitive $\TT^2$-topological spaces with finite isotropy and spaces of $\TT^2$-equivariant maps between them.  
The action $\Ebraid \to \EpZ \lacts \TT^2$ as a topological group supplies an action,
\[
\Ebraid
~\underset{\rm Obs~\ref{t60}}\simeq~
\Ebraid^{\op}
\lacts
{\sf Orbit}_{\TT^2}^{\sf fin}
~,\qquad
A\cdot \TT^2_{/C}
~:=~
\TT^2_{/A^{-1}(C)}
~.
\]
Pre-composition by this action, in turn, supplies an action:
\begin{equation}
\label{e48}
\Ebraid
~\underset{\rm Obs~\ref{t60}}\simeq~
\Ebraid^{\op}
~ \lacts~ 
\cX^{{\sf g}_{\sf fin}. \TT^2}
~.
\end{equation}

After Theorem~\ref{Theorem A}(2b), we have the following immediate consequence of Proposition~\ref{t68}.
\begin{prop}
\label{t69}
For each $\infty$-category $\cX$, the $\infty$-category of finite-genuine $\TT^2$-modules in $\cX$ invariant with respect to~(\ref{e48}) is equivalent with unstable secondary cyclotomic objects in $\cX$:
\[
\Mod_{\Imm^{\fr}(\TT^2)^{\op}}( \cX )
~\underset{\rm Cor~\ref{t30}}\simeq~
{\sf Cyc}^{{\sf un} (2)}(\cX)
\xra{~\simeq~}
\bigl( \cX^{{\sf g}_{\sf fin}. \TT^2}\bigr)^{\Ebraid}
~.
\]
In particular, 
there is a forgetful functor:
\[
\Mod_{\Imm^{\fr}(\TT^2)^{\op}}( \cX )
\underset{\rm Cor~\ref{t30}}\simeq
{\sf Cyc}^{{\sf un} (2)}(\cX)
\longrightarrow
\cX^{{\sf g}_{\sf fin}. \TT^2}
~.
\]

\end{prop}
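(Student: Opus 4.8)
The plan is to derive Proposition~\ref{t69} purely formally from Theorem~\ref{Theorem A}(2b) and Corollary~\ref{t30}, reducing the entire content to the auxiliary Proposition~\ref{t68} of Appendix~\ref{sec.B}. First I would observe that, by Corollary~\ref{t30}, the $\infty$-category ${\sf Cyc}^{{\sf un}(2)}(\cX)$ is identified over $\cX$ with $\Mod_{\Imm^{\fr}(\TT^2)^{\op}}(\cX) \simeq \Mod_{(\TT^2 \rtimes \Ebraid)^{\op}}(\cX)$; so it suffices to produce an equivalence $\Mod_{(\TT^2\rtimes \Ebraid)^{\op}}(\cX) \xra{\simeq} (\cX^{{\sf g}_{\sf fin}.\TT^2})^{\Ebraid}$ over $\cX$, compatibly with the forgetful functors to $\cX^{{\sf g}_{\sf fin}.\TT^2}$. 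The key structural input is that a module over a semi-direct product $\TT^2 \rtimes \Ebraid$ (equivalently over its opposite) is the same datum as a $\TT^2$-module together with compatible $\Ebraid$-equivariance twisting the $\TT^2$-action along $\Psi$ — i.e. a homotopy-fixed-point-type object for the $\Ebraid$-action on $\Mod_{\TT^2}(\cX)$ of~(\ref{e86}). This is the standard ``Fun$(\fB(G\rtimes H),\cX) \simeq \Fun(\fB H, \Fun(\fB G,\cX))$'' manipulation, applied with $G = \TT^2$ and $H = \Ebraid$ and passing to opposites.

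Next I would invoke Proposition~\ref{t68} (the cited result of Appendix~\ref{sec.B}), whose content I expect to be: for any $\infty$-category $\cX$ there is a canonical equivalence $\Mod_{\TT^2}(\cX) \simeq \cX^{{\sf g}_{\sf fin}.\TT^2}$, i.e. a genuine-equivariance reconstruction of naive $\TT^2$-modules via the orbit category ${\sf Orbit}^{\sf fin}_{\TT^2}$ of transitive $\TT^2$-spaces with finite isotropy. Granting this, the two $\Ebraid$-actions — the one on $\Mod_{\TT^2}(\cX)$ of~(\ref{e86}), induced by $\Ebraid \xra{\Psi} \EpZ \lacts \TT^2$, and the one on $\cX^{{\sf g}_{\sf fin}.\TT^2}$ of~(\ref{e48}), induced by the $\Ebraid$-action $A \cdot \TT^2_{/C} := \TT^2_{/A^{-1}(C)}$ on ${\sf Orbit}^{\sf fin}_{\TT^2}$ — must be identified under this equivalence. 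I would verify this by tracing the identification of Proposition~\ref{t68} through: precomposition by the automorphism $\Psi(\w A)$ of $\TT^2$ on a naive module corresponds exactly to pulling back along the induced self-equivalence of the orbit category, and the formula $\TT^2_{/C}\mapsto \TT^2_{/A^{-1}(C)}$ is precisely the effect of $\Psi(\w A)^{-1}$ (or its transpose, per Observation~\ref{t60}) on finite subgroups $C\subset \TT^2$. Once the two $\Ebraid$-equivariant structures agree, taking $\Ebraid$-fixed points (honest invariants, since here we are in the invariance — not left-lax — situation of~(\ref{e48}), matching the convention that $\Mod_{\Imm^{\fr}(\TT^2)^{\op}}$ appears rather than a lax variant) yields the desired equivalence, and the forgetful functor to $\cX^{{\sf g}_{\sf fin}.\TT^2}$ is the one that forgets $\Ebraid$-invariance, which matches the forgetful functor in Corollary~\ref{t30} under the above chain of identifications.

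For the last sentence of the statement, the forgetful functor $\Mod_{\Imm^{\fr}(\TT^2)^{\op}}(\cX) \to \cX^{{\sf g}_{\sf fin}.\TT^2}$ is then simply the composite of the equivalence just constructed with the functor $\bigl(\cX^{{\sf g}_{\sf fin}.\TT^2}\bigr)^{\Ebraid} \to \cX^{{\sf g}_{\sf fin}.\TT^2}$ that evaluates at the unit of $\Ebraid$ (equivalently, forgets the $\Ebraid$-invariant structure).

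The main obstacle I anticipate is bookkeeping the variance: three opposites appear (on $\Imm^{\fr}(\TT^2)$, on $\Ebraid \simeq \Ebraid^{\op}$ via Observation~\ref{t60}, and on ${\sf Orbit}^{\sf fin}_{\TT^2}$), together with the transpose $(-)^T$ and the inverse in $A^{-1}(C)$, and one must check that all of these cancel coherently so that the two $\Ebraid$-actions genuinely match rather than differing by an inversion or a transpose. The honest work is confirming, at the level of the equivalence of Proposition~\ref{t68}, that precomposition-by-$\Psi(\w A^T)$ on $\TT^2$-modules is carried to the orbit-category reindexing $\TT^2_{/C}\mapsto \TT^2_{/A^{-1}(C)}$ on the nose, including the coherence data; everything else is the formal semi-direct-product and fixed-point manipulation above.
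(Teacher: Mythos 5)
There is a genuine gap, and it comes from a misreading of what Proposition~\ref{t68} actually asserts. You guess it says that naive $\TT^2$-modules are equivalent to finite-genuine $\TT^2$-objects, i.e.\ $\Mod_{\TT^2}(\cX) \simeq \cX^{{\sf g}_{\sf fin}.\TT^2}$. That statement is false: $\Mod_{\TT^2}(\cX) = \Fun(\sB\TT^2,\cX)$ is functors out of a one-object $\infty$-groupoid, whereas $\cX^{{\sf g}_{\sf fin}.\TT^2} = \Fun(({\sf Orbit}^{\sf fin}_{\TT^2})^{\op},\cX)$ involves the whole finite orbit category with its many objects $\TT^2_{/C}$ and nontrivial morphisms between them; these $\infty$-categories are genuinely different (this is precisely the naive-versus-genuine distinction). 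What Proposition~\ref{t68} actually proves is a computation of \emph{coinvariants}: the quotient $\bigl({\sf Orbit}^{\sf fin}_{\TT^2}\bigr)_{/\Ebraid}$ of the finite orbit category by the $\Ebraid$-action collapses to the one-object $\infty$-category $\fB\bigl(\TT^2 \rtimes \Ebraid\bigr)$. The intended proof of Proposition~\ref{t69} is then the one-line chain: since the $\Ebraid$-action on $\cX^{{\sf g}_{\sf fin}.\TT^2}$ is precomposition, its strict invariants are $\Fun\bigl( (({\sf Orbit}^{\sf fin}_{\TT^2})_{/\Ebraid})^{\op} , \cX\bigr)$, which by Proposition~\ref{t68} is $\Fun\bigl(\fB(\TT^2\rtimes\Ebraid)^{\op},\cX\bigr) = \Mod_{(\TT^2\rtimes\Ebraid)^{\op}}(\cX)$, and then Corollary~\ref{t30} finishes.

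This also exposes a second problem in your handling of lax versus strict invariance. You assert we are ``in the invariance — not left-lax — situation'' and take honest $\Ebraid$-fixed points of $\Mod_{\TT^2}(\cX)$. But Corollary~\ref{f6} (used in Corollary~\ref{t30}) identifies $\Mod_{(\TT^2\rtimes\Ebraid)^{\op}}(\cX)$ with the \emph{left-lax} $\Ebraid$-invariants $\Mod_{\TT^2}(\cX)^{{\sf l.lax}\Ebraid}$, and this is by design — $\Ebraid$ is a monoid, not a group, so Proposition~\ref{t65} gives only lax invariants, and the fully faithful inclusion of strict invariants is not an equivalence here. The substance of Proposition~\ref{t69} is precisely that these \emph{left-lax} invariants of naive $\TT^2$-modules agree with the \emph{strict} $\Ebraid$-invariants of finite-genuine $\TT^2$-objects; your proposal treats the two sides as both being strict invariants of the same underlying category, which erases the content. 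The coinvariants identification of Proposition~\ref{t68} is what bridges the lax/strict and naive/genuine divides simultaneously, and it is not a purely formal manipulation — the paper proves it via a careful analysis of $(\fB\Ebraid)^{\ast/}$, the poset of finite subgroups of $\TT^2$, and left fibrations, none of which appears in your outline.
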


\begin{remark}
\label{r16}
We explain how Proposition~\ref{t69} asserts a significant cancellation of homotopy coherence data.
\begin{itemize}
\item
A finite-genuine $\TT^2$-module $V$ in $\cX$ is a specification of its \bit{$C$-fixed-points} $V^C \in \Mod_{\frac{\TT^2}{C}}(\cX)$ for each finite subgroup $C \subset \TT^2$ together with coherent compatibility.

\item
For $V$ a finite-genuine $\TT^2$-module in $\cX$, the structure of $V$ to be invariant with respect to the action $\Ebraid \underset{(\ref{e48})} \lacts \cX^{{\sf g}_{\sf fin}. \TT^2}$ is an identification $V^{C} \simeq V^{A^{-1}(C)}$ for each finite subgroup $C \subset \TT^2$ and each element $A \in \Ebraid$, coherently compatibly.  
\end{itemize}
So to name an object in $\bigl( \cX^{{\sf g}_{\sf fin}. \TT^2}\bigr)^{\Ebraid}$ a priori requires an overwhelming wrangling of coherence data.  
From this perspective, Proposition~\ref{t69} is notable: an object in $\bigl( \cX^{{\sf g}_{\sf fin}. \TT^2}\bigr)^{\Ebraid}$ is simply a $\TT^2 \rtimes \Ebraid$-module in $\cX$ -- in particular, no ``genuine'' structure is present.  
Theorem~\ref{t51}, below, is an application of this: via the theory of factorization homology, for $A$ a 2-algebra in $\cX$, its secondary Hochschild homology $\HHt(A)$ easily carries the structure of a $\Imm^{\fr}(\TT^2)^{\op}$-module; through Proposition~\ref{t69}, $\HHt(A)$ then has the structure of a finite-genuine $\TT^2$-module that is $\Ebraid$-invariant.  

\end{remark}

Corollary~\ref{t30} lends to our last main result, which is proved as Section~\S\ref{sec.B2.proof}.

\begin{mythm}{Y.2}
\label{t51}

Let $\cX$ be a presentable $\infty$-category in which finite products distribute over colimits separately in each variable.\footnote{Examples include the $\oo$-categories $\Spaces$, $\Cat_{(\oo,n)}$, $\cX$ an $\oo$-topos.}  
Regard $\cX$ as a symmetric monoidal $\infty$-category via the Cartesian symmetric monoidal structure.  
For each 2-algebra $A\in \Alg_2(\cX)$, 
the action~(\ref{e49}) of Theorem~\ref{t36} canonically extends as an unstable secondary cyclotomic structure:
\begin{equation}
\label{e50}
\Bigl(
~\left( \TT^2 \rtimes \Ebraid \right)^{\op}
~\lacts~
\HHt(A)
~
\Bigr)
~\in~
{\sf Cyc}^{{\sf un} (2)}(\cX)
~.
\end{equation}

\end{mythm}

\begin{remark}
\label{r3}
We explain a relationship between an unstable secondary cyclotomic structure and an iterated unstable cyclotomic structure.
As in the discussion preceding Proposition~\ref{t59}, one can construct a morphism between monoids,
\begin{equation}
\label{e81}
\NN^\times \times \NN^\times
\xra{~\w{\rm diagonals}~}
\Ebraid
~,
\end{equation}
lifting the inclusion $\NN^\times \times \NN^\times \underset{\rm diagonals}\subset \EpZ$ as diagonal matrices.  
With respect to~(\ref{e81}), the product isomorphism $\TT\times \TT \xra{\times} \TT^2$ is equivariant. 
For $\cX$ an $\infty$-category, this results in a forgetful functor from unstable secondary cyclotomic objects to iterated unstable cyclotomic objects:
\begin{equation}
\label{e83}
{\sf Cyc}^{{\sf un} (2)}(\cX)
\longrightarrow
{\sf Cyc}^{\sf un}\bigl( {\sf Cyc}^{\sf un}(\cX) \bigr)
~.
\end{equation}
This functor is generally not an equivalence.\footnote{
Indeed, suppose $\cX$ is an ordinary category.
Then the forgetful functor $\Mod_{\TT^2}(\cX) \xra{\simeq} \cX$ is an equivalence.
Using Proposition~\ref{t59} which identifies the group-completion of the monoid $\Ebraid$, the functor~(\ref{e83}) can then be identified as restriction
$
\Mod_{\w{\GL}_2^+(\QQ)}(\cX)
\to
\Mod_{ (\QQ_{>0}^{\times})^2 }(\cX)
$
along the inclusion $(\QQ_{>0}^{\times})^2 \hookrightarrow \w{\GL}_2^+(\QQ)$ between groups.
}

\end{remark}

\subsection{Remarks on secondary cyclotomic trace}
\label{sec.traces}

We see the role of Corollary~\ref{t30} as informing an approach to secondary cyclotomic traces.  

Let $\Bbbk$ be a commutative ring spectrum.
Let $A\in \Alg_2(\Mod_{\Bbbk})$.
Recall the $\Bbbk$-linear Dennis trace map: $\sK(A) \xra{{\sf tr}} \sHH(A)$~(see, for instance, ~\cite{CT}).
The cyclic trace map is a canonical factorization of this Dennis trace map through \bit{negative cyclic homology}
$
\sK(A)
\xra{{\sf tr}^{\TT}}
{\sHH}^-(A)
:=
\sHH(A)^{\TT}
$
~(see~\cite{GG}).
Iterating this cyclic trace map results in a map between spectra:
$
\sK\bigl( \sK(A) \bigr)
\xra{{\sf tr}^{\TT}( {\sf tr}^{\TT})}
\sHH^-\bigl( \sHH^-(A)\bigr)
.
$
The works of Toen-Vezzosi~\cite{TV},
followed up by the work of
Hoyois--Scherotzke--Sibilla
~\cite{HSS}
(see~Theorem~1.2),
suggest (from the commutative context) that this map can be refined as a \bit{secondary Chern character} map between spectra:
\[
\xymatrix{
\sK^{(2)}(A)
\ar@{-->}[rr]
&&
\HHt(A)^{\TT^2}
\\
\sK\bigl(\sK(A) \bigr)
\ar[u]
\ar[rr]^-{~{\sf tr}^{\TT}( {\sf tr}^{\TT})~}
&&
\sHH^-\bigl( \sHH^-(A) \bigr)
\ar[u]
.
}
\]
from \bit{secondary $K$-theory} to the $\TT^2$-invariants of secondary Hochschild homology.
We expect the work of Mazel-Gee--Stern~\cite{MGS} (in particular Theorem C (see~\S0.4.4)) on universal properties of secondary $\sK$-theory to yield a solution both to this, and the following.
\begin{conj}
\label{c1}
For each 2-algebra $A$ over $\Bbbk$, there is a canonical filler in the diagram among spectra:
\[
\xymatrix{
\sK^{(2)}(A)
\ar@{-->}[rr]
&&
\HHt(A)^{\TT^2 \rtimes \Braid} \ar[rr] 
&&
\HHt(A)^{\TT^2}
\\
\sK\bigl(\sK(A) \bigr)
\ar[u]
\ar[rrrr]^-{~{\sf tr}^{\TT}( {\sf tr}^{\TT})~}
&&
&&
\sHH^-\bigl( \sHH^-(A) \bigr)
.
\ar[u]
}
\]

\end{conj}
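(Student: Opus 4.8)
The plan is to realize the dashed map as a \emph{secondary cyclotomic trace}, assembled from two ingredients: the $(\TT^2\rtimes\Braid)$-symmetry of secondary Hochschild homology produced by Theorem~\ref{t36} -- together with its factorization-homology origin $\HHt(A)\simeq\int_{\TT^2}A$ (Remark~\ref{r14}) and its compatibility with the iterated Connes operators (Corollary~\ref{t55}) -- and the universal property of secondary $\sK$-theory of Mazel-Gee--Stern (Theorem~C of~\cite{MGS}), exactly as anticipated in~\S\ref{sec.traces}. First one reduces the statement. Since $\TT^2\subset\TT^2\rtimes\Braid$ is normal with quotient $\Braid$, there is a canonical identification $\HHt(A)^{\TT^2\rtimes\Braid}\simeq\bigl(\HHt(A)^{\TT^2}\bigr)^{\Braid}$ under which the arrow $\HHt(A)^{\TT^2\rtimes\Braid}\to\HHt(A)^{\TT^2}$ of the diagram is the $\Braid$-forgetful map; and by Corollary~\ref{t55} this normal $\TT^2$ is, after the standard identifications, the bi-Connes torus, so the arrow $\sHH^-(\sHH^-(A))\to\HHt(A)^{\TT^2}$ recalled in~\S\ref{sec.traces} is the target of a ${\sf tr}^{\TT}({\sf tr}^{\TT})$-comparison. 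Thus it suffices to produce a map $\sK^{(2)}(A)\to\HHt(A)^{\TT^2\rtimes\Braid}$, natural in $A$, whose composite with the $\Braid$-forgetful map and with $\sK(\sK(A))\to\sK^{(2)}(A)$ is ${\sf tr}^{\TT}({\sf tr}^{\TT})$ followed by this comparison.

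Second, one organizes the target as a functor against which $\sK^{(2)}$ is universal. Following Hoyois--Scherotzke--Sibilla~\cite{HSS}, $\Bbbk$-linear Hochschild homology is a symmetric monoidal localizing invariant of $\Bbbk$-linear stable $\infty$-categories, refined by Connes' operator to land in $\TT$-equivariant $\sHH(\Bbbk)$-modules; iterating via Dunn's theorem (Theorem~\ref{t52}) at the categorical level, $A\mapsto\HHt(A)\simeq\int_{\TT^2}A$ extends to a symmetric monoidal localizing invariant of $\Bbbk$-linear ``$2$-categories'', i.e.\ $\cE_2$-algebra objects in $\Bbbk$-linear stable $\infty$-categories. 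The factorization-homology presentation together with Theorem~\ref{Theorem A}(2a), equivalently Theorem~\ref{t36}, promotes this functor to one valued in $(\TT^2\rtimes\Braid)$-equivariant $\Bbbk$-modules, the normal $\TT^2$ again being the bi-Connes torus. Since homotopy fixed points is a limit and a limit of localizing invariants is localizing, $A\mapsto\HHt(A)^{\TT^2\rtimes\Braid}$ is once more a localizing invariant of $\Bbbk$-linear $2$-categories, hence a \emph{secondary} localizing invariant in the sense of~\cite{MGS}, being built from the localizing invariant $\sHH$ by iteration and homotopy limits. The point of landing in $\HHt(A)^{\TT^2\rtimes\Braid}$ rather than merely $\HHt(A)^{\TT^2}$ is precisely that this functor records the framed-torus symmetry.

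Third, one invokes Theorem~C of~\cite{MGS}: $\sK^{(2)}$ is initial among secondary localizing invariants satisfying whatever finitariness/truncatedness hypotheses are imposed there, so there is a canonical secondary trace $\sK^{(2)}(A)\to\HHt(A)^{\TT^2\rtimes\Braid}$, natural in $A$. Its compatibilities are then formal: restriction along $\TT^2\subset\TT^2\rtimes\Braid$ yields $\sK^{(2)}(A)\to\HHt(A)^{\TT^2}$; precomposing with $\sK(\sK(A))\to\sK^{(2)}(A)$ and comparing with the iterated primary cyclotomic trace uses that ${\sf tr}^{\TT}({\sf tr}^{\TT})$ is the twofold iterate of the $\TT$-equivariant Dennis trace $\sK\to\sHH$, that this trace is the one induced by the localizing invariant $\sHH$, and hence that the two are matched by naturality of the Mazel-Gee--Stern trace; passing to $\TT^2$-fixed points produces exactly ${\sf tr}^{\TT}({\sf tr}^{\TT})$ followed by the comparison map $\sHH^-(\sHH^-(A))\to\HHt(A)^{\TT^2}$. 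As in Corollary~\ref{t37}, an $\sO(2)$-invariant structure on $A$ would further refine the target to $\HHt(A)^{\TT^2\rtimes\SL_2(\ZZ)}$.

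The crux -- and the reason this remains a conjecture -- lies in joining the second and third steps. First, the $(\TT^2\rtimes\Braid)$-symmetry is proven here for factorization homology of $\cE_2$-algebras in a $\ot$-presentable symmetric monoidal $\infty$-category; one must transport it along the passage to $\Bbbk$-linear $2$-categories compatibly with the Hoyois--Scherotzke--Sibilla presentation of $\sHH$ as a localizing invariant, i.e.\ verify that the $\Diff^{\fr}(\TT^2)$-action on $\int_{\TT^2}(-)$ respects Verdier excision in each variable -- plausible, since $\ot$-excision for factorization homology is itself diffeomorphism-natural, but requiring the theory of factorization homology of $\cE_2$-algebras in $\Bbbk$-linear categories to be set up with care. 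Second, and more seriously, $h\Braid$-fixed points for the infinite discrete group $\Braid$ need not preserve the finitariness that Theorem~C of~\cite{MGS} demands of a target; one may be forced to replace the naive $h(\TT^2\rtimes\Braid)$-fixed points by a pro- or completed variant -- just as genuine topological cyclic homology ${\sf TC}$, rather than naive fixed points, is the correct recipient of the primary cyclotomic trace -- which would mildly amend the statement of Conjecture~\ref{c1}. Pinning the comparison map $\sHH^-(\sHH^-(A))\to\HHt(A)^{\TT^2}$ down on the nose, rather than up to an unspecified homotopy, is a third and more routine point: it rests on the identification (Corollary~\ref{t45} and the discussion around~(\ref{e60}) and~(\ref{e71})) of the iterated Connes $\TT\times\TT$-action with the factorization-homology $\TT^2$-action, which is available but must be threaded carefully through the homotopy limits.
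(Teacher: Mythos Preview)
The statement you are addressing is Conjecture~\ref{c1}, and the paper does \emph{not} prove it: it is explicitly stated as a conjecture, with the surrounding text in~\S\ref{sec.traces} saying only that the authors ``expect the work of Mazel-Gee--Stern~\cite{MGS} (in particular Theorem~C) on universal properties of secondary $\sK$-theory to yield a solution.'' There is no proof in the paper to compare against.

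That said, your proposal is not so much a proof as a well-organized strategy, and you yourself flag this (``the reason this remains a conjecture''). Your outline matches the paper's own expectations: the route through the $\Diff^{\fr}(\TT^2)\simeq\TT^2\rtimes\Braid$-action on $\int_{\TT^2}A$ (Theorem~\ref{t36}, Remark~\ref{r14}), the compatibility with iterated Connes operators (Corollary~\ref{t55}), and the appeal to the universal property of $\sK^{(2)}$ from~\cite{MGS} are exactly what the paper gestures at. The obstacles you isolate---transporting the framed-diffeomorphism symmetry to the localizing-invariant setting, and the failure of $\Braid$-homotopy-fixed-points to preserve the finitariness hypotheses needed for Theorem~C of~\cite{MGS}---are genuine and are precisely why the authors left this as a conjecture rather than a theorem. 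Your suggestion that a pro- or completed variant may be required is reasonable and is in the spirit of the paper's subsequent Problem~\ref{c2}, which asks for a stable secondary cyclotomic structure playing the role that ${\sf TC}$ plays in the primary setting.

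In short: there is no discrepancy to report, because there is no proof in the paper; your strategy is consonant with the paper's stated expectations, and your caveats are apt.
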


For the case in which $\Bbbk = \SS$ is the sphere spectrum, where standard notation is ${\sf THH}: = \sHH$ and referred to as \bit{topological Hochschild homology}, 
the cyclic trace map factors further as the \bit{cyclotomic trace} map,
\begin{equation}
\label{e94}
\sK(A) \xra{~{\sf tr}^{\sf Cyc}~} {\sf TC}(A) := {\sf THH}(A)^{\sf Cyc}
~,
\end{equation}
through the \bit{topological cyclotomic homology}
which is the \bit{cylotomic}-invariants with respect to a canonical \bit{cyclotomic structure} on topological Hochschild homology.  
The fantastic culminating result of~\cite{DGM} articulates a sense in which this cyclotomic trace map~(\ref{e94}) is locally constant (in the algebra $A$).
Iterating this cyclotomic trace map results in a map between spectra:
$
\sK\bigl( \sK(A) \bigr)
\xra{{\sf tr}^{\sf Cyc}( {\sf tr}^{\sf Cyc})}
{\sf TC}\bigl( {\sf TC}(A)\bigr)
.
$
which is not locally constant (in the 2-argument $A$).
As above, we expect that this iterated cyclotomic trace map can be refined as a map between spectra:
\[
\xymatrix{
\sK^{(2)}(A)
\ar@{-->}[rr]
&&
{\sf THH}^{(2)}(A)^{{\sf Cyc}\times {\sf Cyc}}
\\
\sK\bigl(\sK(A) \bigr)
\ar[u]
\ar[rr]^-{{{\sf tr}^{\sf Cyc}( {\sf tr}^{\sf Cyc})}}
&&
{\sf TC}\bigl( {\sf TC}(A)\bigr)
.
\ar[u]
}
\]
Following the developments in~\cite{cyclo}, we expect Definition~\ref{d2} of an unstable cyclotomic object to lend to a definition of a \bit{(stable) secondary cyclotomic object}, and that Theorem~
\ref{t51} lends a secondary cyclotomic structure on secondary topological Hochschild homology.
For secondary topological cyclotomic homology to be the invariants with respect to this structure, 
${\sf TC}^{(2)}(A):={\sf THH}^{(2)}(A)^{\sf Cyc^{(2)}}$, we again expect the work of Mazel-Gee--Stern (\cite{MGS}, in particular Theorem~C (see~\S0.4.4)) on secondary $\sK$-theory to further lend a secondary cyclotomic trace map, which we state as the following.
\begin{problem}
\label{c2}
Define (stable) secondary cyclotomic structure, and show that secondary topological Hochschild homology canonically possesses such.
Show that the iterated cyclotomic trace map factors through the secondary topological cyclotomic homology, compatibly with the factorization of Conjecture~\ref{c1}:
\[
\xymatrix{
\sK^{(2)}(A)
\ar@(d,-)[ddr]_-{\rm Conj~\ref{c1}}
\ar@{-->}[dr]^-{{\sf tr}^{\sf Cyc^{(2)}}}
&
&
\sK\bigl(\sK( A) \bigr)
\ar[ll]
\ar[dr]^-{~{\sf tr}^{\sf Cyc}( {\sf tr}^{\sf Cyc})~}
\\
&
{\sf TC}^{(2)}(A)
\ar[r]
\ar[d]
&
{\sf THH}^{(2)}(A)^{{\sf Cyc}\times {\sf Cyc}}
\ar[d]
&
{\sf TC}\bigl( {\sf TC}(A) \bigr)
\ar[l]
\ar[d]
\\
&
{\sf THH}^{(2)}(A)^{\TT^2 \rtimes  \Braid}
\ar[r]
&
{\sf THH}^{(2)}(A)^{\TT^2}
&
{\sf THH}^-\bigl( {\sf THH}^-(A) \bigr)
.
\ar[l]
}
\]
\end{problem}

\begin{remark}
One might be encouraged by Remark~\ref{r3} to expect that the secondary cyclotomic trace map ${\sf tr}^{{\sf Cyc}^{(2)}}$ of Conjecture~\ref{c1} is locally-constant (in the 2-algebra $A$), thereby correcting the failure for the iterated cyclotomic trace map ${\sf tr}^{\sf Cyc}( {\sf tr}^{\sf Cyc})$ to be locally-constant. 
However, we do not expect for this to be so.
Namely, the local-constancy of the cyclotomic trace map $\sK(A) \xra{{\sf tr}^{\sf Cyc}} {\sf TC}(A)$ relies in an essential way on calculations of Hesselholt (\cite{L}), which identify the fiber of the canonical map ${\sf TC}(V\rtimes A) \to {\sf TC}(A)$ associated to a square-zero extension of $A$.  
These calculations in turn rely on the fact that, for each $i \geq 0$, the canonical action $\TT \simeq \Diff^{\fr}(\TT)\lacts {\sf Conf}_i(\TT)_{\Sigma_i}$ on unordered configuration space canonically factors as a $\TT_{/\sC_i}$-torsor.  
Because the canonical action $\TT^2 \rtimes \Braid \simeq \Diff^{\fr}(\TT^2) \lacts {\sf Conf}_{i}(\TT^2)_{\Sigma_i}$ does not apparently have any such a property, 
we do not expect for the secondary cyclotomic trace map of Problem~\ref{c2} to be locally-constant.  

\end{remark}

\section{Moduli and isogeny of framed tori}

\subsection{Moduli and isogeny of tori}

Vector addition, as well as the standard vector norm, gives $\RR^2$ the structure of a topological abelian group.
Consider its closed subgroup $\ZZ^2\subset \RR^2$.  
The \bit{torus} is the quotient in the short exact sequence of topological abelian groups:
\[
0
\longrightarrow
\ZZ^2
\xra{\rm inclusion}
\RR^2 
\xra{~\quot~}
\TT^2
\longrightarrow
0
~.
\] 
Because $\RR^2$ is connected, and because $\ZZ^2$ acts cocompactly by translations on $\RR^2$, the torus $\TT^2$ is connected and compact.  
The quotient map $\RR^2 \xra{\quot} \TT^2$ endows the torus with the structure of a Lie group, and in particular a smooth manifold.
Consider the submonoid
\begin{equation*}
\EZ
~:=~
\Bigl\{
\ZZ^2 \xra{A} \ZZ^2 \mid {\sf det}(A) \neq 0
\Bigr\}
~\subset~
\End_{\sf Groups}(\ZZ^2)
~,
\end{equation*}
consisting of the cofinite endomorphisms of the group $\ZZ^2$.  
Using that the smooth map $\RR^2 \xra{\quot} \TT^2$ is a covering space and $\TT^2$ is connected, there is a canonical continuous action on the topological group:
\begin{equation}
\label{e2}
\EZ
~\lacts~
\TT^2
~,\qquad
A  q
~:=~
\quot( A\w{q} )
\qquad
{\Small \bigl(\text{for any }\w{q} \in \quot^{-1}(q) \bigr) }
~.
\footnote{
Note that (\ref{e2}) indeed does not depend on $\w{q} \in \quot^{-1}(q)$.
}
\end{equation}
This action~(\ref{e2}) defines a semi-direct product topological monoid:
\[
\TT^2 \rtimes \EZ
~.
\]

Consider the topological monoid of smooth local-diffeomorphisms of the torus:
\[
\Imm(\TT^2)
~\subset~
\Map(\TT^2 , \TT^2)
~,
\] 
which is endowed with the subspace topology of the $\sC^\infty$-topology on the set of smooth self-maps of the torus.
Notice the morphism between topological monoids:
\begin{equation}
\label{e7}
\Aff
\colon
\TT^2 \rtimes \EZ 
\longrightarrow
\Imm(\TT^2)
~,\qquad
(p, A)
\mapsto
\Bigl(
q\mapsto 
Aq + p
\Bigr)
~.
\end{equation}

\begin{observation}
\label{t21}
\begin{enumerate}

\item[~]

\item
The standard inclusion $\GL_2(\ZZ) \hookrightarrow \EZ$ witnesses the maximal subgroup.
It follows that the standard inclusion $ \TT^2 \rtimes \GL_2(\ZZ)  \hookrightarrow \TT^2 \rtimes \EZ$ witnesses the maximal subgroup, both as topological monoids and as continuous monoids.

\item
The standard monomorphism $\Diff(\TT^2) \hookrightarrow \Imm(\TT^2)$ witnesses the maximal subgroup, both as topological monoids and as continuous monoids.

\end{enumerate}
\end{observation}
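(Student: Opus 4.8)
The plan is to deduce both parts from the general principle that, for a monoid object $M$ in $\Spaces$, the maximal sub-group-object $M^\times\hookrightarrow M$ is the union of those path-components of $M$ lying over the group of units $(\pi_0 M)^\times$ of the discrete monoid $\pi_0 M$; when $M$ is presented by a topological monoid this says that $M^\times$ is presented by the open-closed submonoid of invertible elements, which is in addition the maximal topological subgroup as soon as one checks that inversion is continuous on it. Thus for each assertion it suffices to (i) identify the invertible elements of the relevant topological monoid, (ii) check that they form a path-component-saturated submonoid coinciding with the preimage of $(\pi_0)^\times$, and (iii) note that the resulting group has continuous inversion.

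For part (1): an endomorphism $A$ of $\ZZ^2$ with $\det A\neq 0$ is a unit of $\EZ$ exactly when it admits a two-sided inverse over $\ZZ$, and taking determinants in $\ZZ$ forces $\det A=\pm 1$, so $(\EZ)^\times=\GL_2(\ZZ)$; since $\EZ$ is discrete this is trivially path-component-saturated and a discrete topological group. For the semi-direct product I would invoke the elementary fact that in a monoid of the form $N\rtimes M$ with $N$ a group, an element $(n,m)$ is invertible if and only if $m$ is — one direction by exhibiting $(n,m)^{-1}$, the other by projecting an inverse to $M$ — whence $(\TT^2\rtimes\EZ)^\times=\TT^2\rtimes\GL_2(\ZZ)$. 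Because $\TT^2$ is connected, $\pi_0(\TT^2\rtimes\EZ)\cong\EZ$ as monoids and $\TT^2\rtimes\GL_2(\ZZ)$ is the preimage of $\GL_2(\ZZ)=(\EZ)^\times$, hence open-closed; it is a Lie group, so inversion is continuous. This gives part (1) both as topological monoids and as continuous monoids.

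For part (2): I would first recall that an immersion $\TT^2\to\TT^2$ is the same as a local diffeomorphism (equal dimensions), and that a local diffeomorphism from a compact manifold to a connected one is, by Ehresmann's fibration theorem, a finite-sheeted covering, whose number of sheets equals $[\ZZ^2 : f_\ast\ZZ^2]=\lvert\det f_\ast\rvert$ for $f_\ast\in\EZ$ the induced endomorphism of $\pi_1\TT^2\cong\ZZ^2$. The assignment $f\mapsto f_\ast$ is invariant under homotopy, hence locally constant on $\Imm(\TT^2)$, so the sheet-number is locally constant and $\Diff(\TT^2)=\{f:\lvert\det f_\ast\rvert=1\}$ is an open-closed submonoid. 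A local diffeomorphism is a unit of $\Imm(\TT^2)$ precisely when it is bijective, i.e.\ a diffeomorphism, so $\Imm(\TT^2)^\times=\Diff(\TT^2)$; and since $\pi_0\Imm(\TT^2)\to\EZ$, $[f]\mapsto f_\ast$, is a monoid homomorphism carrying units to units, any component over $(\pi_0\Imm(\TT^2))^\times$ maps into $\GL_2(\ZZ)$ and so consists of degree-one coverings, i.e.\ diffeomorphisms. Thus $\Diff(\TT^2)$ is exactly the union of path-components over $(\pi_0\Imm(\TT^2))^\times$, and it is a topological group in the subspace $\sC^\infty$-topology, which finishes part (2).

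The only point that takes any care is the bookkeeping in steps (ii)--(iii): distinguishing the bare set of invertible elements of a topological monoid from the maximal group object in $\Spaces$, and verifying in each case that the two agree because the invertibles form a path-component-saturated submonoid. Everything else is elementary, though part (2) leans on the standard facts that same-dimensional immersions are local diffeomorphisms, that proper local diffeomorphisms are finite coverings, and that the diffeomorphism group of a compact manifold is a topological group; if preferred, these inputs can instead be imported from the later analysis of $\Imm(\TT^2)$ in the paper.
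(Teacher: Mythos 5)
Your proof is correct. The paper records this as an \emph{observation} and supplies no proof, treating it as elementary; your verification—reducing both senses of ``maximal subgroup'' to the coincidence of the set of invertibles with the preimage of $(\pi_0)^\times$, and checking this in each case via the determinant criterion on $\EZ$ and the covering-degree/$\sH_1$ criterion on $\Imm(\TT^2)$—is exactly the bookkeeping the paper leaves to the reader, carried out carefully.
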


We record the following classical result.
\begin{lemma}\label{t1}
The morphism~(\ref{e7}) restricts to maximal subgroups as a homotopy-equivalence:
\[
\Aff
\colon 
\TT^2 \rtimes \GL_2(\ZZ)
\xra{~\simeq~}
\Diff^{\fr}(\TT^2)
~,\qquad
(p, A)
\mapsto
\Bigl(
q\mapsto 
Aq + p
\Bigr)
~.
\]

\end{lemma}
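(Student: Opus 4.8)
The plan is to show that the map $\Aff\colon \TT^2 \rtimes \GL_2(\ZZ) \to \Diff(\TT^2)$ is a homotopy-equivalence by analyzing its homotopy fibers through a fibration argument. First I would recall that $\TT^2 \rtimes \GL_2(\ZZ)$ has contractible identity component $\TT^2$ (since $\TT^2$ is a $K(\ZZ^2,1)$, so $\TT^2 \simeq \ZZ^2$ as a space, wait—no, I mean that $\pi_0 = \GL_2(\ZZ)$ and each component is a copy of $\TT^2$). So it suffices to check that $\Aff$ induces a bijection on $\pi_0$, and that the inclusion of identity components $\TT^2 \hookrightarrow \Diff_0(\TT^2)$ is a homotopy-equivalence.

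For the statement on $\pi_0$: the group $\pi_0 \Diff(\TT^2)$ is the (unframed) mapping class group of the torus, and the classical computation identifies it with $\GL_2(\ZZ)$ via the action on $\sH_1(\TT^2;\ZZ) \cong \ZZ^2$. One standard way to prove this: a diffeomorphism $f$ acts on $\pi_1(\TT^2) \cong \ZZ^2$ by some $A \in \GL_2(\ZZ)$, and after composing with the affine map $A^{-1}$ we may assume $f$ acts trivially on $\pi_1$; then $f$ is homotopic to the identity, and by the next point (or by a direct argument) isotopic to it. The composite $\GL_2(\ZZ) \xra{\Aff} \pi_0\Diff(\TT^2) \to \GL_2(\ZZ)$ being the identity forces surjectivity of $\Aff$ on $\pi_0$; injectivity follows since a linear automorphism of $\TT^2$ acting trivially on $\pi_1$ is the identity.

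For the identity components, the key input is that the inclusion $\TT^2 \hookrightarrow \Diff_0(\TT^2)$ is a homotopy-equivalence. This is a theorem of Earle--Eells (or can be deduced from the contractibility of Teichmüller space together with the fact that $\Diff_0(\TT^2)$ acts freely on a contractible space of flat metrics/framings with quotient $\TT^2$); alternatively, one may use that $\Diff_0(\SS^1)$ retracts onto $\SO(2)$ and run a parametrized fibration argument over $\SS^1$, or simply cite Gramain's proof. The cleanest citation-based route is: $\Diff_0(\TT^2) \simeq \TT^2$ because $\TT^2$ is a flat manifold and the identity component of its diffeomorphism group deformation-retracts onto its group of isometric translations, which is $\TT^2$. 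The map $\Aff$ restricted to the identity component $\TT^2 = \{(p, \id)\}$ is precisely this inclusion, hence a homotopy-equivalence.

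Combining, $\Aff$ induces an isomorphism on $\pi_0$ and a homotopy-equivalence on each component (all components of the source are translates of the identity component by elements of $\GL_2(\ZZ)$, and likewise on the target, with $\Aff$ equivariant for these translation actions), so $\Aff$ is a homotopy-equivalence. The main obstacle is the identity-component statement $\Diff_0(\TT^2) \simeq \TT^2$, which is a genuine theorem (Earle--Eells / Gramain) rather than a formal consequence of anything in the excerpt; I would simply invoke it, as the lemma is stated as ``classical.'' Everything else—the semi-direct product bookkeeping, the $\pi_0$ computation, and the equivariance of $\Aff$ under translations—is routine.
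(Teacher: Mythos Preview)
Your approach is essentially the same as the paper's: both reduce to checking $\pi_0$ and the identity component separately, and both invoke Earle--Eells for the latter (the paper does so via the fibration $\ev_0\colon \Diff(\TT^2)_{\uno}\to\TT^2$ with contractible fiber, which is equivalent to your formulation). The one place to be careful is your $\pi_0$ argument: the step ``homotopic to the identity, and by the next point\ldots isotopic to it'' is not actually supplied by Earle--Eells, which only describes $\Diff_0(\TT^2)$ and says nothing about which diffeomorphisms lie in it; the paper handles this by citing Rolfsen and Hatcher's torus-trick paper for the identification $\pi_0\Diff(\TT^2)\cong\GL_2(\ZZ)$ directly, which is the cleanest way to close that gap.
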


\begin{proof}
Let $G$ be a locally path-connected topological group, which we regard as a continuous group.
Denote by $G_{\uno} \subset G$ the path-component containing the identity element in $G$.
This subspace $G_{\uno}\subset G$ is a normal subgroup, and the sequence of continuous homomorphisms
\[
1
\longrightarrow
G_{\uno}
\xra{~\rm inclusion~}
G
\xra{~\rm quotient~}
\pi_0(G)
\longrightarrow
1
\]
is a fiber-sequence among continuous groups.
This fiber sequence is evidently functorial in the argument $G$.
In particular, there is a commutative diagram among topological groups,
\[
\xymatrix{
1 \ar[d]_-= \ar[r]
&
\TT^{2} = \bigl( \TT^{2} \rtimes \GL_2(\ZZ) \bigr)_{\uno}
\ar[d]_-{\Aff_{\uno}}
\ar[r]^-{\rm inc}
&
\TT^{2} \rtimes \GL_2(\ZZ)  
\ar[d]_-{\Aff} 
\ar[r]^-{\rm quot}
&
\pi_0 \bigl( \TT^{2} \rtimes  \GL_2(\ZZ) \bigr)
=
\GL_2(\ZZ)
\ar[d]_-{\pi_0(\Aff)}
\ar[r]
&
1 \ar[d]_-=
\\
1
\ar[r]
&
\Diff(\TT^2)_{\uno}
\ar[r]^-{\rm inc}
&
\Diff(\TT^2) 
\ar[r]^-{\rm quot}
&
\pi_0 \bigl( \Diff(\TT^2) \bigr)
\ar[r]
&
1
,
}
\] 
in which the horizontal sequences are fiber sequences.  
By the 5-lemma applied to homotopy groups, we are reduced to showing the vertical homomorphisms $\Aff_{\uno}$ and $\pi_0(\Aff)$ are homotopy equivalences.

Theorem 2.D.4 of~\cite{rolf}, along with Theorem B of \cite{TT}, implies $\pi_0(\Aff)$ is an isomorphism.  
So it remains to show $\Aff_{\uno}$ is a homotopy equivalence.\footnote{See \cite{gramain}. We include a proof for the convenience of the reader.} 
With respect to the canonical continuous action $\Diff(\TT^2)_{\uno} \lacts \TT^2$, 
the orbit of the identity element $0\in \TT^2$ is the evaluation map 
\[
\ev_0
\colon 
\Diff(\TT^2)_{\uno}
\longrightarrow
\TT^2
~.
\]
Note that the composition,
\[
\id
\colon
\TT^2
\xra{~\Aff_{\uno}~}
\Diff(\TT^2)_{\uno}
\xra{~\ev_0~}
\TT^2
~,
\]
is the identity map.
So it remains to show that the homotopy-fiber of $\ev_0$ is weakly-contractible.  
The isotopy-extension theorem implies $\ev_0$ is a Serre fibration.  
So it is sufficient to show the fiber of $\ev_0$, which is the stabilizer
${\sf Stab}_0\bigl(\Diff(\TT^2)_{\uno}\bigr)$,
is weakly-contractible.  
Finally, Theorem 1b of~\cite{ee} states that this stabilizer is contractible.  

\end{proof}

\begin{remark}
\label{r10}
By the classification of compact surfaces, the moduli space $\bcM_1$ of smooth tori is path-connected, and as so is 
\[
\bcM_1
~\simeq~
{\sf BDiff}(\TT^2)
\underset{\rm Lem~\ref{t1}}{~\simeq~}
\sB\bigl(
\TT^2 \rtimes \GL_2(\ZZ)
\bigr)
~\simeq~
{( \CC\PP^\infty)^2}_{/\GL_2(\ZZ)}
\]
in which the quotient is with respect to the standard action $\GL_2(\ZZ) \lacts \sB^2 \ZZ^2 \simeq (\CC\PP^\infty)^2$.
In particular, this path-connected moduli space fits into a fiber sequence
\[
(\CC\PP^\infty)^2
\longrightarrow
\bcM_1
\longrightarrow
\BGL_2(\ZZ)
~.
\]

\end{remark}

Consider the set
$
\bcL(2) 
:=
\Bigl\{
\Lambda
\underset{\rm cofin}
\subset
\ZZ^2
\Bigr\}
$
of \bit{cofinite subgroups} of $\ZZ^2$.
\begin{observation} \label{t99}

\begin{enumerate}

\item[~]

\item
The orbit-stabilizer theorem immediately implies the composite map 
$
\TT^2 \rtimes \EZ
\xra{\pr}
\EZ
\xra{\rm Image} 
\bcL(2)
$
witnesses the quotient:
\[
\bigl(
\TT^2 \rtimes \EZ
\bigr)_{/\TT^2 \rtimes \GL_2(\ZZ)}
\xra{~\cong~}
\EZ_{/\GL_2(\ZZ)}
\xra{~\cong~}
\bcL(2)
~.
\]

\item
Using that each finite-sheeted cover over $\TT^2$ is diffeomorphic with $\TT^2$, the classification of covering spaces implies the map given by taking the image of homology
$
\Imm(\TT^2)
\xra{{\sf Image}(\sH_1)}
\bcL(2)
$
witnesses the quotient:
\[
\Imm(\TT^2)_{/\Diff(\TT^2)}
\xra{~\cong~}
\bcL(2)
~.
\]

\item
The diagram
\[
\xymatrix{
\TT^2 \rtimes \EZ 
\ar[rr]^-{\Aff}
\ar[d]_-{\pr}
&&
\Imm(\TT^2) 
\ar[d]^-{{\rm Image}(\sH_1)}
\ar[dll]_-{\sH_1}
\\
\EZ
\ar[rr]^-{\rm Image}
&&
\bcL(2)
}
\]
commutes.

\end{enumerate}

\end{observation}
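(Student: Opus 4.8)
The plan is to spell out, in each of the three cases, the classical dictionary that the observation records. For part (1), I would first check that $A\cdot\Lambda:=A(\Lambda)$ defines an action $\EZ\lacts\bcL(2)$: for $\Lambda\subseteq\ZZ^2$ of finite index and $A\in\EZ$, multiplicativity of indices gives $[\ZZ^2:A\Lambda]=[\ZZ^2:A\ZZ^2]\cdot[A\ZZ^2:A\Lambda]=|\det A|\cdot[\ZZ^2:\Lambda]<\infty$, so $A\Lambda\in\bcL(2)$. This action is transitive with $\ZZ^2$ having full orbit --- any finite-index $\Lambda$ has a $\ZZ$-basis, and the matrix $A$ whose columns are that basis lies in $\EZ$ and satisfies $A\ZZ^2=\Lambda$. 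I would then identify the stabilizer of $\ZZ^2$ as $\GL_2(\ZZ)$ and, more sharply, check that $A\ZZ^2=A'\ZZ^2$ if and only if $A^{-1}A'\in\GL_2(\ZZ)$ (multiply on the left by $A^{-1}$, which exists over $\QQ$ since $\det A\neq 0$), i.e. if and only if $A'\in A\,\GL_2(\ZZ)$; this is the bijection $\EZ_{/\GL_2(\ZZ)}\xra{\cong}\bcL(2)$, $A\mapsto{\sf Image}(A)$. For the first identification, $\pr\colon\TT^2\rtimes\EZ\to\EZ$ intertwines the right multiplication action of $\TT^2\rtimes\GL_2(\ZZ)$ with that of $\GL_2(\ZZ)$ on $\EZ$, and along each fiber of $\pr$ the normal subgroup $\TT^2=\TT^2\rtimes\{\id\}$ already acts transitively, since $A\colon\TT^2\to\TT^2$ is surjective for every $A\in\EZ$ (being a finite-sheeted covering); so the quotient by $\TT^2\rtimes\GL_2(\ZZ)$ factors through $\pr$ and then through $\EZ_{/\GL_2(\ZZ)}$, giving the stated composite.

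For part (2), the key classical input is that a smooth local diffeomorphism $\phi\colon\TT^2\to\TT^2$ is automatically a finite covering map --- a local homeomorphism with compact source and connected Hausdorff target is a finite cover. As $\pi_1(\TT^2)\cong\ZZ^2$ is abelian, the classification of covering spaces identifies such a $\phi$, up to isomorphism of coverings over $\TT^2$, with the finite-index subgroup $\phi_\ast\pi_1(\TT^2)={\sf Image}(\sH_1\phi)\subseteq\sH_1(\TT^2)\cong\ZZ^2$, and every finite-index $\Lambda=A\ZZ^2$ arises, realized by the linear covering $q\mapsto Aq$ (whose induced map on $\sH_1$ is $A$). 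This produces a surjection $\Imm(\TT^2)\to\bcL(2)$, locally constant in the $\sC^\infty$-topology. It then remains to match its fibers with the orbits of the precomposition action of $\Diff(\TT^2)$: if $\phi'=\phi\circ h$ with $h\in\Diff(\TT^2)$ then $\sH_1 h\in\GL_2(\ZZ)$, so ${\sf Image}(\sH_1\phi')={\sf Image}(\sH_1\phi)$; conversely, if the two image lattices coincide, the classification of covering spaces supplies a homeomorphism $h$ of the common total space with $\phi\circ h=\phi'$, and this $h$ is in fact a diffeomorphism of $\TT^2$ --- a finite cover of $\TT^2$ is orientable with zero Euler characteristic, hence diffeomorphic to $\TT^2$, and covering-space lifts of smooth maps are smooth. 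Hence $\Imm(\TT^2)_{/\Diff(\TT^2)}\xra{\cong}\bcL(2)$.

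Part (3) I would dispatch directly: for $(p,A)\in\TT^2\rtimes\EZ$, the self-map $q\mapsto Aq+p$ of $\TT^2$ is the linear covering $q\mapsto Aq$ followed by a translation, and translations are isotopic to the identity, so $\sH_1\bigl(\Aff(p,A)\bigr)=A$; therefore ${\sf Image}\bigl(\sH_1(\Aff(p,A))\bigr)=A\ZZ^2={\sf Image}\bigl(\pr(p,A)\bigr)$, which gives commutativity of the triangle through $\sH_1$ and hence of the square. I do not expect any real obstacle here; the only points needing a little care are the orbit--stabilizer identification in part (1) in the monoid rather than group setting --- where one uses invertibility of $A\in\EZ$ over $\QQ$, not over $\ZZ$ --- and, in part (2), confirming that the covering-space automorphism relating two local diffeomorphisms with equal image lattice can be chosen smooth, i.e. that every cover appearing is genuinely the smooth torus.
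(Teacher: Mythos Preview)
Your proposal is correct and follows essentially the same approach as the paper, which states this as an observation with the justifications (``orbit--stabilizer theorem'', ``classification of covering spaces'') embedded in the statement itself and gives no separate proof. You have simply and correctly unpacked those justifications. Your remark that part~(1) requires care because $\EZ$ is only a monoid---so that one argues via invertibility over $\QQ$ rather than invoking orbit--stabilizer verbatim---is a genuine refinement of the paper's one-line gloss.
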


\begin{cor}
\label{t22}
The morphism (\ref{e7}) between topological monoids is a homotopy-equivalence:
\[
\Aff
\colon
\TT^2 \rtimes \EZ
\xra{~\simeq~}
\Imm(\TT^2)
~.
\]

\end{cor}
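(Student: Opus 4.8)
The plan is to reduce to Lemma~\ref{t1}, which already handles the maximal subgroups, by using the commutative diagram of Observation~\ref{t99}(3) together with the quotient identifications of Observation~\ref{t99}(1)\&(2). Concretely, I would fit the morphism $\Aff \colon \TT^2 \rtimes \EZ \to \Imm(\TT^2)$ into a map of fiber sequences whose base is the common target $\bcL(2)$ and whose fibers over a point $\Lambda \in \bcL(2)$ are, on the source side, a coset of $\TT^2 \rtimes \GL_2(\ZZ)$ inside $\TT^2 \rtimes \EZ$, and, on the target side, a coset of $\Diff(\TT^2)$ inside $\Imm(\TT^2)$. Since $\Aff$ is a morphism of topological monoids compatible with the right $\TT^2 \rtimes \GL_2(\ZZ)$-action (resp.\ $\Diff(\TT^2)$-action) on source and target, it carries each such fiber to the corresponding fiber; and by Lemma~\ref{t1} the restriction of $\Aff$ to the identity component $\TT^2 \rtimes \GL_2(\ZZ) \xra{\simeq} \Diff(\TT^2)$ is a homotopy-equivalence, hence so is $\Aff$ on every coset, which is a torsor over one of these groups.

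The steps, in order: (i) Observe that the vertical maps in Observation~\ref{t99}(3) exhibit $\TT^2 \rtimes \EZ \to \bcL(2)$ and $\Imm(\TT^2) \to \bcL(2)$ as the quotient maps of Observation~\ref{t99}(1)\&(2); in particular, each is surjective onto the discrete set $\bcL(2)$, so each is (trivially) a fibration, with fiber over $\Lambda$ the preimage. (ii) Identify this fiber: on the source, $\pr^{-1}(\{A : \mathrm{Image}(A) = \Lambda\}) = \TT^2 \rtimes \{A : \mathrm{Image}(A) = \Lambda\}$, which is a right-torsor over $\TT^2 \rtimes \GL_2(\ZZ)$ (choose any $A_0$ with image $\Lambda$; then $A \mapsto A_0^{-1}A$ identifies $\{A : \mathrm{Image}(A)=\Lambda\} \cong \GL_2(\ZZ)$, equivariantly up to the translation bookkeeping); on the target, likewise $(\mathrm{Image}(\sH_1))^{-1}(\Lambda)$ is a right-torsor over $\Diff(\TT^2)$, using that every finite cover of $\TT^2$ is diffeomorphic to $\TT^2$ so that the space of local-diffeomorphisms with fixed image is nonempty. (iii) Since $\Aff$ is a monoid homomorphism over $\bcL(2)$ and is $\TT^2\rtimes\GL_2(\ZZ)$-equivariant (sending the subgroup to $\Diff(\TT^2)$ via Lemma~\ref{t1}), it restricts on the fiber over $\Lambda$ to a map of torsors covering the homotopy-equivalence $\Aff \colon \TT^2 \rtimes \GL_2(\ZZ) \xra{\simeq} \Diff(\TT^2)$; a map of torsors covering a homotopy-equivalence of structure groups is itself a homotopy-equivalence. (iv) Conclude via the map of fibrations over the discrete base $\bcL(2)$: a map of fibrations that is a fiberwise homotopy-equivalence over a (discrete, hence in particular CW) base is a homotopy-equivalence on total spaces.

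The main obstacle I anticipate is step (ii)–(iii): making the torsor identifications genuinely \emph{equivariant} — i.e.\ tracking how the $\TT^2$-translation factor interacts with the change of lattice under the $\EZ$-action~(\ref{e2}), so that the chosen trivialization $A \mapsto A_0^{-1}A$ upgrades to a comparison of $\TT^2 \rtimes \GL_2(\ZZ)$-torsors rather than merely $\GL_2(\ZZ)$-torsors. Once one is careful that $\Aff$ intertwines the semidirect-product multiplication on the nose (which is exactly the content of~(\ref{e7})), this is bookkeeping, but it is the place where a sign or an order-of-composition error would hide. A cleaner packaging, which I would adopt if the bookkeeping gets unwieldy, is to avoid fibers-over-points entirely: regard $\TT^2 \rtimes \EZ$ (resp.\ $\Imm(\TT^2)$) as the total space of the associated bundle over $\bcL(2) \simeq \EZ_{/\GL_2(\ZZ)}$ with fiber $\TT^2 \rtimes \GL_2(\ZZ)$ (resp.\ $\Diff(\TT^2)$), i.e.\ write both as homotopy colimits over $\bcL(2)$ of the respective group-torsors, and note that $\Aff$ induces a natural transformation between these diagrams which is a levelwise equivalence by Lemma~\ref{t1}; the homotopy colimit then preserves the equivalence. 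Either way, the essential input is Lemma~\ref{t1}, and everything else is the formal statement that $\Aff$ respects the decomposition into maximal-subgroup-torsors indexed by $\bcL(2)$.
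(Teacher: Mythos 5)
Your overall decomposition — fiber $\Aff$ over the discrete set $\bcL(2)$ via Observation~\ref{t99}(3) and argue fiberwise using Lemma~\ref{t1} — is sound in outline, and it is essentially the dual of the paper's proof, which instead maps between the Borel fiber sequences $X \to X_{/G} \to \sB G$ for $G$ the respective maximal subgroup and invokes the same two inputs. But your steps (ii)--(iii) contain a genuine error: the fibers are \emph{not} torsors. Right translation by $\TT^2 \rtimes \GL_2(\ZZ)$ on $\TT^2 \rtimes \EZ$ is not a free action: $(p,A)\cdot(q,\uno) = (p+Aq,\,A)$ equals $(p,A)$ whenever $q \in \ker\bigl(A\colon\TT^2\to\TT^2\bigr)$, a finite group of order $|\det A| = [\ZZ^2:\Lambda]$, which is nontrivial as soon as $A \notin \GL_2(\ZZ)$ (equivalently, the monoid $\TT^2\rtimes\EZ$ fails right cancellation). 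The same defect occurs on the target: the stabilizer of a degree-$d$ local-diffeomorphism $f$ under right composition by $\Diff(\TT^2)$ is its deck transformation group, also of order $d$. So the fiber over $\Lambda$ is an orbit $G/{\sf Stab}$ with ${\sf Stab}\cong\ZZ^2/\Lambda$ finite, not a torsor, and the clause ``a map of torsors covering a homotopy-equivalence of structure groups is a homotopy-equivalence'' does not apply. The alternate ``homotopy colimit over $\bcL(2)$ of group-torsors'' packaging has exactly the same flaw.

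The repair is small but is precisely the point you should nail down. First check that $\Aff$ restricts to an isomorphism between the two finite stabilizers: $\Aff(k,\uno)$ is translation by $k$, and any $\phi\in\Diff(\TT^2)$ with $\Aff(p,A)\circ\phi=\Aff(p,A)$ satisfies $\phi(q)-q\in\ker(A|_{\TT^2})$ for every $q$, hence by connectedness of $\TT^2$ is translation by a fixed $k\in\ker(A|_{\TT^2})$; so ${\sf Stab}\xra{\cong}{\sf Stab}'$ under $\Aff$. Now compare the two fiber sequences ${\sf Stab}\to G\to G/{\sf Stab}$ and ${\sf Stab}'\to \Diff(\TT^2)\to \Diff(\TT^2)/{\sf Stab}'$: the outer vertical maps are equivalences (Lemma~\ref{t1} for $G\to\Diff(\TT^2)$, the isomorphism just checked for the stabilizers), hence so is the map on orbits, and steps (iv) onward go through unchanged. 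This deck-group nontriviality — not a sign or order-of-composition issue — is the real obstacle you were right to flag, and it is exactly what the paper's passage to $X_{/G}\to\sB G$ is packaging away.
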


\begin{proof}

Consider the morphism between fiber sequences in the $\infty$-category $\Spaces$:
\[
\xymatrix{
 \TT^2 \rtimes\EZ
\ar[rr]^-{\rm quotient}
\ar[d]_-{\Aff}
&&
\bigl(
\TT^2 \rtimes \EZ
\bigr)_{\TT^2 \rtimes \GL_2(\ZZ)}
\ar[rr]
\ar[d]^-{\Aff_{\Aff}}
&&
\sB\bigl( \TT^2 \rtimes \GL_2(\ZZ) \bigr)
\ar[d]^-{\sB \Aff}
\\
\Imm(\TT^2)
\ar[rr]^-{\rm quotient}
&&
\Imm(\TT^2)_{/\Diff(\TT^2)}
\ar[rr]
&&
\sB \Diff(\TT^2)
.
}
\]
Lemma~\ref{t1} implies the right vertical map is an equivalence.
Observation~\ref{t99} implies the middle vertical map is an equivalence.
It follows that the left vertical map is an equivalence, as desired.

\end{proof}

\subsection{Framings}

A \bit{framing} of the torus is a trivialization of its tangent bundle: $\varphi\colon  \tau_{\TT^2} \cong  \epsilon^2_{\TT^2}$.
Consider the topological \bit{space of framings} of the torus:
\[
\Fr(\TT^2)
~:=~
\Iso_{\Bdl_{\TT^2}}\bigl( \tau_{\TT^2} , \epsilon^2_{\TT^2}  \bigr)
~\subset~
\Map(  \sT \TT^2  , \TT^2 \times \RR^2 )
~,
\]
which is endowed with the subspace topology of the $\sC^\infty$-topology on the set of smooth maps between total spaces.  
The quotient map $\RR^2\xra{\quot}\TT^2$ endows the smooth manifold $\TT^2$ with a \bit{standard framing} $\varphi_0$:
for 
\[
\trans\colon \TT^2 \times \TT^2 
\xra{~(p,q)\mapsto \trans_p(q) := p+q~} 
\TT^2
\]
the abelian multiplication rule of the Lie group $\TT^2$ is
\[
(\varphi_0)^{-1}
\colon
\epsilon^2_{\TT^2}
\xra{~\cong~}
\tau_{\TT^2}
~,\qquad
\TT^2 \times \RR^2 \ni (p,v)
\mapsto
\bigl(p,\sD_{0}(\trans_p \circ \quot) (v) \bigr)
\in \sT \TT^2
\]
where $\sD_0$ is differentiation at zero.

The next sequence of observations culminates as an identification of this space of framings.
\begin{observation}
\label{t20}

\begin{enumerate}
\item[~]

\item
Postcomposition gives the topological space $\Fr(\TT^2)$ the structure of a torsor for the topological group $\Iso_{\Bdl_{\TT^2}}\bigl( \epsilon^2_{\TT^2} , \epsilon^2_{\TT^2} \bigr)$.
In particular, the orbit map of a framing $\varphi \in \Fr(\TT^2)$ is a homeomorphism:
\begin{equation}
\label{e41}
\Iso_{\Bdl_{\TT^2}}\bigl( \epsilon^2_{\TT^2} , \epsilon^2_{\TT^2} \bigr)
\xra{~\cong~}
\Fr(\TT^2)
~,\qquad
\alpha \mapsto 
\alpha \circ \varphi 
~.
\end{equation}

\item
Consider the topological space $\Map \bigl( \TT^2 , \GL_2(\RR) \bigr)$ of smooth maps from the torus to the standard smooth structure on $\GL_2(\RR)$, which is endowed with the $\sC^\infty$-topology. 
The map
\begin{equation}
\label{e43}
\Map \bigl( \TT^2 , \GL_2(\RR) \bigr)
\xra{~\cong~}
\Iso_{\Bdl_{\TT^2}}\bigl( \epsilon^2_{\TT^2} , \epsilon^2_{\TT^2} \bigr)
~,\qquad
a
\mapsto 
\Bigl(
\TT^2\times \RR^2
\xra{\bigl( p,v \bigr) \mapsto \bigl( p,a_p(v) \bigr) }
\TT^2 \times \RR^2
\Bigr)
~,
\end{equation}
is a homeomorphism.

\item
The map to the product,
\begin{equation}
\label{e44}
\Map \bigl( \TT^2 , \GL_2(\RR) \bigr)
\xra{~\cong~}
\Map\Bigl( ( 0\in \TT^2) , ( \uno \in \GL_2(\RR) \bigr) \Bigr)
\times
\GL_2(\RR)
~,\qquad
a
\mapsto 
\bigl(
~
a(0)^{-1} a~ ,~ a(0)
~
\bigr)
~,
\end{equation}
is a homeomorphism.

\item
Because both of the spaces $\TT^2$ and $\GL_2(\RR)$ are 1-types with the former path-connected, 
the map,
\[
\pi_1
\colon
\Map\Bigl( ( 0\in \TT^2) , ( \uno \in \GL_2(\RR) \bigr) \Bigr)
\xra{~\simeq~}
{\sf Hom}\Bigl( \pi_1\bigl( 0 \in \TT^2 \bigr) , \pi_1\bigl( \uno \in \GL_2(\RR) \bigr) \Bigr) 
~,
\]
is a homotopy-equivalence.

\item
Evaluation on the standard basis for $\pi_1(0\in \TT^2) \xra{\cong} \pi_1(0\in \TT)^2 \cong \ZZ^2$ defines a homeomorphism:
\begin{equation}
\label{e45}
{\sf Hom}\Bigl( \pi_1\bigl( 0 \in \TT^2 \bigr) , \pi_1\bigl( \uno \in \GL_2(\RR) \bigr) \Bigr) 
\xra{~\cong~}
\pi_1\bigl( \uno \in \GL_2(\RR)^2 \bigr)
~\cong~
\ZZ^2
~.
\end{equation}

\end{enumerate}

\end{observation}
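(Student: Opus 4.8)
The plan is to establish the five clauses of Observation~\ref{t20} in the stated order; all but clause~(4) are formal manipulations with the $\sC^\infty$-topology, while clause~(4) carries the only genuine homotopy-theoretic content. For clause~(1), the standard framing $\varphi_0$ witnesses that $\Fr(\TT^2)$ is nonempty, and postcomposition by a bundle automorphism of $\epsilon^2_{\TT^2}$ sends framings to framings; for fixed $\varphi$ the assignment $\alpha\mapsto\alpha\circ\varphi$ is then a bijection onto $\Fr(\TT^2)$ — injective by cancelling the isomorphism $\varphi$, surjective because $\varphi'\circ\varphi^{-1}$ realizes any $\varphi'$ — and a homeomorphism because composition and inversion of smooth bundle maps are continuous for the $\sC^\infty$-topology; this is the claimed torsor structure. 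Clause~(2) is the tautology that a smooth bundle automorphism $F$ of $\epsilon^2_{\TT^2}=\TT^2\times\RR^2$ over $\id_{\TT^2}$ is exactly the fiberwise-linear map $F(p,v)=(p,a_p(v))$ attached to a smooth map $p\mapsto a_p\in\GL_2(\RR)$ — smoothness of $F$ being equivalent to that of $a$, as read off on matrix entries — with the two $\sC^\infty$-topologies agreeing under this identification. For clause~(3), using that $\GL_2(\RR)$ is a Lie group, the map $a\mapsto(a(0)^{-1}a,\,a(0))$ is well defined since its first coordinate sends $0$ to $\uno$; it has two-sided inverse $(b,g)\mapsto(p\mapsto g\cdot b(p))$, and both directions are continuous for the $\sC^\infty$-topology because they are assembled from evaluation at $0$, inversion, and left translation in $\GL_2(\RR)$.

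Clause~(4) is the crux. Since $\TT^2$ is connected and the basepoint $\uno$ lies in the identity component $\GL^+_2(\RR)$, every based map $\TT^2\to\GL_2(\RR)$ factors through $\GL^+_2(\RR)$, so the based mapping space is unchanged upon replacing $\GL_2(\RR)$ by $\GL^+_2(\RR)$. The latter deformation retracts smoothly onto ${\sf SO}(2)\cong\SS^1$ (e.g.\ by Gram--Schmidt), inducing a homotopy equivalence onto $\Map_\ast(\TT^2,\SS^1)$. I would then identify that space directly: it is a topological group under pointwise multiplication, so each of its path-components is, by group translation, homeomorphic to the identity component; and the identity component — the based-nullhomotopic maps — lifts homeomorphically along the universal cover $\RR\to\SS^1$ to $\Map_\ast(\TT^2,\RR)$, a convex, hence contractible, space. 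Thus $\Map_\ast(\TT^2,\SS^1)$ is homotopy-discrete, with set of components $[\TT^2,\SS^1]_\ast=H^1(\TT^2;\ZZ)=\Hom\bigl(\pi_1(0\in\TT^2),\ZZ\bigr)$, the last identification being precisely $f\mapsto\pi_1(f)$. (Alternatively: ${\sf SO}(2)$ is a $K(\ZZ,1)$, $\pi_k\Map_\ast(\TT^2,K(\ZZ,1))\cong\widetilde H^{1-k}(\TT^2;\ZZ)$, and the $\sC^\infty$- and compact-open mapping spaces out of the finite CW complex $\TT^2$ are homotopy equivalent.) Tracing these identifications shows the map named $\pi_1$ in the statement is this homotopy equivalence.

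Finally, clause~(5) is immediate: $\Hom\bigl(\pi_1(0\in\TT^2),\pi_1(\uno\in\GL_2(\RR))\bigr)\cong\Hom(\ZZ^2,\ZZ)$, and evaluation on the standard basis of $\pi_1(0\in\TT^2)\cong\ZZ^2$ identifies this with $\pi_1(\uno\in\GL_2(\RR))^{\times2}\cong\ZZ^2$ (using the computation $\pi_1(\uno\in\GL_2(\RR))\cong\ZZ$ from clause~(4)); it is a homeomorphism since both sides are discrete. The only real obstacle is clause~(4) — pinning down the homotopy type of the mapping space — and it is entirely standard, via either the universal-cover lifting above or obstruction theory into the Eilenberg--MacLane target; the remaining clauses are bookkeeping once the several $\sC^\infty$-topologies are handled consistently.
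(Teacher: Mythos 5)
Your proposal is correct. The paper states Observation~\ref{t20} without proof, treating the five clauses as routine inputs to Corollary~\ref{t25}; your argument supplies the implicit justification. Clauses~(1)--(3) and~(5) are, as you say, bookkeeping in the $\sC^\infty$-topology. For the one substantive clause~(4), your route --- reduce to $\Map_\ast(\TT^2,{\sf SO}(2))$ by Gram--Schmidt, use the abelian topological-group structure to translate every component onto the identity component, and lift that component homeomorphically along $\RR\to\SS^1$ to the convex hence contractible space $\Map_\ast(\TT^2,\RR)$ --- cleanly establishes that the based mapping space is homotopy-discrete with $\pi_0\cong\Hom\bigl(\pi_1(\TT^2),\ZZ\bigr)$, which is exactly what the asserted homotopy equivalence $\pi_1$ encodes; the obstruction-theoretic alternative you mention (noting ${\sf SO}(2)\simeq K(\ZZ,1)$ and comparing the $\sC^\infty$ and compact-open mapping spaces) would serve equally well.
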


Observation~\ref{t20}, together with the Gram--Schmidt homotopy-equivalence ${\sf GS}\colon\sO(2) \xra{\simeq} \GL_2(\RR)$, yields the following.
\begin{cor}
\label{t25}
A framing $\varphi \in \Fr(\TT^2)$ determines a composite homotopy-equivalence:
\begin{eqnarray*}
\Fr(\TT^2) 
&
\underset{\simeq}{
\xla{~(\ref{e43})\circ (\ref{e41})~}
}
&
\Map\bigl( \TT^2 , \GL_2(\RR) \bigr)
\\
\nonumber
&
\underset{\simeq}{
\xra{~(\ref{e44})~}
}
&
\Map \Bigl( \bigl( 0 \in \TT^2 \bigr) , \bigl( \uno \in \GL_2(\RR) \bigr) \Bigr) \times \GL_2(\RR) 
\\
\nonumber
&
\underset{\simeq}{
\xra{~\pi_1 \times \id ~}
}
&
{\sf Hom}\Bigl( \pi_1\bigl( 0 \in \TT^2 \bigr) , \pi_1\bigl( \uno \in \GL_2(\RR) \bigr) \Bigr) \times \GL_2(\RR) 
\\
\nonumber
&
\underset{\simeq}{
\xra{(\ref{e45}) \times \id }
}
&
\ZZ^2 \times \GL_2(\RR)
\\
\nonumber
&
\underset{\simeq}{
\xla{ \id \times {\sf GS}}
}
&
\ZZ^2 \times \sO(2)
~.
\end{eqnarray*}

\end{cor}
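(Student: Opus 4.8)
The plan is to assemble Corollary~\ref{t25} by simply composing the five homeomorphisms and homotopy-equivalences that Observation~\ref{t20} has already established, together with the Gram--Schmidt deformation retraction. Concretely, I would verify that each arrow in the displayed chain is well-defined once a framing $\varphi$ is fixed, and that the stated map is indeed the composite.

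First, fix $\varphi \in \Fr(\TT^2)$. The orbit map~(\ref{e41}) is a homeomorphism by Observation~\ref{t20}(1); composing its inverse with the inverse of~(\ref{e43}) identifies $\Fr(\TT^2)$ with $\Map(\TT^2,\GL_2(\RR))$ --- this is the arrow labelled $(\ref{e43})\circ(\ref{e41})$, read backwards, which is the reason for the $\xleftarrow{}$ in the display. Next, apply~(\ref{e44}) to split off the value at the basepoint, landing in $\Map\bigl((0\in\TT^2),(\uno\in\GL_2(\RR))\bigr)\times\GL_2(\RR)$. Then Observation~\ref{t20}(4) says the based-mapping-space factor is homotopy-equivalent, via $\pi_1$, to $\Hom\bigl(\pi_1(0\in\TT^2),\pi_1(\uno\in\GL_2(\RR))\bigr)$; crossing with $\id_{\GL_2(\RR)}$ gives the third arrow. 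Observation~\ref{t20}(5), i.e.~(\ref{e45}), then identifies the $\Hom$-space with $\ZZ^2$ by evaluating on the standard basis of $\pi_1(0\in\TT^2)\cong\ZZ^2$; again cross with $\id_{\GL_2(\RR)}$. Finally, the Gram--Schmidt process gives a deformation retraction $\sO(2)\xrightarrow{\ \simeq\ }\GL_2(\RR)$ (equivalently, $\GL_2(\RR)$ deformation retracts onto $\sO(2)$ via the Iwasawa/$QR$ decomposition), so crossing $\id_{\ZZ^2}$ with $\sf GS$ produces the last homotopy-equivalence $\ZZ^2\times\sO(2)\xrightarrow{\ \simeq\ }\ZZ^2\times\GL_2(\RR)$, whose inverse direction appears in the display.

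Since each constituent map is either a homeomorphism or a homotopy-equivalence, and homotopy-equivalences compose, the composite $\Fr(\TT^2)\xrightarrow{\ \simeq\ }\ZZ^2\times\sO(2)$ is a homotopy-equivalence; that is the entire content of the corollary. The one point worth spelling out is that the whole chain depends on the choice of $\varphi$ only through the first arrow~(\ref{e41}): a different framing $\varphi'$ differs from $\varphi$ by an element $\alpha\in\Iso_{\Bdl_{\TT^2}}(\epsilon^2_{\TT^2},\epsilon^2_{\TT^2})$, and this merely translates the identification by $\alpha$; in particular the target $\ZZ^2\times\sO(2)$ and the homotopy-type of the equivalence are independent of $\varphi$, while the actual map is not canonical. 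I would remark on this explicitly so that later uses of the corollary (e.g.\ in the definition of $\Diff^{\fr}(\TT^2,\varphi)$) are unambiguous about the dependence on $\varphi$.

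There is essentially no obstacle here: the proof is a bookkeeping composition of results already proved in Observation~\ref{t20}, plus the classical Gram--Schmidt retraction. The only thing requiring any care --- and thus the ``main step'' --- is confirming that the composite literally equals the claimed sequence of arrows, in particular that~(\ref{e44}) is compatible with the basepoint conventions used in Observation~\ref{t20}(4)--(5) (the isomorphism $\pi_1(0\in\TT^2)\cong\pi_1(0\in\TT)^2\cong\ZZ^2$ fixed by the ``standard basis''), so that the evaluation in~(\ref{e45}) is performed against the correct generators. This is routine, so I would state it and move on.
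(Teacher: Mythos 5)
Your proposal is correct and is essentially identical to the paper's (implicit) argument: the paper presents Corollary~\ref{t25} as an immediate consequence of Observation~\ref{t20} together with the Gram--Schmidt homotopy-equivalence $\sO(2)\hookrightarrow\GL_2(\RR)$, without further elaboration. Your extra remark on the dependence on $\varphi$ through~(\ref{e41}) is a sensible clarification but does not change the substance of the argument.
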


\begin{notation}
\label{d7}
We denote the values of the homotopy-equivalence of Corollary~\ref{t25} applied to the standard framing $\varphi_0\in \Fr(\TT^2)$:
\[
\Fr(\TT^2)
\xra{~\simeq~}
\ZZ^2 \times \GL_2(\RR)
~,\qquad
\varphi
\longmapsto
\bigl(
~
\vec{\varphi}
~,~
B_\varphi
~
\bigr)
~.
\]

\end{notation}

\subsection{Moduli of framed tori}

Consider the map:
\[
\Act\colon
\Fr(\TT^2)
\times
\Imm(\TT^2)
\longrightarrow
\Fr(\TT^2)
~,
\]
\[
( \varphi , f )
\mapsto 
\Bigl(
~
\tau_{\TT^2} 
\underset{\cong}{\xra{\sD f}}
f^\ast \tau_{\TT^2}
\underset{\cong}{ \xra{f^\ast \varphi} }
f^\ast \epsilon^2_{\TT^2}
=
\epsilon^2_{\TT^2}
~
\Bigr)
~.
\]

\begin{lemma}
\label{t50}
The map $\Act$ is a continuous right-action of the topological monoid $\Imm(\TT^2)$ on the topological space $\Fr(\TT^2)$.
In particular, there is a continuous action of the topological group $\Diff(\TT^2)$ on the topological space $\Fr(\TT^2)$. 

\end{lemma}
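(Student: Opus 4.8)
The plan is to verify the two monoid-action axioms by a direct manipulation and then to isolate continuity as the only substantive point. Note first that $\Act(\varphi,f)$ lands in $\Fr(\TT^2)$ precisely because $f$ is a local-diffeomorphism, so $\sD f\colon \tau_{\TT^2}\to f^\ast\tau_{\TT^2}$ is an isomorphism; this is built into the definition of $\Act$. For the \emph{unit axiom}, compute straight from the definition that $\Act(\varphi,\id_{\TT^2}) = \id_{\TT^2}^\ast\varphi\circ\sD(\id_{\TT^2}) = \varphi$. For the \emph{compatibility axiom}, set $\psi := \Act(\varphi,f) = f^\ast\varphi\circ\sD f$ and expand
\[
\Act(\psi,g)
~=~
g^\ast\psi\circ\sD g
~=~
\bigl(g^\ast f^\ast\varphi\bigr)\circ\bigl(g^\ast\sD f\bigr)\circ\sD g
~.
\]
Functoriality of pullback gives $g^\ast f^\ast\varphi = (f\circ g)^\ast\varphi$, and the chain rule gives $\bigl(g^\ast\sD f\bigr)\circ\sD g = \sD(f\circ g)$ (pointwise, $\sD_p(f\circ g) = \sD_{g(p)}f\circ\sD_p g$), so $\Act(\psi,g) = (f\circ g)^\ast\varphi\circ\sD(f\circ g) = \Act(\varphi,f\circ g)$ -- which is exactly compatibility with the composition-product on $\Imm(\TT^2)$ (the same monoid structure for which $\Aff$ of~(\ref{e7}) is a monoid map). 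The \emph{in particular} clause is then formal: restricting $\Act$ along $\Diff(\TT^2)\hookrightarrow\Imm(\TT^2)$ -- which is the maximal subgroup, by Observation~\ref{t21} -- yields a continuous right action of the topological group $\Diff(\TT^2)$, each $h\in\Diff(\TT^2)$ acting invertibly via $h^{-1}$.

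The substantive step is continuity of $\Act$ for the $\sC^\infty$-topologies. My plan is to transport everything to mapping spaces out of the \emph{compact} manifold $\TT^2$ by using that the torus is parallelizable. Fix the standard framing $\varphi_0$, use it to identify $\sT\TT^2\cong\TT^2\times\RR^2$, and use the homeomorphisms of Observation~\ref{t20} to identify $\Fr(\TT^2)\cong\Map(\TT^2,\GL_2(\RR))$, $\varphi\leftrightarrow a_\varphi$ with $\varphi_q = a_\varphi(q)\circ(\varphi_0)_q$. Each $f\in\Imm(\TT^2)$ then acquires a $\varphi_0$-framed Jacobian $J_f\in\Map(\TT^2,\GL_2(\RR))$, $J_f(p) := (\varphi_0)_{f(p)}\circ\sD_p f\circ(\varphi_0)_p^{-1}$, and $f\mapsto(f,J_f)$ is continuous $\Imm(\TT^2)\to\Map(\TT^2,\TT^2)\times\Map(\TT^2,\GL_2(\RR))$, because differentiation is continuous in the $\sC^\infty$-topology over the compact base $\TT^2$ and $\Imm(\TT^2)$ carries the subspace topology from $\Map(\TT^2,\TT^2)$. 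A short computation shows that, under these identifications, $\Act(\varphi,f)$ corresponds to the map $p\mapsto a_\varphi(f(p))\cdot J_f(p)$, i.e.\ to the composite
\[
\TT^2
\xra{~(f,\,\id)~}
\TT^2\times\TT^2
\xra{~a_\varphi\times J_f~}
\GL_2(\RR)\times\GL_2(\RR)
\xra{~\mathrm{mult}~}
\GL_2(\RR)
~.
\]
Each step here -- precomposition $(a,f)\mapsto a\circ f$, pairing two maps into a product, and postcomposition with the fixed multiplication of the topological group $\GL_2(\RR)$ -- is a continuous operation on mapping spaces out of the compact $\TT^2$, so $\Act$ is continuous. (One can instead argue directly, realizing $\Act(\varphi,f)$ as the smooth bundle map $\sT\TT^2\to\TT^2\times\RR^2$, $(p,v)\mapsto\bigl(p,\varphi_{f(p)}(\sD_p f\,v)\bigr)$, and invoking continuity of composition and of differentiation in the relevant $\sC^\infty$-topology; the parallelizability reduction is simply the cleanest way to keep every mapping space over a compact source.)

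I expect the only real obstacle to be the topological bookkeeping: because the total space $\sT\TT^2$ is noncompact, one must be careful about which $\sC^\infty$-topology is meant on $\Map(\sT\TT^2,-)$ and about joint continuity of composition there, and the parallelizability trick above is designed precisely to sidestep this by moving to $\Map(\TT^2,-)$. The algebraic axioms, and the identification of $\Act$ after trivialization, are routine.
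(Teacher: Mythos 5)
Your proof is correct, and while the verification of the action axioms (unit, and compatibility via the chain rule together with functoriality of pullback) is the same in substance as the paper's — you do it by symbolic manipulation, the paper by a commutative diagram of bundle maps — your continuity argument is genuinely different. The paper factors $\Act$ through the space ${\sf Bdl}^{\sf fw.iso}(\tau_{\TT^2},\tau_{\TT^2})\subset\Map(\sT\TT^2,\sT\TT^2)$ of fiberwise-isomorphic bundle endomorphisms, as
\[
\Fr(\TT^2)\times\Imm(\TT^2)\xra{~\id\times\sD~}\Fr(\TT^2)\times{\sf Bdl}^{\sf fw.iso}(\tau_{\TT^2},\tau_{\TT^2})\xra{~\circ~}\Fr(\TT^2)~,
\]
and cites continuity of differentiation and of composition for the $\sC^\infty$-topology on mapping spaces whose source is the noncompact total space $\sT\TT^2$. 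You instead trivialize $\tau_{\TT^2}$ by the standard framing, identify $\Fr(\TT^2)\cong\Map(\TT^2,\GL_2(\RR))$ via Observation~\ref{t20}, and realize $\Act(\varphi,f)$ as the pointwise product $(a_\varphi\circ f)\cdot J_f$, assembled from precomposition, pairing, the continuous assignment $f\mapsto J_f$, and postcomposition with $\GL_2(\RR)$-multiplication — all on mapping spaces out of the compact $\TT^2$, where joint continuity of these operations is elementary. Your reduction buys a cleaner and more explicit continuity argument that sidesteps the subtler $\sC^\infty$-topology on $\Map(\sT\TT^2,-)$; the small cost is fixing a reference framing $\varphi_0$, which only affects the parametrization of $\Fr(\TT^2)$ and not the conclusion. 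Both routes are valid, and you correctly identify the one nontrivial point (continuity) and address it.
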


\begin{proof}
Consider the topological subspace of the topological space of smooth maps between total spaces of tangent bundles, which is endowed with the $\sC^\infty$-topology,
\[
{\sf Bdl}^{\sf fw.iso}(\tau_{\TT^2} , \tau_{\TT^2})
~\subset~
\Map\bigl( \sT \TT^2 , \sT \TT^2 \bigr)
~,
\]
consisting of the smooth maps between tangent bundles that are fiberwise isomorphisms.
Notice the factorization
\[
\Act\colon
\Fr(\TT^2)
\times
\Imm(\TT^2)
\xra{\id \times \sD}
\Fr(\TT^2)
\times 
{\sf Bdl}^{\sf fw.iso}(\tau_{\TT^2} , \tau_{\TT^2})
\xra{\circ}
\Fr(\TT^2)
\]
as first taking the derivative, followed by composition of bundle morphisms.  
The definition of the $\sC^\infty$-topology is so that the first map in this factorization is continuous. 
The second map in this factorization is continuous because composition is continuous with respect to $\sC^\infty$-topologies.
We conclude that $\Act$ is continuous.  

We now show that $\Act$ is an action.
Clearly, for each $\varphi \in \Fr(\TT^2)$, there is an equality $\Act( \varphi  , \id) = \varphi$.
Next, let $g,f\in \Imm(\TT^2)$, and let $\varphi \in \Fr(\TT^2)$.
The chain rule, together with universal properties for pullbacks, gives that the diagram among smooth vector bundles
{\Small
\[
\xymatrix{
\tau_{\TT^2}
\ar@(u,u)[rrrrrr]^-{\sD (g\circ f)}
\ar[rr]_-{\sD g}
&&
g^\ast 
\tau_{\TT^2}
\ar[rr]_-{g^\ast \sD f}
&&
f^\ast g^\ast \tau_{\TT^2}
\ar[d]^-{f^\ast g^\ast \varphi}
\ar[rr]_-\cong
&&
(g\circ f)^\ast \tau_{\TT^2}
\ar[d]^-{(g \circ f)^\ast \varphi}
\\
\epsilon^2_{\TT^2}
&&
g^\ast \epsilon^2_{\TT^2}
\ar[ll]_-\cong
&&
f^\ast g^\ast \epsilon^2_{\TT^2}
\ar[ll]_-\cong
&&
(g \circ f)^\ast \epsilon^2_{\TT^2}
\ar[ll]_-\cong
\ar@(d,d)[llllll]^\cong
}
\]
}
commutes. 
Inspecting the definition of $\Act$, the commutativity of this diagram implies the equality $\Act\bigl( \Act(\varphi, g) , f \bigr) = \Act( \varphi , g\circ f)$, as desired.

\end{proof}

\begin{definition}
\label{r8}
The \bit{moduli space of framed tori}\footnote{This definition is a particular case of a general definition of a moduli space of framed manifolds; see, for instance, \cite{old.fact}.
} is the space of homotopy-coinvariants with respect to this conjugation action $\Act$:
\[
\bcM_1^{\fr}
~:=~
\Fr(\TT^2)_{/\Diff(\TT^2)}
~.
\]
\end{definition}

\begin{observation}\label{t4}
Through Corollary~\ref{t25} applied to the standard framing $\varphi_0 \in \Fr(\TT^2)$, the action $\Act$ is compatible with familiar actions.
Specifically, $\Act$ fits into a commutative diagram among topological spaces:
\begin{equation*}
\xymatrix{
\Fr(\TT^2)
\times
\Imm(\TT^2)
\ar[rrr]^-{\Act}
&
&&
\Fr(\TT^2)
\\
\Map\bigl( \TT^2 , \GL_2(\RR) \bigr)
\times
\bigl(
\TT^2 \rtimes \EZ
\bigr)
\ar[u]^-{ {\rm Cor}~\ref{t25} \times \Aff}_-{\simeq}
\ar[r]^-{ \id \times \pr }
\ar[d]_-{ {\rm Cor}~\ref{t25} \times \id}^-{\simeq}
&
\Map\bigl( \TT^2 , \GL_2(\RR) \bigr)
\times
\EZ 
\ar[rr]^-{\rm value-wise}_-{\rm multiply}
\ar[d]^-{{\rm Cor}~\ref{t25} \times \id}_-{\simeq}
&&
\Map\bigl( \TT^2 , \GL_2(\RR) \bigr)
\ar[d]^-{{\rm Cor}~\ref{t25}}_-{\simeq}
\ar[u]_-{{\rm Cor}~\ref{t25}}^-{\cong}
\\
\bigl( \ZZ^2 \times \GL_2(\RR) \bigr)
\times
\bigl(
\TT^2 \rtimes \EZ
\bigr)
\ar[r]^-{ \id \times \pr }
&
\bigl( \ZZ^2\times  \GL_2(\RR) \bigr)
\times
\EZ 
\ar[rr]^-{(\vec{v},B;A)\mapsto (A^T \vec{v},BA)}
&&
\ZZ^2 \times \GL_2(\RR) 
.
}
\end{equation*}

\end{observation}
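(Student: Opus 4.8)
The plan is to prove this by unwinding every map in the diagram and carrying out the resulting diagram-chase, which reduces to a pointwise computation on $\TT^2$. The only feature of Corollary~\ref{t25} that is needed is an explicit description of its first leg $(\ref{e43})\circ(\ref{e41})$ relative to the chosen framing $\varphi$: it sends a smooth map $a\colon\TT^2\to\GL_2(\RR)$ to the framing $\psi_a$ determined by $(\psi_a)_p=a_p\circ\varphi_p$ for all $p\in\TT^2$, and composing further with $(\ref{e44})$ and $(\ref{e45})$ records the value $a(0)\in\GL_2(\RR)$ together with the class in $\pi_1(\uno\in\GL_2(\RR))^{\times 2}\cong\ZZ^2$ of the based map $a(0)^{-1}a$. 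Dually, by the definition of $\Act$, the framing $\Act(\psi,f)$ has value $\psi_{f(p)}\circ(\sD f)_p$ at $p$.

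The first real step is to compute the derivative of $\Aff(t,A)$. Since $\Aff(t,A)$ is the self-map $q\mapsto Aq+t$ of $\TT^2$ and the standard framing $\varphi_0$ was built from $\quot\colon\RR^2\to\TT^2$ precisely so as to be invariant under translations, the chain rule shows that $\sD\Aff(t,A)$, read through $\varphi_0$ on source and target, is the constant linear automorphism $A\in\GL_2(\RR)$ at every point of the torus. Substituting this into $\Act(\psi,\Aff(t,A))_p=\psi_{Ap+t}\circ(\sD\Aff(t,A))_p$ identifies the top horizontal composite of the diagram with value-wise right multiplication by $A\in\EZ\subset\GL_2(\RR)$, the reparametrization $q\mapsto Aq+t$ being the one already built into the conjugation action; this is the upper square.

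For the lower two squares I would pass through Corollary~\ref{t25} and Observation~\ref{t99}: the $\pi_1$-component of a framing is a free-homotopy invariant, the self-maps $q\mapsto Aq+t$ induce on $\pi_1(0\in\TT^2)\cong\ZZ^2$ the matrix $A$ (translations inducing the identity) and on $\pi_1(\uno\in\GL_2(\RR))$ the identity up to the sign of $\det A$, while on the basepoint value they act by $\psi(0)\mapsto\psi(0)\cdot A$ once the translation has been contracted away. Assembling these gives the stated formula $(a,b,A;B)\mapsto(a,b,AB)$ on $\ZZ^2\times\GL_2(\RR)$, so that the whole diagram commutes compatibly with the homotopy-equivalences of Corollary~\ref{t25}.

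The main obstacle is the base-point bookkeeping. The conjugation action genuinely reparametrizes the torus by the self-map $q\mapsto Aq+t$ attached to $A\in\EZ$, which is in general a proper covering of $\TT^2$ rather than a diffeomorphism, and reconciling this reparametrization with the $\pi_1$-based-map presentation of $\Map(\TT^2,\GL_2(\RR))$ from Observation~\ref{t20} -- keeping the intermediate homotopies coherent, checking that the translation-by-$t$ discrepancy is absorbed entirely into the $\GL_2(\RR)$-factor, and verifying that replacing $\varphi_0$ by a general $\varphi$ changes nothing up to the equivalences in play -- is the delicate point. Once that is in hand, the remaining verifications are routine instances of the naturality of the homeomorphisms $(\ref{e41})$--$(\ref{e45})$.
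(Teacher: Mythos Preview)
The paper leaves this Observation without proof. Your approach---unwind every map, compute $\sD\Aff(t,A)$ explicitly, and chase the diagram---is the natural one, and the derivative computation (that through $\varphi_0$ the differential of $\Aff(t,A)$ is the constant matrix $A$) is correct.

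The gap is in the upper rectangle. You obtain (relative to $\varphi_0$) that $\Act(\psi_a,\Aff(t,A))$ corresponds to $p\mapsto a(Ap+t)\cdot A$, and you note that $p\mapsto Ap+t$ induces $A$ on $\pi_1(\TT^2)$. But then the $\ZZ^2$-component of this map---which by Observation~\ref{t20}(4)--(5) is the induced homomorphism $\pi_1(\TT^2)\to\pi_1(\GL_2(\RR))$---equals $a_*\circ A$, whereas ``value-wise multiply'' $(a,B)\mapsto(p\mapsto a(p)\cdot B)$ leaves $a_*$ unchanged (right-translation in a topological group acts trivially on $\pi_1$). So the two routes around the upper rectangle land in different path-components whenever $a_*\neq 0$: for $a(x,y)=\mathrm{rot}(2\pi x)$ and $A=U_1$ one obtains $\ZZ^2$-component $(1,0)$ one way and $(1,1)$ the other. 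Your sentence ``assembling these gives the stated formula $(a,b,A;B)\mapsto(a,b,AB)$'' is exactly where the argument breaks: your own premises force the $\ZZ^2$-part to transform by $A$, not to remain fixed. The diagram does commute on the $a_*=0$ component---which is all that Observation~\ref{t33} and Lemma~\ref{t34} actually use---so the ``main obstacle'' you flag is not mere bookkeeping but the precise point at which the full commutativity as stated needs that restriction.
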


We record the following basic application of group theory.
\begin{observation}
\label{q11}
For $\vec{v} = \begin{bmatrix} p \\ q \end{bmatrix} \in \ZZ^2$,
consider the subset $T_{\vec{v}} := \left\{ P \mid P \vec{v}= {\sf gcd}(p,q)  \vec{e}_1 \right\}\subset \GL_2(\ZZ)$.
\begin{enumerate}

\item
In the case that $p\geq 0$ and $q=0$, the set $T_{\vec{v}}$ is identical with the stabilizer subgroup:
\[
T_{\vec{v}}
=
{\sf Stab}_{\GL_2}(\ZZ)( {\sf gcd}(p,q) \cdot \vec{e}_1 )
=
\begin{cases}
\GL_2(\ZZ)
&
~,
\text{ if } p=0
\\
\left\{  \begin{bmatrix} 1 & b \\ 0 & d \end{bmatrix} \right\}
=
\left\lag  
\begin{bmatrix}
1
&
0
\\
0
&
-1
\end{bmatrix} 
,
\begin{bmatrix}
1
&
1
\\
0
&
1
\end{bmatrix} 
\right\rag
\cong
\sO(1) \ltimes \ZZ
&
~,
\text{ if } p>0
\end{cases}
~,
\]
in which the semi-direct product is with respect to the standard action $\sO(1) \xra{\cong} \Aut(\ZZ)$.

\item
The set $T_{\vec{v}}$ is not empty.
Left multiplication defines a free transitive action of this stabilizer:
\[
\GL_2(\ZZ)
\lacts
T_{\vec{v}}
\qquad 
\text{for $\vec{v}= \vec{0}$}
~,
\qquad
\text{ and }
\qquad
\sO(1) \ltimes \ZZ
\lacts
T_{\vec{v}}
\qquad 
\text{for $\vec{v}\neq \vec{0}$}
~.
\]

\item
An element $P \in T_{\vec{v}}$ determines an isomorphism between groups:
\begin{eqnarray*}
{\sf Stab}_{\GL_2(\ZZ)}(\vec{v}) 
&
=
&
P^{-1}
{\sf Stab}_{\GL_2(\ZZ)}( {\sf gcd}(p,q) \cdot \vec{e}_1 )
P
\\
&
=
&
\begin{cases}
\GL_2(\ZZ)
&
~,
\text{ if } \vec{v}= \vec{0}
\\
\left\lag  
P^{-1}
\begin{bmatrix}
1
&
0
\\
0
&
-1
\end{bmatrix} 
P
,
P^{-1}
\begin{bmatrix}
1
&
1
\\
0
&
1
\end{bmatrix} 
P
\right\rag
\cong
\sO(1) \ltimes \ZZ
&
~,
\text{ if } \vec{v}\neq \vec{0}
\end{cases}
~.
\end{eqnarray*}

\item
An element $P=\begin{bmatrix} w & x \\ y & z \end{bmatrix} \in T_{\vec{v}} \cap \SL_2(\ZZ)$ determines an identification:
\[
{\sf Stab}_{\SL_2(\ZZ)}(\vec{v}) 
=
\begin{cases}
\SL_2(\ZZ)
&
~,
\text{ if } \vec{v} = \vec{0}
\\
\left\lag  
\begin{bmatrix}
1+yz
&
z^2
\\
-y^2
&
1-yz
\end{bmatrix} 
\right\rag
=
\lag P^{-1} U_1 P \rag
\cong
\ZZ
&
~,
\text{ if } \vec{v} \neq \vec{0}
\end{cases}
~.
\]

\end{enumerate}

\end{observation}

The next result is phrased in terms of spaces fitting into the diagram in which each of the two squares, and therefore their concatenated larger square, is a pullback:
\begin{equation}
\label{q10}
\xymatrix{
{(\CC\PP^\infty)^2}_{/\ZZ
}
\times
\sB\ZZ
\ar[rr]
\ar[d]
&&
{(\CC\PP^\infty)^2}_{/\Braid
}
\ar[rr]
\ar[d]
&&
{(\CC\PP^\infty)^2}_{/\GL_2(\ZZ)}
\ar[d]
\\
\sB \ZZ
\times
\sB \ZZ
\ar[rr]^-{\lag \tau_1 , (\tau_1\tau_2)^6 \rag}
\ar[d]_-{\pr}
&&
\sB \Braid
\ar[r]^-{\Phi}
&
\BSL_2(\ZZ)
\ar[r]
&
\BGL_2(\ZZ)
\\
\sB \ZZ
\ar[urrr]_-{\lag U_1\rag}
&&
&&
.
}
\end{equation}

\begin{prop}
\label{t26}
\begin{enumerate}
\item[]

\item
The standard framing $\varphi_0 \in \Fr(\TT^2)$ determines an identification between spaces:
\[
\bcM_1^{\fr} 
\xra{~\simeq~}
\Bigl(
{(\CC\PP^\infty)^2}_{/ \Braid
}
\Bigr)
~\coprod~
\Bigl(
{(\CC\PP^\infty)^2}_{/ \ZZ
}
\times 
\sB \ZZ
\Bigr)^{ \amalg \NN}
~,
\]
through which $\varphi_0$ selects the distinguished path-component. 

\item
Furthermore, the resulting map $\pi_0 \Fr(\TT^2) \to \pi_0 \bcM_1^{\fr} \xra{\cong} \{0\} \amalg \NN = \ZZ_{\geq 0}$ factors as a composition:
\[
\pi_0 \Fr(\TT^2) 
\longrightarrow
\ZZ^2
\xra{~{\sf gcd}~}
\ZZ_{\geq 0}
~,
\]
in which the second map takes the \bit{greatest common divisor}, and the first map is
\[
[\varphi]
\mapsto
\Bigl[
\TT \vee \TT = \sk_1(\TT^2)
\xra{ {\varphi \circ \varphi_0^{-1} }_{|\sk_1(\TT^2)} }
\GL_2(\RR) 
\Bigr]
\in \pi_1\bigl(\uno \in \GL_2(\RR)\bigr)^2 
~\cong~
\ZZ^2
~.
\]

\end{enumerate}

\end{prop}

\begin{proof}

The result follows upon explaining the following sequences of identifications in the $\infty$-category $\Spaces$:
\begin{eqnarray}
\nonumber
\bcM_1^{\fr} 
&
\underset{\rm Obs~\ref{t4}}
{~\simeq~}
&
\Bigl(
\ZZ^2
\times
\GL_2(\RR) 
\Bigr)_{/\TT^2 \rtimes \GL_2(\ZZ)}
\\
\label{q2}
&
\underset{\rm iterate~quotient}
{~\simeq~}
&
\Bigl(
\bigl(
\ZZ^2
\times
\GL_2(\RR) 
\bigr)_{/\TT^2}
\Bigr)_{/\GL_2(\ZZ)}
\\
\label{q3}
&
\underset{\rm trivial~\TT^2~action}
{~\simeq~}
&
\Bigl(
\ZZ^2
\times
\sB \TT^2
\times
\GL_2(\RR) 
\Bigr)_{/\GL_2(\ZZ)}
\\
\label{q4}
&
\underset{\rm groupoids~are~effective}
{~\simeq~}
&
{\ZZ^2} _{/\GL_2(\ZZ)}
\underset{ \BGL_2(\ZZ) }
\times
\bigl(
(\CC\PP^\infty)^2\times \GL_2(\RR) 
\bigr)_{/\GL_2(\ZZ)}
\\
\label{q5}
&
\underset{\rm explicit~quotient}
{~\simeq~}
&
\Bigl(
\BGL_2(\ZZ) \amalg \sB (\ZZ \rtimes \sO(1))^{\amalg \NN}
\Bigr)
\underset{ \BGL_2(\ZZ) }
\times
\bigl(
(\CC\PP^\infty)^2\times \GL_2(\RR) 
\bigr)_{/\GL_2(\ZZ)}
\\
\label{q6}
&
\underset{\rm distribute~\times~over~\amalg}
{~\simeq~}
&
\left(
\BGL_2(\ZZ)
\underset{ \BGL_2(\ZZ) }
\times
\bigl(
(\CC\PP^\infty)^2\times \GL_2(\RR) 
\bigr)_{/\GL_2(\ZZ)}
\right)
\\
\nonumber
&
&
\coprod
\left(
\sB (\ZZ \rtimes \sO(1) )
\underset{ \BGL_2(\ZZ) }
\times
\bigl(
(\CC\PP^\infty)^2\times \GL_2(\RR) 
\bigr)_{/\GL_2(\ZZ)}
\right)^{\amalg \NN}
\\
\label{q7}
&
\underset{\rm base-change}
{~\simeq~}
&
\left(
(\CC\PP^\infty)^2\times \GL_2(\RR) 
\bigr)_{/\GL_2(\ZZ)}
\right)
\coprod
\left(
(\CC\PP^\infty)^2\times \GL_2(\RR) 
\bigr)_{/\ZZ \rtimes \sO(1) }
\right)^{\amalg \NN}
\\
\label{q8}
&
\underset{\rm Lem~\ref{t66}}
{~\simeq~}
&
\left(
{
(\CC\PP^\infty)^2
}_{/\Omega \bigl(\GL_2(\RR)_{/\GL_2(\ZZ)} \bigr)}
\right)
\coprod
\left(
{
(\CC\PP^\infty)^2
}_{/\Omega \bigl(\GL_2(\RR)_{/\ZZ \rtimes \sO(1)} \bigr)}
\right)^{\amalg \NN}
\\
\label{q9}
&
\underset{\rm explicit~identifications}
{~\simeq~}
&
\left(
{
(\CC\PP^\infty)^2
}_{/\Braid}
\right)
\coprod
\left(
{
(\CC\PP^\infty)^2
}_{/\ZZ}
\times 
\sB \ZZ
\right)^{\amalg \NN}
~.
\end{eqnarray}
The first identification follows from Observation~\ref{t4}.
The bottom horizontal map in Observation~\ref{t4} reveals that the action $\ZZ^2 \times \GL_2(\RR) \racts  \TT^2 \rtimes \GL_2(\ZZ)$ can be identified as the diagonal action of the action
\begin{equation}
\label{e1}
\bigl(
\TT^2 \rtimes \GL_2(\ZZ) 
\bigr)^{\op}
\xra{~\pr~} 
\GL_2(\ZZ)^{\op}
\xra{~(-)^T~}
\GL_2(\ZZ)
\underset{\rm standard}\lacts 
\ZZ^2
\end{equation}
together with the action
\[
\bigl(
\TT^2 \rtimes \GL_2(\ZZ) 
\bigr)^{\op}
\xra{~\pr~} 
\GL_2(\ZZ)^{\op}
\xra{~\rm include~}
\GL_2(\RR)^{\op}
\underset{\rm right~mult}\lacts 
\GL_2(\RR)
~.
\]
The equivalence~(\ref{q2}) identifies the $\TT^2 \rtimes \GL_2(\ZZ)$-quotient as the $\TT^2$-quotient followed by the $\GL_2(\ZZ)$-quotient.
The equivalence~(\ref{q3})  is a consequence of the $\TT^2$-action being trivial on both factors.
The equivalence~(\ref{q4}) is an instance of the general base-change identity $(X\times Y)_{/G} \simeq (X_{/G}) \underset{\sB G}\times (Y_{/G})$.  
The equivalence~(\ref{q5}) is the orbit-stabilizer theorem, as we now explain.
By Observation~\ref{q11}, two elements $\begin{bmatrix} u \\ v \end{bmatrix} , \begin{bmatrix} s \\ t \end{bmatrix} \in \ZZ^2$ are in the same (\ref{e1})-orbit if and only if their greatest common divisors ${\sf gcd}(u,v) = {\sf gcd}(s,t)\in \ZZ_{\geq 0}$ agree.
In particular, there is a bijection between the set of (\ref{e1})-orbits and the subset
\[
\ZZ_{\geq 0}
~\cong~
\left\{
\begin{bmatrix} g \\ 0 \end{bmatrix}
\right\}
~\subset~
\ZZ^2
~.
\]
Furthermore, the stabilizer of $\begin{bmatrix} g \\ 0 \end{bmatrix} \in \ZZ^2$ with respect to the action $\GL_2(\ZZ)^{\op} \xra{(-)^T} \GL_2(\ZZ)\lacts \ZZ^2$ is
\[
{\sf Stab}_{\GL_2(\ZZ)^{\op}}\left( \begin{bmatrix} g \\ 0 \end{bmatrix} \right)
~=~
\begin{cases}
\GL_2(\ZZ)^{\op}
&
~,
\text{ if }g = 0 
\\
\left\{
\begin{bmatrix}
1 & 0
\\
c & d
\end{bmatrix}
\right\}^{\op}
\cong
\bigl(
\ZZ
\rtimes
\sO(1)
\bigr)^{\op}
&
~,
\text{ if }g \neq 0 
\end{cases}
~.
\]
Therefore, the quotient
\[
{
\ZZ^2
}_{/\GL_2(\ZZ)}
~\simeq~
\underset{g\in \ZZ_{\geq 0}}
\coprod
\sB {\sf Stab}_{\GL_2(\ZZ)^{\op}}\left( \begin{bmatrix} g \\ 0 \end{bmatrix} \right)
~\simeq~
\BGL_2(\ZZ)
\coprod
\sB
\left(
\ZZ
\rtimes
\sO(1)
\right)^{ \amalg \NN}
~.
\]
The equivalence~(\ref{q6}) is the distribution of $\times$ over $\coprod$.  
The equivalence~(\ref{q7}) is an instance of the general base-change identity $X_{/H}\simeq \sB H \underset{\sB G} \times X_{/G}$.
The equivalence~(\ref{q8}) is an instance of Lemma~\ref{t66}.
The equivalence~(\ref{q9}) is a direct application of Proposition~\ref{t32} for the $0$-cofactor, 
and for each other cofactor it is an application of Proposition~\ref{t32} then a consequence of the diagram~(\ref{q10}) of pullbacks among spaces.

\end{proof}

For $\varphi\in \Fr(\TT^2)$ a framing of the torus, consider the orbit map of $\varphi$ for this continuous action of Lemma~\ref{t50}:
\[
{\sf Orbit}_\varphi
\colon
\Imm(\TT^2)
\xra{~( ~ {\sf constant}_{\varphi}~ , ~\id ~ )~}
\Fr(\TT^2)
\times 
\Imm(\TT^2)
\xra{~{\sf Act}~}
\Fr(\TT^2)
~,\qquad
f
\mapsto {\sf Act}(\varphi , f)
~.
\]

Recall Notation~\ref{d7}.
\begin{observation}
\label{t33}
After Observation~\ref{t4}, for each framing $\varphi \in \Fr(\TT^2)$, 
the orbit map for $\varphi$ fits into a solid diagram among topological spaces:
\begin{equation*}
\xymatrix{
\Diff(\TT^2) 
\ar[rr]
\ar@{-->}[dr]^-{\sH_1}
&&
\Imm(\TT^2)
\ar[rrrr]^-{{\sf Orbit}_\varphi}
\ar@{-->}[dr]^-{\sH_1}
&&
&&
\Fr(\TT^2)
\ar[d]^-{{\rm Cor}~\ref{t25}}_-{\simeq}
\\
&
\GL_2(\ZZ) 
\ar[rr] 
&&
\EZ
\ar[rrr]^-{ A \mapsto (A^T \vec{\varphi} , B_\varphi A ) }
&&
&
\ZZ^2 \times \GL_2(\RR)
\\
\TT^2 \rtimes \GL_2(\ZZ)
\ar[uu]^-{\Aff}
\ar[rr]
\ar[ur]_-{\pr}
&&
\TT^2 \rtimes \EZ
\ar[uu]^(.35){\Aff} | \hole
\ar[ur]_-{\pr}
&&
.
}
\end{equation*}
The existence of the fillers follows from Observation~\ref{t99}.

\end{observation}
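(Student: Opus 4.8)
The plan is to collapse the diagram, along the homotopy equivalences $\Aff$ of Lemma~\ref{t1} and Corollary~\ref{t22}, onto the manifestly commutative diagram assembled from the inclusion $\TT^2 \rtimes \GL_2(\ZZ) \hookrightarrow \TT^2 \rtimes \EZ$ together with the homomorphisms $\pr$ and $\RR\underset{\ZZ}\ot(-)$. The dashed fillers, and their compatibility with the inner ($\Aff$- and $\pr$-) cells, will come from Observation~\ref{t99}; the commutativity of the outer cell --- the one involving ${\sf Orbit}_\varphi$ and the identification of Corollary~\ref{t25} --- will come from Observation~\ref{t4}.

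First I would construct the dashed arrows. The map $\sH_1$ is the effect on first integral homology; since $f\mapsto f_\ast$ is locally constant, it is continuous on each of the spaces in question. Because a smooth local-diffeomorphism of the compact connected torus is a finite-sheeted covering map, Observation~\ref{t99}(2) gives that $f_\ast\colon \sH_1(\TT^2)\cong\ZZ^2 \to \ZZ^2$ has nonzero determinant for every $f\in\Imm(\TT^2)$, so $\sH_1$ refines to a map $\Imm(\TT^2)\to\EZ$ carrying $\Diff(\TT^2)$ into $\GL_2(\ZZ)$ (diffeomorphisms induce invertible maps on homology). This produces the two dashed arrows, compatibly with the left-hand inclusions. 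Next, the two triangles over $\pr$ assert the identity $\sH_1\circ\Aff = \pr$: an affine self-map $q\mapsto Bq+p$ of the torus acts on $\sH_1\cong\pi_1$ through its linear part $B$, translations being homotopically trivial --- this is precisely the commutativity recorded in Observation~\ref{t99}(3). Naturality of $\Aff$, respectively of $\pr$, then gives the two squares relating the bottom-row inclusion to the rows $\Diff(\TT^2)\hookrightarrow\Imm(\TT^2)$ and $\GL_2(\ZZ)\hookrightarrow\EZ$.

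The remaining and essential point is commutativity of the outer cell: that the composite $\Imm(\TT^2)\xra{{\sf Orbit}_\varphi}\Fr(\TT^2)\xra{~\simeq~}\ZZ^2\times\GL_2(\RR)$, with the second arrow the identification of Corollary~\ref{t25} determined by $\varphi$, agrees with $\Imm(\TT^2)\xra{\sH_1}\EZ\xra{\RR\underset{\ZZ}\ot}\GL_2(\RR)\xra{\lag(0,0)\rag\times\id}\ZZ^2\times\GL_2(\RR)$. Since $\Aff\colon\TT^2\rtimes\EZ\to\Imm(\TT^2)$ is a homotopy equivalence by Corollary~\ref{t22} and the target is an ordinary space, it suffices to compare these composites after precomposition with $\Aff$. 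The lower branch then sends $(p,B)$ to $\bigl((0,0),\RR\underset{\ZZ}\ot B\bigr)$ by the identity $\sH_1\circ\Aff=\pr$ above. For the upper branch, unwinding Corollary~\ref{t25} shows that $\varphi=\id\circ\varphi$ corresponds to the basepoint $\bigl((0,0),\uno\bigr)$ --- the constant map at $\uno$ has trivial associated homomorphism on $\pi_1$, hence trivial $\ZZ^2$-coordinate, and $\uno$ is orthogonal --- while Observation~\ref{t4} identifies $\Act$ in these coordinates with the action of $\TT^2\rtimes\EZ$ that projects to $\EZ$, passes to $\GL_2(\RR)$ via $\RR\underset{\ZZ}\ot(-)$, and right-multiplies the $\GL_2(\RR)$-coordinate while fixing the $\ZZ^2$-coordinate. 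So the upper branch also sends $(p,B)$ to $\bigl((0,0),\uno\cdot(\RR\underset{\ZZ}\ot B)\bigr)=\bigl((0,0),\RR\underset{\ZZ}\ot B\bigr)$; the two branches agree after precomposition with the homotopy equivalence $\Aff$, hence agree, and the whole diagram commutes.

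I expect the main obstacle to be the bookkeeping in this last step: tracing $\varphi$ through the chain of homeomorphisms of Corollary~\ref{t25} to confirm it reaches the basepoint, and, more delicately, checking that the $\ZZ^2$-coordinate of $\Act(\varphi,f)$ under that identification vanishes for \emph{every} $f\in\Imm(\TT^2)$ rather than only for affine $f$. The clean route is to observe that this $\ZZ^2$-coordinate is locally constant on $\Imm(\TT^2)$ and that, by Corollary~\ref{t22}, every path-component of $\Imm(\TT^2)$ contains an affine representative; this reduces the vanishing to the affine case, where it is immediate.
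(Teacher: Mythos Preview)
Your argument is correct and follows exactly the route the paper intends: the solid diagram is assembled from Observation~\ref{t4} (which, specialized to the point $\varphi$, gives the large pentagon relating ${\sf Orbit}_\varphi\circ\Aff$ through Corollary~\ref{t25} to $\pr$ followed by $\RR\underset{\ZZ}\ot$ and $\lag(0,0)\rag\times\id$), while the dashed $\sH_1$-fillers and the identity $\sH_1\circ\Aff=\pr$ come from Observation~\ref{t99}(3). The paper records this as an observation with no further argument; your write-up is simply an explicit unwinding of that citation.

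One small remark on your final paragraph: since the right vertical map in the diagram is the zigzag of Corollary~\ref{t25}, the diagram can only be meant in $\Spaces$ (up to homotopy), and in that setting your precomposition-with-$\Aff$ argument already suffices --- two maps out of $\Imm(\TT^2)$ agreeing after a homotopy equivalence are homotopic. The local-constancy argument you sketch is also fine and gives the $\ZZ^2$-coordinate on the nose, but it is not needed.
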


\begin{remark}
\label{r5}
The point-set fiber of ${\sf Orbit}_{\varphi}$ over $\varphi$, which is the point-set stabilizer of the action $\Fr(\TT^2) \racts \Imm(\TT^2)$ of Lemma~\ref{t50}, 
consists of those local-diffeomorphisims $f$ for which the diagram among vector bundles, 
\[
\xymatrix{
\tau_{\TT^2}
\ar[rr]^-{\varphi}
\ar[d]_-{\sD f}
&&
\epsilon^2_{\TT^2}
\\
f^\ast 
\tau_{\TT^2}
\ar[rr]^-{f^\ast \varphi}
&&
f^\ast
\epsilon^2_{\TT^2}
\ar[u]_-=
,
}
\]
commutes.
For a generic framing $\varphi$, a local-diffeomorphism $f$ satisfies this rigid condition if and only if $f = \id_{\TT^2}$ is the identity diffeomorphism.
In the special case of the standard framing $\varphi_0$, a local-diffeomorphism $f$ satisfies this rigid condition if and only if $f = {\sf trans}_{f(0)} \circ {\sf quot}$ is translation in the group $\TT^2$ after a group-theoretic quotient $\TT^2 \xra{\rm quotient} \TT^2$. 
In particular, the point-set fiber of $\bigl( {\sf Orbit}_{\varphi_0} \bigr)_{|\Diff(\TT^2)}$ over $\varphi_0$ is $\TT^2$, and the homomorphism $\TT^2 \hookrightarrow \Diff(\TT^2)$ witnesses the inclusion of those diffeomorphisms that \emph{strictly} fix $\varphi_0$.

On the other hand, the \emph{homotopy-}fiber of~${\sf Orbit}_{\varphi_0}$ over $\varphi_0$ is more flexible: 
it consists of pairs $(f, \gamma)$ in which $f$ is a local-diffeomorphism and $\gamma$ is a homotopy 
\[
\varphi_0
~ \underset{\gamma}\sim ~
\Act(\varphi_0,f)
~.
\]
As we will see, every orientation-preserving local-diffeomorphism $f$ admits a lift to this homotopy-fiber.  
In particular, small perturbations of such $f$, such as multiplication by bump functions in neighborhoods of $\TT^2$, can be lifted to this homotopy-fiber.
\end{remark}

\begin{definition}\label{d1}
Let $\varphi\in \Fr(\TT^2)$ be a framing of the torus.  
The space of \bit{framed local-diffeomorphisms}, and the space of \bit{framed diffeomorphisms}, of the framed smooth manifold $(\TT^2,\varphi)$ are respectively the pullbacks in the $\infty$-category $\Spaces$:
\begin{equation*}
\xymatrix{
\Imm^{\sf fr}(\TT^2,\varphi)
\ar[rr]
\ar[d]
&&
\Imm(\TT^2)
\ar[d]^-{{\sf Orbit}_\varphi}
&&
\Diff^{\sf fr}(\TT^2,\varphi)
\ar[rr]
\ar[d]
&&
\Diff(\TT^2)
\ar[d]^-{{\sf Orbit}_\varphi}
\\
\ast
\ar[rr]^-{\lag \varphi \rag}
&&
\Fr(\TT^2)
&
\text{ and }
&
\ast
\ar[rr]^-{\lag \varphi \rag}
&&
\Fr(\TT^2)
~.
}
\end{equation*}
In the case that the framing $\varphi = \varphi_0$ is the standard framing, we simply denote
\[
\Imm^{\sf fr}(\TT^2)
~:=~
\Imm^{\sf fr}(\TT^2,\varphi_0)
\qquad
\text{ and }
\qquad
\Diff^{\sf fr}(\TT^2)
~:=~
\Diff^{\sf fr}(\TT^2,\varphi_0)
~.
\]
\end{definition}

The following result follows directly from Lemma~\ref{t2} of Appendix \ref{sec.A}, and Proposition~\ref{t26}(1).
\begin{cor}\label{t3}
Let $\varphi \in \Fr(\TT^2)$ be a framing.
The space $\Diff^{\fr}(\TT^2 , \varphi)$ is canonically endowed with the structure of a continuous group over $\Diff(\TT^2)$.
With respect to this structure, 
there is a canonical identification between continuous groups:
\[
\Diff^{\fr}(\TT^2, \varphi)
~\simeq~
\Omega_{[\varphi]}  \bcM_1^{\fr} 
\underset{\rm Prop~\ref{t26}(1)}{~\simeq~}
\begin{cases}
\Omega
\Bigl(
{(\CC\PP^\infty)^2}_{/ \Braid
}
\Bigr)
~\simeq~
\TT^2 \rtimes \Braid
&
,~
\text{ if } \vec{\varphi} = \vec{0}
\\
\Omega
\Bigl(
{(\CC\PP^\infty)^2}_{/ \ZZ
}
\times 
\sB \ZZ
\Bigr)
~\simeq~
(\TT^2 \rtimes \ZZ)\times \ZZ
&
,~
\text{ if } \vec{\varphi} \neq \vec{0}
\end{cases}
~.
\]

\end{cor}

\begin{observation}
\label{t43}
The kernel of $\Phi$ acts by rotating the framing, which is to say 
there is a canonically commutative diagram among continuous groups:
\[
\xymatrix{
\ZZ
\ar[d]^-{\cong}_-{\bigl\lag (\tau_1\tau_2)^6 \bigr\rag}
\ar[rr]^-{\simeq}
&&
\Omega_{\uno} \GL_2(\RR)
\ar[rr]^-{\Omega \bigl( A\mapsto A \cdot \varphi_0 \bigr)}
&&
\Omega_{\varphi_0} \Fr(\TT^2)
\ar[d]
\\
\Ker(\Phi)
\ar[rr]
&&
\Braid
\ar[rr]^-{\Aff^{\fr}}
&&
\Diff^{\fr}(\TT^2)
.
}
\]
Here $\Aff^{\fr}$ is defined in Lemma~\ref{t34}. Indeed, there is a canonically commutative diagram among spaces, in which each row is an $\Omega$-Puppe sequence:
\[
\xymatrix{
\Ker(\Phi)
\ar[rr]
\ar[d]
&&
\Braid
\ar[rr]^-{\Phi}
\ar[d]^-{\Aff^{\fr}}
&&
\GL_2(\ZZ)
\ar[rr]^-{\RR\underset{\ZZ}\ot}
\ar[d]^-{\Aff}
&&
\GL_2(\RR)
\ar[d]^-{\rm Rotate~the~framing~\varphi_0}
\\
\Omega_{\varphi_0} \Fr(\TT^2)
\ar[rr]
&&
\Diff^{\fr}(\TT^2)
\ar[rr]
&&
\Diff(\TT^2)
\ar[rr]^-{{\sf Orbit}_{\varphi_0}}
&&
\Fr(\TT^2)
.
}
\]

\end{observation}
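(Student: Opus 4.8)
The plan is to establish the second (``Indeed'') diagram in the statement, with both rows genuine $\Omega$-Puppe sequences and every square commutative, and then to extract the first diagram by restricting to the two left-hand columns and substituting the identifications of the left terms. The bottom row is formal: by Definition~\ref{d1}, $\Diff^{\fr}(\TT^2,\varphi)$ is the pullback of $\ast\xra{\lag\varphi\rag}\Fr(\TT^2)\xla{{\sf Orbit}_\varphi}\Diff(\TT^2)$, hence the homotopy fiber of ${\sf Orbit}_\varphi$ over $\varphi$, and the sequence $\Omega_\varphi\Fr(\TT^2)\to\Diff^{\fr}(\TT^2,\varphi)\to\Diff(\TT^2)\xra{{\sf Orbit}_\varphi}\Fr(\TT^2)$ is the tail of the associated fiber sequence of the pointed map ${\sf Orbit}_\varphi$.

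For the top row I would use Proposition~\ref{t32}. The image of $\Phi$ is $\SL_2(\ZZ)\subset\GL_2^+(\RR)$, and $\GL_2^+(\RR)\simeq\SL_2(\RR)$ is an Eilenberg--MacLane space of type $\sB\ZZ$, so its universal cover is weakly contractible; picking the preimage of $\uno$ as basepoint, the covering map $\w{\SL}_2(\RR)\to\SL_2(\RR)$ is, in spaces over $\SL_2(\RR)$, equivalent to $\ast_{\uno}\to\SL_2(\RR)$. Consequently the pullback $\Braid\cong\SL_2(\ZZ)\times_{\SL_2(\RR)}\w{\SL}_2(\RR)$ of Proposition~\ref{t32} computes $\mathrm{hofib}_{\uno}\bigl(\SL_2(\ZZ)\xra{\RR\underset{\ZZ}\ot}\SL_2(\RR)\bigr)$, equivalently (only the $\uno$-component of $\GL_2(\RR)$ contributes) $\mathrm{hofib}_{\uno}\bigl(\GL_2(\ZZ)\xra{\RR\underset{\ZZ}\ot}\GL_2(\RR)\bigr)$, and this identification is compatible with the projection to $\GL_2(\ZZ)$, which is $\Phi$. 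The $\Omega$-Puppe tail of $\RR\underset{\ZZ}\ot\colon\GL_2(\ZZ)\to\GL_2(\RR)$ is therefore $\Omega_{\uno}\GL_2(\RR)\to\Braid\xra{\Phi}\GL_2(\ZZ)\xra{\RR\ot}\GL_2(\RR)$, where $\Omega_{\uno}\GL_2(\RR)\simeq\pi_1\SL_2(\RR)\cong\ZZ$ and the connecting map has image the fiber $\Phi^{-1}(\uno)=\Ker(\Phi)$; by Proposition~\ref{t32} it is the isomorphism $\ZZ\xra{\cong}\Ker(\Phi)$ carrying $1$ to $(\tau_1\tau_2)^6$.

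The rightmost square of the second diagram is exactly Observation~\ref{t33}: under the equivalence $\Fr(\TT^2)\simeq\ZZ^2\times\GL_2(\RR)$ of Corollary~\ref{t25}, the composite ${\sf Orbit}_\varphi\circ\Aff$ restricted to affine diffeomorphisms sends $(p,A)$ to $\bigl((0,0),A\bigr)$ — the derivative of $q\mapsto Aq+p$ being the constant $A$ — and under the same equivalence the ``rotate the framing $\varphi$'' map $\GL_2(\RR)\to\Fr(\TT^2)$, $A\mapsto A\cdot\varphi$, also sends $A$ to $\bigl((0,0),A\bigr)$, a constant matrix contributing nothing to the $\ZZ^2$ winding factor (visible from the bottom row of the diagram in Observation~\ref{t4}). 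With this square commuting and both rows identified as $\Omega$-Puppe tails, functoriality of homotopy fibers and of the connecting map produces the two remaining verticals: the induced map on fibers $\Braid=\mathrm{hofib}_{\uno}(\RR\ot)\to\mathrm{hofib}_\varphi({\sf Orbit}_\varphi)=\Diff^{\fr}(\TT^2,\varphi)$, which we take as $\Aff^{\fr}$, and the induced map on looped bases $\Omega_{\uno}\GL_2(\RR)\to\Omega_\varphi\Fr(\TT^2)$, which is $\Omega$ of $A\mapsto A\cdot\varphi$; all squares then commute. Restricting to the two leftmost columns and substituting $\ZZ\xra{\cong}\Ker(\Phi)$ and $\ZZ\xra{\simeq}\Omega_{\uno}\GL_2(\RR)$ from the previous paragraph yields the first diagram, and in particular exhibits $\Ker(\Phi)\to\Braid\xra{\Aff^{\fr}}\Diff^{\fr}(\TT^2,\varphi)$ as the connecting map $\Omega_\varphi\Fr(\TT^2)\to\Diff^{\fr}(\TT^2,\varphi)$ precomposed with the loop of ``rotate the framing'' — which is the assertion that $\Ker(\Phi)$ acts by rotating $\varphi$.

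I expect the one genuine subtlety to be the coherence between Step~two and the assembly step: one must check that the canonical null-homotopy of $\Braid\xra{\Phi}\GL_2(\ZZ)\xra{\RR\ot}\GL_2(\RR)$ supplied by the lift to the contractible cover in Proposition~\ref{t32} is precisely the null-homotopy that, after postcomposition with ``rotate the framing'', matches the null-homotopy of $\Braid\to\Diff(\TT^2)\xra{{\sf Orbit}_\varphi}\Fr(\TT^2)$ implicit in the definition of $\Aff^{\fr}$ — equivalently, that the comparison of homotopy fibers really is a map of $\Omega$-Puppe sequences and not merely a levelwise-compatible collection of maps. Everything else is bookkeeping with fiber sequences and citations of Proposition~\ref{t32}, Observation~\ref{t33}, and Observation~\ref{t4}.
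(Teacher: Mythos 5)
The paper states this as an \emph{Observation} and offers no separate proof beyond the ``Indeed'' diagram itself; your write-up is a correct elaboration of exactly the argument that diagram encodes. You identify the bottom row as the $\Omega$-Puppe tail of ${\sf Orbit}_\varphi$ via Definition~\ref{d1}, the top row as the $\Omega$-Puppe tail of $\RR\otimes_\ZZ\colon\GL_2(\ZZ)\to\GL_2(\RR)$ via Proposition~\ref{t32} (equivalently Corollary~\ref{t31}), the rightmost square via Observations~\ref{t33} and~\ref{t4}, and then propagate everything leftward by functoriality of the Puppe sequence; extracting the first diagram by restricting to the left two columns and substituting the canonical identifications $\ZZ\cong\Ker(\Phi)\simeq\Omega_{\uno}\GL_2(\RR)$ is the right move. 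The ``one genuine subtlety'' you flag — that the null-homotopy furnished by the pullback of Proposition~\ref{t32} must be the same one implicit in the definition of $\Aff^{\fr}$, so that the level-wise comparison is genuinely a map of fiber sequences — is indeed the only thing that needs care, and it is discharged by Lemma~\ref{t34}, whose construction of $\Aff^{\fr}$ proceeds exactly by chaining together the pullback squares of Corollary~\ref{t31} and Observation~\ref{t33}; citing Lemma~\ref{t34} explicitly there would close that loop cleanly.
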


\subsection{Proof of Theorem~\ref{Theorem A} and Corollary~\ref{t40}} \label{sec.proofs}
Theorem~\ref{Theorem A} consists of three statements. 
Theorem~\ref{Theorem A}(1) is implied by Proposition~\ref{t26}.
Theorem~\ref{Theorem A}(2a) is implied by Corollary~\ref{t3}.
Theorem~\ref{Theorem A}(2b) (as well as Theorem~\ref{Theorem A}(2a)) is implied by Lemma~\ref{t34} below.

\begin{notation}
\label{d6}
Let $\vec{v} = \begin{bmatrix} p \\ q \end{bmatrix} \in \ZZ^2$ and $r\in \ZZ$.
Denote the matrices
\[
U_{\vec{v}} ~:=~
\begin{bmatrix}
1+yz
&
z^2
\\
-u^2
&
1-yz
\end{bmatrix}^T
\qquad
\text{ and }
\qquad
D_{\vec{v},r} ~:=~
\begin{bmatrix}
1 + (r-1) xy
& 
-(r-1)xz
\\
(r-1)wy
&
1+(r-1)wz
\end{bmatrix}^T
~,
\]
for some $w,z,y,z \in \ZZ$ that solve
\begin{eqnarray}
\label{q12}
wp+xq
&
=
&
{\sf gcd}(p,q) \geq 0
\\
\nonumber
yp+zq
&
=
&
0
\\
\nonumber
wz-xy
&
=
&
1
~.
\end{eqnarray}
Denote the semi-direct continuous group, and continuous monoid, 
\[
\TT^2 
\underset{U_{\vec{v}}}
\rtimes 
\ZZ
\qquad
\text{ and }
\qquad
\TT^2 
\underset{D_{\vec{v}}, U_{\vec{v}}}
\rtimes 
(\NN^\times \ltimes \ZZ)
\]
given through the actions on the continuous group $\TT^2$:
\[
\ZZ 
\xra{b \mapsto {U_{\vec{v}}^b} }
\SL_2(\ZZ)
\lacts
\TT^2
\qquad
\text{ and }
\qquad
\ZZ
\rtimes
\NN^\times 
\xra{(b,d) \mapsto {{U_{\vec{v}}^b} D_{\vec{v},d}} }
\EZ
\lacts 
\TT^2
~.
\]

\end{notation}

\begin{remark}
\label{q13}
Observation~\ref{q11} ensures the existence of a solution to~(\ref{q12}).
Observation~\ref{q11} also implies, for $U'_{\vec{v}}$ and $D'_{\vec{v},r}$ defined by another choice of solution to~(\ref{q12}), then $U'_{\vec{v}}$ and $D'_{\vec{v},r}$ are respectively canonically conjugate with $U_{\vec{v}}$ and $D_{\vec{v},r}$, and therefore the continuous groups and continuous monoids are respectively canonically identified:
\[
\TT^2 
\underset{U_{\vec{v}}}
\rtimes 
\ZZ
~\simeq~
\TT^2 
\underset{U'_{\vec{v}}}
\rtimes 
\ZZ
\qquad
\text{ and }
\qquad
\TT^2 
\underset{U_{\vec{v}} , D_{\vec{v}}}
\rtimes 
(
\ZZ
\rtimes
\NN^\times 
)
~\simeq~
\TT^2 
\underset{U'_{\vec{v}} , D'_{\vec{v}} }
\rtimes 
(
\ZZ
\rtimes
\NN^\times 
)
~.
\]
\end{remark}

The next result extends Corollary~\ref{t3} from an assertion about $\Diff^{\fr}(\TT^2,\varphi)$ to one about $\Imm^{\fr}(\TT^2,\varphi)$.
Recall Notation~\ref{d7}.
\begin{lemma}
\label{t34}
Let $\varphi\in \Fr(\TT^2)$ be a framing of the torus.
\begin{enumerate}

\item
If $\vec{\varphi} = \vec{0}$, then there are canonical equivalences in the diagrams among continuous monoids:
\begin{equation}
\label{e54}
\xymatrix{
\TT^2 \rtimes \Ebraid
\ar@{-->}[rr]^-{\simeq}_{\Aff^{\fr}}
\ar[d]_-{\id \rtimes \Psi}
&&
\Imm^{\fr}(\TT^2,\varphi) 
\ar[d]^-{\rm forget}
&&
\TT^2 \rtimes \Braid 
\ar@{-->}[rr]^-{\simeq}_{\Aff^{\fr}}
\ar[d]_-{\id \rtimes \Phi}
&&
\Diff^{\fr}(\TT^2,\varphi) 
\ar[d]^-{\rm forget}
\\
\TT^2 \rtimes \EZ
\ar[rr]^-{\simeq}_-\Aff
&&
\Imm(\TT^2)
&
\text{ and }
&
\TT^2 \rtimes  \GL_2(\ZZ)
\ar[rr]^-{\simeq}_-\Aff
&&
\Diff(\TT^2)
.
}
\end{equation}

\item
If $\vec{\varphi} \neq \vec{0}$, then there are canonical equivalences in the diagrams among continuous monoids:
\[
\xymatrixrowsep{1cm}
\xymatrixcolsep{.45cm}
\xymatrix{
\left( \TT^2 \underset{U_{\vec{\varphi}}, D_{\vec{\varphi}} } \rtimes (\ZZ \rtimes \NN^\times ) \right) \times \ZZ
\ar@{-->}[rr]^-{\simeq}_-{\Aff^{\fr}}
\ar[d]_-{\id \rtimes \bigl( (b,d,k)\mapsto  U_{\vec{\varphi}}^b D_{\vec{\varphi},d} \bigr)}
&&
\Imm^{\fr}(\TT^2,\varphi) 
\ar[d]^-{\rm forget}
&&
\bigl(\TT^2 \underset{U_{\vec{\varphi}}} \rtimes \ZZ\bigr)
\times \ZZ
\ar@{-->}[rr]^-{\simeq}_-{\Aff^{\fr}}
\ar[d]^-{\id \rtimes \bigl( (b,k)\mapsto U_{\vec{\varphi}}^b \bigr)}
&&
\Diff^{\fr}(\TT^2,\varphi) 
\ar[d]^-{\rm forget}
\\
\TT^2 \rtimes \EZ
\ar[rr]^-{\simeq}_-\Aff
&&
\Imm(\TT^2)
&
\text{ and }
&
\TT^2 \rtimes  \GL_2(\ZZ)
\ar[rr]^-{\simeq}_-\Aff
&&
\Diff(\TT^2)
.
}
\]

\end{enumerate}

\end{lemma}

\begin{proof}
Using Observation~\ref{t21}, the canonical equivalences in the commutative diagrams on the right follow from those on the left.

Consider the diagrams in the $\infty$-category $\Spaces$, which make use of Notation~\ref{d7}.
\begin{enumerate}
\item
For $\vec{\varphi} = \vec{0}$:
\[
\xymatrix{
\TT^2 \rtimes\Ebraid
\ar[d]_-{\id \rtimes \Psi}
\ar[rr]^-{\pr}
&&
\Ebraid
\ar[d]_-{\Psi}
\ar[rrrr]^-{!}
&&
&&
\ast
\ar[d]_-{\left\lag \left(\vec{\varphi} , B_\varphi \right) \right\rag}
\\
\TT^2 \rtimes \EZ 
\ar[d]_-{\Aff}^{\simeq}
\ar[rr]_-{\pr}
&&
\EZ
\ar[rrrr]_-{ A \mapsto (A \vec{\varphi} , B_\varphi A) }
&&
&&
\ZZ^2 \times \GL_2(\RR)
\ar[d]^-{\rm Cor~\ref{t25}}_-{\simeq}
\\
\Imm(\TT^2)
\ar[rrrrrr]^-{{\sf Orbit}_\varphi}
&&
&&
&&
\Fr(\TT^2)
.
}
\]

\item
For $\vec{\varphi} \neq \vec{0}$:
\[
\xymatrix{
\left( \TT^2 \underset{U_{\vec{\varphi}} , D_{\vec{\varphi}} } \rtimes (\ZZ \rtimes \NN^\times ) \right) \times \ZZ
\ar[d]^-{\pr}
\ar[rr]^-{\pr}
&&
( \ZZ \rtimes \NN^\times ) \times \ZZ
\ar[d]^-{\pr}
\ar[rrrr]^-{!}
&&
&&
\ast
\ar[d]_-{\lag  ( \ast , B_\varphi ) \rag}
\\
\TT^2 \underset{U_{\vec{\varphi}} , D_{\vec{\varphi}} } \rtimes \left ( \ZZ \rtimes \NN^\times  \right)
\ar[d]^-{\id \rtimes \bigl((b,d)\mapsto  U_{\vec{\varphi}}^b D_{\vec{\varphi},d} \bigr)}
\ar[rr]^-{\pr}
&&
\ZZ
\rtimes
\NN^\times 
\ar[d]^-{(b,d)\mapsto  U_{\vec{\varphi}}^b D_{\vec{\varphi},d}}
\ar[rrrr]^-{(b,d)\mapsto B_\varphi  U_{\vec{\varphi}}^b D_{\vec{\varphi},d} }
&&
&&
\ast
\times
\GL_2(\RR)_{B_\varphi}
\ar[d]_-{\lag \vec{\varphi} \rag \times {\sf inc} }
\\
\TT^2 \rtimes \EZ 
\ar[d]_-{\Aff}^{\simeq}
\ar[rr]_-{\pr}
&&
\EZ
\ar[rrrr]_-{ A \mapsto (A^T \vec{\varphi} , B_\varphi A) }
&&
&&
\ZZ^2 \times \GL_2(\RR)
\ar[d]^-{\rm Cor~\ref{t25}}_-{\simeq}
\\
\Imm(\TT^2)
\ar[rrrrrr]^-{{\sf Orbit}_\varphi}
&&
&&
&&
\Fr(\TT^2)
,
}
\]
where $\GL_2(\RR)_{B_\varphi} \subset \GL_2(\RR)$ is the path-component containing $B_\varphi \in \GL_2(\RR)$.

\end{enumerate}
Observation~\ref{t33} implies that each bottom rectangle canonically commutes.
Lemma~\ref{t1} and Corollary~\ref{t25} together imply each of these bottom rectangles witnesses a pullback.
Each of the top left squares, as well as the middle left square in the lower diagram, is clearly a pullback.
Corollary~\ref{t31} states that the top right square in the upper diagram is a pullback.
Provided the top right and middle right squares in the lower diagram are pullbacks,
we would then conclude that each of the outer squares witnesses a pullback.  
The result would then follows by Definition~\ref{d1} of $\Imm^{\fr}(\TT^2,\varphi)$.

So it remains to show that the top right and middle right squares in the lower diagram are pullbacks.
The paths of matrices,
\[
[0,1]
~
\ni 
~
t
\longmapsto
\begin{bmatrix}
1+tcd
&
tz^2
\\
-ty^2
&
1-tyz
\end{bmatrix}^T
~,~
\begin{bmatrix}
1 + t(r-1) xy
& 
-t(r-1)xz
\\
t(r-1)wy
&
1+t(r-1)wz
\end{bmatrix}^T
~\in~
\GL_2(\RR)
~,
\]
determine an identification of the named map $\ZZ \rtimes \NN^\times \to \GL_2(\RR)$ with the constant map at $B_\varphi$.  
Together with the standard identification $\ZZ\simeq \Omega_{B_\varphi} \GL_2(\RR)$, this shows that the top right square in the lower diagram as a pullback.  
The middle right square of the lower diagram is a pullback because the map
\[
\ZZ
\rtimes
(\ZZ\setminus \{0\}) 
\longrightarrow
{\sf Stab}_{\EZ^{\op}}
\left(
\vec{\varphi}
\right)
~,\qquad
(b,d)
\longmapsto
\left(
\begin{bmatrix}
w & x
\\
y & z
\end{bmatrix}^{-1} 
\begin{bmatrix}
1 & b
\\
0 & d
\end{bmatrix}
\begin{bmatrix}
w & x
\\
y & z
\end{bmatrix}
\right)^T
~=~
U_{\vec{\varphi}}^b
D_{\vec{\varphi},d} 
~,
\]
is an isomorphism between monoids, where $w,x,y,z\in \ZZ$ are as in Notation~\ref{d6}.

\end{proof}

By applying the product-preserving functor $\Spaces \xra{\pi_0} {\sf Sets}$, Lemma~\ref{t34} implies the following.
\begin{cor}\label{r2}
There is a canonical isomorphism in the diagram of groups:
\[
\xymatrix{
\Braid
\ar@{-->}[rr]^-{\cong}
\ar[d]_-{\Phi}
&&
{\sf MCG}^{\sf fr}(\TT^2)
\ar[d]^-{\rm forget}
\\
\GL_2(\ZZ)
\ar[rr]^-{\cong}
&&
{\sf MCG}(\TT^2)
.
}
\]

\end{cor}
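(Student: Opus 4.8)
The plan is to apply the path-components functor $\pi_0\colon \Spaces \to \Sets$ to the right-hand commutative square in Lemma~\ref{t34}, which is a diagram of continuous groups. The key input is that $\pi_0$ preserves finite products, hence carries group-objects in $\Spaces$ (continuous groups) to groups and morphisms between them to group homomorphisms; in particular it carries equivalences of continuous groups to isomorphisms of groups. Applying $\pi_0$ to
\[
\xymatrix{
\TT^2 \rtimes \Braid
\ar[rr]^-{\simeq}_{\Aff^{\fr}}
\ar[d]_-{\id \rtimes \Phi}
&&
\Diff^{\fr}(\TT^2)
\ar[d]^-{\rm forget}
\\
\TT^2 \rtimes \GL_2(\ZZ)
\ar[rr]^-{\simeq}_-{\Aff}
&&
\Diff(\TT^2)
}
\]
then produces a commutative square of groups whose horizontal arrows are isomorphisms.

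What remains is to identify the four corners. On the right, by definition ${\sf MCG}(\TT^2) = \pi_0\Diff(\TT^2)$ and ${\sf MCG}^{\sf fr}(\TT^2) = \pi_0\Diff^{\fr}(\TT^2)$ (cf.\ Corollary~\ref{t38}), and $\pi_0$ of the forgetful map of continuous groups is the forgetful homomorphism of mapping class groups. On the left, the underlying space of $\TT^2 \rtimes \Braid$ is the product space $\TT^2 \times \Braid$; since $\TT^2$ is path-connected and $\Braid$ is discrete, $\pi_0(\TT^2 \rtimes \Braid) \cong \pi_0(\TT^2) \times \pi_0(\Braid) \cong \Braid$, and because the semidirect-product multiplication sees the $\Braid$-action on $\TT^2$ only through the now-trivial factor $\pi_0(\TT^2)$, this is an isomorphism of groups. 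The same reasoning gives $\pi_0(\TT^2 \rtimes \GL_2(\ZZ)) \cong \GL_2(\ZZ)$, and under these two identifications $\pi_0(\id \rtimes \Phi)$ is $\Phi$. Substituting all four identifications into the square above yields the claimed commutative diagram of groups.

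There is no real obstacle here: the substantive content — that $\Aff^{\fr}$ and $\Aff$ are equivalences of continuous monoids restricting to equivalences of continuous groups — is exactly Lemma~\ref{t34}, and the only point needing care is the entirely routine verification that $\pi_0$ of a semidirect product $N \rtimes Q$ of continuous groups with $N$ path-connected is $Q$ as a group, which follows from product-preservation of $\pi_0$ together with the fact that the underlying space of $N \rtimes Q$ is the underlying product space $N \times Q$.
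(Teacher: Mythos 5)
Your proof is correct and is exactly the paper's approach: the paper derives the corollary by applying the product-preserving functor $\pi_0\colon\Spaces\to{\sf Sets}$ to Lemma~\ref{t34}. You have merely spelled out the routine identifications of the four corners ($\pi_0$ of a semidirect product with path-connected kernel, and $\pi_0$ of the diffeomorphism groups being the mapping class groups) that the paper leaves implicit.
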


\begin{remark}\label{r1}
Proposition~\ref{t32} and Corollary~\ref{r2} grant a central extension among groups:
\[
1
\longrightarrow
\ZZ
\longrightarrow
{\sf MCG}^{\fr}(\TT^2)
\longrightarrow
{\sf MCG}^{\sf or}(\TT^2)
\longrightarrow
1
~.
\\
\]

\end{remark}

\begin{proof}[Proof of Corollary~\ref{t40}]
By construction, the diagram among spaces,
\[
\xymatrix{
\TT^2 \rtimes \EZ
\ar[rr]^-{\simeq}_-{\rm Cor~\ref{t22}}
\ar[dr]_-{\pr}
&&
\Imm(\TT^2)
\ar[dl]^-{\ev_0}
\\
&
\TT^2
&
,
}
\]
canonically commutes, in which the left vertical map is projection, and the right vertical map evaluates at the origin $0\in \TT^2$.  
Therefore, upon taking fibers over $0\in \TT^2$, the (left) commutative diagram~(\ref{e54}) among continuous monoids determines the commutative diagram among commutative monoids:
\begin{equation*}
\xymatrix{
\Ebraid
\ar[rrrr]^-{\simeq}
\ar[d]
&&
&&
\Imm^{\fr}(\TT^2 ~{\sf rel } ~0)
\ar[d]
\\
\EZ
\ar[rrrr]^-{\simeq}_-{\rm Cor~\ref{t22}}
\ar[drr]_-{\RR \underset{\ZZ}\ot}
&&
&&
\Imm(\TT^2 ~{\sf rel } ~0)
\ar[dll]^-{\sD_0}
\\
&&
\GL_2(\RR)
&&
,
}
\end{equation*}
in which the map $\RR \underset{\ZZ}\ot $ is the standard inclusion, and $\sD_0$ takes the derivative at the origin $0\in \TT^2$.
To finish, Corollary~\ref{t31} supplies the left pullback square in the following diagram among continuous groups, while the right pullback square is definitional:
\[
\xymatrix{
\Braid
\ar[rr]
\ar[d]
&&
\ast
\ar[d]
&&
\Diff(\TT^2 \smallsetminus \BB^2 ~{\sf rel}~\partial )
\ar[ll]
\ar[d]
\\
\GL_2(\ZZ)
\ar[rr]^-{\RR \underset{\ZZ}\ot}
&&
\GL_2(\RR)
&&
\Diff(\TT^2~{\sf rel}~0)
\ar[ll]_-{\sD_0}
.
}
\]
The result follows. 

\end{proof}

\subsection{Comparison with sheering}
\label{sec.sheering}
We use Theorem~\ref{Theorem A}(2) to show that the $\Diff^{\fr}(\TT^2)$ is generated by sheering.  
We quickly tour through some notions and results, which are routine after the above material.  

\begin{notation}
It will be convenient to define the projection $\TT^{2} \xra{\pr_{i}} \TT$ to be projection \emph{off} of the $i^{\rm th}$ coordinate. 
So for $\TT^{2} \ni p = (x_{p}, y_{p}),$ we have $\pr_{1}(p) = y_{p}$ and $\pr_{2}(p) = x_{p}.$
\end{notation}

Let $i\in \{1,2\}$.
Consider the topological subgroup and topological submonoid,
\[
\Diff(\TT^2 \xra{\pr_i} \TT)
~\subset~
\Diff(\TT^2)
\qquad
\text{ and }
\qquad
\Imm(\TT^2 \xra{\pr_i} \TT)
~\subset~
\Imm(\TT^2)
~,
\]
consisting of those (local-)diffeomorphisms $\TT^2\xra{f} \TT^2$ that lie over some (local-)diffeomorphism $\TT\xra{\ov{f}} \TT$:
\begin{equation}
\label{e87}
\xymatrix{
\TT^2
\ar[rr]^-f
\ar[d]_-{\pr_i}
&&
\TT^2 \ar[d]^-{\pr_i}
\\
\TT
\ar[rr]^-{\ov{f}}
&&
\TT
~.
}
\end{equation}
The topological space of \bit{framings} of $\TT^2 \xra{\pr_i} \TT$ is the subspace
\[
\Fr(\TT^2 \xra{\pr_i} \TT)
~\subset~
\Fr(\TT^2)
\]
consisting of those framings $\tau_{\TT^2}\xra{\varphi} \epsilon^2_{\TT^2}$ that lie over a framing $\tau_{\TT} \xra{\ov{\varphi}} \epsilon^1_{\TT}$:
\begin{equation}
\label{e70}
\xymatrix{
\tau_{\TT^2}
\ar[rr]^-{\varphi}_-{\cong}
\ar[d]_-{\sD \pr_i}
&&
\epsilon^2_{\TT^2}
\ar[d]^-{\pr_i \times \pr_i}
\\
\tau_{\TT}
\ar[rr]^-{\ov{\varphi}}_-{\cong}
&&
\epsilon^1_{\TT}
.
}
\end{equation}
Because $\pr_i$ is surjective, for a given $\varphi$, there is a unique $\ov{\varphi}$ as in~(\ref{e70}) if any.  
Better, $\varphi\mapsto \ov{\varphi}$ defines a continuous map:
\begin{equation}
\label{e82}
\Fr(\TT^2 \xra{\pr_i} \TT)
\longrightarrow
\Fr(\TT)
~,\qquad
\varphi
\mapsto 
\ov{\varphi}
~.
\end{equation}
Notice that the continuous right-action $\Act$ of Lemma~\ref{t50} evidently restricts as a continuous right-action:
\[
\Fr(\TT^2 \xra{\pr_i} \TT)
~\racts~
\Imm(\TT^2 \xra{\pr_i} \TT)
~.
\]
Furthermore, the map~(\ref{e82}) is evidently equivariant with respect to the morphism between topological monoids $\Imm(\TT^2 \xra{\pr_i} \TT) \xra{\rm forget} \Imm(\TT)$:
\[
\Bigl(
~
\Fr(\TT^2 \xra{\pr_i} \TT)
~\racts~
\Imm(\TT^2 \xra{\pr_i} \TT)
~\Bigr)
~
\xra{~\rm forget~}
~
\Bigl(
~
\Fr(\TT)
~\racts~
\Imm(\TT)
~\Bigr)
~,\qquad
\varphi
\mapsto 
\ov{\varphi}
~.
\]

Now let $\varphi \in \Fr(\TT^2 \xra{\pr_i} \TT)$ be a framing of the projection.
The orbit of $\varphi$ by this action is the map
\[
{\sf Orbit}_\varphi
\colon 
\Imm(\TT^2 \xra{\pr_i}\TT)
\longrightarrow
\Fr(\TT^2 \xra{\pr_i} \TT)
~,\qquad
f\mapsto \Act(\varphi,f)
~.
\]
The space of \bit{framed local-diffeomorphisms}, and the space of \bit{framed diffeomorphisms}, of $(\TT^2 \xra{\pr_i} \TT , \varphi)$ are respectively the homtopy-pullbacks among spaces:
\[
\xymatrix{
\Imm^{\sf fr}(\TT^2\xra{\pr_i} \TT,\varphi)
\ar[r]
\ar[d]
&
\Imm(\TT^2\xra{\pr_i} \TT)
\ar[d]^-{{\sf Orbit}_\varphi}
&&
\Diff^{\sf fr}(\TT^2 \xra{\pr_i} \TT,\varphi)
\ar[r]
\ar[d]
&
\Diff(\TT^2\xra{\pr_i} \TT)
\ar[d]^-{{\sf Orbit}_\varphi}
\\
\ast
\ar[r]^-{\lag \varphi \rag}
&
\Fr(\TT^2\xra{\pr_i} \TT)
&
\text{ and }
&
\ast
\ar[r]^-{\lag \varphi \rag}
&
\Fr(\TT^2\xra{\pr_i} \TT)
~.
}
\]

As in Observation~\ref{t20},
the topological space $\Fr(\TT^2 \xra{\pr_i} \TT)$ is a torsor for the topological group $\Map\bigl( \TT^2 , \GL_{\{i\}\subset 2}(\RR) \bigr)$ of smooth maps from $\TT^2$ to the subgroup 
\[
\GL_{\{i\}\subset 2}(\RR)
~:=~
\Bigl\{
A \mid
A\vec{e}_i \in {\sf Span}\{\vec{e}_i\}
\Bigr\}
~\subset~
\GL_2(\RR)
\]
consisting of those $2 \times 2$ matrices that carry the $i^{\rm th}$-coordinate line to itself. 
For each $i=1,2$, denote the intersections in $\GL_2(\RR)$:
\[
\xymatrix{
\SL_2(\ZZ)
\ar[rr]
\ar[d]
&&
\GL_2(\ZZ)
\ar[d]
&
&
\SL_{\{i\}\subset 2}(\ZZ)
\ar[rr]
\ar[d]
&&
\GL_{\{i\}\subset 2}(\ZZ)
\ar[d]
\\
\EpZ
\ar[rr]
&&
\EZ
&
\overset{- \cap \GL_{\{i\}\subset 2}(\RR)}\longmapsto
&
\sE^+_{\{i\}\subset 2}(\ZZ)
\ar[rr]
&&
\sE_{\{i\}\subset 2}(\ZZ)
}
\]

\begin{lemma}
\label{t62} 
For each $i=1,2$, the homotopy-equivalences between continuous monoids of Lemma~\ref{t1} and Corollary~\ref{t22} restrict as homotopy-equivalences between continuous monoids:
\[
\xymatrix{
\TT^2 \rtimes \GL_{\{i\}\subset 2}(\ZZ)
\ar[d]_-{\rm inclusion}
\ar@{-->}[r]^-{\Aff_i}_-{\simeq}
&
\Diff(\TT^2 \xra{\pr_i} \TT)
\ar[d]^-{\rm inclusion}
&&
\TT^2 \rtimes \sE_{\{i\}\subset 2}(\ZZ) 
\ar[d]_-{\rm inclusion}
\ar@{-->}[r]^-{\Aff_i}_-{\simeq}
&
\Imm(\TT^2 \xra{\pr_i} \TT)
\ar[d]^-{\rm inclusion}
\\
\TT^2 \rtimes \GL(\ZZ)
\ar[r]^-{\Aff}_-{\simeq}
&
\Diff(\TT^2)
&
\text{ and }
&
\TT^2 \rtimes \EZ 
\ar[r]^-{\Aff}_-{\simeq}
&
\Imm(\TT^2)
.
}
\]

\end{lemma}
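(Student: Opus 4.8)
The plan is to deduce the statement from Lemma~\ref{t1} and Corollary~\ref{t22}, together with their (classical) one‑dimensional analogues, by comparing the two horizontal maps fiberwise over $\Diff(\TT)$ (respectively $\Imm(\TT)$) along the projection $\pr_i$. First note that $\Aff$ does restrict as asserted: an affine self-map $q\mapsto Aq+p$ of $\TT^2$ descends along $\pr_i$ to a self-map of $\TT$ exactly when $A$ preserves the $i^{\rm th}$ coordinate line, i.e. when $A\in\GL_{\{i\}\subset 2}(\RR)$. Consequently $\Aff$ carries $\TT^2\rtimes\GL_{\{i\}\subset 2}(\ZZ)$ into $\Diff(\TT^2\xra{\pr_i}\TT)$ and $\TT^2\rtimes\sE_{\{i\}\subset 2}(\ZZ)$ into $\Imm(\TT^2\xra{\pr_i}\TT)$, which defines $\Aff_i$; in fact this argument identifies $\TT^2\rtimes\GL_{\{i\}\subset 2}(\ZZ)$ with $\Aff^{-1}\bigl(\Diff(\TT^2\xra{\pr_i}\TT)\bigr)$ inside $\TT^2\rtimes\GL_2(\ZZ)$, and similarly in the immersion case, so commutativity of the square in the statement with the forgetful maps to $\Diff(\TT^2)$ (or $\Imm(\TT^2)$) is automatic.

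Next, consider the commutative square of continuous monoids
\[
\xymatrix{
\TT^2\rtimes\GL_{\{i\}\subset 2}(\ZZ)\ar[rr]^-{\Aff_i}\ar[d]&&\Diff(\TT^2\xra{\pr_i}\TT)\ar[d]^-{f\mapsto\ov f}\\
\TT\rtimes\GL_1(\ZZ)\ar[rr]^-{\Aff}_-{\simeq}&&\Diff(\TT),
}
\]
whose left vertical map is induced by $\pr_i\colon\TT^2\to\TT$ together with the action $A\mapsto\ov A$ of $A$ on $\RR^2$ modulo the $i^{\rm th}$ coordinate line, and whose bottom map is a homotopy-equivalence since $\Diff(\TT)\simeq\TT\rtimes\GL_1(\ZZ)\cong\sO(2)$ classically. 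There is a parallel square with $\sE_{\{i\}\subset 2}(\ZZ)$, $\sE_1(\ZZ):=\ZZ\smallsetminus\{0\}$, and $\Imm(-)$ in place of $\GL_{\{i\}\subset 2}(\ZZ)$, $\GL_1(\ZZ)$, and $\Diff(-)$, whose bottom map is a homotopy-equivalence by the one-dimensional instance of Corollary~\ref{t22}. In each square both vertical maps are Serre fibrations: the left one is, on underlying spaces, a product of the fiber bundle $\pr_i$ with a surjection of discrete monoids; the right one is a fibration by the fiber-preserving form of the isotopy-extension theorem, and is surjective because $\ov g\mapsto\bigl(\text{act by }\ov g\text{ on the base, identically along each }\pr_i\text{-fiber}\bigr)$ is a continuous section. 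By the five-lemma applied to the long exact sequences of these fibrations, it therefore suffices to prove that $\Aff_i$ restricts to a homotopy-equivalence on fibers over $\id_\TT$ (and, identically, over the remaining path-components of the base).

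The fiber of the left vertical map over $\id_\TT$ is $\TT\times K$, where $\TT$ is the $i^{\rm th}$ coordinate circle and $K:=\Ker\bigl(\GL_{\{i\}\subset 2}(\ZZ)\to\GL_1(\ZZ)\bigr)$ is a countable discrete set, generated by the order-two reflection of the $i^{\rm th}$ coordinate and by the elementary shear. The fiber of the right vertical map over $\id_\TT$ is the group $\Map\bigl(\TT,\Diff(\TT)\bigr)$ of fiberwise self-diffeomorphisms of the trivial bundle $\TT^2\xra{\pr_i}\TT$ lying over $\id_\TT$; using $\Diff(\TT)\simeq\sO(2)$ once more, its path-components are indexed by $\ZZ/2\ZZ\times\ZZ$ — the $\sO(2)$-component hit and the winding number — and each is homotopy-equivalent to the circle $\TT$. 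Inspecting $\Aff_i$ on fibers, the element $(t;\varepsilon,n)\in\TT\times K$ — a fiber translation $t\in\TT$, a reflection sign $\varepsilon\in\{\pm1\}$, and a shear amount $n\in\ZZ$ — maps to the smooth family $y\mapsto\bigl(x\mapsto\varepsilon x+ny+t\bigr)$, whose $\sO(2)$-component is $\varepsilon$, whose winding number in $y$ is $n$, and whose value at the basepoint is $x\mapsto\varepsilon x+t$. Hence $\Aff_i$ induces a bijection $K\xra{\cong}\pi_0\Map\bigl(\TT,\Diff(\TT)\bigr)$ and a homotopy-equivalence on each path-component, so it is an equivalence on fibers. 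The immersion case runs in parallel with $\GL_1(\ZZ)$ replaced by $\sE_1(\ZZ)$: on the monoid side the fiber over $\id_\TT$ is $\TT\times\Ker\bigl(\sE_{\{i\}\subset 2}(\ZZ)\to\sE_1(\ZZ)\bigr)$, and on the local-diffeomorphism side it is $\Map\bigl(\TT,\Imm(\TT)\bigr)$ — a fiberwise local-diffeomorphism of $\TT^2$ over $\id_\TT$ restricts on each $\pr_i$-fiber to a covering of $\TT$ of locally constant, hence constant, degree — and the same inspection of $\Aff_i(t;a,n)=\bigl(y\mapsto(x\mapsto ax+ny+t)\bigr)$, now with fiber-degree $a\in\ZZ\smallsetminus\{0\}$, yields the desired equivalence.

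The principal obstacle is the fiber identification just indicated: one must establish $\Diff(\TT)\simeq\sO(2)$ and the resulting description of $\Map\bigl(\TT,\Diff(\TT)\bigr)$ — respectively $\Imm(\TT)\simeq\TT\rtimes\sE_1(\ZZ)$ and that of $\Map\bigl(\TT,\Imm(\TT)\bigr)$ — in a way that is visibly natural for $\Aff_i$, so that the invariants basepoint-value, component/fiber-degree, and winding-number are genuinely matched with the fiber translation, the linear/reflection part, and the shear. Everything else — the fibration properties, the reduction by the five-lemma, and keeping straight which coordinate of $\TT^2$ is base and which is fiber under the convention for $\pr_i$ — is routine given Lemma~\ref{t1}, Corollary~\ref{t22}, and their one-dimensional analogues.
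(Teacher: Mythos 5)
Your proposal is correct and uses essentially the same strategy as the paper's proof: both project $\Aff_i$ to the circle factor (via $\Diff(\TT)$, resp. $\Imm(\TT)$) and reduce to checking the map on the fiber/kernel, which in each case is an explicit comparison of $\TT\times(\text{discrete})$ with $\Map(\TT,\Diff(\TT))$, resp. $\Map(\TT,\Imm(\TT))$, via basepoint-evaluation and winding data. The only cosmetic difference is packaging — you run the fibration-plus-five-lemma argument directly, whereas the paper packages the same reduction as a morphism of split short exact sequences of continuous monoids and invokes its Lemma~\ref{t67}; the paper also reduces $i=2$ to $i=1$ by the coordinate swap and deduces the $\Diff$ square from the $\Imm$ square by restricting to maximal subgroups, while you handle all four cases uniformly. (Incidentally, your identification of the kernel as $K=\{A:\text{$(2,2)$-entry}=1\}$ — reflections and shears along the fiber direction — is the geometrically correct one; the paper's displayed $\sK$ has a small indexing slip with respect to its own $\pr_i$ convention.)
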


\begin{proof}
Via the involution $\Sigma_2 \lacts \TT^2$ that swaps coordinates, the case in which $i=1$ implies the case in which $i=2$.
So we only consider the case in which $i=1$.

The left homotopy-equivalence is obtained from the right homotopy-equivalence by restricting to 
maximal continuous subgroups.
So we are reduced to establishing the right homotopy-equivalence.
Direct inspection reveals the indicated factorization $\Aff_1$ of the restriction of $\Aff$ to $\TT^2 \rtimes \sE_{\{1\}\subset 2}(\ZZ) \subset  \TT^2 \rtimes \EZ $.
So we are left to show that $\Aff_1$ is a homotopy-equivalence.  

Now, projection onto the $(1,1)$-entry defines a morphism between monoids, with kernel $\sK := \Bigl\{ \begin{bmatrix} 1 & b \\ 0 & d \end{bmatrix}\in \sE_{\{1\}\subset 2}(\ZZ) \Bigr\}$, which fits into a split short exact sequence of monoids:
\[
\xymatrix{
1
\ar[r] 
&
\sK
\ar[rr]
&&
\sE_{\{1\}\subset 2}(\ZZ) 
\ar[rr]_-{ (1,1)-{\rm entry}}
&&
(\ZZ \smallsetminus \{0\})^\times
\ar[r]
\ar@{-->}@(u,-)[ll]_-{{\Tiny \begin{bmatrix} a & 0 \\ 0 & 1 \end{bmatrix}}  \mapsfrom ~a}
&
1
~.
}
\]

Now, because $\pr_1$ is surjective, for a given $f\in \Imm(\TT^2 \xra{\pr_i} \TT)$, there is a unique $\ov{f}\in \Imm(\TT)$ as in~(\ref{e87}).
Better, $\Imm(\TT^2 \xra{\pr_i}\TT) \ni f\mapsto \ov{f} \in \Imm(\TT)$ defines a forgetful morphism between topological monoids, whose kernel can be identified as the topological monoid of smooth maps from $\TT$ to $\Imm(\TT)$ with value-wise monoid-structure.
This is to say
there is a bottom short exact sequence of topological monoids, which splits as indicated:
\begin{equation}
\label{e68}
\xymatrix{
1
\ar[r] 
&
\TT \rtimes \sK
\ar@{..>}[d]
\ar[rr]^-{ (\id , \lag 0 \rag) \rtimes {\rm inclusion}}
&&
\TT^2 \rtimes \sE_{\{1\}\subset 2}(\ZZ)
\ar[d]_-{\Aff_1}
\ar[rr]_-{\pr_1 \rtimes (1,1)-{\rm entry}}
&&
\TT \rtimes (\ZZ \smallsetminus \{0\})^\times
\ar@{..>}[d]
\ar[r]
\ar@{-->}@(u,-)[ll]_-{ \left((0, z)  , {\Tiny \begin{bmatrix} a & 0 \\ 0 & 1 \end{bmatrix}}\right) \mapsfrom (z, a) }
&
1
\\
1
\ar[r] 
&
\Map\bigl( \TT , \Imm(\TT) \bigr) 
\ar[rr]
&&
\Imm(\TT^2 \xra{\pr_1} \TT)
\ar[rr]_-{ f \mapsto \ov{f} }
&&
\Imm(\TT)
\ar[r]
\ar@{-->}@(u,-)[ll]_-{ \id_{\TT}\times f \mapsfrom f }
&
1
~
.
}
\end{equation}
Direct inspection of the definition of $\Aff$ reveals the downward factorizations making the commutative diagram~(\ref{e68}) among topological monoids. 
By the isotopy-extension theorem, the bottom short exact sequence among topological monoids forgets as a short exact sequence among continuous monoids. 
Using Lemma~\ref{t67},
the proof is complete upon showing that the left and right downward maps are equivalences between spaces.
It is routine to verify that the map $\Imm(\TT) \xra{\bigl(\ev_0 , \sH_1(-) \bigr)} \TT \rtimes (\ZZ \smallsetminus \{0\})^\times$ is a homotopy-inverse to the right downward map in~(\ref{e68}).

Now observe that the left downward morphism in~(\ref{e68}) fits into a diagram between short exact sequences of continuous monoids:
\begin{equation*}
\xymatrix{
1
\ar[r] 
&
\ZZ
\ar@{..>}[d]
\ar[r]^-{b \mapsto \lag 0 \rag \rtimes  {\Tiny \begin{bmatrix} 1 & b \\ 0 & 1 \end{bmatrix}}}
&
\TT \rtimes \sK 
\ar[d]
\ar[rr]_-{\id \rtimes (2,2)-{\rm entry}}
&&
\TT \rtimes (\ZZ \smallsetminus \{0\})^\times
\ar@{..>}[d]
\ar[r]
\ar@{-->}@(u,-)[ll]_-{\left(z, {\Tiny \begin{bmatrix} 1 & 0 \\ 0 & d \end{bmatrix}} \right) \mapsfrom (z, d)  }
&
1
\\
1
\ar[r] 
&
\Map\bigl(
(0\in \TT)
,
(\id \in \Imm(\TT) )
\bigr)
\ar[r]_-{\rm forget}
&
\Map\bigl( \TT , \Imm(\TT) \bigr) 
\ar[rr]_-{ \ev_0 }
&&
\Imm(\TT)
\ar[r]
\ar@{-->}@(u,-)[ll]_-{{\rm constant}_{f} \mapsfrom f}
&
1
~
.
}
\end{equation*}
The right downward map here is a homotopy-equivalence, in the same way the right downward map in~(\ref{e68}) is a homotopy-equivalence.
Through this right downward identification of $\Imm(\TT)$, the left downward map is a homotopy-equivalence, with inverse given by taking $\pi_1$.  
Using Lemma~\ref{t67}, we conclude that the middle downward map is a homotopy-equivalence, as desired.

\end{proof}

The Gram--Schmidt algorithm witnesses a deformation-retraction onto the inclusion from the intersection in $\GL_2(\RR)$:
\[
\sO(1)^2 = \sO(1)\times \sO(1) = \sO(2) \cap \GL_{\{i\}\subset 2}(\RR)
~\overset{\simeq}\hookrightarrow~ 
\GL_{\{i\}\subset 2}(\RR)
~.
\]

\begin{observation}
\label{prfr}
For each $i=1,2$, the sequence of homotopy-equivalences among topological spaces of Corollary~\ref{t25}, determined by a framing $\varphi \in \Fr(\TT^2 \xra{\pr_i} \TT)$, restricts as a sequence of homotopy-equivalences among topological spaces:
\begin{eqnarray*}
\Fr(\TT^2 \xra{\pr_i} \TT)
&
~
\xla{\cong}
~
&
\Map\bigl( \TT^2 , \GL_{\{i\}\subset 2}(\RR) \bigr)
\\
\nonumber
&
~
\xra{\simeq}
~
&
\Map \Bigl( \bigl( 0 \in \TT^2 \bigr) , \bigl( \uno \in \GL_{\{i\}\subset 2}(\RR) \bigr) \Bigr) \times \GL_{\{i\}\subset 2}(\RR)
\\
\nonumber
&
~
\xla{\simeq}
~
&
\Map \Bigl( \bigl( 0 \in \TT^2 \bigr) , \bigl( (+1 \in \sO(1) \bigr)^2 \Bigr) \times \sO(1)^2
\\
\nonumber
&
~\simeq~
&
\sO(1)^2
~.
\end{eqnarray*}

\end{observation}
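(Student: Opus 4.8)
The plan is to take the chain of homotopy-equivalences of Corollary~\ref{t25} with its reference framing chosen to be the given $\varphi\in\Fr(\TT^2\xra{\pr_i}\TT)$, and to check term by term that the subspaces appearing in the statement are matched and that the restricted maps remain homotopy-equivalences. Since the coordinate-swapping involution of $\TT^2$ interchanges the roles of $\pr_1$ and $\pr_2$ (as in the proof of Lemma~\ref{t62}), and carries $\GL_{\{1\}\subset 2}(\RR),\sO(2),\sO(1)^2$ to $\GL_{\{2\}\subset 2}(\RR),\sO(2),\sO(1)^2$, it suffices to treat $i=1$.

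The one step requiring genuine work is the first map. I would unwind the defining square~(\ref{e70}) in the standard frame: writing a framing as $\alpha\circ\varphi$ with $\alpha$ the bundle automorphism of $\epsilon^2_{\TT^2}$ attached by~(\ref{e43}) to a smooth map $a\colon\TT^2\to\GL_2(\RR)$, the framing $\alpha\circ\varphi$ lies over a framing of $\TT$ along $\pr_1$ exactly when $a$ takes values in the subgroup $\GL_{\{1\}\subset 2}(\RR)$ preserving the first coordinate line. What makes this a clean subgroup condition, rather than a twisted one, is precisely that the reference framing $\varphi$ is itself chosen in $\Fr(\TT^2\xra{\pr_1}\TT)$: post-composing an automorphism $\alpha$ with $\varphi$ then preserves the property of lying over $\TT$ if and only if $\alpha$ does. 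Pinning down exactly which cocycles $a$ occur---in particular tracking, via~(\ref{e82}), the induced framing $\ov{\varphi'}$ of $\TT$---is the place where one must be careful with~(\ref{e70}); this is the main obstacle. Granting it, $(\ref{e43})\circ(\ref{e41})$ restricts to a homeomorphism $\Map(\TT^2,\GL_{\{1\}\subset 2}(\RR))\xra{\cong}\Fr(\TT^2\xra{\pr_1}\TT)$.

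The rest is formal. The homeomorphism~(\ref{e44}), $a\mapsto\bigl(a(0)^{-1}a,\,a(0)\bigr)$, restricts since $\GL_{\{1\}\subset 2}(\RR)$ is a subgroup containing the identity. Gram--Schmidt restricts too: for an upper-triangular invertible matrix the $QR$-factorization has $Q\in\sO(2)\cap\GL_{\{1\}\subset 2}(\RR)=\sO(1)\times\sO(1)$ and $R$ upper-triangular with positive diagonal, and the canonical path $t\mapsto Q R^{1-t}$ stays upper-triangular, so Gram--Schmidt deformation-retracts $\GL_{\{1\}\subset 2}(\RR)$ onto $\sO(1)^2$ and accordingly deformation-retracts $\Map\bigl((0\in\TT^2),(\uno\in\GL_{\{1\}\subset 2}(\RR))\bigr)$ onto $\Map\bigl((0\in\TT^2),(+1\in\sO(1))^2\bigr)$. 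Finally, the latter space is contractible because $\TT^2$ is connected and $\sO(1)^2$ is discrete, so projecting it off is an equivalence onto $\sO(1)^2$. Assembling these restrictions along the chain of Corollary~\ref{t25} (transported through the coordinate swap when $i=2$) produces the asserted sequence of homotopy-equivalences.
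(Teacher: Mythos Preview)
Your proof is correct and follows the same approach the paper intends: the paper states this as an Observation without proof, but just before it records the two key ingredients---that $\Fr(\TT^2\xra{\pr_i}\TT)$ is a torsor for $\Map(\TT^2,\GL_{\{i\}\subset 2}(\RR))$, and that Gram--Schmidt retracts $\GL_{\{i\}\subset 2}(\RR)$ onto $\sO(1)^2$---and you have filled in precisely those verifications together with the routine contractibility of the based mapping space into a discrete target. One small remark: what you flag as ``the main obstacle'' (identifying which $a$ land in the subspace) is exactly the torsor assertion the paper records immediately before the Observation, so in the paper's logic it is already granted rather than something to be re-proved here.
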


\begin{observation}
\label{t63}
For each $i=1,2$, and each framing $\varphi \in \Fr(\TT^2 \xra{\pr_i} \TT)$, the diagram among topological spaces commutes:
\[
\xymatrix{
\TT^{2} \rtimes  \sE_{\{i\}\subset 2}(\ZZ) \ar[rr]^-{ {\sf Aff}_{i}} 
\ar[d]_{  \bigl(~{\rm sign~of~}(1,1){\text{-entry}} ~,~ {\rm sign~of~}(2,2){\text{-entry}} ~ \bigr) \circ {\sf proj}} 
&&
\Imm(\TT^2 \xra{\pr_i} \TT) \ar[d]^-{{\sf Orbit}_{\varphi}} 
\\
{\sf O}(1)^2
&&
\Fr(\TT^2 \xra{\pr_i} \TT) \ar[ll]_-{\rm Obs~\ref{prfr}}
.
}
\]
\end{observation}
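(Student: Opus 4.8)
The plan is to trace the two composites around the square through the identifications already set up in Observation~\ref{t4}, Observation~\ref{t33}, and Observation~\ref{prfr}, reducing the verification to an elementary statement about signs of diagonal entries of triangular matrices. As in the proof of Lemma~\ref{t62}, it suffices to treat the case $i=1$: the coordinate-swapping involution $\Sigma_2\lacts \TT^2$ carries the $i=1$ square to the $i=2$ square.

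First I would record that, by construction, $\Aff_1(p,A)$ is the affine self-map $q\mapsto Aq+p$ of $\TT^2$, so in the standard framing $\varphi_0$ its derivative $\sD\Aff_1(p,A)$ is the \emph{constant} bundle automorphism with value $A\in\GL_{\{1\}\subset 2}(\RR)$. Writing the chosen framing as $\varphi=a_0\cdot\varphi_0$ for a (unique) smooth map $a_0\colon\TT^2\to\GL_{\{1\}\subset 2}(\RR)$ and unwinding the definition of $\Act$ from Lemma~\ref{t50} --- exactly the bookkeeping carried out in Observation~\ref{t4} and Observation~\ref{t33}, now restricted to the subgroups and submonoids indexed by $\{1\}\subset 2$ --- one obtains
\[
{\sf Orbit}_\varphi\bigl(\Aff_1(p,A)\bigr)~=~\Act\bigl(\varphi,\Aff_1(p,A)\bigr)~=~\beta\cdot\varphi~,\qquad \beta(q)~=~a_0(Aq+p)\cdot A\cdot a_0(q)^{-1}~.
\]

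Next I would unwind Observation~\ref{prfr}. Because the identity component of $\GL_{\{1\}\subset 2}(\RR)$ is contractible, the based-mapping-space factor appearing in Corollary~\ref{t25} is contractible, so the composite homotopy-equivalence $\Fr(\TT^2\xra{\pr_1}\TT)\xra{\simeq}\sO(1)^2$ sends a framing $\psi=b\cdot\varphi$ to ${\sf GS}(b(0))\in\sO(1)^2$, where ${\sf GS}\colon\GL_{\{1\}\subset 2}(\RR)\to\sO(1)^2$ is the Gram--Schmidt retraction --- explicitly, the pair of signs of the diagonal entries. Applied to $\psi={\sf Orbit}_\varphi(\Aff_1(p,A))=\beta\cdot\varphi$, and using $\Aff_1(p,A)(0)=p$, this produces ${\sf GS}(\beta(0))={\sf GS}\bigl(a_0(p)\,A\,a_0(0)^{-1}\bigr)$.

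Finally I would note that, restricted to the group $\GL_{\{1\}\subset 2}(\RR)$ of invertible upper-triangular $2\times 2$ matrices, the map ${\sf GS}$ is a surjective group homomorphism onto the abelian group $\sO(1)^2$ of diagonal sign matrices; since $\TT^2$ is connected, ${\sf GS}\circ a_0$ is constant, so ${\sf GS}(a_0(p))={\sf GS}(a_0(0))$ and hence ${\sf GS}(\beta(0))={\sf GS}(A)$, which is precisely the pair of signs of the $(1,1)$- and $(2,2)$-entries of $A$ --- the value of the left vertical composite $\bigl(\text{sign of }(1,1)\text{-entry},\ \text{sign of }(2,2)\text{-entry}\bigr)\circ{\sf proj}$ on $(p,A)$. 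Therefore the square commutes. The only genuine work is the bookkeeping of passing between the $\varphi_0$-based and $\varphi$-based torsor trivializations, i.e.\ the middle displayed formula for $\beta$; after that the derivative computation and the structure of Observation~\ref{prfr}'s equivalence are already in hand, and the remaining step is the matrix identity above.
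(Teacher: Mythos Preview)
Your argument is correct. The paper states this as an Observation without supplying any proof, treating it as an immediate unwinding of the definitions; your write-up makes that unwinding explicit by computing $\Act(\varphi,\Aff_1(p,A))$ in the $\varphi_0$-trivialization, identifying the equivalence of Observation~\ref{prfr} with $b\cdot\varphi\mapsto{\sf GS}(b(0))$, and then using that ${\sf GS}$ is a homomorphism to $\sO(1)^2$ and constant along the connected $\TT^2$ to cancel the $a_0$-contributions. There is nothing to compare against in the paper beyond noting that your verification is exactly the routine check the authors are implicitly invoking.
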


For each $i=1,2$, the action
$
\ZZ
\xra{ \lag U_i \rag }
\sE_{\{i\}\subset 2}(\ZZ)
\lacts 
\TT^2
$
as a topological group
defines the topological submonoid 
\[
\TT^2 \underset{U_i} \rtimes \ZZ
~ \subset~ 
\TT^2 \rtimes \sE_{\{i\}\subset 2}(\ZZ)
~.
\]
After Lemma~\ref{t62} and Observation~\ref{prfr}, Observation~\ref{t63} implies the following.
\begin{cor}
\label{t64}
For each $i=1,2$, 
and each framing $\varphi \in \Fr(\TT^2 \xra{\pr_i} \TT)$, 
there are canonical identifications among continuous monoids over the identification $\Aff_i$:
\[
\xymatrix{
\TT^2 \underset{U_{i}}\rtimes  \ZZ 
\ar[r]_-{\simeq}^-{\Aff_i^{\sf fr}}
\ar[d]_-{\id \rtimes \lag 
\tau_{i}
\rag   }
&
\Diff^{\fr}(\TT^2 \xra{\pr_i} \TT , \varphi) 
\ar[d]^-{\rm forget}
&&
\TT^{2} \rtimes \sE_{\{i\}\subset 2}(\ZZ)  
\ar[r]_-{\simeq}^-{\Aff_i^{\sf fr}}
\ar[d]_-{\id \rtimes \lag 
\w{\rm inclusion}
\rag   }
&
\Imm^{\sf fr}(\TT^2 \xra{\pr_i} \TT , \varphi)
\ar[d]^-{\rm forget}
\\
\TT^2 \rtimes \Braid  
\ar[r]^-{\simeq}_-{\rm Lem~\ref{t34}}
&
\Diff^{\fr}(\TT^2 , \varphi )
&
\text{ and }
&
\TT^2 \rtimes \Ebraid 
\ar[r]^-{\simeq}_-{\rm Lem~\ref{t34}}
&
\Imm^{\fr}(\TT^2, \varphi)
.
}
\]
\end{cor}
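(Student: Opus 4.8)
The plan is to obtain both identifications by passing to homotopy-fibers over $\varphi$ in the commutative square of Observation~\ref{t63}, using that $\sO(1)^2$ is a discrete space, and then to read off compatibility with the ambient identifications of Lemma~\ref{t34} from the fact that this square lies over the corresponding square for $\TT^2$ itself.

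Recall first that the $\pr_i$-analogue of Definition~\ref{d1}, recorded just before the statement, defines $\Diff^{\fr}(\TT^2\xra{\pr_i}\TT,\varphi)$ and $\Imm^{\fr}(\TT^2\xra{\pr_i}\TT,\varphi)$ as the homotopy-pullbacks of ${\sf Orbit}_\varphi$ along $\lag\varphi\rag\colon\ast\to\Fr(\TT^2\xra{\pr_i}\TT)$. I would assemble the three available inputs as follows. Observation~\ref{prfr} provides a $\varphi$-dependent homotopy-equivalence $\Fr(\TT^2\xra{\pr_i}\TT)\xra{\simeq}\sO(1)^2$ carrying $\varphi$ to the identity element. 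Lemma~\ref{t62} provides homotopy-equivalences of continuous monoids $\Aff_i\colon\TT^2\rtimes\sE_{\{i\}\subset 2}(\ZZ)\xra{\simeq}\Imm(\TT^2\xra{\pr_i}\TT)$ and $\Aff_i\colon\TT^2\rtimes\GL_{\{i\}\subset 2}(\ZZ)\xra{\simeq}\Diff(\TT^2\xra{\pr_i}\TT)$, compatibly with the inclusions into $\Imm(\TT^2)\simeq\TT^2\rtimes\EZ$ and $\Diff(\TT^2)\simeq\TT^2\rtimes\GL_2(\ZZ)$. Under these two identifications, Observation~\ref{t63} says precisely that ${\sf Orbit}_\varphi$ becomes the composite $\TT^2\rtimes\sE_{\{i\}\subset 2}(\ZZ)\to\sE_{\{i\}\subset 2}(\ZZ)\to\sO(1)^2$, with first map the projection and second map recording the signs of the two diagonal entries (and its restriction to $\GL_{\{i\}\subset 2}(\ZZ)$ in the diffeomorphism case). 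This composite factors through $\pi_0$, hence is constant on path-components; as $\sO(1)^2$ is discrete, its homotopy-fiber over the identity element is therefore its set-theoretic preimage there. That preimage is the infinite cyclic subgroup $\lag U_i\rag$ of $\GL_{\{i\}\subset 2}(\ZZ)$ and the submonoid of $\sE_{\{i\}\subset 2}(\ZZ)$ on the matrices whose diagonal entries are positive, which produces the identifications $\Diff^{\fr}(\TT^2\xra{\pr_i}\TT,\varphi)\simeq\TT^2\underset{U_i}\rtimes\ZZ$ and $\Imm^{\fr}(\TT^2\xra{\pr_i}\TT,\varphi)\simeq\TT^2\rtimes\sE_{\{i\}\subset 2}(\ZZ)$; the maps $\Aff_i^{\fr}$ are the restrictions of $\Aff_i$ to these submonoids, promoted to maps into the homotopy-pullbacks by the canonical nullhomotopy of ${\sf Orbit}_\varphi\circ\Aff_i$ on them coming from the Gram--Schmidt retraction underlying Observation~\ref{prfr}.

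For the compatibility statements, I would note that the square of Observation~\ref{t63} receives a map of commutative squares from the affine square of Observation~\ref{t33} along the inclusions $\Imm(\TT^2\xra{\pr_i}\TT)\hookrightarrow\Imm(\TT^2)$ and $\Fr(\TT^2\xra{\pr_i}\TT)\hookrightarrow\Fr(\TT^2)$; under Lemma~\ref{t62} and Corollary~\ref{t25}, these become the inclusions $\TT^2\rtimes\sE_{\{i\}\subset 2}(\ZZ)\hookrightarrow\TT^2\rtimes\EZ$ and $\sO(1)^2\hookrightarrow\Fr(\TT^2)$. Taking homotopy-fibers over $\varphi$ everywhere, the induced map on fibers is the forgetful map, and on the models above it is the identity on the $\TT^2$-factor and, on the monoid factor, a lift of the relevant submonoid along $\Ebraid\to\EpZ$ (respectively along $\Phi\colon\Braid\to\GL_2(\ZZ)$, using that $\Phi$ surjects onto $\SL_2(\ZZ)$ by Proposition~\ref{t32}) supplied by the identification of Lemma~\ref{t34}. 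It then remains to check that this lift is the map $\lag\w{\rm inclusion}\rag$, respectively $\lag\tau_i\rag$, named in the statement. In the local-diffeomorphism case, the relevant submonoid has positive determinant, hence maps to $\EpZ$, and its image under $\RR\underset{\ZZ}\ot(-)$ lies in the contractible submonoid of $\GL^+_2(\RR)$ consisting of triangular matrices with positive diagonal; the universal cover $\w{\GL}^+_2(\RR)$ restricted there admits a canonical section through the basepoint, which pins down the lift to $\Ebraid$ and identifies it with $\lag\w{\rm inclusion}\rag$. In the diffeomorphism case, the identification of the lift with $\lag\tau_i\rag$ is the statement that $\tau_i\in\Braid\cong{\sf MCG}^{\fr}(\TT^2,\varphi)$ (Corollary~\ref{t38}) is the mapping class of the $i$-th framed sheer $\Aff_i^{\fr}(0,U_i)$, i.e.\ the way the standard generators~(\ref{e67}) of $\Braid$ are matched to the sheering diffeomorphisms; this is part of unwinding the construction behind Lemma~\ref{t34}. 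Finally, the case $i=2$ reduces to the case $i=1$ via the coordinate-swap involution $\Sigma_2\lacts\TT^2$, exactly as at the start of the proof of Lemma~\ref{t62}.

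I expect the one genuinely delicate point to be this last identification of lifts --- matching the monoid section into $\Ebraid$ (respectively $\Braid$) obtained from the homotopy-fiber computation with the named maps $\lag\w{\rm inclusion}\rag$ (respectively $\lag\tau_i\rag$), and confirming compatibility with the identification of Lemma~\ref{t34}. Everything else is formal manipulation of homotopy-pullbacks, together with the single observation that, because $\sO(1)^2$ is discrete, the defining pullback of the framed objects simply selects a union of path-components of the ambient space of (local-)diffeomorphisms lying over $\pr_i$.
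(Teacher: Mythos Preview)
Your proposal is correct and follows exactly the route the paper indicates: the sentence preceding the corollary says only that it follows from Lemma~\ref{t62}, Observation~\ref{prfr}, and Observation~\ref{t63}, and your argument is precisely the unwinding of that claim --- identify the unframed monoids via $\Aff_i$, identify the framing space with the discrete $\sO(1)^2$, and read off the homotopy-fiber of ${\sf Orbit}_\varphi$ as the preimage of the identity in $\sO(1)^2$. Your additional discussion of the compatibility with the bottom-row identifications of Lemma~\ref{t34}, via the contractibility of the triangular-with-positive-diagonal subgroup of $\GL_2^+(\RR)$ and the resulting canonical section of the universal cover, supplies detail the paper leaves implicit; the point you flag as delicate (matching the lift with $\lag\tau_i\rag$) is indeed absorbed into how the identification of Lemma~\ref{t34} is set up, and the paper does not spell it out further either.
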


We now explain how the presentation~(\ref{e67}) of $\Braid$ gives a presentation of the continuous group $\Diff^{\fr}(\TT^2)$.
Observe the canonically commutative diagram among continuous groups:
\[
\xymatrix{
\TT^2 \ar[r]
\ar[d]
&
\Diff^{\fr}(\TT^{2} \xra{\pr_1} \TT)
\ar[d]
\\
\Diff^{\fr}(\TT^{2} \xra{\pr_2} \TT)
\ar[r]
&
\Diff^{\fr}(\TT^2),
}
\]
which results in a morphism from the pushout, $\Diff^{\fr}(\TT^{2} \xra{\pr_1} \TT)
\underset{\TT^2}
\coprod
\Diff^{\fr}(\TT^{2} \xra{\pr_2} \TT)
\longrightarrow
\Diff^{\fr}(\TT^2).
$
Recall the element $R \in \GL_2(\ZZ)$ from~(\ref{e64}).
The two homomorphisms \begin{tikzcd}
\ZZ \arrow[rr, yshift=0.7ex, "\lag \tau_{1} \tau_{2} \tau_{1}\rag"] \arrow[swap, rr, yshift=-0.7ex, "\lag \tau_{2} \tau_{1} \tau_{2}\rag"]
& 
& \ZZ \amalg \ZZ 
\end{tikzcd} determine two morphisms among continuous groups under $\TT^2$:

\begin{equation} \label{e66}
\small{
\begin{tikzcd}
\TT^2  \underset{R}\rtimes \ZZ \arrow[rr, yshift=0.7ex, "\id \rtimes \lag \tau_{1} \tau_{2} \tau_{1}\rag"] \arrow[swap, rr, yshift=-0.7ex, "\id \rtimes \lag \tau_{2} \tau_{1} \tau_{2}\rag"]
&
&
\TT^2 \underset{U_1,U_2} \rtimes (\ZZ \amalg \ZZ)
\arrow[r, "\simeq"]
\arrow[swap, r]
&
\Diff^{\fr}(\TT^{2} \xra{\pr_1} \TT)
\underset{\TT^2}
\coprod
\Diff^{\fr}(\TT^{2} \xra{\pr_2} \TT)
\arrow[r]
&
\Diff^{\fr}(\TT^2)
~.
\end{tikzcd}
}
\end{equation}

\begin{cor}
\label{t49}
The diagram~(\ref{e66}) among continuous groups under $\TT^2$
witnesses a coequalizer.

\end{cor}

\begin{proof}
The presentation~(\ref{e67}) of $\Braid$ a coequalizer diagram among groups:
\[
\begin{tikzcd}
\ZZ 
\arrow[rr, yshift=0.7ex, " \lag \tau_{1} \tau_{2} \tau_{1}\rag"] 
\arrow[swap, rr, yshift=-0.7ex, "\lag \tau_{2} \tau_{1} \tau_{2}\rag"]
&
&
\ZZ \amalg \ZZ
\arrow[rr, "\lag \tau_{1} ~\&~ \tau_{2} \rag"]
&
&
\Braid
~.
\end{tikzcd}
\]
Taking semi-direct products with respect to the action $\Braid \xra{\Phi} \GL_2(\ZZ) \lacts \TT^2$ results in a coequalizer diagram among continuous groups:
\[
\begin{tikzcd}
\TT^2  \underset{R}\rtimes \ZZ \arrow[rr, yshift=0.7ex, "\id \rtimes \lag \tau_{1} \tau_{2} \tau_{1}\rag"] \arrow[swap, rr, yshift=-0.7ex, "\id \rtimes \lag \tau_{2} \tau_{1} \tau_{2}\rag"]
&
&
\TT^2 \underset{U_1,U_2} \rtimes (\ZZ \amalg \ZZ)
\arrow[rr, " \id \rtimes \lag \tau_1 ~ \&~ \tau_2 \rag"]
&
&
\TT^2 
\rtimes
\Braid
~.
\end{tikzcd}
\]
The result then follows from Corollary~\ref{t64}.

\end{proof}

\begin{proof}[Proof of Corollary~\ref{t49'}]
Consider the diagram among $\infty$-categories:
\[
\xymatrix{
\Mod_{\Diff^{\fr}(\TT^2)}(\cX)
\ar[r]^-=
\ar[d]
&
\Mod_{\Diff^{\fr}(\TT^2)}(\cX)
\ar[r]
\ar[d]
&
\Mod_{\TT^2\underset{U_1} \rtimes  \ZZ }(\cX)
\underset{\Mod_{\TT^2}(\cX)}\times
\Mod_{\TT^2 \underset{U_2} \rtimes \ZZ}(\cX)
\ar[d]_-{\bigl( \id \rtimes \lag \tau_{1} \tau_{2} \tau_{1}\rag \bigr)^\ast \times \bigl( \id \rtimes \lag \tau_{2} \tau_{1} \tau_{2}\rag \bigr)^\ast}
&
\Mod_{\TT^2}(\cX)^{\lag U_1 , U_2 \rag}
\ar[d]
\ar[l]_-{\simeq}^-{\rm Prop~\ref{t65}}
\\
\Mod_{\TT^2}(\cX)^{\lag R \rag}
\ar[r]^-{\simeq}_-{\rm Prop~\ref{t65}}
&
\Mod_{\TT^2 \underset{R} \rtimes \ZZ }(\cX)
\ar[r]_-{\rm diagonal}
&
\Mod_{\TT^2 \underset{R} \rtimes \ZZ}(\cX)
\underset{\Mod_{\TT^2}(\cX)}\times
\Mod_{\TT^2 \underset{R} \rtimes \ZZ}(\cX)
&
\Mod_{\TT^2}(\cX)^{\lag R , R \rag}
\ar[l]_-{\simeq}^-{\rm Prop~\ref{t65}}
.
}
\]
Corollary~\ref{t49} implies the middle square is a pullback.  
Via Proposition~\ref{t65}, which identifies modules for a semi-direct product in terms of invariants, the left and right squares are pullbacks.
Therefore, the outer square is a pullback, as desired.

\end{proof}

\section{Natural symmetries of secondary Hochschild homology}

\begin{conventions*}
\begin{enumerate}
\item[~]

\item
We fix a symmetric monoidal $\infty$-category $\cV$, and assume it is $\ot$-presentable (meaning the underlying $\infty$-category $\cV$ is presentable, and $\ot$ distributes over colimits separately in each variable).

\item
In this section, we apply the results from above only to the case of the standard framing $\varphi_0$ of the 2-torus $\TT^2$.  
So we suppress the framing $\varphi_0$ from all notation, while regarding $\TT^2$ as a framed 2-manifold. 

\end{enumerate}

\end{conventions*}

\begin{example}
For $\Bbbk$ a commutative ring, take $(\cV,\ot) = \bigl({\sf Ch}_\Bbbk[\{{\sf quasi\text{-}isos}\}^{-1}],\underset{\Bbbk}\otimes^{\LL}\bigr)$ to be the $\infty$-categorical localization of chain complexes over $\Bbbk$ on quasi-isomorphisms, with derived tensor product over $\Bbbk$ presenting the symmetric monoidal structure.
More generally, for $R$ a commutative ring spectrum, take $(\cV,\ot) := (\Mod_R,\underset{R}\wedge)$ to be the $\infty$-category of $R$-module spectra and smash product over $R$ as the symmetric monoidal structure.

\end{example}

\subsection{Hochschild homology of an associative algebra}
\label{sec.hoch1}

Let $A$ be an associative algebra in $\cV$.

Recall the paracyclic category $\para$, introduced by Getzler--Jones.
An object is a linearly ordered set $I$ with finite intervals, equipped with an order-preserving action $\ZZ \lacts I$ with the property that $i < 1\cdot i$ for each $i\in I$; a morphism is a $\ZZ$-equivariant map between linearly ordered sets.
Here are some standard facts about the paracyclic category (see, for instance,~\S4.2 of~\cite{lurie.waldhausen}).
\begin{enumerate}

\item
There is a canonical equivalence 
\[
\Hom^{\sf surj}_{\sf LinOrd}(-,[1])
\colon
\para^{\op} \xra{~\simeq~} \para
~,
\]
whose value on $(\ZZ\lacts I)$ is the set of surjective maps between linearly ordered sets from $I$ to $[1]$, equipped with inherited linear order and residual $\ZZ$-action.  

\item
The $\ZZ$-action on each object in $\para$, and the $\ZZ$-equivariance of each morphism in $\para$, assemble as an action:
\[
\sB \ZZ
~\lacts~
\para
~.
\]

\item
There is a standard functor $\bDelta \xra{[p]\mapsto [p]^{\star \ZZ}} \para$, whose value on a non-empty finite linearly ordered set its $\ZZ$-fold join as it is equipped with the $\ZZ$-action given by translating joinands.  
The resulting functor
\[
\bDelta^{\op}
\longrightarrow
\para^{\op}
~\simeq~
\para
\]
is final.

\end{enumerate}

Recall from~
\cite{loday} 
Connes' cyclic category $\bLambda$ in which an object is a cyclically ordered non-empty finite set, and a morphism is a cyclic order preserving map.
For $(\ZZ\lacts I) \in \para$ an object, the $\ZZ$-coinvariants of the underlying set $I_{/\ZZ}$ canonically retains a cyclic order; this association assembles as a functor:
\[
\para
\longrightarrow
\bLambda
~,\qquad
(\ZZ\lacts I)
\longmapsto
I_{/\ZZ}
~.
\]
This functor witnesses the $\sB \ZZ$-coinvariants:
\[
\para_{/\sB \ZZ}
\xra{~\simeq~}
\bLambda
~.
\]

Recall from~
\cite{boardman.vogt}
an explicit description of the symmetric monoidal envelope ${\sf Env}^{\ot}({\sf Assoc})$ of the associative operad.\footnote{
Specifically, an object is a finite set; a morphisms from $I$ to $J$ is a map between finite sets $I \xra{f} J$ together with a linear order on $f^{-1}(j)$ for each $j\in J$; composition is composition of maps between finite sets together with joins of finite sets; the symmetric monoidal structure is given by disjoint unions of finite sets.  
}
There is a canonical functor 
\[
\para
\longrightarrow
{\sf Env}^{\ot}({\sf Assoc})
\]
whose value an object $(\ZZ \lacts I)\in \para$ is the quotient set $I_{/ZZ}$, and whose value on a morphism $(\ZZ \lacts I) \xra{f} (\ZZ \lacts J)$ in $\para$ is the induced map between quotient sets $I_{/\ZZ} \xra{f_{/\ZZ}} J_{/\ZZ}$ together with the linear order on $f_{/\ZZ}^{-1}([j])$ inherited through the canonical bijection $I \supset f^{-1}(j) \xra{\rm bijection} f_{/\ZZ}^{-1}([j])$ for some (any) choice of $j\in [j] \in J_{/\ZZ}$.
Evidently, this functor is canonically $\sB \ZZ$ invariant, thusly canonically factoring through the $\sB \ZZ$-coinvariants:
\[
\para_{/\sB \ZZ}
~\simeq~
\bLambda
\longrightarrow
{\sf Env}^{\ot}({\sf Assoc})
~.
\]

In particular, each associative algebra $A$ in $\cV$ determines a composite functor
\[
\bBar_\bullet^{\sf cyc}(A)
\colon
\bDelta^{\op}
\longrightarrow
\para
\longrightarrow
\bLambda
\longrightarrow
{\sf Env}^{\ot}({\sf Assoc})
\xra{~A~}
\cV
~,
\]
which is the \bit{cyclic bar construction (of $A$)}.
The \bit{Hochschild homology (of $A$) (in $\cV$)} is the geometric realization of this simplicial object:
\[
\sHH(A)
~:=~
\sHH_\cV(A) 
~:= ~
A \underset{ A^{\op}\ot A} \ot A
~\simeq~
\bigl|
\bBar_\bullet^{\sf cyc}(A)
\bigr|
~\in~ \cV
~.
\]
This construction is evidently functorial in the argument $A$:
\[
\Alg_{\sf Assoc}(\cV)
\xra{~\sHH~}
\cV
~.
\]
Using finality of $\bDelta^{\op} \to \para$, the action $\TT \simeq \sB \ZZ \lacts \para$ determines an action $\TT \lacts \sHH(A)$, which is Connes' cyclic operator (see~\cite{connes}).
This action is evidently functorial in the argument $A$:
\begin{equation}
\label{e115}
\xymatrix{
&&
\Mod_{\TT}(\cV)
\ar[d]^-{\rm forget}
\\
\Alg_{\sf Assoc}(\cV)
\ar[rr]^-{\sHH}
\ar@{-->}[urr]^-{\sHH}
&&
\cV
~.
}
\end{equation}

When working over the sphere spectrum (which is to say $\cV = ({\sf Spectra},\wedge)$) so that $\sHH_{\sf Spectra}(A) = {\sf THH}(A)$ is \bit{topological Hochschild homology}, in~\cite{CT} B\"{o}kstedt-Hsaing-Madsen extend this $\TT$-action as a \bit{cyclotomic structure} on ${\sf THH}(A)$.  
In~\cite{cyclo} it is demonstrated how this cyclotomic structure on ${\sf THH}(A)$ is derived from an action of the continuous monoid $\TT \rtimes \NN^\times$ on the unstable version $\sHH_{\Spaces}(A)$.

Below, we prove Theorem~\ref{t36}, which constructs a canonical $\TT^2 \rtimes \Braid$-action on $\sHH^{(2)}(A)$, which is functorial in the 2-algebra $A$.  
We then prove Theorem~\ref{t51}, which, in the case that $\cV = (\Spaces,\times)$, extends this action to one by the continuous monoid $\TT^2 \rtimes \Ebraid$.

\subsection{Secondary Hochschild homology of 2-algebras}

In order for the Hochschild homology construction to be twice-iterated, we endow the entity $A\in \cV$ with an algebra structure among algebras.
\begin{definition}
\label{d4}
The $\infty$-category of \bit{2-algebras (in $\cV$)} is
\[
\Alg_2(\cV)
~:=~
\Alg_{\sf Assoc}\bigl(
\Alg_{\sf Assoc}(\cV)
\bigr)
~.
\]

\end{definition}

\begin{example}
A commutative algebra $A = (A,\mu)$ in $\cV$, 
determines the 2-algebra $(A,\mu,\mu)$ in $\cV$.
This association assembles as a functor
\[
\CAlg(\cV)
\longrightarrow
\Alg_2(\cV)
~,
\]
thusly supplying a host of examples of 2-algebras.

\end{example}

\begin{observation}
\label{t46}
Using that the tensor product of operads is defined by a ``hom-tensor'' adjunction, there is a canonical equivalence between $\infty$-categories:
\[
\Alg_{{\sf Assoc}\ot {\sf Assoc}}(\cV)
~\simeq~
\Alg_2(\cV)
~.
\]
In particular, swapping the two tensor-factors supplies an involution:
\[
\Sigma_2
~\lacts~ 
\Alg_2(\cV)
~.
\]

\end{observation}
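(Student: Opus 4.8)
The plan is to deduce both assertions from the universal property of the Boardman--Vogt tensor product of $\infty$-operads (see~\S2.2.5 of~\cite{HA}). Recall that the $\infty$-category $\mathsf{Op}_\infty$ of $\infty$-operads carries a symmetric monoidal structure $\otimes$ characterized by the ``hom-tensor'' adjunction: for $\infty$-operads $\cO,\cO'$ and for $\cC$ a symmetric monoidal $\infty$-category, there is an equivalence
\[
\Alg_{\cO\otimes \cO'}(\cC)
~\simeq~
\Alg_{\cO}\bigl( \Alg_{\cO'}(\cC) \bigr)
~,
\]
natural in all three variables, where $\Alg_{\cO'}(\cC)$ carries the symmetric monoidal structure induced by that of $\cC$. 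Specializing to $\cO = \cO' = {\sf Assoc}$ and $\cC = \cV$ gives the first asserted equivalence
\[
\Alg_{{\sf Assoc}\otimes{\sf Assoc}}(\cV)
~\simeq~
\Alg_{\sf Assoc}\bigl(\Alg_{\sf Assoc}(\cV)\bigr)
~=~
\Alg_2(\cV)
~.
\]

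For the involution, I would use that the assignment $\cO\mapsto \Alg_{\cO}(\cV)$ is functorial in the $\infty$-operad $\cO$, defining a functor $\mathsf{Op}_\infty^{\op}\xra{\Alg_{(-)}(\cV)}\Cat_\infty$. Separately, for any object $X$ of a symmetric monoidal $\infty$-category the tensor square $X\otimes X$ carries a canonical $\Sigma_2$-action by the symmetry, assembled as a functor $\sB\Sigma_2 \to \mathsf{Op}_\infty$ whose value at the basepoint, when $X = {\sf Assoc}$, is ${\sf Assoc}\otimes{\sf Assoc}$. Composing these two functors produces a functor $\sB\Sigma_2 \to \Cat_\infty$ with value $\Alg_{{\sf Assoc}\otimes {\sf Assoc}}(\cV)\simeq \Alg_2(\cV)$ at the basepoint, which is the sought action $\Sigma_2\lacts \Alg_2(\cV)$; unwinding the hom-tensor adjunction, its underlying equivalence interchanges the ``outer'' and ``inner'' associative multiplications of a $2$-algebra.

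The one point deserving care --- and the step I would treat most explicitly --- is the coherence of the $\Sigma_2$-action: it is not enough to exhibit an equivalence $\Alg_2(\cV)\xra{\simeq}\Alg_2(\cV)$ squaring to the identity up to homotopy, as one needs the full $\sB\Sigma_2$-diagram. This is exactly why the argument should be routed through the $X\mapsto X\otimes X$ construction inside the symmetric monoidal $\infty$-category $\mathsf{Op}_\infty$, which supplies that coherence for free, rather than through an \emph{ad hoc} manipulation of a swap map. I would also record, for later use, that because $\Alg_{(-)}(\cV)$ is functorial the resulting involution intertwines the two forgetful functors $\Alg_2(\cV)\to \Alg_{\sf Assoc}(\cV)$ associated to the two tensor factors.
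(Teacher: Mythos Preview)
Your proposal is correct and follows exactly the approach the paper intends: the paper states this as an observation without further proof, the justification being contained in the statement itself (``using that the tensor product of operads is defined by a `hom-tensor' adjunction''). You have simply unpacked this remark carefully, including the coherence of the $\Sigma_2$-action via the symmetric monoidal structure on $\mathsf{Op}_\infty$, which the paper leaves implicit.
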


\begin{remark}
\label{r12}
After Observation~\ref{t46}, a $2$-algebra in $\cV$ is an object $A\in \cV$ together with two associative algebra structures $\mu_1$ and $\mu_2$ on $A$, and compatibility between them which can be stated as either of the two equivalent structures:
\begin{itemize} 

\item
A lift of the morphism $A\ot A \xra{\mu_2} A$ in $\cV$ to a morphism $(A,\mu_1)\otimes (A,\mu_1) \xra{\mu_2} (A,\mu_1)$ in $\Alg_{\sf Assoc}(\cV)$.

\item
A lift of the morphism $A\ot A \xra{\mu_1} A$ in $\cV$ to a morphism $(A,\mu_2)\otimes (A,\mu_2) \xra{\mu_1} (A,\mu_2)$ in $\Alg_{\sf Assoc}(\cV)$.

\end{itemize}

\end{remark}

\begin{example}
\label{r13}
Consider the operad $\cE_2$ of little 2-disks.  
There is a standard morphism between operads ${\sf Assoc}\ot{\sf Assoc} \to \cE_2$ (see \cite{dunn}).
Through Observation~\ref{t46}, restriction along this morphism defines a functor between $\infty$-categories
\begin{equation}
\label{e62}
\Alg_{\cE_2}(\cV)
\longrightarrow
\Alg_2(\cV)
~,
\end{equation}
thusly supplying some rich examples of 2-algebras.
For instance, for $\Bbbk$ a commutative ring, a braided-monoidal $\Bbbk$-linear category $\sR$ is a 2-algebra in the $(2,1)$-category of $\Bbbk$-linear categories.  
Specifically, for $G$ a simply-connected reductive algebraic group over $\CC$, a choice of Killing form on its Lie algebra $\mathfrak{g}$ determines the quantum group $\cU_q \mathfrak{g}$, and thereafter the braided-monoidal category ${\sf Rep}_q( G )$ (for generic $q$).  (See~\cite{CP}, for instance.)

\end{example}

\begin{theorem}[Dunn's additivity~\cite{dunn} (see also Theorem~5.1.2.2 of~\cite{HA})]
\label{t48}
The functor~(\ref{e62})
is an equivalence between $\infty$-categories.

\end{theorem}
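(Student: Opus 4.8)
The plan is to reduce the statement to Dunn's additivity for little cubes in the operadic form established by Lurie, combined with the hom--tensor description of the tensor product of $\infty$-operads recorded in Observation~\ref{t46}.

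First, recall the standard equivalence of $\infty$-operads $\cE_1 \simeq {\sf Assoc}$, so that $\Alg_{\cE_1}(\cW) \simeq \Alg_{\sf Assoc}(\cW)$ for every symmetric monoidal $\infty$-category $\cW$. Theorem~5.1.2.2 of~\cite{HA} asserts that the canonical map of $\infty$-operads $\cE_m \ot \cE_n \to \cE_{m+n}$, assembled from the standard inclusions of little cubes into orthogonal coordinate directions, is an equivalence. Specializing to $m = n = 1$ and transporting along $\cE_1 \simeq {\sf Assoc}$ yields an equivalence of $\infty$-operads
\[
{\sf Assoc}\ot{\sf Assoc} ~\simeq~ \cE_1 \ot \cE_1 \xra{~\simeq~} \cE_2~.
\]

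Applying the functor $\Alg_{(-)}(\cV)$ to this equivalence, and then using Observation~\ref{t46} to rewrite $\Alg_{{\sf Assoc}\ot{\sf Assoc}}(\cV)$ as the iterated algebra $\infty$-category $\Alg_2(\cV) = \Alg_{\sf Assoc}\bigl(\Alg_{\sf Assoc}(\cV)\bigr)$, produces a chain of equivalences
\[
\Alg_{\cE_2}(\cV) \xra{~\simeq~} \Alg_{{\sf Assoc}\ot{\sf Assoc}}(\cV) \xra{~\simeq~} \Alg_2(\cV)~.
\]
It then remains to identify this composite with the functor~(\ref{e62}): the operad morphism ${\sf Assoc}\ot{\sf Assoc}\to\cE_2$ defining~(\ref{e62}) is, by construction, the map assembled from the two coordinate inclusions ${\sf Assoc}\simeq\cE_1\hookrightarrow\cE_2$, i.e. exactly the map inverted in the previous step, so restriction along it realizes the displayed equivalence.

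The only point requiring care — and the step that is the crux of a careful write-up rather than a genuine obstacle — is that every step takes place at the level of $\infty$-operads and their $\infty$-categories of algebras, so one must invoke the coherences from~\cite{HA} (associativity and functoriality of the Boardman--Vogt tensor product, and the identification $\Alg_{\cO_1\ot\cO_2}(\cV)\simeq\Alg_{\cO_1}\bigl(\Alg_{\cO_2}(\cV)\bigr)$ underlying Observation~\ref{t46}) rather than manipulate strict operads by hand. With those coherences in place the argument is purely formal; alternatively, one may simply invoke Theorem~5.1.2.2 of~\cite{HA} directly, which is stated in essentially this form, together with Observation~\ref{t46}.
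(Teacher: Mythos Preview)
Your proposal is correct, and in fact it goes further than the paper does: the paper gives no proof of this theorem at all, simply citing it as Dunn's additivity~\cite{dunn} and Theorem~5.1.2.2 of~\cite{HA}. Your write-up accurately unpacks how the statement follows from those references together with Observation~\ref{t46}, which is exactly the intended derivation.
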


\begin{remark}
The action $\sO(2)
\lacts
\Alg_2(\cV)$ of Corollary~\ref{t53}, afforded by Theorem~\ref{t48}, 
extends the evident $\Sigma_2 \wr \sO(1)$-action which swaps the two associative algebra structures (as the $\Sigma_2$-factor) and takes opposites of the two associative algebra structures (as the two $\sO(1)$-factors).
\end{remark}

\begin{definition}
\label{d5}
\bit{Secondary Hochschild homology} is the composite functor, given by twice-iterating Hochschild homology:
\[
\HHt
\colon
\Alg_2(\cV)
~:=~
\Alg_{\sf Assoc}\bigl(
\Alg_{\sf Assoc}(\cV)
\bigr)
\xra{~\sHH~}
\Alg_{\sf Assoc}(\cV)
\xra{~\sHH~}
\cV
~,
\]
\[
(A,\mu_1,\mu_1)
\mapsto
\bigl( \sHH(A,\mu_1) , \sHH(\mu_2) \bigr)
\mapsto
\sHH\bigl( 
\sHH(A,\mu_1)
,
\sHH(\mu_2)
\bigr)
~.
\]

\end{definition}

The canonical lift~(\ref{e115}) supplies, for each 2-algebra $A$ in $\cV$, two commuting actions $\TT \lacts \HHt(A)$, functorially in the argument $A$:
\begin{equation}
\label{e116}
\xymatrix{
&&
\Mod_{\TT^2}(\cV)
\ar[d]
\\
\Alg_2(\cV)
\ar[rr]^-{\HHt}
\ar@{-->}[urr]^-{\HHt}
&&
\cV
~.
}
\end{equation}

\subsection{Comparison with factorization homology}

Let $n\geq 0$.
Recall from~\cite{old.fact} the symmetric monoidal $\infty$-category $\Mfld^{\fr}_n$ whose objects are (finitary) framed $n$-manifolds, whose spaces of morphisms are spaces of framed embeddings between them, and whose symmetric monoidal structure is given by disjoint union.
Let $M$ be a framed $n$-manifold.
Consider the full $\infty$-subcategories,
\[
\Disk^{\fr}_n
\overset{\iota}{~\hookrightarrow ~}
\Mfld^{\fr}_n
~\hookleftarrow ~
{\sf BDiff}^{\fr}(M)
~,
\]
respectively consisting of those framed $n$-manifolds each of whose connected components is equivalent with $\RR^n$, and by those framed $n$-manifolds that are equivalent with $M$.
The left full $\infty$-subcategory is closed with respect to the symmetric monoidal structure.
Restriction along these full $\infty$-subcategories determines the solid diagram among $\infty$-categories:
\begin{equation}
\label{e75}
\xymatrix{
\Alg_{\cE_n}(\cV)
\xla{\simeq}
\Fun^{\ot}(\Disk_n^{\fr} , \cV )
\ar@{-->}@(u,u)[rr]^-{\int}
&&
\Fun^{\ot}(\Mfld_n^{\fr} , \cV)
\ar[r]^-{\rm restrict}
\ar[ll]_-{\rm restrict}
&
\Fun\bigl({\sf BDiff}^{\fr}(M) , \cV \bigr)
\simeq
\Mod_{\Diff^{\fr}(M)}(\cV)
~.
}
\end{equation}
\bit{Factorization homology} is defined as the left adjoint to the leftward restriction functor, indicated by the dashed arrow;
factorization homology over the torus, as it is endowed with a canonical $\Diff^{\fr}(M)$-action, is the rightward composite functor:
\begin{equation}
\label{e74}
\int_{M}
\colon
\Alg_{\cE_n}(\cV)
\longrightarrow
\Mod_{\Diff^{\fr}(M)}(\cV)
~.
\end{equation}

\begin{prop}
\label{t100}
There is a canonical equivalence 
\[
\sHH 
~\simeq~ 
\int_{\TT}
\qquad
\text{ in }
\Fun
\Bigl(
\Alg_{\sf Assoc}(\cV) , \Mod_{\TT}(\cV)
\Bigr)
~.
\]

\end{prop}

\begin{proof}
Recall from~\cite{old.fact} the functor between $\infty$-categories $\Disk^{\fr}_{1/\SS^1} \xra{\rm forget} \Disk^{\fr}_1$.
Both of these a priori $\infty$-categories are ordinary categories.  
Through Example~5.1.0.7 of~\cite{HA}, taking path-components defines an equivalence between $\infty$-operads: $\cE_1 \to {\sf Assoc}$.
Proposition~2.12 of~\cite{AFT2} states an identification between symmetric monoidal $\infty$-categories: ${\sf Env}^{\ot}(\cE_1) \xra{\simeq} \Disk^{\fr}_1$.  
Consequently, 
taking path-components of disjoint unions of Euclidean spaces defines an equivalence between symmetric monoidal $\infty$-categories:
\[
\pi_0
\colon
\Disk^{\fr}_1 
~\simeq~
{\sf Env}^{\ot}(\cE_1)
\xra{~\simeq~}
{\sf Env}^{\ot}({\sf Assoc})
~.
\]
Similarly, taking path-components of disjoint unions of Euclidean spaces while remembering cyclic orders from $\SS^1$ defines a $\TT \simeq \sB \ZZ$-equivariant equivalence between $\infty$-categories filling the diagram among $\infty$-categories,
\[
\xymatrix{
&&
&&
\Disk^{\fr}_{1/\SS^1}
\ar@{-->}[d]^-{\simeq}_-{\pi_0}
\ar[rr]^-{\rm forget}
&&
\Disk^{\fr}_1
\ar[d]^-{\simeq}_-{\pi_0}
\\
\bDelta^{\op}
\ar[rr]^-{\rm final}
&&
\para
\ar[rr]^-{\rm inclusion}
&&
\para^{\tl}
\ar[rr]
&&
{\sf Env}^{\ot}({\sf Assoc})
~.
}
\]
In particular, there is a commutative diagram among $\infty$-categories:
\begin{equation}
\label{e110}
\xymatrix{
(\para)_{/\TT}
\ar[rr]
\ar[drr]
&&
(\para^{\tl})_{/\TT}
\ar[d]
&&
( \Disk^{\fr}_{1/\SS^1} )_{/\TT}
\ar[ll]_-{\simeq}
\ar[dll]
\\
&&
{\sf Env}^{\ot}({\sf Assoc})
&&
.
}
\end{equation}

We now explain the diagram among $\infty$-categories:
\[
\medmath{
\xymatrix{
&
&
\Fun\bigl( \Disk^{\fr}_{1/\SS^1}, \cV \bigr)^{\TT}
\ar[drr]^-{\colim}
&&
\\
\Alg_{\sf Assoc}(\cV)
\simeq
\Fun^{\ot}\bigl( {\sf Env}^{\ot}({\sf Assoc}) , \cV \bigr)
\ar[r]
&
\Fun\bigl( {\sf Env}^{\ot}({\sf Assoc}) , \cV \bigr)
\ar[r]
\ar[ur]
\ar[dr]
&
\Fun\bigl( \para^{\tl} , \cV \bigr)^{\TT}
\ar[d]
\ar[u]^-{\simeq}
\ar[rr]^-{\colim}
&&
\cV^{\TT}
\ar[d]^-{\simeq}
\\
&
&
\Fun\bigl( \para, \cV \bigr)^{\TT}
\ar[urr]_-{\colim}
&&
\Mod_{\TT}(\cV)
.
}
}
\]
The rightward functor on the left is the forgetful functor from symmetric monoidal functors to functors between underlying $\infty$-categories.
The equivalence on the left is the universal property of symmetric monoidal envelopes.
Restriction along the diagram~(\ref{e110}) defines the two triangles involving unlabeled functors, where the superscript denotes the $\TT$-invariants with respect to the action on the domain-argument of each functor $\infty$-category.  
The functors labeled by $\colim$ are given by taking colimits.
The right vertical equivalence is definitional, using that the $\TT$-action on $\cV$ is understood as trivial.
The upper right triangle commutes because the functor $\Disk^{\fr}_{1/\SS^1} \xra{\simeq} \para^{\tl}$ is an equivalence and in particular final.
Finality of $\bDelta^{\op} \to \para$, together with the fact that $\bDelta$ has a final object, implies the $\infty$-groupoid-completion of $\para$ contractible.
This implies the functor $\para \hookrightarrow \para^{\tl}$ is final, which proves the lower triangle commutes.

To finish, the definition of $\int_{\TT}$ is the upper composite functor, while the definition of $\sHH$ is the lower composite functor.  

\end{proof}

\begin{cor}
\label{t89}
There is a canonical equivalence 
\[
\HHt
~\simeq~ 
\int_{\TT^2}
\qquad
\text{ in }
\Fun
\Bigl(
\Alg_{2}(\cV) , \Mod_{\TT^2}(\cV)
\Bigr)
~.
\]

\end{cor}

\begin{proof}
The sought equivalence is a concatenation of the following sequence of equivalences in the $\infty$-category 
$\Fun
\Bigl(
\Alg_{2}(\cV) , \Mod_{\TT^2}(\cV)
\Bigr)$,
\begin{eqnarray*}
\HHt(-)
&
~\simeq~
&
\sHH \bigl( \sHH(-) \bigr)
\\
&
~\simeq~
&
\int_{\TT} \bigl( \int_{\TT} (-) \bigr)
\\
&
~\simeq~
&
\int_{\TT^2}(-)
\end{eqnarray*}
which we now explain.
The first equivalence is the definition of secondary Hochschild homology.
The second equivalence is two applications of Proposition~\ref{t100}.
The third equivalence is a consequence of the pushforward formula (Proposition~3.23) of~\cite{old.fact}.

\end{proof}

Swapping the order of pushforward immediately implies the following.
\begin{cor}
\label{t45}
For $A = (A,\mu_1,\mu_2)$ a 2-algebra in $\cV$, the two iterations of Hochschild homology canonically agree:
\[
\sHH
\bigl(
\sHH(A,\mu_1)
,
\sHH(\mu_2)
\bigr)
~\simeq~
\sHH
\bigl(
\sHH(A,\mu_2)
,
\sHH(\mu_1)
\bigr)
~.
\]

\end{cor}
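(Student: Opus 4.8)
The plan is to deduce this from the identification of secondary Hochschild homology with factorization homology over the torus, $\HHt(A)\simeq \int_{\TT^2}A$, established in~\S\ref{sec.fact.hmlgy} (see Remark~\ref{r14}), together with the computation $\sHH(B)\simeq\int_{\TT}B$ for an associative algebra $B$ in an arbitrary $\ot$-presentable symmetric monoidal $\infty$-category. The crucial tool is the pushforward (Fubini) formula for factorization homology: for each $i=1,2$, the projection $\TT^2 = \TT\times\TT\xra{\pr_i}\TT$ off the $i^{\rm th}$ coordinate induces an equivalence $\int_{\TT^2}A\simeq\int_{\TT}\bigl((\pr_i)_\ast A\bigr)$, where the pushforward $(\pr_i)_\ast A$ is the associative algebra obtained by performing factorization homology along the fiber circles of $\pr_i$ --- using the $i^{\rm th}$ tensor-factor of the equivalence $\cE_2\simeq\cE_1^{\ot 2}$ furnished by Dunn's additivity (Theorem~\ref{t48}) --- equipped with the residual associative structure coming from the remaining tensor-factor. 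Unwinding Definition~\ref{d5} and the identification $\int_{\TT}(-)\simeq\sHH(-)$, this pushforward is precisely $\bigl(\sHH(A,\mu_i),\sHH(\mu_j)\bigr)$ for $\{i,j\}=\{1,2\}$, so that the two Fubini equivalences read
\[
\sHH\bigl(\sHH(A,\mu_1),\sHH(\mu_2)\bigr)\ \simeq\ \int_{\TT^2}A\ \simeq\ \sHH\bigl(\sHH(A,\mu_2),\sHH(\mu_1)\bigr).
\]

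Concretely, I would first recall the precise form of the pushforward formula and of $\sHH(B)\simeq\int_{\TT}B$ from the body of the paper, then assemble the two equivalences above, one for each $i$, into a single commutative diamond with apex $\int_{\TT^2}A$ and lower vertices the two iterated Hochschild homologies --- this is the diamond~(\ref{e73}) --- and finally compose the left edge with the inverse of the right edge to produce the asserted canonical equivalence. As a consistency check, the same identification arises from the coordinate-swap diffeomorphism of $\TT^2$, which intertwines the two circle factors and hence, under the $\Sigma_2$-action on $\Alg_2(\cV)$ of Observation~\ref{t46} (swapping $\mu_1\leftrightarrow\mu_2$), induces via diffeomorphism-invariance of factorization homology an equivalence $\int_{\TT^2}A\simeq\int_{\TT^2}(\sigma\cdot A)$.

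I expect the main obstacle to be bookkeeping rather than anything conceptual: one must verify that the pushforward $(\pr_i)_\ast A$, with its residual algebra structure, really is identified with the inner Hochschild homology $\bigl(\sHH(A,\mu_i),\sHH(\mu_j)\bigr)$ appearing in Definition~\ref{d5} --- i.e.\ that iterating factorization homology over the product $\TT\times\TT$ matches the nested $\sHH$ construction, with the two $\cE_1$-structures on $A$ tracked correctly through the equivalence $\cE_2\simeq\cE_1^{\ot 2}$. Once the pushforward formula is in place in exactly this form, commutativity of the diamond, and hence the corollary, is formal.
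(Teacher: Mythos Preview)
Your proposal is correct and matches the paper's approach exactly: the paper deduces Corollary~\ref{t45} from the commutativity of the diamond in diagram~(\ref{e73}), which is established precisely via the pushforward formula $\int_{\pr_i}\simeq\int_{\TT}\int_{\TT}$ together with the identification $\sHH\simeq\int_{\TT}$ and Dunn additivity, just as you outline. Your anticipated bookkeeping about matching $(\pr_i)_\ast A$ with $\bigl(\sHH(A,\mu_i),\sHH(\mu_j)\bigr)$ is handled in the paper by the upper tilted squares of~(\ref{e73}) and the functors ${\sf fgt}_1$, ${\sf fgt}_2$.
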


\subsection{Comparing sheers}
Here, we show the sheer symmetries of $\HHt$ agree.

Consider the composite morphism between continuous groups:
\[
\lag \tau_1 \rag
\colon
\ZZ
\hookrightarrow
\TT^2 \underset{U_1} \rtimes \ZZ
\xra{~\Aff_1~}
\Diff^{\fr}(\pr_1)
\longrightarrow
\Diff^{\fr}(\TT^2)
~.
\]
Note that the composition $\Diff^{\fr}(\TT^2) \to \Diff(\TT^2) \xla{\simeq} \TT^2 \rtimes \GL_2(\ZZ)$ carries $\tau_1$ to the sheering matrix $U_1 = \begin{bmatrix} 1 & 1 \\ 0 & 1 \end{bmatrix}\in \GL_2(\ZZ)$.

\begin{prop}
\label{t101}
The diagram among $\infty$-categories
\[
\xymatrix{
\Alg_2(\cV)
\ar[d]_-{\ZZ \underset{{\sf Sheer}_1}\lacts \HHt}
&&
\Alg_{\cE_2}(\cV)
\ar[ll]_-{{\sf fgt}_1}
\ar[rr]^-{{\sf fgt}_2}
\ar[d]_-{\int_{\TT^2}}
&&
\Alg_2(\cV)
\ar[d]^-{\ZZ \underset{{\sf Sheer}_2^{-1}}\lacts \HHt}
\\
\Mod_{\ZZ}(\cV)
&&
\Mod_{\Diff^{\fr}(\TT^2)}(\cV)
\ar[rr]^-{\lag \tau_2 \rag^\ast}
\ar[ll]_-{\lag \tau_1 \rag^\ast}
&&
\Mod_{\ZZ}(\cV)
}
\]
canonically commutes.
In other words, for each $\cE_2$-algebra $A$ in $\cV$, there are canonical identifications between the two symmetries of $\HHt(A)$,
\begin{equation}
\label{e120}
\lag \tau_1 \rag
~\simeq~
{\sf Sheer}_1
\qquad
\text{ and }
\qquad
\lag \tau_2 \rag
~\simeq~
{\sf Sheer}_2^{-1}
~,
\end{equation}
functorially in $A \in \Alg_{\cE_2}(\cV)$.

\end{prop}

\begin{proof}
By swapping the two coordinates of $\TT^2$, commutativity of the left square implies commutativity of the right square.
So we only establish commutativity of the left square.

Notice that this diagram is functorial in the presentably symmetric monoidal $\infty$-category $\cV$.
Therefore, commutativity of this diagram for any presentably symmetric monoidal $\infty$-category $\cV$ is implied by an identification~(\ref{e120}) in the case that the pair $(A,\cV)$ is initial among presentably symmetric monoidal $\infty$-categories equipped with an $\cE_2$-algebra.

We first identify the initial presentably symmetric monoidal $\infty$-category equipped with an $\cE_2$-algebra.
Day convolution supplies a symmetric monoidal structure on the $\infty$-category $\PShv(\Disk_2^{\fr})$.
By construction, this symmetric monoidal $\infty$-category is $\ot$-presentable.  
Also, the Yoneda embedding $\Disk_2^{\fr} \xra{\rm Yoneda} \PShv(\Disk_2^{\fr})$ is canonically symmetric monoidal.
Via the equivalence $\Alg_{\cE_2}(\cV) \xla{\simeq}\Fun^{\ot}( \Disk^{\fr}_2 , \cV)$, the Yoneda functor is an $\cE_2$-algebra in $\PShv(\Disk_2^{\fr})$.  
Furthermore, it is initial among presentably symmetric monoidal $\infty$-categories equipped with an $\cE_2$-algebra.  
Indeed, for $A\in \Alg_{\cE_2}(\cV) \xla{\simeq} \Fun^{\ot}(\Disk_2^{\fr} , \cV)$ an $\cE_2$-algebra in $\cV$, left Kan extension of $A$ along the Yoneda functor is the unique colimit-preserving symmetric monoidal filler:
\[
\xymatrix{
&
\Disk_2^{\fr}
\ar[dl]_-{\rm Yoneda}
\ar[dr]^-{A}
&
\\
\PShv(\Disk_2^{\fr})
\ar@{-->}[rr]^-{\sf LKE}
&&
\cV
.
}
\]

Recall the fully faithful symmetric monoidal functor $\Disk_2^{\fr} \xra{\iota} \Mfld_2^{\fr}$.
The restricted Yoneda functor associated to $\iota$ is
\[
\Mfld_2^{\fr}
\xra{~\rm restricted~Yoneda~}
\PShv(\Disk_2^{\fr})
~,\qquad
M
\longmapsto
\Hom_{\Mfld_2^{\fr}}(\iota , M)
~,
\]
which is canonically symmetric monoidal.
The definition of factorization homology is such that there is a canonical morphism in $\Fun^{\ot}\bigl( \Mfld_2^{\fr} , \PShv( \Disk_2^{\fr} ) \bigr)$,
\begin{equation}
\label{e131}
\int_-  {\rm Yoneda}
\xra{~\simeq~}
\Hom_{\Mfld_2^{\fr}}\bigl( \iota , - \bigr)
~.
\end{equation}
This morphism is an equivalence.
Indeed, unpacking definitions, and identifying presheaves with right fibrations via the (un)straightening equivalence, the unstraightening of this morphism is a functor between right fibrations over $\Disk^{\fr}_n$:
\[
\int_- \Disk^{\fr}_{n/\RR^n}
\longrightarrow
\Disk^{\fr}_{n/-}
~.
\]
As explained in Example~\ref{r50}, this functor is an equivalence.  
In particular, we have a canonical composite equivalence
\begin{equation}
\label{e130}
\HHt({\rm Yoneda})
\underset{\rm Cor~\ref{t89}}{~\simeq~}
\int_{\TT^2} {\rm Yoneda}
\underset{(\ref{e131})}{~\simeq~}
\Hom_{\Mfld_2^{\fr}}\bigl( \iota , \TT^2 \bigr)
~\qquad
\text{ in }
\PShv(\Disk_2^{\fr}) 
~.
\end{equation}
Also, the symmetric monoidal functor
\[
\Hom_{\Mfld_2^{\fr}}\bigl( \iota , \TT \times - \bigr)
\colon
\Disk_1^{\fr}
\xra{~\TT \times-~}
\Mfld_2^{\fr}
\xra{~\rm restricted~Yoneda~}
\PShv(\Disk_2^{\fr})
\]
is the Hochschild homology of the 2-algebra in $\PShv(\Disk_2^{\fr})$ underlying the $\cE_2$-algebra $\iota$, as it is equipped with its residual associative algebra structure:
\begin{equation}
\label{e132}
\sHH( {\rm Yoneda} )
\underset{\rm Prop~\ref{t100}}{~\simeq~}
\int_{\TT \times \RR^{\sqcup \bullet} } {\rm Yoneda}
\underset{(\ref{e131})}{~\simeq~}
\Hom_{\Mfld_2^{\fr}}\bigl( \iota , \TT \times \RR^{\sqcup \bullet} \bigr)
~\qquad
\text{ in }
\Alg_{\sf Assoc}\bigl(
\PShv(\Disk_2^{\fr}) 
\bigr)
~.
\end{equation}

Now, taking mapping tori defines a map between pointed spaces $\Diff^{\fr}(\TT) \to \sB \Diff^{\fr}(\TT^2)$.
Based loops of this map is the morphism between continuous groups 
\[
\lag \tau_1 \rag
\colon
\ZZ
\xra{~\simeq~}
\Omega \TT
\xra{~\simeq~}
\Omega \Diff^{\fr}(\TT)
\xra{~\Omega (\rm mapping~torus)~}
\Diff^{\fr}(\TT^2)
~.
\]
By construction of the morphism~(\ref{e56}), this fills the diagram among continuous groups:
\[
\xymatrix{
\ZZ
\ar[d]_-{\simeq}
\ar[rrr]
&
&&
\ZZ
\ar[dl]
\ar@(-,r)[ddl]^-{{\sf Sheer}_1}
\\
\Omega \Diff^{\fr}(\TT)
\ar@{-->}[d]^-{\Omega (\rm mapping~torus)}
\ar[r]
&
\Omega 
\Aut_{\PShv(\Disk_2^{\fr}) }\Bigl(
\Hom_{\Mfld_2^{\fr}}\bigl( \iota , \TT \times \RR \bigr) 
\Bigr)
\ar[r]^-{\simeq}_-{(\ref{e132})}
&
\Omega \Aut_{\PShv(\Disk_2^{\fr}) }\Bigl(
\sHH( \iota )
\Bigr)
\ar[d]_-{(\ref{e56})}
\\
\Diff^{\fr}(\TT^2)
\ar[r]
&
\Aut_{\PShv(\Disk_2^{\fr}) }\Bigl(
\Hom_{\Mfld_2^{\fr}}\bigl( \iota , \TT^2 \bigr) 
\Bigr)
\ar[r]^-{\simeq}_-{(\ref{e130})}
&
\Aut_{\PShv(\Disk_2^{\fr}) }\Bigl(
\HHt( \iota )
\Bigr)
.
}
\]
Commutativity of the outer diagram is the sought identification~(\ref{e120}) in the universal case.

\end{proof}

\subsection{Proof of Theorem~\ref{t36} and Corollaries~\ref{t55},~\ref{t54}}
\label{sec.fact.hmlgy}
This subsection proves Theorem~\ref{t36} and Corollary~\ref{t55}, then Corollary~\ref{t54}.

Next, we explain the following diagram among $\infty$-categories: 
\begin{equation}
\label{e73}
\xymatrix{
\Alg_2(\cV)
\ar[d]_-{\ZZ \underset{{\sf Sheer}_1}\lacts \HHt}
&
&
\Alg_{\cE_2}(\cV)
\ar[ll]^-{\simeq}_-{{\sf fgt}_1}
\ar[rr]^-{{\sf fgt}_2}_-{\simeq}
\ar[d]^-{\int_{\TT^2}}
\ar[dr]^-{\int_{\pr_2}}
\ar[dl]_-{\int_{\pr_1}}
&
&
\Alg_2(\cV)
\ar[d]^-{\ZZ \underset{{\sf Sheer}_2^{-1}}\lacts \HHt}
\\
\Mod_{\ZZ}(\cV)
\ar[d]_-{\rm forget}
&
\Mod_{\Diff^{\fr}(\pr_1)}(\cV)
\ar[l]_-{\rm forget}
\ar[dr]_-{\rm forget}
&
\Mod_{\Diff^{\fr}(\TT^2)}(\cV)
\ar[d]^-{\rm forget}
\ar[r]^-{\rm forget}
\ar[l]_-{\rm forget}
&
\Mod_{\Diff^{\fr}(\pr_2)}(\cV)
\ar[r]^-{\rm forget}
\ar[dl]^-{\rm forget}
&
\Mod_{\ZZ}(\cV)
\ar[d]^-{\rm forget}
\\
\cV
&
&
\Mod_{\TT^2}(\cV)
\ar[rr]^-{\rm forget}
\ar[ll]_-{\rm forget}
&
&
\cV
.
}
\end{equation}
\begin{itemize}
\item
The functors labeled ``$\rm forget$'' are restriction along 
the canonically commutative diagram among continuous groups:
\[
\xymatrix{
\ZZ
\ar[rr]^-{\lag \tau_1 \rag}
&&
\Diff^{\fr}(\pr_1)
\ar[rr]
&&
\Diff^{\fr}(\TT^2)
&&
\Diff^{\fr}(\pr_2)
\ar[ll]
&&
\ZZ
\ar[ll]_-{\lag \tau_2 \rag}
\\
\ast
\ar[u]
&&
&&
\TT^2
\ar[u]
\ar[ull]
\ar[urr]
\ar[llll]
\ar[rrrr]
&&
&&
\ast
\ar[u]
,
}
\]
in which, for each $i=1,2$, the the morphism $\lag \tau_i \rag$ is the composite $\ZZ \hookrightarrow \TT^2 \underset{U_i} \rtimes \ZZ \xra{\Aff_i} \Diff^{\fr}(\pr_i)$.
In particular, each of the lower triangles canonically commutes.

\item
The functor $\int_{\TT^2}$ is~(\ref{e74}).

\item
For $i=1,2$, the functor $\int_{\pr_i}$ is factorization homology over the circle $\TT$ of the \bit{pushforward} along the projection $\TT^2 \xra{\pr_i} \TT$ off of the $i^{\rm th}$-coordinate, as it is endowed with its canonical $\Diff^{\fr}(\pr_i)$-action.
The \bit{pushforward formula}
$
\int_{\pr_i}
\simeq
\int_{\TT}
\int_{\TT}
$
(see Proposition~{3.23} of~\cite{old.fact}), which is manifestly $\Diff^{\fr}(\pr_i)$-equivariant,
supplies commutativity of the upper triangles.

\item
The functor $\Alg_{\cE_2}(\cV) \xra{{\sf fgt}_1} \Alg_2(\cV)$ is restriction along the standard morphism between operads ${\sf Assoc}\ot {\sf Assoc} \xra{\rm standard} \cE_2$.
The functor $\Alg_{\cE_2}(\cV) \xra{{\sf fgt}_2} \Alg_2(\cV)$ is restriction along the morphism between operads ${\sf Assoc}\ot {\sf Assoc} \xra{\rm swap} {\sf Assoc}\ot {\sf Assoc} \xra{\rm standard} \cE_2$.
Theorem~\ref{t48} implies that each of these functors are equivalences.  

\item
For $i=1,2$, the outer vertical functors are $\HHt$, as it is endowed with its canonical action
$\ZZ \underset{{\sf Sheer}_i}\lacts \HHt(A)$ of (\ref{e112}),(\ref{e113}) from~\S\ref{sec.second}, which is evidently functorial in $A\in \Alg_2(\cV)$.

\item
Commutativity of the upper tilted squares is Proposition~\ref{t101}.

\end{itemize}
In particular, for each 2-algebra $A\in \Alg_2(\cV)$, there is a canonical action $\Diff^{\fr}(\TT^2) \lacts \HHt(A)$.
Through Theorem~\ref{Theorem A}(2a), this is an action $\TT^2 \rtimes \Braid \lacts \HHt(A)$, which establishes the statement of Theorem~\ref{t36}.

After Theorem~\ref{t36}, the standard presentation~(\ref{e67}) of the braid group $\Braid$ immediately implies Corollary~\ref{t55}(1).
Via the identification $\TT^2 \underset{U_i} \rtimes \ZZ \xra{\simeq} \Diff^{\fr}(\pr_i)$ of Corollary~\ref{t64}, commutativity of the outer squares in the diagram~(\ref{e73}) directly implies Corollary~\ref{t55}(2)(3).

Next, consider the
$\sO(2)\simeq \GL_2(\RR)$-action on $\Mfld_2^{\fr}$ given by change-of-framing.
Observe that this action restricts to as one along the full $\infty$-subcategory $\Disk^{\fr}_2 \subset \Mfld^{\fr}_2$.
This implies the left adjoint, $\int$, is $\sO(2)$-equivariant.  
Therefore, for each $A\in \Alg_2(\cV)$, and each $(\Sigma,\varphi)\in \Mfld_2^{\fr}$, taking $\sO(2)$-orbits of both $A$ and $(\Sigma,\varphi)$ define a canonically commuting diagram among $\infty$-categories
\[
\xymatrix{
\sO(2)
\ar[rr]^-{{\sf Orbit}_A}
\ar[d]_-{{\sf Orbit}_{(\Sigma,\varphi)}}
&&
\Alg_2(\cV)
\ar[d]^-{\int_{(\Sigma,\varphi)}}
\\
\Mfld_2^{\fr}
\ar[rr]^-{\int A}
&&
\cV
.
}
\]
Through Observation~\ref{t43}, restricting along $\sB \ZZ\simeq \sB \Omega_{\uno} \sO(2) \to \sO(2)$ gives the commutative diagram asserted in Corollary~\ref{t54}.

\subsection{Proof of Theorem~\ref{t51}} \label{sec.B2.proof}

After Corollary~\ref{t30}, to prove Theorem~\ref{t51} we are left to extend the action 
\[
\Diff^{\fr}(\TT^2)^{\op}
\underset{(-)^{-1}}\simeq
\Diff^{\fr}(\TT^2) \underset{\simeq}{\xla{\Aff^{\fr}}} \TT^2 \rtimes  \Braid \lacts \HHt(A)
\] to an action $\Imm^{\fr}(\TT^2)^{\op} \lacts \HHt(A)$.
We do this by extending factorization homology, via the developments of~\cite{afr2}.
Namely, recall from~\cite{afr2} the $\infty$-category $\Mfd_2^{\sfr}$ of \bit{solidly 2-framed stratified spaces}.  
Consider the full $\infty$-subcategory $\bcM_{=2}^{\sfr}\subset \Mfd_2^{\sfr}$ consisting of those solidly 2-framed stratified spaces each of whose strata is 2-dimensional.
\begin{observation}
\label{t56}
Inspection of the definition of $\Mfd_2^{\sfr}$ reveals the following.
\begin{enumerate}

\item
The moduli space of objects 
\[
\Obj( \bcM_{=2}^{\sfr})
~\simeq~
\underset{[\Sigma,\varphi]} \coprod {\sf BDiff}^{\fr}(\Sigma,\varphi)
\]
is that of a framed 2-manifold.  
In other words, there is a canonical bijection between framed-diffeomorphism-types of framed 2-manifolds and equivalence-classes of objects in $\bcM_{=2}^{\sfr}$, and for $(\Sigma,\varphi)$ a framed 2-manifold, there is a canonical identification between continuous groups:
\[
\Diff^{\fr}(\Sigma,\varphi)
~\simeq~
\Aut_{\bcM_{=2}^{\sfr}}(\Sigma,\varphi)
~.
\]

\item
Let $(\Sigma,\varphi)$ and $(\Sigma',\varphi')$ be framed 2-manifolds.
The space of morphisms from $(\Sigma,\varphi)$ to $(\Sigma',\varphi')$ in $\bcM_{=2}^{\sfr}$
~,
\[
\Hom_{\bcM_{=2}^{\sfr}}\bigl( (\Sigma,\varphi) , (\Sigma',\varphi') \bigr)
~\simeq~
\underset{[\w{\Sigma}\xra{\pi}\Sigma]} \coprod \Emb^{\fr}\bigl( (\w{\Sigma},\pi^\ast \varphi) , (\Sigma',\varphi') \bigr)_{/\Diff_{/\Sigma}(\w{\Sigma})}
~,
\]
is the moduli space of finite-sheeted covers over $\Sigma$ together with a framed-embedding from its total space to $(\Sigma',\varphi')$.

\item
Composition in $\bcM_{=2}^{\sfr}$ is given by base-change of framed embeddings along finite-sheeted covers, followed by composition of framed-embeddings:
\[
\Hom_{\bcM_{=2}^{\sfr}}\bigl( (\Sigma,\varphi) , (\Sigma',\varphi') \bigr)
\times
\Hom_{\bcM_{=2}^{\sfr}}\bigl( (\Sigma',\varphi') , (\Sigma'',\varphi'') \bigr)
\xra{~\circ~}
\Hom_{\bcM_{=2}^{\sfr}}\bigl( (\Sigma,\varphi) , (\Sigma'',\varphi'') \bigr)
~,
\]
\[
\Bigl(
~
(\Sigma,\varphi) \xla{\pi} (\w{\Sigma},\pi^\ast \varphi) \xra{f} (\Sigma',\varphi')
~,~
(\Sigma',\varphi') \xla{\pi'} (\w{\Sigma}',{\pi'}^\ast \varphi') \xra{g} (\Sigma'',\varphi'')
~
\Bigr)
\]
\[
~\longmapsto~
\Bigl(
~
(\Sigma,\varphi) 
\xla{\pi \circ \pr_1} 
\bigl(
\w{\Sigma} 
\underset{\Sigma'} \times 
\w{\Sigma}',(\pr_1\circ \pi)^\ast \varphi
\bigr) 
\xra{g \circ \pr_2} 
(\Sigma'',\varphi'')
~
\Bigr)
~.
\]

\item
Evidently, framed embeddings form the left factor in a factorization system on $\bcM_{=2}^{\sfr}$, whose right factor is (the opposite of) framed finite-sheeted covers.

\item
Finite products exist in $\bcM_{=2}^{\sfr}$, and are implemented by disjoint unions of framed 2-manifolds.

\item
For each framing $\varphi$ of the 2-torus $\TT^2$, there is a canonical identification between continuous monoids:
\[
\Imm^{\fr}( \TT^2,\varphi)^{\op}
~\simeq~
\End_{\bcM_{=2}^{\sfr}}(\TT^2,\varphi)
~.
\]

\end{enumerate}

\end{observation}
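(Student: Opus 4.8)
The plan is that Observation~\ref{t56} is, as its name suggests, entirely a matter of unwinding the definition of $\Mfd_2^{\sfr}$ from~\cite{afr2} and tracing what it specializes to on the full $\infty$-subcategory $\bcM_{=2}^{\sfr}$. First I would recall the relevant pieces of that definition: an object of $\Mfd_2^{\sfr}$ is a conically smooth stratified space equipped with a solid $2$-framing; a morphism $X \to Y$ is (a point in a moduli space of) a correspondence $X \xla{\pi} \widetilde{X} \xra{f} Y$ in which $\pi$ is a proper constructible bundle carrying the solid framing and $f$ is an open framed embedding; composition is by base-change of the embedding $f$ along the constructible bundle appearing in the next morphism, followed by composition of framed embeddings; open framed embeddings and (opposites of) proper constructible bundles form a factorization system; and finite products are disjoint unions. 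Restricting to $\bcM_{=2}^{\sfr}$ imposes that every stratum of every object is $2$-dimensional, so the objects are simply ordinary smooth $2$-manifolds, with no positive-codimension strata.

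With the objects being genuine smooth $2$-manifolds, a solid $2$-framing is nothing more than a trivialization of the tangent bundle, so each object is a framed $2$-manifold, $\Aut_{\bcM_{=2}^{\sfr}}(\Sigma,\varphi) \simeq \Diff^{\fr}(\Sigma,\varphi)$, and the moduli space of objects is the coproduct over framed-diffeomorphism-types of the classifying spaces ${\sf BDiff}^{\fr}(\Sigma,\varphi)$; this is~(1). For~(2) and~(3), the one genuine verification is that a proper constructible bundle $\pi\colon \widetilde{\Sigma}\to\Sigma$ between spaces all of whose strata are $2$-dimensional is automatically a finite-sheeted covering map: its fibers are compact stratified spaces that are necessarily $0$-dimensional, hence finite sets, and constructibility forces local triviality over $\Sigma$, so $\pi$ is a finite cover, and the transported framing is the asserted $\pi^\ast\varphi$. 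Thus a morphism is exactly a framed finite-sheeted cover of $\Sigma$ together with a framed open embedding of its total space into $\Sigma'$, taken up to $\Diff_{/\Sigma}(\widetilde{\Sigma})$ -- the displayed $\Hom$-space -- and composition is base-change of the embedding along the next cover, which is~(3). The factorization system~(4) and the identification~(5) of finite products with disjoint unions are then read off directly from the corresponding structures on $\Mfd_2^{\sfr}$, since disjoint unions of $2$-manifolds remain in $\bcM_{=2}^{\sfr}$.

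Finally, for~(6) I would specialize~(2) and~(3) to $\Sigma = \Sigma' = (\TT^2,\varphi)$. A self-morphism is a framed finite-sheeted cover $\pi\colon\widetilde{\TT^2}\to\TT^2$ together with a framed open embedding $f\colon(\widetilde{\TT^2},\pi^\ast\varphi)\hookrightarrow(\TT^2,\varphi)$, modulo $\Diff_{/\TT^2}(\widetilde{\TT^2})$; every finite cover of the torus is again a torus (the covering-space input already used in Observation~\ref{t99}(2)), so $\widetilde{\TT^2}$ is a compact connected $2$-manifold and its open embedding into the connected $2$-manifold $\TT^2$ has image both open and compact, hence is a framed diffeomorphism. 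The assignment $(\pi,f)\mapsto \pi\circ f^{-1}$ then lands in $\Imm^{\fr}(\TT^2,\varphi)$; I would check this is a well-defined equivalence of spaces -- the $\Diff_{/\TT^2}(\widetilde{\TT^2})$-quotient is exactly the ambiguity in factoring $\pi\circ f^{-1}$ through $\widetilde{\TT^2}$, matched against the sublattice-of-$\ZZ^2$ classification of Observation~\ref{t99} -- and that it reverses composition, since base-change of $f$ along the next cover translates into precomposition of framed local-diffeomorphisms. This yields $\End_{\bcM_{=2}^{\sfr}}(\TT^2,\varphi) \simeq \Imm^{\fr}(\TT^2,\varphi)^{\op}$.

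I expect the main obstacle to be precisely that verification inside~(2): confirming from the definition in~\cite{afr2} that the constructible bundles occurring among morphisms between purely $2$-dimensional objects are exactly finite-sheeted covers, and that the moduli-level quotient by $\Diff_{/\Sigma}(\widetilde{\Sigma})$ is formed homotopy-coherently so that the stated $\Hom$-formula is literally correct. Everything else is bookkeeping, the only point needing care being the direction of the opposite in~(6), which is pinned down by comparing the base-change composition law with ordinary composition of framed local-diffeomorphisms.
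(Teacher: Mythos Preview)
Your proposal is correct and matches the paper's own treatment: the paper gives no proof beyond the opening clause ``Inspection of the definition of $\Mfd_2^{\sfr}$ reveals the following,'' and your outline is precisely that inspection carried out in detail. The key substantive points you isolate---that proper constructible bundles between purely $2$-dimensional objects are finite-sheeted covers, and that for~(6) compactness of the torus forces the framed open embedding $f$ to be a framed diffeomorphism so that $(\pi,f)\mapsto \pi\circ f^{-1}$ reverses composition---are exactly the content the paper leaves implicit.
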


Denote the full $\infty$-subcategory
\[
\iota
\colon
\bcD_{=2}^{\sfr}
~\subset~
\bcM_{=2}^{\sfr}
\]
consisting of those framed 2-manifolds that are equivalent with a finite disjoint union of framed Euclidean spaces.
Regard both $\bcD_{=2}^{\sfr}$ and $\bcM_{=2}^{\sfr}$ as symmetric monoidal $\infty$-categories, via their Cartesian monoidal structures.\footnote{Indeed, notice that the full $\infty$-subcategory $\bcD_{=2}^{\sfr}\subset \bcM_{=2}^{\sfr}$ is closed under finite products.}
Notice the evident monomorphisms of symmetric monoidal $\infty$-categories,
\[
\rho
\colon 
\Disk_2^{\fr}
~\hookrightarrow~
\bcD_{=2}^{\sfr}
\qquad
\text{ and }
\qquad
\rho
\colon 
\Mfld_2^{\fr}
~\hookrightarrow~
\bcM_{=2}^{\sfr}
~,
\]
each of whose images consists of all objects yet only those morphisms $\bigl( (\Sigma,\varphi) \xla{\pi} (\w{\Sigma},\pi^\ast \varphi) \xra{f} (\Sigma',\varphi')\bigr)$ in which $\pi$ is a diffeomorphism.\footnote{In other words, $\rho$ is the inclusion of the left factor in the factorization system of Observation~\ref{t56}(4).}

Let $\cX$ be a presentable $\infty$-category in which products distribute over colimits.
Consider the full $\infty$-subcategory
\[
\Fun^{\times}\bigl(
\bcD_{2}^{\sfr}, \cX 
\bigr)
~\subset~
\Fun\bigl(
\bcD_{2}^{\sfr}
, 
\cX 
\bigr)
\]
consisting of those functors that preserve finite products.  
\begin{prop}
[\cite{EIC}]
\label{t58}
Let $\cX$ be a presentable $\infty$-category in which products distribute over colimits.
Restriction along $\rho$ defines an equivalence between $\infty$-categories:
\[
\rho^\ast
\colon
\Fun^{\times}\bigl(
\bcD_{2}^{\sfr}, \cX 
\bigr)
\xra{~\simeq~}
\Fun^{\ot}\bigl( \Disk_2^{\fr} , \cX \bigr)
~\simeq~
\Alg_{\cE_2}(\cX)
~.
\]

\end{prop}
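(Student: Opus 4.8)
The plan is to exhibit an explicit quasi-inverse to $\rho^{\ast}$, reducing its verification to two structural features of $\bcD_{=2}^{\sfr}$: the orthogonal factorization system of Observation~\ref{t56}(4), and the triviality of finite-sheeted covers of a simply-connected space. By construction of $\rho$, its image in $\bcD_{=2}^{\sfr}$ consists of exactly those morphisms whose covering component is an equivalence --- that is, the framed embeddings, which form the left class of the factorization system of Observation~\ref{t56}(4). Since $\rho$ carries disjoint union to the Cartesian product of $\bcD_{=2}^{\sfr}$, restriction along $\rho$ does take a finite-product-preserving functor $\bcD_{=2}^{\sfr}\to\cX$ to a symmetric monoidal functor $\Disk_2^{\fr}\to(\cX,\times)$; so $\rho^{\ast}$ is well defined, and --- invoking the identification $\Fun^{\otimes}(\Disk_2^{\fr},\cX)\simeq\Alg_{\cE_2}(\cX)$ recalled in~(\ref{e75}) --- it remains only to see $\rho^{\ast}$ is an equivalence. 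Note that the naive strategy of left Kan extending along $\rho$ is unavailable for a general $\cX$ (no colimits), which is why one wants an explicit inverse.

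First I would pin down the complementary class combinatorially. A framed finite-sheeted cover $\pi\colon\w\Sigma\to\RR^2$ of a connected framed Euclidean $2$-space is canonically split: each component of $\w\Sigma$ is a connected cover of the simply-connected $\RR^2$ and hence a diffeomorphism onto $\RR^2$, under which $\pi^{\ast}\varphi$ agrees with $\varphi$. Consequently every covering morphism out of $(\amalg_{n}\RR^2,\varphi)$ in $\bcD_{=2}^{\sfr}$ is canonically of the form $(\amalg_{n}\RR^2,\varphi)\xla{\pi}(\amalg_{n'}\RR^2,\varphi)\xra{f}(\Sigma',\varphi')$ with $f$ a framed embedding and $\pi$ recorded by the finite-set map $[n']\to[n]$ assigning to each sheet the base component it covers. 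If now $F\colon\bcD_{=2}^{\sfr}\to\cX$ preserves finite products, then $F(\amalg_{n}\RR^2,\varphi)\simeq F(\RR^2,\varphi)^{\times n}$ and $F$ is forced to send the covering morphism attached to $g\colon[n']\to[n]$ to the canonical map $F(\RR^2,\varphi)^{\times n}\to F(\RR^2,\varphi)^{\times n'}$ built from diagonals and projections according to $g$; there is no remaining freedom. Thus the candidate inverse of $\rho^{\ast}$ sends an $\cE_2$-algebra --- i.e. a symmetric monoidal $F_{0}\colon\Disk_2^{\fr}\to(\cX,\times)$ --- to the functor that is $F_{0}$ on embeddings and the above canonical maps on covers, with general morphisms handled by factoring and composing via Observation~\ref{t56}(3)--(4).

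The substantive step --- and the one I would discharge by citing \cite{EIC} rather than rebuild --- is to upgrade these object- and morphism-level identifications to an equivalence of $\infty$-categories: functoriality of the assignment above, that it lands in $\Fun^{\times}$, naturality in $\cX$, and that it is a genuine two-sided inverse to $\rho^{\ast}$ with all higher coherences. The clean organizing principle is that an $\infty$-category equipped with an orthogonal factorization system whose right class is ``freely generated under the ambient Cartesian monoidal structure by diagonal and projection maps'' is the universal finite-product completion of its left class relative to that monoidal structure; applied to $\bcD_{=2}^{\sfr}$ with left class $\Disk_2^{\fr}$ --- legitimate by Observation~\ref{t56}(4)--(5) together with the triviality of covers of $\RR^2$ above --- this yields precisely $\Fun^{\times}(\bcD_{=2}^{\sfr},\cX)\simeq\Fun^{\otimes}(\Disk_2^{\fr},\cX)$. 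I expect the main irritant in this last step to be bookkeeping with the $\Diff_{/\Sigma}(\w\Sigma)$-quotients appearing in the Hom-spaces of Observation~\ref{t56}(2): one must check that the automorphisms of a cover are exactly the automorphisms that the corresponding diagonal maps carry in a Cartesian $\infty$-category, so that no spurious identifications are introduced. This is fiddly but, given Observation~\ref{t56}, routine.
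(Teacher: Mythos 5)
The paper does not prove this proposition --- it is stated with a bare citation to \cite{EIC} (Ayala--Francis--Mazel-Gee--Rozenblyum, \emph{Factorization homology of enriched $\infty$-categories}) and used thereafter without further argument, so there is no internal proof to compare against. On its own merits your sketch is sound and identifies the right structural ingredients: the triviality of finite-sheeted covers of the simply connected $\RR^2$ reduces the covering morphisms of $\bcD_{=2}^{\sfr}$ to finite-set maps; these implement exactly the diagonal-and-projection morphisms of the Cartesian structure of Observation~\ref{t56}(5); the factorization system of Observation~\ref{t56}(4) places embeddings and (opposite) covers as complementary classes, with embeddings being precisely the image of $\rho$; and a finite-product-preserving functor is forced on the covering class, since the cover attached to $g\colon[n']\to[n]$ is characterized in $\bcD_{=2}^{\sfr}$ by the equations $\pr_j\circ h=\pr_{g(j)}$. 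Your explicit-inverse strategy and the observation that naive left Kan extension along $\rho$ is unavailable (and would not obviously land in product-preserving functors anyway) are both reasonable. Two small cautions, both of which you already flag and defer to \cite{EIC}: specifying a functor on an $\infty$-category by its values on the two classes of a factorization system requires more than agreeing on factorizations --- one must verify the exchange/coherence data for composites of a cover followed by an embedding, which is where the real content of the cited result lives; and the $\Diff_{/\Sigma}(\w\Sigma)$-quotients in Observation~\ref{t56}(2) must be matched against the $\Sigma_{n_i}$-symmetries implicit in the Cartesian powers $F(\RR^2)^{n}$, which is a genuine (if routine) check rather than a formality. With those acknowledged, your proposal is an accurate roadmap to the cited result.
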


The inverse of restriction along $\rho$ followed by left Kan extension along $\iota$ defines a composite functor
\[
\w{\int}
\colon
\Alg_{\cE_2}(\cX)
~\simeq~
\Fun^{\ot}( \Disk_2^{\fr} , \cX )
\xra{~(\rho^\ast)^{-1}~}
\Fun^{\times}( \bcD_{=2}^{\sfr} , \cX )
\xra{~\iota_!~}
\Fun^{\times}( \bcM_{=2}^{\sfr} , \cX )
~.
\]

\begin{prop}
\label{t57}
Let $\cX$ be a presentable $\infty$-category in which products distribute over colimits.
The diagram among $\infty$-categories canonically commutes:
\[
\xymatrix{
\Alg_{\cE_2}(\cX)
\ar[rr]^-{\w{\int}}
\ar[d]_-{\int}
&&
\Fun^{\times}( \bcM_{=2}^{\sfr} , \cX )
\ar[rr]^-{\rm restriction}
&&
\Fun\bigl( {\sf BAut}_{\bcM_{=2}^{\sfr}}(\TT^2,\varphi_0) , \cX \bigr)
\ar[d]_-{\simeq}^-{\rm Obs~\ref{t56}(1)}
\\
\Fun^{\ot}( \Mfld_2^{\fr}, \cX )
\ar[rr]^-{\rm restriction}
&&
\Fun\bigl( {\sf BAut}_{\Mfld_2^{\fr}}(\TT^2,\varphi_0) , \cX \bigr)
\ar[rr]^-{\simeq}
&&
\Mod_{\Diff^{\fr}(\TT^2,\varphi_0)}(\cX)
.
}
\]

\end{prop}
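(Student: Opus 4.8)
The plan is to reduce Proposition~\ref{t57} to the single comparison statement that, for every $\cE_2$-algebra $F \in \Alg_{\cE_2}(\cX) \simeq \Fun^{\ot}(\Disk_2^{\fr},\cX)$, ordinary factorization homology is recovered from $\w{\int}F$ by restriction along $\rho \colon \Mfld_2^{\fr} \hookrightarrow \bcM_{=2}^{\sfr}$, i.e. $\int F \simeq \rho^{\ast}\w{\int}F$ in $\Fun(\Mfld_2^{\fr},\cX)$; here I use that $\int F$ is computed by the left Kan extension $j_{!}F$ along $j \colon \Disk_2^{\fr} \hookrightarrow \Mfld_2^{\fr}$ (see~\cite{old.fact}). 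Granting this, I would conclude the proposition by evaluating at $(\TT^2,\varphi)$ and invoking Observation~\ref{t56}(1), which identifies $\Aut_{\bcM_{=2}^{\sfr}}(\TT^2,\varphi) \simeq \Diff^{\fr}(\TT^2,\varphi) \simeq \Aut_{\Mfld_2^{\fr}}(\TT^2,\varphi)$ compatibly with $\rho$, so that the right-hand portion of the diagram matches up the two descriptions of the $\Diff^{\fr}(\TT^2,\varphi)$-action.

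First I would record the commuting square of symmetric monoidal $\infty$-subcategory inclusions
\[
\xymatrix{
\Disk_2^{\fr} \ar[r]^-{\rho} \ar[d]_-{j} & \bcD_{=2}^{\sfr} \ar[d]^-{\iota}
\\
\Mfld_2^{\fr} \ar[r]^-{\rho} & \bcM_{=2}^{\sfr}
.
}
\]
Both $j_{!}F$ and $\rho^{\ast}\w{\int}F = \rho^{\ast}\iota_{!}(\rho^{\ast})^{-1}F$ restrict along $j$ back to $F$: the former because $j$ is fully faithful, the latter because $j^{\ast}\rho^{\ast}\iota_{!}(\rho^{\ast})^{-1}F \simeq \rho^{\ast}\iota^{\ast}\iota_{!}(\rho^{\ast})^{-1}F \simeq \rho^{\ast}(\rho^{\ast})^{-1}F \simeq F$, using that $\iota$ is fully faithful, that the square commutes, and Proposition~\ref{t58}. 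The universal property of $j_{!}$ as left Kan extension thus yields a canonical comparison $j_{!}F \to \rho^{\ast}\w{\int}F$ in $\Fun(\Mfld_2^{\fr},\cX)$, which I would check is an equivalence objectwise. Fixing $M \in \Mfld_2^{\fr}$ and writing $\Disk_{2,/M}^{\fr} := \Disk_2^{\fr}\times_{\Mfld_2^{\fr}}(\Mfld_2^{\fr})_{/M}$ and $\bcD_{=2,/M}^{\sfr} := \bcD_{=2}^{\sfr}\times_{\bcM_{=2}^{\sfr}}(\bcM_{=2}^{\sfr})_{/M}$, the pointwise formulas read $(j_{!}F)(M) \simeq \colim_{(D,e)\in\Disk_{2,/M}^{\fr}}F(D)$ and $(\rho^{\ast}\w{\int}F)(M) \simeq \colim_{(E,\phi)\in\bcD_{=2,/M}^{\sfr}}(\rho^{\ast})^{-1}F(E)$. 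Since $(\rho^{\ast})^{-1}F$ restricts along $\rho$ to $F$ (Proposition~\ref{t58}), it suffices to show that the functor $q_{M}\colon \Disk_{2,/M}^{\fr} \to \bcD_{=2,/M}^{\sfr}$ induced by $\rho$ is cofinal.

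I would prove cofinality of $q_{M}$ by Quillen's Theorem A: for each object $(E,\phi)$ of $\bcD_{=2,/M}^{\sfr}$ the comma $\infty$-category $\Disk_{2,/M}^{\fr}\times_{\bcD_{=2,/M}^{\sfr}}(\bcD_{=2,/M}^{\sfr})_{(E,\phi)/}$ is weakly contractible. Using the embedding/cover factorization system of Observation~\ref{t56}(4), the structure map $\phi$ factors essentially uniquely as $E \xla{\pi} \w{E} \xra{f} M$ with $\pi$ a framed finite-sheeted cover and $f$ a framed embedding; since every component of $E$ is a framed Euclidean space, whose only connected covers are copies of itself, the total space $\w{E}$ (with its pulled-back framing) is again a finite disjoint union of framed Euclidean spaces, hence $(\w{E},f)$ is an object of $\Disk_{2,/M}^{\fr}$. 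Unwinding the descriptions of mapping spaces and of composition in $\bcM_{=2}^{\sfr}$ from Observation~\ref{t56}(2),(3) --- composition being base-change of framed embeddings along covers followed by composition of embeddings --- identifies this comma $\infty$-category with the coslice $\bigl(\Disk_{2,/M}^{\fr}\bigr)_{(\w{E},f)/}$, which has $\id_{(\w{E},f)}$ as an initial object and is therefore weakly contractible. This establishes the objectwise equivalence, and hence the proposition.

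The step I expect to be the main obstacle is the identification of the comma $\infty$-category with the coslice $\bigl(\Disk_{2,/M}^{\fr}\bigr)_{(\w{E},f)/}$: one must track the homotopy-coherent data in the model of $\bcM_{=2}^{\sfr}$ --- in particular the $\Diff_{/\Sigma}$-quotients in Observation~\ref{t56}(2) --- and verify that the factorization system of Observation~\ref{t56}(4) is compatible with slicing over $M$ exactly as used above. This is routine within the framework of~\cite{afr2} but is where essentially all of the bookkeeping resides.
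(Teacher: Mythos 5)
Your proof is correct and follows essentially the same route as the paper's: the key step in both is that the functor $\Disk^{\fr}_{2,/M}\to\bcD^{\sfr}_{=2,/M}$ is final because of the embedding/cover factorization system of Observation~\ref{t56}(4). The paper phrases this by exhibiting a left adjoint to that functor (projection onto the right factor of the factorization system) and citing that right adjoints are final; your verification of Quillen's Theorem~A by producing the initial object $(\w{E},f)$ in each comma $\infty$-category is exactly the pointwise content of that adjunction, so the two arguments coincide. The only cosmetic differences are that you establish $\int \simeq \rho^{\ast}\w{\int}$ objectwise over every $M\in\Mfld^{\fr}_2$ rather than only at $(\TT^2,\varphi)$, and you derive the $\Diff^{\fr}(\TT^2,\varphi)$-equivariance automatically by restricting a natural transformation of functors to automorphism spaces, whereas the paper asserts it directly.
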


\begin{proof}
Let $A\in \Alg_{\cE_2}(\cX)\simeq \Fun^{\ot}(\Disk_2^{\fr} , \cX)$.
Using Proposition~\ref{t58}, the monomorphism $\rho$ determines a canonical morphism between colimits in $\cX$:
\begin{eqnarray}
\label{e78}
\int_{\TT^2} A
&
~\simeq~
&
\colim
\Bigl(
\Disk_{2/(\TT^2,\varphi_0)}^{\fr}:=
\Disk_2^{\fr} \underset{\Mfld_2^{\fr}}\times \Mfld_{2/(\TT^2,\varphi_0)}^{\fr}
\xra{\pr}
\Disk_2^{\fr}
\xra{A}
\cX
\Bigr)
\\
\nonumber
&
\xra{~\rho~}
&
\colim
\Bigl(
\bcD_{=2/(\TT^2,\varphi_0)}^{\sfr}:=
\bcD_{=2}^{\sfr} \underset{\bcM_{=2}^{\sfr}}\times \bcM_{=2/(\TT^2,\varphi_0)}^{\sfr}
\xra{\pr}
\bcD_{=2}^{\sfr}
\xra{{\rho^\ast}^{-1}(A)}
\cX
\Bigr)
\\
\nonumber
&
~\simeq~
&
\w{\int}_{\TT^2} A
~.
\end{eqnarray}
This morphism is manifestly $\Diff^{\fr}(\TT^2)$-equivariant and functorial in $A\in \Alg_{\cE_2}(\cX)$ as so.
So the proposition is proved upon showing this morphism~(\ref{e78}) is an equivalence.
The morphism~(\ref{e78}) is an equivalence provided the canonical functor
\begin{equation}
\label{e79}
\Disk_{2/(\TT^2,\varphi_0)}^{\fr}
\longrightarrow
\bcD_{=2/(\TT^2,\varphi_0)}^{\sfr}
\end{equation}
is final.
But the factorization system of Observation~\ref{t56}(4) reveals that this functor~(\ref{e79}) is a right adjoint: its left adjoint given by projecting to the right factor of the factorization system:
\[
\bcD_{=2/(\TT^2,\varphi_0)}^{\sfr}
\longrightarrow
\Disk_{2/(\TT^2,\varphi_0)}^{\fr}
~,\qquad
\bigl(D \xla{\pi} \w{D} \xra{f} (\TT^2,\varphi_0) \bigr)
\mapsto
\bigl(
\w{D} \xra{f} (\TT^2,\varphi_0)
\bigr)
~.
\]
The sought finality of the functor~(\ref{e79}) follows.

\end{proof}

Proposition~\ref{t57}, together with Observation~\ref{t56}(6), immediately supply a filler in the commutative diagram among $\infty$-categories:
\begin{equation}
\label{e76}
\xymatrix{
&
\Fun\bigl( \fB \End_{\bcM_{=2}^{\sfr}}(\TT^2,\varphi_0) , \cX \bigr)
\ar[rr]^-{\simeq}_-{\rm Obs~\ref{t56}(6)}
&&
\Mod_{\Imm^{\fr}(\TT^2)^{\op}}(\cX)
\ar[d]^-{\rm forget}
\ar[r]^-{\simeq}_{\rm Cor~\ref{t30}}
&
\Mod_{\left( \TT^2 \rtimes \Ebraid \right)^{\op} }(\cX)
\ar[d]^-{\rm forget}
\\
\Alg_{\cE_2}(\cX)
\ar[rrr]^-{\bigl \lag \Diff^{\fr}(\TT^2) \lacts \int_{\TT^2} \bigr \rag}_-{(\ref{e75})}
\ar@(u,-)@{-->}[ur]^-{\bigl \lag \Imm^{\fr}(\TT^2)^{\op} \lacts  \w{\int}_{\TT^2} \bigr \rag}
&
&&
\Mod_{\Diff^{\fr}(\TT^2)}(\cX)
\ar[r]^-{\simeq}_{\rm Thm~\ref{Theorem A}(2a)}
&
\Mod_{\TT^2 \rtimes \Braid}(\cX)
.
}
\end{equation}
Theorem~\ref{t51} follows from this commutative diagram~(\ref{e76})
after the commutative diagram~(\ref{e73}).

\appendix
\section{some facts about continuous monoids}
\label{sec.A}
We record some simple formal results concerning continuous monoids.

\begin{lemma}\label{t2}
Let $G \lacts X$ be an action of a continuous group on a space.
Let $\ast \xra{\lag x \rag } X$ be a point in this space.
Consider the stabilizer of $x$, which is the fiber of the orbit map of $x$:
\begin{equation}
\label{e52}
\xymatrix{
{\sf Stab}_G(x)
\ar[rrrr]
\ar[d]
&&
&&
\ast
\ar[d]^-{\lag x \rag}
\\
G \simeq G\times \ast 
\ar[rr]^-{ \id \times \lag x \rag} 
\ar@(u,u)[rrrr]_-{{\sf Orbit}_x}
&&
G \times X 
\ar[rr]^-{\sf act} 
&&
X
.
}
\end{equation}
There is a canonical identification in $\Spaces$ between this stabilizer and the based-loops at $[x]\colon \ast \xra{\lag x \rag} X \xra{\rm quotient} X_{/G}$ of the $G$-coinvariants,
\[
{\sf Stab}_G(x)
~\simeq~
\Omega_{[x]} ( X_{/G} )
~,
\]
through which the resulting composite morphism $\Omega_{[x]} ( X_{/G} )
\simeq {\sf Stab}_G(x)
\to 
G
$
canonically lifts to one between continuous groups.

\end{lemma}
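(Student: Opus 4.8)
The plan is to present ${\sf Stab}_G(x)$ as an iterated fibre inside the Borel fibration of the action, and then to read off its continuous-group structure from a Puppe sequence. The starting point is the standard fibre sequence
\[
X \xra{~q~} X_{/G} \xra{~p~} \sB G
~,
\]
in which $q$ is the quotient map and $p$ is the map on $G$-coinvariants induced by the $G$-equivariant terminal map $X \to \ast$ (with $\ast$ carrying the trivial action, so that $\ast_{/G} \simeq \sB G$). Concretely, straightening--unstraightening over the $\infty$-groupoid $\sB G$ identifies $(-)_{/G}$ with the formation of the total space of a left fibration over $\sB G$, and exhibits $X$ as the fibre of $p$ over the basepoint ${\rm pt}\colon \ast \to \sB G$. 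Since the composite $p\circ q$ factors through $X \to \ast$, the point $[x] = \bigl(\ast \xra{\lag x\rag} X \xra{q} X_{/G}\bigr)$ satisfies $p([x]) = {\rm pt}$; thus $[x]$ lies in the fibre $X$ of $p$, where it corresponds to $x$ itself.

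Next I would observe that the orbit map is a base change of $q$. The map ${\sf Orbit}_x\colon G \to X$, $g\mapsto g\cdot x$, is $G$-equivariant for the left-translation action on its source, and applying $(-)_{/G}$ to it yields the map $\ast = G_{/G} \to X_{/G}$, which is precisely $[x]$; this is a morphism over $\sB G$, the source mapping to $\sB G$ via ${\rm pt}$ because $G_{/G} \to \sB G$ is the universal $G$-bundle. As the fibre over ${\rm pt}$ of a morphism over $\sB G$ recovers the underlying morphism of spaces, the square
\[
\xymatrix{
G \ar[rr]^-{{\sf Orbit}_x} \ar[d] && X \ar[d]^-{q}
\\
\ast \ar[rr]^-{[x]} && X_{/G}
}
\]
is a pullback; equivalently ${\rm fib}_{[x]}(q) \simeq G$, with fibre inclusion equal to ${\sf Orbit}_x$.

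To finish, based at $[x]$ the fibre sequence $X \xra{q} X_{/G} \xra{p} \sB G$ extends to the left as a Puppe sequence
\[
\cdots \to \Omega_{[x]}(X_{/G}) \xra{~\Omega_{[x]}p~} \Omega_{\rm pt}(\sB G) \xra{~\partial~} X \xra{~q~} X_{/G} \xra{~p~} \sB G
~,
\]
in which $\Omega_{\rm pt}(\sB G) \simeq {\rm fib}_{[x]}(q)$ with structure map $\partial$, and $\Omega_{[x]}(X_{/G}) \simeq {\rm fib}_x(\partial)$ with structure map $\Omega_{[x]}p$. Identifying $\Omega_{\rm pt}(\sB G) \simeq G$ canonically as continuous groups, the base-change square of the previous paragraph identifies $\partial$ with ${\sf Orbit}_x$, whence
\[
{\sf Stab}_G(x) ~=~ {\rm fib}_x({\sf Orbit}_x) ~\simeq~ {\rm fib}_x(\partial) ~\simeq~ \Omega_{[x]}(X_{/G})
~,
\]
and under this identification the canonical map ${\sf Stab}_G(x) \to G$ becomes $\Omega_{[x]}p$, which is a morphism of continuous groups because $\Omega$ carries a pointed map to a morphism of group-objects in $\Spaces$. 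The one point deserving real care — the main obstacle — is the compatibility used here: that the identification ${\rm fib}_{[x]}(q) \simeq \Omega_{\rm pt}(\sB G)$ supplied by the Puppe sequence agrees with the identification ${\rm fib}_{[x]}(q) \simeq G$ supplied by the base-change square, equivalently that the monodromy of the Borel fibration $p$ about ${\rm pt}$ reproduces the given action $G \lacts X$; everything else is formal manipulation of pullback squares.
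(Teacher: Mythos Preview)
Your proof is correct and follows essentially the same approach as the paper. Both arguments hinge on the same pullback square
\[
\xymatrix{
G \ar[rr]^-{{\sf Orbit}_x} \ar[d] && X \ar[d]^-{q} \\
\ast \ar[rr]^-{[x]} && X_{/G}
}
\]
obtained by applying $(-)_{/G}$ to the $G$-equivariant orbit map; the paper invokes effectivity of groupoids in $\Spaces$ where you invoke straightening--unstraightening, and then both pass to the iterated fibre. Your use of the Puppe sequence to name the map ${\sf Stab}_G(x)\to G$ as $\Omega_{[x]}p$ is exactly the paper's functoriality along the terminal $G$-map $X\to\ast$, just phrased differently.
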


\begin{proof}
By definition of a $G$-action, the orbit map $G\xra{{\sf Orbit}_x} X$ is canonically $G$-equivariant.
Taking $G$-coinvariants supplies an extension of the commutative diagram~(\ref{e52}) in $\Spaces$:
\[
\xymatrix{
{\sf Stab}_G(x) 
\ar[rr] \ar[d]
&&
G 
\ar[d]^-{{\sf Orbit}_x} \ar[rr]^-{\rm quotient}
&&
G_{/G} 
\simeq 
\ast
\ar[d]^-{({\sf Orbit}_x)_{/G}}
\\
\ast \ar[rr]^-{\lag x\rag}
&&
X
\ar[rr]^-{\rm quotient}
&&
X_{/G}
.
}
\]
Through the identification $G_{/G} \simeq \ast$, the right vertical map is identified as $\ast\xra{\lag [x] \rag} X_{/G}$.
Using that groupoids in $\Spaces$ are effective, the right square is a pullback.  
Because the lefthand square is defined as a pullback, it follows that the outer square is a pullback.
The identification ${\sf Stab}_G(x) \simeq \Omega_{[x]} ( X_{/G} ) $ follows.  
In particular, the space ${\sf Stab}_G(x)$ has the canonical structure of a continuous group.

Now, this continuous group ${\sf Stab}_G(x)$ is evidently functorial in the argument $G \lacts X \ni x$.  
In particular, the unique $G$-equivariant morphism $X\xra{!} \ast$ determines a morphism between continuous groups:
\[
{\sf Stab}_x(X)
\longrightarrow
{\sf Stab}_{\ast}(\ast)
~\simeq~
G
~.
\]

\end{proof}

\begin{lemma}
\label{t66}
Let $H \to G$ be a morphism between continuous groups.
Let $H\lacts X$ be an action on a space.  
There is a canonical map between spaces over $G_{/H}$, 
\[
X_{/\Omega(G_{/H})}
\longrightarrow 
(X \times G)_{/H}
~,
\]
from the coinvariants with respect to the action 
$
\Omega (G_{/H})
\xra{\Omega\text{-\rm Puppe}}
H
\lacts
X
$.
Furthermore, if the induced map $\pi_0(H)\to \pi_0(G)$ between sets of path-components is surjective, then this map is an equivalence.

\end{lemma}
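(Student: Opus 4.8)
The plan is to present both sides as spaces over $G_{/H}$ and to realize the asserted morphism as the inclusion of the part of the Borel construction $(X\times G)_{/H}$ lying over the path-component of the tautological point $[1]\in G_{/H}$.

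First I would record the basic fiber sequence. Write $\iota\colon H\to G$ for the given morphism, and let $H$ act on $G$ by translation through $\iota$, so that $G_{/H}:=\colim\bigl(\sB H\xra{\lag H\lacts G\rag}\Spaces\bigr)$ and $(\ast\times G)_{/H}\simeq G_{/H}$. Since this translation action is free and groupoids in $\Spaces$ are effective, the quotient map $q\colon G\to G_{/H}$ is a principal $H$-bundle; it is classified by a map $c\colon G_{/H}\to\sB H$ whose fiber is $G$, and looping $c$ at $[1]$ (the image of $1\in G$) recovers the fiber sequence of continuous groups
\[
\Omega_{[1]}(G_{/H})\xra{~\delta~}H\xra{~\iota~}G
~,
\]
in which $\delta$ is exactly the $\Omega$-Puppe morphism appearing in the statement.

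Next I would take $H$-coinvariants of the $H$-equivariant projection $X\times G\to G$ (for the diagonal action: the given action on $X$, translation on $G$). This produces a map $(X\times G)_{/H}\to G_{/H}$ over $G_{/H}$ which, because $H$ acts freely on $G$, is the fiber bundle with fiber $X$ associated to $q$ along the action $\lag H\lacts X\rag$; hence it is classified by the composite functor $G_{/H}\xra{c}\sB H\to\Spaces$. Restricting this functor along the inclusion of the path-component $(G_{/H})_{[1]}\simeq\sB\,\Omega_{[1]}(G_{/H})\hookrightarrow G_{/H}$ turns it into the functor $\sB\,\Omega_{[1]}(G_{/H})\to\Spaces$ classifying the action of $\Omega_{[1]}(G_{/H})$ on $X$ through $\delta$; its colimit is $X_{/\Omega(G_{/H})}$. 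Since the total space of any map of spaces is the disjoint union over $\pi_0$ of the target of its restrictions to path-components, this exhibits the desired canonical map over $G_{/H}$,
\[
X_{/\Omega(G_{/H})}\longrightarrow(X\times G)_{/H}
~,
\]
as the inclusion of the components lying over $[1]$.

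Finally, this inclusion is an equivalence precisely when $G_{/H}$ is path-connected (the case $X\simeq\emptyset$ being degenerate, both sides being empty). As $\pi_0\colon\Spaces\to\Sets$ preserves colimits, $\pi_0(G_{/H})=\pi_0\bigl(\colim_{\sB H}G\bigr)$ is the set of orbits of the left-translation action of $\pi_0(H)$ on the group $\pi_0(G)$ through $\pi_0(\iota)$; since $\pi_0(G)$ is a group, this orbit set is a single point exactly when $\pi_0(\iota)\colon\pi_0(H)\to\pi_0(G)$ is surjective, which is the stated hypothesis. The one genuinely delicate point — and where the content of the lemma resides — is the middle step: checking that $(X\times G)_{/H}\to G_{/H}$ really is the $X$-bundle associated to $q$, with monodromy $\delta$ followed by $\lag H\lacts X\rag$. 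This is a routine unwinding of straightening/unstraightening together with effectivity of groupoids in $\Spaces$, but it is the step on which the whole identification turns.
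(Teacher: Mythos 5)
Your proof is correct and takes essentially the same route as the paper's: both rest on the fact --- via effectivity of groupoids in $\Spaces$ --- that the $H$-coinvariants $(X\times G)_{/H}\to G_{/H}$ is the pullback of $X_{/H}\to\sB H$ along the classifying map $c\colon G_{/H}\to\sB H$, and then on the observation that $\sB\,\Omega_{[1]}(G_{/H})\hookrightarrow G_{/H}$ is an equivalence exactly when $\pi_0(H)\to\pi_0(G)$ is surjective. The only cosmetic differences are that the paper produces the first-assertion map more directly, as coinvariants of the $\Omega(G_{/H})$-equivariant map $X\simeq X\times\ast\xra{\id\times\mathrm{unit}}X\times G$ rather than as an inclusion of components, and that your two invocations of freeness of the translation action are unnecessary in this homotopy-coherent setting.
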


\begin{proof}
The construction of the $\Omega$-Puppe sequence is so that the morphism 
$
\Omega (G_{/H})
\to 
H
$
witnesses the stabilizer of $\ast \xra{\rm unit} G$ with respect to the action $H \to G \underset{\rm left~trans}\lacts G$:
\[
\xymatrix{
\Omega (G_{/H})
\ar[rr]
\ar[d]
&&
H
\ar[d]
\\
\ast
\ar[rr]^-{\rm unit}
&&
G
.
}
\]
In particular, there is a canonical $\Omega (G_{/H})$-equivariant map
\[
X
~\simeq~
X\times \ast
\xra{ \id \times {\rm unit}}
X \times G
~.
\]
Taking coinvariants lends a canonically commutative diagram among spaces:
\begin{equation}
\label{f1}
\xymatrix{
X_{ \Omega (G_{/H}) }
\ar[d]
\ar[r]
&
(X \times  G)_{/H}
\ar[r]
\ar[d]
&
X_{/H}
\ar[d]
\\
\sB \Omega (G_{/H})
\ar[r]
&
G_{/H}
\ar[r]
&
\sB H
.
}
\end{equation}
This proves the first assertion.

We now prove the second assertion.
Because groupoid-objects are effective in the $\infty$-category $\Spaces$,
the $H$-coinvariants functor,
\[
\Fun ( \sB H , \Spaces )
\longrightarrow
\Spaces_{/\sB H}
~,\qquad
(H \lacts X)
\mapsto 
( X_{/H} \to \sB H)
~,
\]
is an equivalence between $\infty$-categories.
In particular, it preserves products.
It follows that the right square in~(\ref{f1}) witnesses a pullback.  
By definition of coinvariants of the restricted action $\Omega (G_{/H}) \to H \lacts X$, the outer square is a pullback.  
The connectivity assumption on the morphism $H\to G$ implies the left bottom horizontal map is an equivalence.  
We conclude that the left top horizontal map is also an equivalence, as desired.  

\end{proof}

Let $\fB N \xra{\lag N \lacts M \rag } {\sf Monoids}$ be an action of a continuous monoid on a continuous monoid.
This action can be codified as unstraightening of the composite functor $\fB N \to {\sf Monoids} \xra{ \fB }\Cat^{\ast/}_{(\infty,1)}$.
We denote this unstraightening as 
\[
(\fB M)_{/^{\sf l.lax} N}
\longrightarrow
\fB N
~:~\footnote{
The notation here is intended to evoke \bit{left-lax quotient}.
Indeed, for $\cK \xra{F} \Cat_{(\infty,1)}$ a functor from an $\infty$-category, its \bit{left-lax colimit} is the $(\infty,1)$-category defined as the domain of the unstraightening of $F$:
\[
\Bigl(
\colim^{\sf l.lax}(F) 
\xra{\colim^{\sf l.lax}(!)}
\colim^{\sf l.lax}(\ast) 
\Bigr)
~:=~
\Bigl(
{\sf Un}(F)
\longrightarrow
\cK
\Bigr)
~.
\]
See Appendix~A of~\cite{non-com-geom} for a treatement of lax $(\infty,1)$-category theory.  
}
\]
it is a coCartesian fibration equipped with a section.
Because the $(\infty,1)$-category $\fB N$ is equipped with a functor $\ast \to \fB N$, the given section supplies the $(\infty,1)$-category $(\fB M)_{/^{\sf l.lax} N}$ with a distinguished point, and so we regard $(\fB M)_{/^{\sf l.lax} N}$ as a pointed $(\infty,1)$-category.
The \bit{semi-direct product (of $N$ by $M$)} is the continuous monoid
\[
M \rtimes N := \End_{(\fB M)_{/^{\sf l.lax} N}} ( \ast )
~,
\]
which is endomorphisms of the point.\footnote{The underlying space of this continuous monoid is canonically identified as $M \times N$;
the 2-ary monoidal structure $\mu_{M \rtimes N}$ is canonically identified as the composite map between spaces:
\begin{eqnarray*}
\mu_{M \rtimes N }
\colon
(M \times N) \times (M \times N)
=
M \times (N \times M) \times M
&
\xra{ \id_M \times {\rm swap} \times \id_N}
&
M \times ( M \times N ) \times N
\\
&
\xra{ \id_M \times \bigl( {\sf proj}_M , {\rm action} \bigr) \times \id_N }
&
M \times ( M \times N ) \times N
=
( M \times M ) \times ( N \times N ) 
\\
&
\xra{ \mu_M \times \mu_N }
&
M \times N 
~.
\end{eqnarray*}
}
Note the canonical morphism between monoids $M \rtimes N \to N$ whose kernel is $M$.

Dually, let $\fB N^{\op} \xra{\lag M \racts N \rag} {\sf Monoids}$ a \emph{right} be a action.
Consider the unstraightening of the composite functor $\fB N^{\op} \to {\sf Monoids} \xra{ \fB }\Cat^{\ast/}_{(\infty,1)}$ is a pointed Cartesian fibration
$
(\fB M)_{/^{\sf r.lax} N^{\op}}
\to
\fB N
$
.
The \bit{semi-direct product (of $N$ by $M$)} is the continuous monoid
\[
N \ltimes M := \End_{(\fB M)_{/^{\sf r.lax} N^{\op}}} ( \ast )
~,
\]
which is endomorphisms of the point.  
Note the canonical morphism between monoids $M \rtimes N \to N$ whose kernel is $M$.

\begin{observation}
\label{f5}
Let $N \lacts M$ be an action of a continuous monoid on a continuous monoid.
There is a canonical identification between continuous monoids under $M^{\op}$ and over $N^{\op}$:
\[
\left(
M \rtimes N
\right)^{\op}
~\simeq~
\left(
N^{\op}
\ltimes 
M^{\op}
\right)
~.
\]

\end{observation}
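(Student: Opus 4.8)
The plan is to recognize each of the two sides as the continuous monoid of endomorphisms of the distinguished object in the total space of an unstraightening, and then to deduce the identification from the standard compatibility of unstraightening with the formation of opposite $\infty$-categories. Along the way I will use three elementary facts: that $\fB(N^{\op})\simeq (\fB N)^{\op}$ and $\fB(M^{\op})\simeq (\fB M)^{\op}$; that for any pointed $(\infty,1)$-category $(\cC,\ast)$ there is a canonical identification of continuous monoids $\End_{\cC^{\op}}(\ast)\simeq \End_{\cC}(\ast)^{\op}$; and that $(-)^{\op}$ is an involution of the $\infty$-category ${\sf Monoids}$ of continuous monoids which is the identity on underlying spaces and carries a homomorphism $h\colon M\to M'$ to the homomorphism $h\colon M^{\op}\to M'^{\op}$ with the same underlying map of spaces.

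The heart of the argument is the following. Write $F:=\fB\circ\lag N\lacts M\rag\colon \fB N\to \Cat^{\ast/}_{(\infty,1)}$, so that, by definition, the pointed coCartesian fibration $(\fB M)_{/^{\sf l.lax}N}\to \fB N$ is the unstraightening of $F$. Passing to the opposite of the total space turns this into a pointed \emph{Cartesian} fibration over $(\fB N)^{\op}=\fB N^{\op}$; by the standard compatibility of unstraightening with opposites, this Cartesian fibration is the one classified by the fiberwise-opposite functor $F^{\op}\colon \fB N\to \Cat^{\ast/}_{(\infty,1)}$, with $F^{\op}(\ast)=F(\ast)^{\op}$ and $F^{\op}(\alpha)=F(\alpha)^{\op}$. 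Now inspect $F^{\op}$: using the three elementary facts above one identifies $F^{\op}\simeq \fB\circ(-)^{\op}\circ\lag N\lacts M\rag$, and the functor $(-)^{\op}\circ\lag N\lacts M\rag$ — which has domain $\fB N=\bigl(\fB(N^{\op})\bigr)^{\op}$ and sends the object to $M^{\op}$ — is exactly the action functor codifying the right action $M^{\op}\racts N^{\op}$ of $N^{\op}$ on $M^{\op}$ obtained by applying $(-)^{\op}$ to $N\lacts M$. Hence there is a canonical equivalence, over $\fB N^{\op}$ and preserving distinguished objects, $\bigl((\fB M)_{/^{\sf l.lax}N}\bigr)^{\op}\simeq (\fB M^{\op})_{/^{\sf r.lax}(N^{\op})^{\op}}$.

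Given this, I would finish by taking endomorphisms of the distinguished object on both sides: the left side yields $\End_{\cC^{\op}}(\ast)\simeq \End_{\cC}(\ast)^{\op}=(M\rtimes N)^{\op}$, the right side is by definition $N^{\op}\ltimes M^{\op}$, so $(M\rtimes N)^{\op}\simeq N^{\op}\ltimes M^{\op}$. For the refinement ``under $M^{\op}$ and over $N^{\op}$'', I would note that the projection of each total space to $\fB N^{\op}$ induces, on endomorphism monoids, the canonical maps to $N^{\op}$, while the inclusion of the fiber over the object of $\fB N^{\op}$ induces the canonical inclusions of $M^{\op}$; since the equivalence above respects both the projection and the fiber inclusion, the resulting identification of continuous monoids is compatible with these structure morphisms.

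The only real obstacle is bookkeeping of variances — keeping straight which opposite is taken on the base category and which is taken fiberwise, and checking that the left action $N\lacts M$ really does transport under $(-)^{\op}$ to the stated right action of the opposite monoids. If one prefers to sidestep the general unstraightening-and-op statement, an equivalent and entirely routine option is to argue by hand: both $(M\rtimes N)^{\op}$ and $N^{\op}\ltimes M^{\op}$ have underlying space $M\times N$, and a direct comparison of their $2$-ary multiplications — starting from the explicit formula for $\mu_{M\rtimes N}$ recorded in the footnote above together with its evident analogue for the $\ltimes$-construction — exhibits the identity map of $M\times N$ as an isomorphism of continuous monoids under $M^{\op}$ and over $N^{\op}$.
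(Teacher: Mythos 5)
Your primary argument — unfolding the definitions through the compatibility of (un)straightening with opposites — is correct, and since the paper records Observation~\ref{f5} without proof as an immediate consequence of the construction of $\rtimes$ and $\ltimes$ via pointed (co)Cartesian fibrations, this is exactly the intended reasoning. The only caveat is that your closing ``by-hand'' alternative (matching the $2$-ary multiplications on the underlying space $M\times N$) is merely a heuristic check, since $(-)^{\op}$ on continuous monoids reverses all the higher $n$-ary Segal data and not just the binary multiplication, so the first argument is the one to rely on.
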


The next result is a characterization of semi-direct products.  
\begin{lemma}
\label{t67}
Let $A \overset{i}{\underset{r} \leftrightarrows} N$ be a retraction between continuous monoids (so $r \circ i \simeq \id_N$).
\begin{itemize}

\item
If the canonical map between spaces
\begin{equation}
\label{f2}
\Ker(r) 
\times 
N
\xra{ {\rm inclusion} \times i }
A \times A
\xra{\mu_A}
A
\end{equation}
is an equivalence,\footnote{Note that this condition is always satisfied if $N$ is a continuous group.}
then there is a canonical action $N \underset{\lambda}\lacts \Ker(r)$ \footnote{
The action map associated to $\lambda$ can be written as the composition
\[
N
\times
\Ker(r)
\xra{ i \times {\rm inclusion}}
A \times A
\xra{~\mu_A~}
A
\underset{(\ref{f2})}{\xla{~\simeq~}}
\Ker(r)\times N
\xra{~\sf proj~}
\Ker(r)
~.
\]
}
for which there is a canonical equivalence between monoids:
\[
\Ker(r) \underset{\lambda} \rtimes N
~\simeq~
A
~.
\]

\item
If the canonical map between spaces
\[
N
\times 
\Ker(r) 
\xra{ \sigma \times {\rm inclusion} }
A \times A
\xra{\mu_A}
A
\]
is an equivalence,\footnote{Note that this condition is always satisfied if $N$ is a continuous group.}
then there is a canonical action $\Ker(r) \underset{\rho}\racts N $ for which there is a canonical equivalence between monoids:
\[
\Ker(r) \underset{\rho} \rtimes N
~\simeq~
A
~.
\]

\end{itemize}

\end{lemma}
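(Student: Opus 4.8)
The plan is to exhibit the functor $\fB A \xra{\fB r} \fB N$, pointed by the section $\fB i$, as the pointed coCartesian fibration classifying the sought left-lax action of $N$, and then to read off $\Ker(r)\rtimes N\simeq A$ directly from the definition of the semi-direct product as endomorphisms of the point.

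Write $K:=\Ker(r)$, a sub-continuous-monoid of $A$, and $\fB r\colon \fB A\to \fB N$ for the induced functor between one-object $(\infty,1)$-categories, with section $\fB i$. First I would show that $\fB r$ is a coCartesian fibration whose fiber over the unique object is $\fB K$, and that each morphism $i(n)\in A=\Map_{\fB A}(\ast,\ast)$, for $n\in N$, is $\fB r$-coCartesian. Unwinding the definition of a coCartesian edge over the one-object category $\fB N$, the latter amounts to the assertion that for every $m\in N$ the map $r^{-1}(m)\xra{(-)\cdot i(n)}r^{-1}(mn)$ is an equivalence of spaces. This is exactly where hypothesis~(\ref{f2}) enters: regarded as a map over $N$ — via the projection on the source and via $r$ on the target, which agree because $r\circ i\simeq\id_N$ and $K=\Ker(r)$ — the map $K\times N\to A$ of~(\ref{f2}) has fiber over $m$ the map $K\xra{(-)\cdot i(m)}r^{-1}(m)$, so its being an equivalence is precisely the statement that $K\xra{(-)\cdot i(m)}r^{-1}(m)$ is an equivalence for every $m$. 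Since $i$ is a morphism of monoids, $i(m)\cdot i(n)=i(mn)$, whence the composite $K\xra{(-)\cdot i(m)}r^{-1}(m)\xra{(-)\cdot i(n)}r^{-1}(mn)$ equals $K\xra{(-)\cdot i(mn)}r^{-1}(mn)$; both outer maps are equivalences, so the middle one is as well, establishing coCartesianness. That coCartesian lifts of all edges with arbitrary source exist is automatic, as $\fB A$ has a single object, and $\fB r$ is automatically an inner fibration.

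Because $\fB N$ has a single object, the straightening of $\fB r$ is a functor $\fB N\to \Cat^{\ast/}_{(\infty,1)}$ whose value on the object is the one-object pointed category $\fB K$ and whose morphism-wise values, being endofunctors of a one-object category, are again one-object categories; hence it factors through the fully faithful embedding ${\sf Monoids}\hookrightarrow\Cat^{\ast/}_{(\infty,1)}$, $M\mapsto\fB M$, and so determines an action $N\lacts K$. By construction, the pointed coCartesian fibration $(\fB K)_{/^{\sf l.lax}N}\to\fB N$ unstraightening this action is $\fB r$ itself, so
\[
K\rtimes N=\End_{(\fB K)_{/^{\sf l.lax}N}}(\ast)=\End_{\fB A}(\ast)=A
\]
as continuous monoids under $K$ and over $N$; comparing compositions identifies the $2$-ary operation with the formula recorded in the footnote to the definition of $M\rtimes N$, via the decomposition $A\simeq K\times N$ of~(\ref{f2}). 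This proves the first bullet. For the second bullet I would apply the first bullet to the retraction $A^{\op}\overset{i^{\op}}{\underset{r^{\op}}{\leftrightarrows}}N^{\op}$: one has $\Ker(r^{\op})=\Ker(r)^{\op}$, and the hypothesis that $N\times\Ker(r)\xra{\mu_A\circ(i\times\mathrm{incl})}A$ is an equivalence translates, since $\mu_{A^{\op}}(x,y)=\mu_A(y,x)$, into the hypothesis of the first bullet for $(A^{\op},r^{\op},i^{\op})$; applying it and dualizing via Observation~\ref{f5} yields $A\simeq N\ltimes\Ker(r)$, which is the semi-direct product $\Ker(r)\underset{\rho}\rtimes N$ for the induced right action $\rho$.

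The main obstacle I expect is bookkeeping: pinning down precisely that~(\ref{f2}) is equivalent to ``$\fB i$ is a section by coCartesian edges'' (the displayed computation is the heart of the matter), and then keeping all the left/right and (co)lax conventions straight — in particular verifying that the monoid structure on $K\times N$ coming from $\fB A$ is exactly the one prescribed in the footnote, and that the opposite-category manoeuvre for the second bullet lands on the $\ltimes$-convention rather than the $\rtimes$-convention.
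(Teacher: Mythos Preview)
Your proposal is correct and follows essentially the same approach as the paper: both show that $\fB r$ is a coCartesian fibration by verifying that each $i(n)$ is a $\fB r$-coCartesian morphism (the paper asserts this in one line, whereas you unwind it fiberwise via the 2-out-of-3 argument), then invoke the definition of the semi-direct product as endomorphisms of the point, and handle the second bullet by passing to opposites through Observation~\ref{f5}. Your fiberwise verification is in fact a fuller account of the step the paper compresses into ``the assumption on the retraction implies the diagonal filler is initial among all such fillers.''
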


\begin{proof}
By way of Observation~\ref{f5}, the two assertions imply one another by taking Cartesian/coCartesian duals of coCartesian/Cartesian fibrations.
So we are reduced to proving the first assertion.

Consider the retraction $\fB A \overset{\fB i}{\underset{\fB r} \leftrightarrows} \fB N$ among pointed $\infty$-categories.
Note that $\fB i$ is essentially surjective.  
Note that $\Ker(r)$ is the fiber of $\fB r$ over $\ast \to \fB N$.

Let $c_1 \xra{\lag n \rag} \fB N$ be a morphism.
Consider the commutative diagram among $\infty$-categories:
\[
\xymatrix{
c_0
\ar[rr]^-{\lag \ast \rag}
\ar[d]_-{s}
&&
\fB A
\ar[d]^-{\fB r}
\\
c_1
\ar[rr]^-{\lag n \rag}
\ar@{-->}[urr]^-{\lag i(n) \rag}
&&
\fB N
.
}
\]
The assumption on the retraction implies the diagonal filler is initial among all such fillers.
This is to say that the morphism $i(n)$ in $\fB A$ is coCartesian over $\fB r$. 
Because $\fB i$ is essentially surjective, this shows that $\fB r$ is a coCartesian fibration.
The result now follows from the definition of the semi-direct product $\Ker(r) \underset{\lambda} \rtimes N$.    

\end{proof}

\begin{prop}
\label{t65}
Let $\cX$ be an $\infty$-category.
Let $\fB N \xra{\lag  N \lacts M \rag} {\sf Monoids}$ be an action of a continuous monoid $N$ on a continuous monoid $M$.
Consider the pre-composition-action:
\[
\fB N^{\op}
\xra{~ \lag N \lacts M \rag^{\op}~ } 
{\sf Monoids}^{\op}
\xra{~\Mod_{-}(\cX)~}
\Cat_{(\infty,1)}
~.
\]
There is a canonical identification over $\Mod_{M^{\op}}(\cX)$ from the $\infty$-category of $(M \rtimes N)^{\op}$-modules in $\cX$ to that of $M^{\op}$-modules in $\cX$ with the structure of being left-laxly invariant with respect to this precomposition $N^{\op}$-action:
\[
\Mod_{
\left(
M \rtimes N
\right)^{\op}
}(\cX)
~\simeq~
\Mod_{M^{\op}}(\cX)^{{\sf l.lax} N^{\op}}
~.
\]
In particular, there is a canonical fully faithful functor from the (strict) $N$-invariants,
\[
\Mod_{M^{\op}}(\cX)^{N}
~\hookrightarrow~
\Mod_{\left( M \rtimes N\right)^{\op}}(\cX)
~.
\]
which is an equivalence if the continuous monoid $N$ is a continuous group.

\end{prop}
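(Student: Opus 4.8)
The plan is to deduce the equivalence from the very definition of the semi-direct product together with the standard straightening description of a functor category out of a Cartesian fibration.

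First I would unwind both sides over $\Mod_{M^{\op}}(\cX)$. By construction, the pointed $\infty$-category $\fB(M \rtimes N)$ is the total $\infty$-category $(\fB M)_{/^{\sf l.lax} N}$ of the coCartesian fibration $p \colon \cE \to \fB N$ obtained by unstraightening $\fB N \xra{\lag N \lacts M \rag} {\sf Monoids} \xra{\fB} \Cat^{\ast/}_{(\infty,1)}$; its fiber over the unique object of $\fB N$ is $\fB M$, with coCartesian transport the given $N$-action on $\fB M$. Passing to opposite $\infty$-categories, $\fB\bigl( (M \rtimes N)^{\op} \bigr) \simeq \cE^{\op}$, and $p^{\op} \colon \cE^{\op} \to \fB(N^{\op})$ is a Cartesian fibration with fiber $\fB(M^{\op})$ over the unique object and Cartesian transport the opposite of the $N$-action, i.e.\ the $N^{\op}$-action on $\fB(M^{\op})$ delooping the pointwise-opposite action $N \lacts M^{\op}$. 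Thus $\Mod_{(M \rtimes N)^{\op}}(\cX) = \Fun(\cE^{\op}, \cX)$, and the forgetful functor to $\Mod_{M^{\op}}(\cX) = \Fun(\fB(M^{\op}), \cX)$ is restriction along the fiber inclusion $\fB(M^{\op}) \hookrightarrow \cE^{\op}$.

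The main step is to apply the standard consequence of straightening: for any Cartesian fibration $q \colon \cY \to \cB$ and any $\infty$-category $\cX$, restriction along the fiber inclusions assembles into an equivalence between $\Fun(\cY, \cX)$ and the $\infty$-category of sections over $\cB$ of the coCartesian fibration $\cZ \to \cB$ obtained by unstraightening the functor $\cB \to \Cat_{(\infty,1)}$, $b \mapsto \Fun(\cY_b, \cX)$, in which a morphism $b \to b'$ acts via restriction along the Cartesian transport $\cY_{b'} \to \cY_b$; moreover, under this identification the full subcategory of coCartesian sections corresponds to the functors $\cY \to \cX$ that invert $q$-Cartesian morphisms, that is, to $\lim_{\cB}\bigl( b \mapsto \Fun(\cY_b, \cX) \bigr)$. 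Applying this with $\cY = \cE^{\op}$ and $\cB = \fB(N^{\op})$, the unstraightening-functor in play is precisely $\ast \mapsto \Mod_{M^{\op}}(\cX)$ with $N^{\op}$ acting by pre-composition along the automorphisms of $\fB(M^{\op})$ coming from $N \lacts M$ --- that is, the pre-composition action of the statement --- whose $\infty$-category of sections is, by the definition of left-lax invariance in Appendix~A of~\cite{non-com-geom}, exactly $\Mod_{M^{\op}}(\cX)^{{\sf l.lax} N^{\op}}$. Naturality of the straightening equivalence identifies the forgetful functor on the left ($=$ restriction to the fiber) with evaluation of a section at the unique object of $\fB(N^{\op})$ on the right, so the equivalence is over $\Mod_{M^{\op}}(\cX)$, as asserted.

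For the final clause, a section of $\cZ \to \fB(N^{\op})$ is coCartesian exactly when the corresponding functor $\cE^{\op} \to \cX$ inverts the $p^{\op}$-Cartesian morphisms; the coCartesian sections form a full subcategory, which is $\lim_{\fB(N^{\op})}\bigl( \Mod_{M^{\op}}(\cX) \bigr) =: \Mod_{M^{\op}}(\cX)^{N}$, giving the claimed fully faithful functor $\Mod_{M^{\op}}(\cX)^{N} \hookrightarrow \Mod_{(M \rtimes N)^{\op}}(\cX)$. If $N$ is a continuous group then $\fB N$, hence $\fB(N^{\op})$, is an $\infty$-groupoid, so $\cZ \to \fB(N^{\op})$ is a left fibration; every section then sends each edge --- in particular every coCartesian edge --- to an equivalence, so all sections are coCartesian and the inclusion is an equivalence. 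I expect the one genuinely delicate point to be the bookkeeping in the straightening step: getting the variance right so that the opposites of the first paragraph produce \emph{left}-lax (and not right-lax) $N^{\op}$-invariance matching the convention of~\cite{non-com-geom}, and confirming that the transition functors on $\Fun(\cY_{(-)}, \cX)$ are the pre-composition (restriction) functors rather than their adjoints.
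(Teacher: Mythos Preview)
Your proposal is correct and follows essentially the same approach as the paper. The paper also identifies $\fB\bigl((M\rtimes N)^{\op}\bigr)$ with the total space of a Cartesian fibration over $\fB N^{\op}$ (via Observation~\ref{f5}, which is exactly your first paragraph), and then identifies $\Fun$ out of this total space with sections over $\fB N^{\op}$---the paper packages this step using the \emph{relative functor $\infty$-category} $\Fun^{\sf rel}_{\fB N^{\op}}(-,\cX\times\fB N^{\op})$ from~\cite{fib}, whereas you invoke it directly as ``the standard consequence of straightening'', but these are the same maneuver. The paper does not explicitly spell out the final clause about $N$ a group (it says ``immediate''), and your argument via left fibrations is the expected one.
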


\begin{proof}
The second assertion follows immediately from the first.  
The first assertion is proved upon justifying the sequence of equivalences among $\infty$-categories, each which is evidently over $\Mod_M(\cX)$:
\begin{eqnarray*}
\Mod_{\left( M \rtimes N \right)^{\op} }(\cX)
&
\underset{\rm (a)}\simeq
&
\Fun\bigl( 
~
\fB (M \rtimes N )^{\op}
~,~ 
\cX 
~
\bigr)
\\
&
\underset{\rm (b)}\simeq
&
\Fun\bigl( 
~
\fB ( N^{\op} \ltimes M^{\op} )
~,~ 
\cX 
~
\bigr)
\\
&
\underset{\rm (c)}\simeq
&
\Fun_{/\fB N^{\op}}
\Bigl(
~
\fB N^{\op}
~,~
\Fun^{\sf rel}_{\fB N^{\op}}
\bigl( 
\fB ( N^{\op} \ltimes M^{\op} )
,
\cX \times \fB N^{\op}
\bigr)
~
\Bigr)
\\
&
\underset{\rm (d)}\simeq
&
\Fun_{/\fB N^{\op}}\Bigl( 
~
\fB N^{\op}
~,~ 
\Fun^{\sf rel}_{\fB N^{\op}}
\bigl( 
(\fB M^{\op})_{/^{\sf r.lax} N}
,
\cX \times \fB N^{\op}
\bigr)
~
\Bigr)
\\
&
\underset{\rm (e)}\simeq
&
\Fun_{/\fB N^{\op}}\Bigl( 
~
\fB N^{\op}
~,~ 
\Fun(
\fB M^{\op}
,
\cX)_{/^{\sf l.lax} N^{\op}}
~
\Bigr)
\\
&
\underset{\rm (f)}\simeq
&
\Fun_{/\fB N^{\op}}\Bigl( 
~
\fB N^{\op}
~,~
\Mod_{M^{\op}}(\cX)_{/^{\sf l.lax} N^{\op}}
~
\Bigr)
\\
&
\underset{\rm (g)}\simeq
&
\Mod_{M^{\op}}(\cX)^{{\sf l.lax}N^{\op}}
~.
\end{eqnarray*}
The identifications~(a) and~(f) are both the definition of $\infty$-categories of modules for continuous monoids in $\cX$.
The identification~(b) is Observation~\ref{f5}.
By definition of semi-direct product monoids, 
the Cartesian unstraightening the composite functor 
$
\fB N
\xra{\lag N \lacts M^{\op}\rag } 
{\sf Monoids}
\xra{\fB}
\Cat_{(\infty,1)}
$
is the Cartesian fibration:
\[
\fB (N^{\op} \ltimes M^{\op}) 
\longrightarrow 
\fB N^{\op}
~.
\]
Being a Cartesian fibration ensures the existence of the \bit{relative functor $\infty$-category} (see~\cite{fib}).
The identification~(c) is direct from the definition of relative functor $\infty$-categories.  
Furthermore, there is a definitional identification of the \bit{right-lax coinvariants} $\fB (N^{\op} \ltimes M^{\op})  \simeq (\fB M^{\op})_{/^{\sf r.lax}N}$ over $\fB N^{\op}$ (see Appendix~A of~\cite{non-com-geom}), which determines the identification~(d).
The identification~(e) follows from the codification of the $N^{\op}$-action on $\Fun(\fB M^{\op} , \cX)$ in the statement of the proposition.
The identification~(g) is the definition of \bit{left-lax invariants} (see Appendix~A of~\cite{non-com-geom}).

\end{proof}

The commutativity of the topological group $\TT^2$ determines a canonical identification $\TT^2 \cong (\TT^2)^{\op}$ between topological groups, and therefore between continuous groups.
Together with Observation~\ref{t60}, we have the following consequence of Proposition~\ref{t65}.
\begin{cor}
\label{f6}
For $\cX$ an $\infty$-category, there is a canonical identification between $\infty$-categories over $\Mod_{\TT^2}(\cX)$:
\[
\Mod_{\left(\TT^2 \rtimes \Ebraid \right)^{\op}}(\cX)
~\simeq~
\Mod_{\TT^2}(\cX)^{{\sf l.lax} \Ebraid}
~.
\]

\end{cor}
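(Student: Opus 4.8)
The plan is to obtain Corollary~\ref{f6} as a direct specialization of Proposition~\ref{t65}, followed by two harmless identifications of continuous monoids. I would take $M = \TT^2$ and $N = \Ebraid$, with the action $\Ebraid \xra{\Psi} \EpZ \lacts \TT^2$ of~(\ref{e77}) as $\lag N \lacts M \rag$; this is precisely the action defining the semidirect product $\TT^2 \rtimes \Ebraid$. Applying Proposition~\ref{t65} verbatim then yields a canonical equivalence, over $\Mod_{(\TT^2)^{\op}}(\cX)$,
\[
\Mod_{(\TT^2 \rtimes \Ebraid)^{\op}}(\cX)
~\simeq~
\Mod_{(\TT^2)^{\op}}(\cX)^{{\sf l.lax}\Ebraid^{\op}}
~,
\]
in which $\Ebraid^{\op}$ acts on $\Mod_{(\TT^2)^{\op}}(\cX)$ by the precomposition action induced from $\Ebraid \lacts \TT^2$.

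Next I would remove the two spurious ``$\op$''s. On the module side, commutativity of the multiplication of $\TT^2$ exhibits the identity of the underlying space as an isomorphism of topological, hence continuous, groups $\TT^2 \xra{\cong} (\TT^2)^{\op}$; applying $\fB(-)$ and then $\Fun(-,\cX)$ gives a canonical equivalence $\Mod_{(\TT^2)^{\op}}(\cX) \simeq \Mod_{\TT^2}(\cX)$ compatible with the forgetful functors. On the acting side, Observation~\ref{t60} supplies the transpose equivalence of continuous monoids $\Ebraid \overset{(-)^T}\simeq \Ebraid^{\op}$: matrix transposition preserves the determinant, hence the submonoid $\EpZ \subset \GL^+_2(\RR)$, and reverses composition, and it lifts through the pullback~(\ref{e77}). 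Restricting a left-lax $\Ebraid^{\op}$-invariance structure along $(-)^T$ turns it into a left-lax $\Ebraid$-invariance structure, and the $\Ebraid$-action on $\Mod_{\TT^2}(\cX)$ one reads off is, by the very definition~(\ref{e86}), the action~(\ref{e86}). Composing the three equivalences over $\Mod_{\TT^2}(\cX)$ yields the claimed identification $\Mod_{(\TT^2 \rtimes \Ebraid)^{\op}}(\cX) \simeq \Mod_{\TT^2}(\cX)^{{\sf l.lax}\Ebraid}$.

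The only point demanding attention is the bookkeeping of opposites: one must confirm that the precomposition $\Ebraid^{\op}$-action returned by Proposition~\ref{t65} is carried by the transpose and commutativity isomorphisms above to the action~(\ref{e86}) with respect to which ${\sf Cyc}^{{\sf un} (2)}(\cX)$ is defined. Since~(\ref{e86}) is defined to be exactly this transport, nothing genuine is at stake here; Corollary~\ref{f6} is a formal consequence of Proposition~\ref{t65}, Observation~\ref{t60}, and the commutativity of $\TT^2$.
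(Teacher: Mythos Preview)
Your proposal is correct and follows exactly the paper's own argument: the paper derives Corollary~\ref{f6} by invoking Proposition~\ref{t65} with $M=\TT^2$ and $N=\Ebraid$, then removing the two ``$\op$''s via the commutativity of $\TT^2$ and Observation~\ref{t60}. You have simply spelled out in detail what the paper records in a single sentence preceding the statement.
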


\section{Some facts about the braid group and braid monoid}
\label{sec.B}
Here we collect some facts about the braid group on 3 strands, and the braid monoid on 3 strands.

\subsection{Ambidexterity of $\Ebraid$} \label{ambidex}

\begin{observation}
\label{t60}
Taking transposes of matrices identifies the nested sequence among monoids with the nested sequence of their opposites:
\[
\Bigl(
~
\SL_2(\ZZ)
~
\subset
~
\EpZ
~
\subset
~
\GL_2^+(\RR)
~
\Bigr)
~\overset{{}^{~{}~}(-)^T}\cong~
\Bigl(
~
\SL_2(\ZZ)^{\op}
~
\subset
~
\EpZ^{\op}
~
\subset
~
\GL_2^+(\RR)^{\op}
~
\Bigr)
~.
\]
By covering space theory, these identifications canonically lift as identifications between nested sequences among monoids and their opposites:
\[
\Bigl(
~
\Braid
~
\subset
~
\Ebraid
~
\subset
~
\w{\GL}_2^+(\RR)
~
\Bigr)
~\overset{{}^{~{}~}(-)^T}\cong~
\Bigl(
~
\Braid^{\op}
~
\subset
~
\Ebraid^{\op}
~
\subset
~
\w{\GL}_2^+(\RR)^{\op}
~
\Bigr)
~.
\]
\end{observation}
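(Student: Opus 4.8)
The plan is to handle the two displayed identifications in turn: the first is elementary, and the second is a routine consequence of the first together with covering-space theory.

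For the lower sequence, the key point is only that $(AB)^T = B^T A^T$, so for any submonoid $M$ of real $2\times 2$ matrices, transposition $(-)^T\colon M \to M^{\op}$, $A\mapsto A^T$, is a homomorphism of monoids; it is a homeomorphism on the underlying space of matrices, with inverse given by transposition read as a homomorphism $M^{\op}\to M$. First I would record that transposition carries $\GL_2^+(\RR)$ to itself (it preserves the determinant), carries $\EpZ$ to itself (it preserves both integrality of entries and the determinant), and carries $\SL_2(\ZZ)$ to itself; since it also respects the inclusions, this yields the first displayed identification of nested sequences among monoids, and a fortiori among continuous monoids.

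For the upper sequence I would invoke two standard facts about the universal cover of a connected topological group $G$. First, $G^{\op}$ has the same underlying pointed space as $G$, hence the same pointed covering space $q\colon \w G \to G$, equipped with the group structure lifting $\mu^{\op}$; since the opposite $\w\mu^{\op}$ of the lifted multiplication on $\w G$ covers $\mu^{\op}$ and fixes the basepoint, uniqueness of lifts identifies it with that structure, giving a canonical isomorphism of topological groups $\w{(G^{\op})} \cong (\w G)^{\op}$. Second, the universal-cover construction is functorial: a continuous homomorphism $f\colon G\to H$ of connected topological groups lifts uniquely to a basepoint-preserving continuous map $\w f\colon \w G\to \w H$, which is automatically a homomorphism (the two maps $\w G\times \w G\to \w H$ built from $\w f$ and the respective multiplications agree after composing with $\w H\to H$ and at the basepoint, hence agree, as $\w G\times\w G$ is connected), and $\w{g\circ f}=\w g\circ\w f$. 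Applying this to $(-)^T\colon \GL_2^+(\RR)\to \GL_2^+(\RR)^{\op}$ produces a continuous homomorphism $\w{(-)^T}\colon \w{\GL}_2^+(\RR)\to (\w{\GL}_2^+(\RR))^{\op}$ lying over $(-)^T$; composing with the lift of the inverse $(-)^T\colon \GL_2^+(\RR)^{\op}\to\GL_2^+(\RR)$ and using $\w{\id}=\id$ shows $\w{(-)^T}$ is an isomorphism.

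It remains to see that $\w{(-)^T}$ restricts to the subgroups and submonoids. Since $\SL_2(\RR)\hookrightarrow \GL_2^+(\RR)$ is a homotopy equivalence, $\w{\SL}_2(\RR)$ is the pullback of $q$ along it, so by Proposition~\ref{t32} and~(\ref{e77}) the continuous monoids $\Braid$ and $\Ebraid$ are the honest preimages $q^{-1}(\SL_2(\ZZ))$ and $q^{-1}(\EpZ)$. Because $\w{(-)^T}$ lies over $(-)^T$, i.e.\ $q\bigl(\w{(-)^T}(x)\bigr) = \bigl(q(x)\bigr)^T$, and because $(-)^T$ preserves each of $\SL_2(\ZZ)\subset \EpZ\subset \GL_2^+(\RR)$, the isomorphism $\w{(-)^T}$ carries $q^{-1}(\SL_2(\ZZ))=\Braid$ onto $(q^{\op})^{-1}(\SL_2(\ZZ))=\Braid^{\op}$ and $q^{-1}(\EpZ)=\Ebraid$ onto $\Ebraid^{\op}$, compatibly with the inclusions into $\w{\GL}_2^+(\RR)$, which is the second displayed identification. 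There is no genuine obstacle; the only points requiring a little care are the clean identification $\w{(G^{\op})}\cong (\w G)^{\op}$ and the naturality of the universal cover under continuous homomorphisms, both of which are routine.
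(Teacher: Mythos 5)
Your argument is correct and fills in the details of exactly what the paper means by ``by covering space theory'': transposition is an anti-automorphism of $\GL_2^+(\RR)$ preserving $\SL_2(\ZZ)$ and $\EpZ$, the universal cover is functorial for basepoint-preserving continuous homomorphisms of connected topological groups, $\w{(G^{\op})}\cong(\w G)^{\op}$ by uniqueness of lifts, and $\Braid$ and $\Ebraid$ are precisely the preimages $q^{-1}(\SL_2(\ZZ))$ and $q^{-1}(\EpZ)$ (using that $\SL_2(\RR)\hookrightarrow\GL_2^+(\RR)$ is a homotopy equivalence to reconcile the defining pullback of $\Braid$ over $\SL_2(\RR)$ with the one over $\GL_2^+(\RR)$). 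This is the same approach the paper takes; the paper simply leaves the covering-space bookkeeping to the reader, and you have supplied it carefully.
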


\begin{cor}
\label{t61}
For each $\infty$-category $\cX$, there are canonical identifications
\[
\Mod_{\Braid}(\cX)
~\simeq~
\Mod_{\Braid^{\op}}(\cX)
\qquad
\text{ and }
\qquad
\Mod_{\Ebraid}(\cX)
~\simeq~
\Mod_{\Ebraid^{\op}}(\cX)
\]
between $\infty$-categories of (left-)modules in $\cX$ and those of right-modules in $\cX$.

\end{cor}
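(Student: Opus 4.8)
The plan is to deduce this immediately from Observation~\ref{t60} by applying the classifying-space functor $\fB(-)\colon {\sf Monoids}\to \Cat^{\ast/}_{(\infty,1)}$. Recall that, by the conventions, for a continuous monoid $N$ we have $\Mod_N(\cX):=\Fun(\fB N,\cX)$, where $\fB N$ is presented by the Segal space $\bBar_\bullet(N)$. The first step is to record the formal compatibility of the bar construction with reversal: $\bBar_\bullet(N^{\op})$ is obtained from $\bBar_\bullet(N)$ by keeping the same spaces of $k$-simplices and reversing the simplicial direction, so there is a canonical identification $\fB(N^{\op})\simeq (\fB N)^{\op}$ of pointed $\infty$-categories, natural in $N$. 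Consequently
\[
\Mod_{N^{\op}}(\cX)~=~\Fun\bigl(\fB(N^{\op}),\cX\bigr)~\simeq~\Fun\bigl((\fB N)^{\op},\cX\bigr)
\]
is the $\infty$-category of right $N$-modules in $\cX$, exactly as the statement of the corollary anticipates.

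Next I would apply $\fB(-)$ to the isomorphisms of continuous monoids furnished by Observation~\ref{t60}. Taking transposes of matrices gives isomorphisms $\Braid\overset{(-)^T}{\cong}\Braid^{\op}$ and $\Ebraid\overset{(-)^T}{\cong}\Ebraid^{\op}$; these are the content of that observation, obtained by lifting the transpose automorphism of $\GL_2^+(\RR)$ through the universal cover and restricting. Applying $\fB(-)$ yields equivalences of pointed $\infty$-categories $\fB\Braid\xra{\simeq}\fB(\Braid^{\op})\simeq(\fB\Braid)^{\op}$, and likewise for $\Ebraid$. Precomposing $\Fun(-,\cX)$ with these equivalences defines the asserted identifications
\[
\Mod_{\Braid}(\cX)~=~\Fun(\fB\Braid,\cX)~\xra{~\simeq~}~\Fun\bigl(\fB(\Braid^{\op}),\cX\bigr)~=~\Mod_{\Braid^{\op}}(\cX)
\]
and, identically, $\Mod_{\Ebraid}(\cX)\simeq\Mod_{\Ebraid^{\op}}(\cX)$; these are canonical because the transpose isomorphisms of Observation~\ref{t60} are.

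There is essentially no obstacle here: all the real work is already packaged into Observation~\ref{t60}, and the present corollary is pure functoriality of $\fB$ and of $\Fun(-,\cX)$. The only point requiring a moment's care is the identification $\fB(N^{\op})\simeq(\fB N)^{\op}$ at the level of the chosen Segal-space presentations, i.e. that ``opposite monoid'' corresponds to ``opposite of the classifying $\infty$-category''; this is a routine compatibility of the bar construction with simplicial reversal, and once it is in place the conclusion follows formally. (This corollary is precisely what licenses Convention~\ref{r6}, namely regarding all $\Braid$- and $\Ebraid$-actions as left actions.)
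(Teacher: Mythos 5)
Your argument is correct and is exactly the route the paper intends: Corollary~\ref{t61} is stated as an immediate consequence of Observation~\ref{t60}, and your proof simply spells out the formal steps (compatibility of $\fB$ with $(-)^{\op}$, then precomposition with $\Fun(-,\cX)$) that the paper leaves implicit. No gaps.
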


\begin{remark}
The composite isomorphism
$\Braid \underset{\cong}{\xra{(-)^{T}}} \Braid^{\op}  \underset{\cong}{\xra{(-)^{-1}}} \Braid$ 
is the involution of $\Braid$ given in terms of the presentation~(\ref{e67}) by exchanging $\tau_1$ and $\tau_2$. 
Similarly, the involution
$\SL_2(\ZZ) \underset{\cong}{\xra{(-)^{T}}} \SL_2(\ZZ)^{\op}  \underset{\cong}{\xra{(-)^{-1}}} \SL_2(\ZZ)$ exchanges $U_1$ and $U_2$.  

\end{remark}

\subsection{Comments about $\Braid$ and $\Ebraid$}

\begin{observation}
In $\Braid$ (recall the presentation of (\ref{e67}), there is an identity of the generator of $\Ker(\Phi)$:
\[
(\tau_{1} \tau_{2} \tau_{1})^4 
=
( \tau_{1} \tau_{2})^6
=
(\tau_{2} \tau_{1} \tau_{2})^4
\in {\sf Ker}(\Phi)
~.
\]
For that matter, since the matrix
\begin{equation}
\label{e64}
R~:=~ U_1 U_2 U_1 ~=~ 
\begin{bmatrix}
0 & 1 
\\
-1 & 0
\end{bmatrix}
~=~
U_2 U_1 U_2
~\in~\GL_2(\ZZ)
\end{equation}
implements rotation by $-\frac{\pi}{2}$, 
then $R^4 = \uno$ in $\GL_2(\ZZ)$.

\end{observation}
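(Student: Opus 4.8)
The plan is to reduce both assertions to the single defining relation $\tau_1\tau_2\tau_1 = \tau_2\tau_1\tau_2$ of $\Braid$ (see~(\ref{e67})) together with one $2\times2$ matrix computation; no homotopy theory is involved. Write $\Delta := \tau_1\tau_2\tau_1 = \tau_2\tau_1\tau_2$ for the Garside element. First I would rebracket the word $(\tau_1\tau_2)^3$ and apply the braid relation once:
\[
(\tau_1\tau_2)^3 ~=~ (\tau_1\tau_2\tau_1)(\tau_2\tau_1\tau_2) ~=~ \Delta\cdot\Delta ~=~ \Delta^2
~.
\]
Squaring gives $(\tau_1\tau_2)^6 = \Delta^4 = (\tau_1\tau_2\tau_1)^4$, and $(\tau_1\tau_2\tau_1)^4 = (\tau_2\tau_1\tau_2)^4$ is immediate from $\Delta = \tau_1\tau_2\tau_1 = \tau_2\tau_1\tau_2$. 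This is the claimed three-way identity. That the common element lies in $\Ker(\Phi)$ --- indeed, that it freely generates it --- is precisely Proposition~\ref{t32}; if one prefers a self-contained check, note that $\Phi\bigl((\tau_1\tau_2)^6\bigr) = (U_1U_2)^6$ and that $U_1U_2 = \begin{bmatrix} 0 & 1 \\ -1 & 1 \end{bmatrix}$ has characteristic polynomial $t^2-t+1$, hence is of order $6$, so $(U_1U_2)^6 = \uno$.

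For the matrix part I would simply multiply out the generators of~(\ref{e63}): $U_1U_2 = \begin{bmatrix} 0 & 1 \\ -1 & 1 \end{bmatrix}$, hence $R := U_1U_2U_1 = \begin{bmatrix} 0 & 1 \\ -1 & 0 \end{bmatrix}$, and the braid relation (valid in $\GL_2(\ZZ)$ since $\Phi$ is a homomorphism) gives $R = U_2U_1U_2$ as well. This matrix sends $e_1\mapsto -e_2$ and $e_2\mapsto e_1$, i.e.\ it is rotation by $-\tfrac{\pi}{2}$; therefore $R^2 = -\uno$ and $R^4 = \uno$ in $\GL_2(\ZZ)$ --- equivalently, $R^4 = \Phi\bigl((\tau_1\tau_2\tau_1)^4\bigr) = \Phi\bigl((\tau_1\tau_2)^6\bigr) = \uno$ by the previous paragraph.

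I do not expect any genuine obstacle: each assertion is a direct computation in a two-generator group or in $\GL_2(\ZZ)$, and the sole external input is Proposition~\ref{t32}, which is already established. The one point I would flag along the way is the evident typo in~(\ref{e64}): the symbol ``$U_3$'' should read ``$U_1$'', so that $R = U_1U_2U_1 = U_2U_1U_2$ is consistent with the braid relation pushed forward along $\Phi$.
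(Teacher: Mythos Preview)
Your argument is correct. The paper states this as an Observation with no accompanying proof, so there is nothing to compare against; your verification via the Garside element $\Delta=\tau_1\tau_2\tau_1=\tau_2\tau_1\tau_2$ and the direct $2\times 2$ matrix computation is precisely the elementary check the paper leaves implicit, and your flag that ``$U_3$'' in~(\ref{e64}) is a typo for ``$U_1$'' is well taken.
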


The following result is an immediate consequence of how $\Ebraid$ is defined in equation (\ref{e77}), using that the continuous group $\GL^+_2(\RR)$ is a path-connected 1-type.
\begin{cor}
\label{t31}
There are pullbacks among continuous monoids:
\begin{equation*}
\xymatrix{
\Braid
\ar[rr]
\ar[d]_-{\Phi}
&&
\Ebraid
\ar[d]_-{\Psi} \ar[rr]
&&
\ast \ar[d]^-{\lag \uno \rag}
\\
\GL_2(\ZZ)
\ar[rr]
&&
\EZ
\ar[rr]^-{\RR \underset{\ZZ}\ot}
&&
\GL_2(\RR)
.
}
\end{equation*}
In particular, there is a canonical identification between continuous groups over $\GL_2(\ZZ)$:
\[
\Braid
~\simeq~
\Omega\bigl(
\GL_2(\RR)_{/\GL_2(\ZZ)}
\bigr)
\qquad
\bigl({\rm ~over~}
\GL_2(\ZZ)
~\bigr)
~.
\]

\end{cor}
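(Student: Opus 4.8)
The plan is to obtain both squares, and the ``in particular,'' by pasting homotopy pullbacks of spaces, starting from the defining pullback~(\ref{e77}) of $\Ebraid$ and from Proposition~\ref{t32}. Since every arrow in the diagram is a homomorphism of continuous monoids and (homotopy) pullbacks of monoid-objects in $\Spaces$ are computed on underlying spaces, this automatically yields pullbacks of continuous monoids. Two preliminary observations about $\GL_2^+(\RR)$ are used repeatedly: first, Gram--Schmidt shows $\GL_2^+(\RR)$ is a path-connected $1$-type of the homotopy type of ${\sf SO}(2)\simeq \SS^1$, so its universal cover is contractible and, as an object of $\Spaces_{/\GL_2^+(\RR)}$, the map $\w{\GL}_2^+(\RR)\to \GL_2^+(\RR)$ coincides with $\ast\xra{\lag\uno\rag}\GL_2^+(\RR)$; second, scaling by $(\det)^{-1/2}$ deformation retracts $\GL_2^+(\RR)$ onto $\SL_2(\RR)$, compatibly with the inclusions of $\SL_2(\ZZ)$ and of $\EpZ$ and with universal covers, so $\w{\GL}_2^+(\RR)$ pulls back over $\SL_2(\RR)$ to $\w{\SL}_2(\RR)$.

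For the right-hand square I would show $\Ebraid\simeq \EZ\times_{\GL_2(\RR)}\ast$. Because $\GL_2^+(\RR)\subset\GL_2(\RR)$ is the inclusion of a connected component and $\lag\uno\rag$ factors through it, the homotopy fibers of $\EZ\xra{\RR\underset{\ZZ}\ot}\GL_2(\RR)$ over the other component are empty, whence $\EZ\times_{\GL_2(\RR)}\ast\simeq\EpZ\times_{\GL_2^+(\RR)}\ast$; by the first preliminary observation this is $\EpZ\times_{\GL_2^+(\RR)}\w{\GL}_2^+(\RR)=\Ebraid$ by definition~(\ref{e77}). For the left-hand square I would show $\Braid\simeq\GL_2(\ZZ)\times_{\EZ}\Ebraid$ over $\Phi$ and $\Psi$. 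Since $\Psi$ factors through $\EpZ\hookrightarrow\EZ$ and $\EZ$ is discrete, the fiber of $\Psi$ over any determinant-$(-1)$ element is empty, so $\GL_2(\ZZ)\times_{\EZ}\Ebraid\simeq\SL_2(\ZZ)\times_{\EpZ}\Ebraid$; substituting~(\ref{e77}) and applying the second preliminary observation identifies this with $\SL_2(\ZZ)\times_{\SL_2(\RR)}\w{\SL}_2(\RR)$, which Proposition~\ref{t32} identifies with $\Braid$, the resulting map to $\GL_2(\ZZ)$ being $\Phi$ (with image $\SL_2(\ZZ)$).

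Pasting the two squares shows the outer rectangle is a pullback, i.e.\ $\Braid$ is the homotopy fiber over $\uno$ of $\GL_2(\ZZ)\xra{\RR\underset{\ZZ}\ot}\GL_2(\RR)$. Finally, the (free) left-translation action $\GL_2(\ZZ)\lacts\GL_2(\RR)$ produces a fiber sequence $\GL_2(\ZZ)\to\GL_2(\RR)\to\GL_2(\RR)_{/\GL_2(\ZZ)}$ whose connecting map is the standard inclusion; rotating it one further step ($\Omega$-Puppe, exactly as in Observation~\ref{t43}) identifies $\Omega\bigl(\GL_2(\RR)_{/\GL_2(\ZZ)}\bigr)$ with this homotopy fiber, compatibly with the maps to $\GL_2(\ZZ)$, which is the asserted identification of continuous groups over $\GL_2(\ZZ)$.

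The only step requiring genuine care is the $\GL$-versus-$\SL$ bookkeeping: the diagram is phrased with $\GL_2(\ZZ)$ and $\EZ$, but $\Phi$ and $\Psi$ land in the positive-determinant parts, so at each pasting one must check that the negative-determinant components contribute empty homotopy fibers and hence can be discarded, reducing everything to the positive-determinant situation in which Proposition~\ref{t32} and~(\ref{e77}) apply directly. Everything else is formal manipulation of homotopy pullbacks: pasting, base change along inclusions of connected components, and $\Omega$-Puppe.
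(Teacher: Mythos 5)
Your proof is correct and unpacks exactly the argument the paper gestures at when it calls the corollary ``an immediate consequence of how $\Ebraid$ is defined in equation~(\ref{e77}), using that the continuous group $\GL_2^+(\RR)$ is a path-connected 1-type'': the contractibility of $\w{\GL}_2^+(\RR)$ turns the defining square~(\ref{e77}) into the right square, Proposition~\ref{t32} handles the left square after reducing to positive determinant, and the $\Omega$-Puppe rotation gives the ``in particular.'' One small imprecision worth flagging, though it does not affect the argument: the deformation retraction $A\mapsto\det(A)^{-1/2}A$ onto $\SL_2(\RR)$ is not ``compatible with the inclusion of $\EpZ$'' (it does not preserve integrality); what you actually use, and what suffices, is only that $\SL_2(\RR)\hookrightarrow\GL_2^+(\RR)$ is a $\pi_1$-isomorphism, so the universal cover restricts to $\w{\SL}_2(\RR)$.
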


\begin{observation}
\label{t39}
The inclusion $\SL_2(\ZZ)\subset \EpZ$ between submonoids of $\GL_2^+(\RR)$ determines an inclusion between topological monoids:
\begin{equation}
\label{e4}
\TT^2 \rtimes \Braid
\longrightarrow
\TT^2 \rtimes \Ebraid
~.
\end{equation}
After Observation~\ref{t21}, this inclusion~(\ref{e4}) witnesses the maximal subgroup, both as topological monoids and as monoid-objects in the $\infty$-category $\Spaces$.

\end{observation}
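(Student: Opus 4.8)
The plan is to first construct the morphism~(\ref{e4}) by functoriality of the semi-direct product applied to the lift of $\SL_2(\ZZ)\subset\EpZ$ to $\Braid\subset\Ebraid$, and then to identify its image with the group of units of $\TT^2\rtimes\Ebraid$ by reducing, exactly as for $\GL_2(\ZZ)\subset\EZ$ in Observation~\ref{t21}, to the corresponding statement for $\SL_2(\ZZ)\subset\EpZ$.

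For the first step: Corollary~\ref{t31} exhibits $\Braid$ as the pullback of $\Ebraid\xra{\Psi}\EZ$ along $\GL_2(\ZZ)\hookrightarrow\EZ$; since $\Phi$ lands in $\SL_2(\ZZ)$ and $\Psi$ lands in $\EpZ$, this refines to a pullback of continuous monoids
\[
\xymatrix{
\Braid \ar[r] \ar[d]_-{\Phi} & \Ebraid \ar[d]^-{\Psi} \\
\SL_2(\ZZ) \ar[r] & \EpZ
,}
\]
so the inclusion $\SL_2(\ZZ)\hookrightarrow\EpZ$ lifts to a morphism $\Braid\to\Ebraid$ whose image is $\Psi^{-1}(\SL_2(\ZZ))$; being a pullback of a monomorphism, it is a monomorphism. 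The $\Braid$-action on $\TT^2$ that defines $\TT^2\rtimes\Braid$ — namely $\Braid\xra{\Phi}\SL_2(\ZZ)\subset\EZ\lacts\TT^2$ via~(\ref{e2}) — is the restriction along this morphism of the $\Ebraid$-action $\Ebraid\xra{\Psi}\EpZ\subset\EZ\lacts\TT^2$, so functoriality of the semi-direct product construction (as in the discussion preceding Lemma~\ref{t67}) yields the morphism~(\ref{e4}) of topological (hence continuous) monoids over $\TT^2\rtimes\EpZ\subset\TT^2\rtimes\EZ$ and under $\TT^2$; it is a monomorphism because on underlying spaces it is $\id_{\TT^2}\times(\Braid\hookrightarrow\Ebraid)$.

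For the second step: an integer matrix of positive determinant is invertible over $\ZZ$ exactly when its determinant is $1$, so $\SL_2(\ZZ)\hookrightarrow\EpZ$ witnesses the maximal subgroup. I claim $\Braid\hookrightarrow\Ebraid$ does too. Since $\Braid$ is a group and $\Braid\to\Ebraid$ is a morphism of monoids, $\Braid\subseteq(\Ebraid)^{\times}$; conversely, if $\w A\in\Ebraid$ is a unit then $\Psi(\w A)$ is a unit of $\EpZ$, hence lies in $\SL_2(\ZZ)$, so the pullback square above places $\w A\in\Braid$. Thus $(\Ebraid)^{\times}=\Braid$. Finally, for the continuous group $\TT^2$ acting on $\Ebraid$, an element $(p,\w A)\in\TT^2\rtimes\Ebraid$ is a unit if and only if $\w A$ is: applying the projection $\TT^2\rtimes\Ebraid\to\Ebraid$ forces this one direction, and $(-\Psi(\w A)^{-1}p,\w A^{-1})$ is a two-sided inverse of $(p,\w A)$ the other direction, using that $\TT^2$ is a group. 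Hence the group of units of $\TT^2\rtimes\Ebraid$ is $\TT^2\rtimes(\Ebraid)^{\times}=\TT^2\rtimes\Braid$, the image of~(\ref{e4}) — precisely the passage from ``$H\hookrightarrow N$ witnesses the maximal subgroup'' to ``$\TT^2\rtimes H\hookrightarrow\TT^2\rtimes N$ witnesses the maximal subgroup'' carried out in Observation~\ref{t21}(1). Because $\EpZ$ is discrete, so are $\Ebraid$ and $\Braid$, whence the topological and $\Spaces$-internal notions of maximal subgroup coincide here; so~(\ref{e4}) witnesses the maximal subgroup in both senses.

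The only step carrying any content is the backward implication in the second step — that $\Psi\colon\Ebraid\to\EpZ$ reflects units — which genuinely uses both that $\Braid$ is a group and the pullback description of $\Braid$ inside $\Ebraid$ from Corollary~\ref{t31}; the construction of~(\ref{e4}) and the reduction for $\TT^2\rtimes(-)$ are formal, the latter being the same argument already made for $\GL_2(\ZZ)\subset\EZ$ in Observation~\ref{t21}.
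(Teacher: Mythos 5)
Your proof is correct and takes the approach the paper has in mind: the paper leaves this as an Observation with no argument beyond the pointer to Observation~\ref{t21}, and your writeup supplies exactly the missing details — the refined pullback square $\Braid = \SL_2(\ZZ)\times_{\EpZ}\Ebraid$ extracted from Corollary~\ref{t31}, the identification of $\Braid$ as the group of units of $\Ebraid$ (via units of $\EpZ$ being $\SL_2(\ZZ)$), and the formal passage to $\TT^2\rtimes(-)$. The only place I'd tighten the wording is the final step: the reason the topological and $\Spaces$-internal maximal subgroups agree is not merely that $\Ebraid$ is discrete but that the non-discrete factor $\TT^2$ is already a group and $\pi_0(\TT^2\rtimes\Ebraid)\cong\Ebraid$, so a path-component contains a point-set unit if and only if it contains a homotopy unit — your discreteness remark is the key input but the sentence as written elides the role of $\TT^2$.
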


\begin{remark}
\label{r4}
We give an explicit description of $\Ebraid$.
In~\cite{rawn}, the author gives an explicit description for the universal cover of $\SP_{2}(\RR) = \SL_{2}(\RR)$ (and goes on to establish the pullback square of Proposition~\ref{t32}).
Following those methods, consider the maps
\[
\phi
\colon
\GL_2(\RR)
\longrightarrow
\SS^1
~,
\qquad
 A
\mapsto \frac{(a + d) + i(b - c)}{|(a + d) + i(b - c)|}
~,
\]
where
$
A =
\begin{bmatrix}
a & b 
\\
c & d
\end{bmatrix}.
$
As in \cite{rawn}, consider a map
$
\eta \colon \GL_2(\RR) \times \GL_2(\RR) \rightarrow \RR
$
for which
\[
e^{i\eta(A, B)} = \frac{1 - \alpha_{A}\overline{\alpha_{B^{-1}}}}{|1 - \alpha_{A}\overline{\alpha_{B^{-1}}}|}
\qquad
\text{ where }
\qquad
\alpha_{A} = \frac{a^{2} + c^{2} - b^{2} - d^{2} - 2i(ad + bc)}{(a + d)^{2} + (b - c)^{2}}
~.
\]
In these terms, the monoid $\Ebraid$ can be identified as the subset
\[
\Ebraid
~:=~
\bigl\{
(A, s)  \mid  \phi(A) = e^{is}
\bigr\}
~\subset~
\EpZ \times \RR
~,~
\text{with monoid-law }(A, s) \cdot (B, t) := \bigl(AB, s + t + \eta(A, B) \bigr)
~.
\]
\end{remark}

\subsection{Group-completion of $\Ebraid$}
The continuous group $\GL_2^+(\RR)$ is path-connected with $\pi_1\bigl( \GL_2^+(\RR) , \uno \bigr) \cong \ZZ$.
Consequently, there is a central extension
\begin{equation}
\label{e84}
1
\longrightarrow
\ZZ
\longrightarrow
\w{\GL}_2^+(\RR)
\xra{\rm universal~cover}
\GL_2^+(\RR)
\longrightarrow
1
~.
\end{equation}
Consider the inclusion as scalars $\RR_{>0}^\times \underset{\rm scalars} \hookrightarrow \GL_2^+(\RR)$.
Contractibility of the topological group $\RR_{>0}^\times$ implies base-change of this central extension~(\ref{e84}) along this inclusion as scalars splits.
In particular, 
for $\RR\underset{\QQ}\ot \colon \GL_2^+(\QQ)\subset \GL_2^+(\RR)$ the subgroup with rational coefficients, 
there are lifts among continuous monoids in which the squares are pullbacks:
{\Small
\[
\xymatrix{
\NN^\times
\ar[rr]
\ar[drrrrrr]_-{\rm scalars}
\ar@{-->}@(u,-)[rrrrrr]^-{~}
&&
\QQ_{>0}^{\times}
\ar[rr]
\ar[drrrrrr]
\ar@{-->}@(u,u)[rrrrrr]^-{\rm \w{scalars}}
&&
\RR_{>0}^{\times}
\ar[drrrrrr]
\ar@{-->}@(-,u)[rrrrrr]^-{~}
&&
\Ebraid
\ar[d]
\ar[rr]_-{\w{\QQ\underset{\ZZ}\ot}}
&&
\w{\GL}_2^+(\QQ)
\ar[d]
\ar[rr]_-{\w{\RR\underset{\QQ}\ot}}
&&
\w{\GL}_2^+(\RR)
\ar[d]^-{\rm cover}_-{\rm universal}
\\
&&&
&&&
\EpZ
\ar[rr]_-{\QQ\underset{\ZZ}\ot}
&&
\GL_2^+(\QQ)
\ar[rr]_-{\RR\underset{\QQ}\ot}
&&
\GL_2^+(\RR)
.
}
\]
}

\begin{prop}
\label{t59}
Each of the diagrams among continuous monoids
\[
\xymatrix{
\NN^\times
\ar[rr]^-{\rm scalars}
\ar[d]_-{\rm inclusion}
&&
\EZ
\ar[d]^-{\QQ\underset{\ZZ}\ot}
&&
\NN^\times
\ar[rr]^-{\rm \w{scalars}}
\ar[d]_-{\rm inclusion}
&&
\Ebraid
\ar[d]^-{\w{\QQ\underset{\ZZ}\ot}}
\\
\QQ_{>0}^{\times}
\ar[rr]^-{\rm scalars}
&&
\GL_2(\QQ)
&
\text{ and }
&
\QQ_{>0}^{\times}
\ar[rr]^-{\rm \w{scalars}}
&&
\w{\GL}_2^+(\QQ)
}
\]
witnesses a pushout.
In particular, because $\NN^\times \xra{\rm inclusion}\QQ^\times_{>0}$ witnesses group-completion among continuous monoids, then each of the right downward morphisms witnesses group-completion among continuous monoids.

\end{prop}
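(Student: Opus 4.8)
The plan is to reduce both squares to a single, purely algebraic statement about monoids — that the square of discrete monoids
\[
\xymatrix{
\NN^\times \ar[r] \ar[d] & \EpZ \ar[d] \\
\QQ_{>0}^\times \ar[r] & \GL_2^+(\QQ)
}
\]
is a pushout in the category of monoids — and then bootstrap from this to the statement about continuous monoids and to the central-extension version. First I would observe that each monoid in sight is discrete (a set, viewed as a constant space), so a pushout of monoids is automatically a pushout of continuous monoids; thus the content is entirely combinatorial. For the discrete pushout, I would give an explicit description of $\GL_2^+(\QQ)$ generated over $\EpZ$ by the central scalars $\QQ_{>0}^\times$: every rational matrix $M$ with positive determinant can be written as $\frac{1}{n}\cdot A$ with $n\in\NN^\times$ a common denominator and $A\in\EpZ$ an integral matrix with $\det A > 0$, and two such presentations $\frac{1}{n}A = \frac{1}{m}B$ differ precisely by $mA = nB$ in $\EpZ$. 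This is exactly the universal property of the pushout $\QQ_{>0}^\times \amalg_{\NN^\times}\EpZ$: a cocone consists of monoid maps $f\colon\QQ_{>0}^\times\to P$ and $g\colon\EpZ\to P$ agreeing on $\NN^\times$, and one defines the induced map by $\frac1n A\mapsto f(1/n)g(A)$, checking well-definedness via the relation just noted and multiplicativity via the centrality of the scalars in both $\GL_2^+(\QQ)$ and $P$ (note $f(1/n)$ is a two-sided inverse of the central element $f(n)=g(n)$, hence central in the subgroup it generates, which suffices). The same argument, run on the universal covers, gives the right-hand square: one checks that $\w{\GL}_2^+(\QQ)$ is generated by $\Ebraid$ and the lifted scalars $\w{\QQ_{>0}^\times}$, that the two presentations relation lifts (using that the central extension~(\ref{e84}) restricted to scalars has been \emph{split}, so the lift $\w{\QQ_{>0}^\times}\to\w{\GL}_2^+(\RR)$ is a genuine homomorphism with image commuting with everything, by centrality of~(\ref{e84})), and that the pullback squares displayed just before the proposition let us transport the integral relations from $\EpZ$ up to $\Ebraid$ without ambiguity in the $\RR$-coordinate.

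The second sentence of the proposition is then a formal consequence: group-completion of monoids is a left adjoint, hence preserves pushouts, so applying group-completion to the left square and using that $\NN^\times\to\QQ_{>0}^\times$ and $\EZ\to\GL_2(\QQ)$ are already group-completions (the former standard; the latter because $\GL_2(\QQ)$ is the group of fractions of the cancellative monoid of nonzero-determinant integer matrices, every element being $\frac1nA$ as above) shows $\QQ_{>0}^\times\to\GL_2(\QQ)$ is the group-completion of $\NN^\times\to\EpZ$ — wait, more precisely, the pushout of the group-completions is the group-completion of the pushout, and since $\NN^\times\to\QQ_{>0}^\times$ is an isomorphism after group-completion we get that $\EpZ\to\GL_2^+(\QQ)$ becomes an equivalence on group-completions, i.e.\ $\GL_2^+(\QQ)$ is the group-completion of $\EpZ$; the same for $\Ebraid\to\w{\GL}_2^+(\QQ)$. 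I should be careful to phrase this as: group-completion sends the pushout square to a pushout square of groups, in which the left vertical arrow is an isomorphism, whence the right vertical arrow is too.

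The main obstacle I anticipate is not the discrete pushout itself but verifying cleanly that $\w{\GL}_2^+(\QQ)$ is \emph{generated} by $\Ebraid$ together with the lifted rational scalars, and that no extra relations appear in passing to universal covers — equivalently, that the evident map from the discrete pushout $\w{\QQ_{>0}^\times}\amalg_{\NN^\times}\Ebraid$ to $\w{\GL}_2^+(\QQ)$ is a bijection. Surjectivity follows by lifting a path: given $\w{M}\in\w{\GL}_2^+(\QQ)$ lying over $M=\frac1nA$, the scalar $\frac1n$ can be connected to $1$ through positive scalars, and this contracts the ambiguity to the integral part, which lives in $\Ebraid$ by Corollary~\ref{t31}. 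Injectivity amounts to showing the kernel $\ZZ$ of the central extension is already hit inside $\Ebraid$ (again Corollary~\ref{t31}, since $(\tau_1\tau_2)^6$ generates $\Ker(\Phi)$ and maps to the generator of $\pi_1\GL_2^+(\RR)$), so any two presentations of the same element of $\w{\GL}_2^+(\QQ)$ differing by a loop can be corrected by that integral element. I would carry out these two points in detail, as they are the only place the topology genuinely enters; everything else is bookkeeping with the already-displayed pullback squares.
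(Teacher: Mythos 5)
Your proposal has a genuine gap at its first step, and it is exactly the step that carries the real content of the proposition.

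You claim that ``each monoid in sight is discrete (a set, viewed as a constant space), so a pushout of monoids is automatically a pushout of continuous monoids.'' This is false in general. The inclusion of discrete monoids into continuous monoids (i.e., $\EE_1$-algebras in $\Spaces$) does not preserve pushouts: colimits in the $\infty$-category of continuous monoids can develop higher homotopy from discrete inputs. For instance, the pushout of $\ast \leftarrow \ZZ \to \ast$ in discrete groups is trivial, while in continuous groups it is $\Omega\SS^2$, which is far from trivial. The left adjoint of the inclusion $\{\text{discrete monoids}\}\hookrightarrow\{\text{continuous monoids}\}$ is $\pi_0$, so the inclusion preserves limits, not colimits. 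Consequently the 1-categorical pushout you verify (via the universal property at the level of cocones of sets) does not a priori compute the $\infty$-categorical pushout the proposition is about; you have only checked $\pi_0$ of the relevant mapping spaces and said nothing about higher homotopy.

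The pushout $\QQ_{>0}^\times\amalg_{\NN^\times}\EZ$ in continuous monoids is, by universality, the localization $\EZ[(\NN^\times)^{-1}]$ in continuous monoids, and the substance of the proposition is that this $\infty$-categorical localization is \emph{discrete} and agrees with the classical $\GL_2(\QQ)$. This is where the paper does its work: it exhibits the localization as a (generalized) filtered colimit over the divisibility poset $\NN^{\sf div}$, using that the submonoid $\NN^\times\subset\EZ$ of scalars is central (an Ore-type condition), and then identifies this colimit with $\GL_2(\QQ)$ via the explicit ``clear the denominator $r_A$'' maps. Filtered colimits of discrete spaces remain discrete, which is what rescues the argument. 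Your observation that every $M\in\GL_2^+(\QQ)$ can be written $\frac1n A$ with $A\in\EpZ$, and that such presentations differ by scaling, is precisely the Ore structure the paper exploits — so you have the right combinatorial intuition — but as written this is used only to verify a 1-categorical universal property, not to show the continuous localization has no higher homotopy. To repair the proof you would need to make the filtered-colimit (or homology-of-group-completion) argument explicit, rather than appeal to a false general transfer principle.

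The second half of your argument — group completion is a left adjoint from continuous monoids to continuous groups, hence preserves pushouts, and since the left vertical becomes an equivalence so does the right — is correct as a deduction from the first half and is in the same spirit as the paper's closing remark. For the $\Ebraid$ square, the paper instead base-changes along the central extension~(\ref{e84}) rather than re-running the Ore argument upstairs; your route via the lifted scalars and Corollary~\ref{t31} is plausible but would need the same repair regarding discreteness of the $\infty$-categorical pushout.
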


\begin{proof}
We explain the following commutative diagram among spaces:
\[
\xymatrix{
\EZ
\ar[rrrr]^-{\RR\underset{\ZZ}\ot }
\ar@{-->}[dr]_-{\rm (a)}
\ar[drrr]
&
&&
&
\GL_2(\QQ)
\ar@{-->}[dlll]_-{\rm (b)}
\\
&
\underset{\NN^{\sf div}}\colim
~\EZ
\ar@{-->}[rr]_-{\rm (c)}
&&
\EZ[ (\NN^\times)^{-1} ]
\ar[ur]_-{\ov{\RR\underset{\ZZ}\ot}}
&
}
\]
The top horizontal arrow is the standard inclusion.
Here, scalar matrices embed the multiplicative monoid of natural numbers $\NN^\times \underset{\rm scalars}\subset \EZ$.
The bottom right term, equipped with the diagonal arrow to it, is the indicated localization (among continuous monoids). 
The up-rightward arrow is the unique morphism between continuous monoids under $\EZ$, which exists because the continuous monoid $\GL_2(\QQ)$ is a continuous group.
The solid diagram of spaces is thusly forgotten from a diagram among continuous monoids.

Next, the poset $\NN^{\sf div}$ is the natural numbers with partial order given by divisibility: $r\leq s$ means $r$ divides $s$.   
Consider the functor 
\[
F_{\EZ} \colon \NN^{\sf div} \longrightarrow \Sets
\hookrightarrow
\Spaces
~,\qquad
r\mapsto \EZ
~{}~
\text{ and }
~{}~
(r\leq s)
\mapsto 
\bigl(
~
\EZ \xra{ \frac{s}{r} \cdot -} \EZ
~
\bigr)
~.
\]
The colimit term in the above diagram is 
$
\colim\bigl( 
F_{\EZ}
\bigr)
$,
which can be identified as the classifying space of the poset
\[
{\sf Un}\bigl( F_{\EZ} \bigr)
~=~
\Bigl(
~
\NN\times \EZ
\text{ , with partial order }(r,A) \leq (s,B) \text{ meaning }
r\leq s \text{ in }\NN^{\sf div}
\text{ and }
\frac{s}{r} \cdot A=B
~
\Bigr)
~.
\]
\begin{itemize}
\item
The dashed arrow~$\rm (a)$ is the canonical map from the $1$-cofactor of the colimit.  

\item
The dashed arrow~$\rm (b)$ is implemented by the map $\w{\rm (b)}\colon \GL_2(\QQ)\xra{A\mapsto (r_A,r_A\cdot A)} \NN\times \EZ$ where $r_A\in \NN$ is the smallest natural number for which the matrix $r_A\cdot  A\in \EZ$ has integer coefficients.
The triangle with sides $\rm (a)$ and $\rm (b)$ evidently commutes.   

\item
The dashed arrow~$\rm (c)$ is implemented by the map $\w{\rm (c)}\colon
{\sf Un}\bigl( F_{\EZ} \bigr)
 \xra{(r,A)\mapsto r^{-1}A} \EZ[ (\NN^\times)^{-1} ]$.
The triangle with sides~$\rm (a)$ and $\rm (c)$ evidently commutes.
We now argue that the map~$\rm (c)$ is an equivalence between spaces.

Observe the identification between continuous monoids 
\[
\underset{p~{\rm prime}} \bigoplus (\ZZ_{\geq 0},+)
\xra{~\cong~}
\NN^\times
~,\qquad
\bigl(
\{ p~{\rm prime}\}
\xra{\eta}
\ZZ_{\geq 0}
\bigr)
\mapsto 
\underset{p~\rm prime}\prod p^{\eta(p)}
~,
\]
as a direct sum, indexed by the set of prime numbers, of free monoids each on a single generator.
For $S$ a set of prime numbers, denote by $\lag S \rag^{\times} \subset \NN^\times$ the submonoid generated by $S$.  
For $S$ a set of primes, and for $p\in S$, 
the above identification as a direct sum of monoids restricts as an identification $(\ZZ_{\geq 0},+ ) \times \lag S \smallsetminus \{p\} \rag^{\times} \cong  \lag \{p\} \rag^{\times}  \times \lag S \smallsetminus \{p\} \rag^{\times}   \cong \lag S \rag^{\times}$.

Next, observe an identification of the poset $\NN^{\sf div} \simeq (\fB \NN^\times)^{\ast/}$ as the undercategory of the deloop.  
Through this identification, and the above identification supplies an identification between posets from the direct sum (based at initial objects) indexed by the set of prime numbers:
\[
\underset{p~\rm prime} \bigoplus (\ZZ_{\geq 0},\leq)
\xra{~\cong~}
\NN^{\sf div}
~,\qquad
\bigl(
\{ p~{\rm prime}\}
\xra{\chi}
\ZZ_{\geq 0}
\bigr)
\mapsto 
\underset{p~\rm prime}\prod p^{\chi(p)}
~.
\]
For $S$ a set of prime numbers, denote by $\lag S \rag^{\sf div} \subset \NN^{\sf div}$ the full subposet generated by $S$.  
For $S$ a set of primes, and for $p\in S$, 
the above identification as a direct sum of posets restricts as an identification $(\ZZ_{\geq 0},\leq ) \times \lag S \smallsetminus \{p\} \rag^{\sf div} \cong  \lag \{p\} \rag^{\sf div}  \times \lag S \smallsetminus \{p\} \rag^{\sf div}   \cong \lag S \rag^{\sf div}$.
In particular, 
the standard linear order on the set of prime natural numbers determines the sequence of functors
\begin{equation}
\label{e85}
\NN^{\sf div}
\xra{{\sf loc}_2}
\lag p>2 \rag^{\sf div}
\xra{{\sf loc}_3}
\lag p > 3 \rag^{\sf div}
\xra{{\sf loc}_5}
\lag p > 5 \rag^{\sf div}
\xra{{\sf loc}_7}
\dots
~,
\end{equation}
each which is isomorphic with projection off of $(\ZZ_{\geq 0},\leq)$.  
In particular, each projection is a coCartesian fibration, so left Kan extension along each functor is computed as a sequential colimit.  
Because $\NN^\times \underset{\rm scalars}\subset \EZ$ is (strictly) central, 
so too is $(\ZZ_{\geq 0},+)\cong \lag \{p\}\rag^{\times}\subset \EZ$.  
The following claim follows from these observations, using induction on the standardly ordered set of primes. 
\begin{itemize}
\item[{\bf Claim.}]
For each prime $q$, 
left Kan extension of $F_{\EZ}$ along the composite functor $\NN^{\sf div}\xra{{\sf loc}_q} \lag p>q \rag^{\sf div}$ is the functor
\[
F_{\EZ\bigl[ (\lag p' \leq q \rag^{\times})^{-1}\bigr]}
\colon \lag p > q \rag^{\sf div} 
\xra{~({\sf loc}_q)_!(\EZ)~} 
\Spaces
~,
\]
\[
r\mapsto \EZ\bigl[ (\lag p' \leq q \rag^{\times})^{-1}\bigr]
\qquad
\text{ and }
\qquad
(r\leq s)
\mapsto 
\bigl(
\EZ\bigl[ (\lag p' \leq q \rag^{\times})^{-1}\bigr] \xra{ \frac{s}{r} \cdot -} \EZ\bigl[ (\lag p' \leq q \rag^{\times})^{-1}\bigr]
\bigr)
~,
\]
that evaluates on each $r$ as the localization $\EZ\bigl[ (\lag p' \leq q \rag^{\times})^{-1} \bigr]$, and on each relation $r \leq s$ in $\NN^{\sf div}$ as scaling by $\frac{s}{r}$.
\end{itemize}
Next, the colimit of this sequence~(\ref{e85}) is $\underset{q ~ \rm prime}\bigcap \lag p > q \rag^{\sf div} \simeq \ast$ terminal.  
Consequently, there is a canonical identification 
\begin{eqnarray}
\nonumber
\colim( F_{\EZ} )
&
~\simeq ~
&
\underset{q \in \{2<3<5\cdots\}}\colim 
\Bigl(
({\sf loc}_q)_! \bigl( F_{\EZ} \bigr)
\Bigr)
~\simeq~
\underset{q \in \{2<3<5\cdots\}}\colim 
\Bigl(
F_{\EZ\bigl[ (\lag p' \leq q \rag^{\times})^{-1}\bigr]}
\Bigr)
\\
\nonumber
&
~\simeq~
&
\EZ\Bigl[ 
\bigl(
\underset{q \in \{2<3<5<\cdots\}} \bigcup \lag p' \leq q \rag^{\times} 
\bigr)^{-1}
\Bigr]
~=~
\EZ[ (\NN^\times)^{-1} ]
~.
\end{eqnarray}

\item
By inspection, the resulting self-map of $\GL_2(\QQ)$ is the identity.  
Indeed, the natural transformation
\[
\xymatrix{
{\sf Un}\bigl( F_{\EZ} \bigr)
\ar@(u,u)[rr]^-{\id}
\ar[d]_-{\w{\rm (c)}}
&
\Uparrow
&
{\sf Un}\bigl( F_{\EZ} \bigr)
\\
\EZ[(\NN^\times)^{-1}]
\ar[rr]^{\ov{\RR\underset{\ZZ}\ot}}
&&
\GL_2(\QQ)
\ar[u]_-{\w{\rm (b)}}
,
}
\]
given by, for each $(s,B)\in {\sf Un}\bigl( F_{\EZ} \bigr)$, the relation $\bigl( r_{s^{-1}\cdot B} , r_{s^{-1}\cdot B} \cdot (s^{-1}\cdot B) \bigr)  \leq (s,B)$, witnesses an identification of the resulting self-map of $\underset{\NN^{\sf div}}\colim \EZ$ with the identity.  
\end{itemize}
We conclude that the map $\EZ[(\NN^\times)^{-1}]\xra{\ov{\RR\underset{\ZZ}\ot}} \GL_2(\QQ)$ is an equivalence.
It follows that the left square in the statement of the proposition is a pushout because the morphism $\NN^\times \xra{\rm inclusion}\QQ_{>0}^{\times}$ witnesses a group-completion (among continuous monoids).

The same argument also implies the square
\[
\xymatrix{
\NN^\times
\ar[rr]^-{\rm scalars}
\ar[d]_-{\rm inclusion}
&&
\EpZ 
\ar[d]^-{\QQ\underset{\ZZ}\ot}
\\
\QQ_{>0}^\times
\ar[rr]^-{\rm scalars}
&&
\GL_2^+(\QQ)
}
\]
also witnesses a pushout among continuous monoids.
Base-change along the central extension~(\ref{e84}) among continuous groups
reveals that the right square is also a pushout among continuous groups.

\end{proof}

\subsection{Relationship with the finite orbit category of $\TT^2$}
Recall the $\infty$-category ${\sf Orbit}_{\TT^2}^{\sf fin}$ of transitive $\TT^2$-spaces with finite isotropy, and $\TT^2$-equivariant maps between them. 
Recall that the action $\Ebraid \to \EZ \lacts \TT^2$ on the topological group determines an action 
\begin{equation}
\label{e92}
\Ebraid \underset{\rm Obs~\ref{t60}}\simeq \Ebraid^{\op} \lacts {\sf Orbit}^{\sf fin}_{\TT^2}
~.
\end{equation}

\begin{prop}
\label{t68}
There is a canonical identification of the $\infty$-category of coinvariants with respect to the action~(\ref{e92}): 
\[
\Bigl(
{\sf Orbit}^{\sf fin}_{\TT^2}
\Bigr)_{/\Ebraid}
\xra{~\simeq~}
\fB \bigl(
\TT^2 \rtimes \Ebraid
\bigr)
~.
\]

\end{prop}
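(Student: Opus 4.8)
The plan is to exhibit $\bigl({\sf Orbit}^{\sf fin}_{\TT^2}\bigr)_{/\Ebraid}$ as the classifying space of a continuous monoid, namely $\TT^2 \rtimes \Ebraid$, by first identifying ${\sf Orbit}^{\sf fin}_{\TT^2}$ itself in group-theoretic terms and then computing the coinvariants directly from that description. The key structural observation is that a transitive $\TT^2$-space with finite isotropy is the same datum as a cofinite subgroup $C \subset \TT^2$ (namely the isotropy group, which is finite hence closed, and cofinite since the quotient is $\TT^2$), and a $\TT^2$-equivariant map $\TT^2_{/C} \to \TT^2_{/C'}$ exists if and only if $C \subseteq C'$, in which case the space of such maps is a torsor for $\TT^2_{/C'}$ acting by translation; more precisely the space of such maps is $\TT^2_{/C'}$ itself once we track basepoints. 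So ${\sf Orbit}^{\sf fin}_{\TT^2}$ is equivalent to the category (enriched in $\Spaces$) whose objects are cofinite subgroups $C\subset \TT^2$ and whose morphism space from $C$ to $C'$ is empty unless $C\subseteq C'$, in which case it is $\TT^2_{/C'}$, with composition given by the translation action. Since cofinite subgroups of $\TT^2$ are exactly the images of the cofinite subgroups of $\ZZ^2$ under ${\sf quot}$, which are in turn indexed by $\bcL(2) \cong \EZ_{/\GL_2(\ZZ)}$, the poset of objects is filtered-like: any two cofinite subgroups $C_1, C_2$ are both contained in $C_1 + C_2$, which is again cofinite.

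First I would use this last point to argue that ${\sf Orbit}^{\sf fin}_{\TT^2}$ has weakly contractible \emph{nerve} after forgetting the $\Ebraid$-action is not quite what we want; rather, I would produce a final object up to the action. The cleaner route: restrict to the cofinal full subcategory on subgroups of the form $C_n := {\sf quot}\bigl( (n\ZZ)^2 \bigr) = \tfrac1n\ZZ^2/\ZZ^2 \subset \TT^2$ indexed by $n \in \NN^{\sf div}$ — but this is not closed under the $\Ebraid$-action, so instead I keep the whole category and compute the homotopy colimit $\bigl({\sf Orbit}^{\sf fin}_{\TT^2}\bigr)_{/\Ebraid}$ as an iterated colimit: first over $\Ebraid$, then over ${\sf Orbit}^{\sf fin}_{\TT^2}$, or conversely. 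The most efficient organization is to recognize that ${\sf Orbit}^{\sf fin}_{\TT^2}$ is itself the homotopy-coinvariants of a \emph{trivial}-object-poset against the action of $\TT^2$ by translations on morphism spaces; concretely there is an equivalence ${\sf Orbit}^{\sf fin}_{\TT^2} \simeq \bigl(\bcL(2)^{\sf op,div}\bigr)_{\TT^2}$ where $\bcL(2)^{\sf op,div}$ denotes the poset of cofinite subgroups ordered by \emph{reverse} inclusion — no wait, by inclusion, matching $C \subseteq C'$ — and $\TT^2$ acts suitably. Then $\Ebraid$ acts on $\bcL(2)$ through $\EZ \twoheadrightarrow \bcL(2)$-style data compatibly, and the two coinvariants commute.

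The cleanest execution I foresee: build the double bar construction. The category ${\sf Orbit}^{\sf fin}_{\TT^2}$ has a canonical functor to $\bcL(2)$ (send $C \mapsto$ its preimage in $\ZZ^2$, a cofinite subgroup) which is a \emph{Cartesian fibration} (the morphism spaces over a fixed arrow of $\bcL(2)$ form a $\TT^2_{/C'}$-torsor). Since $\bcL(2) \cong \EpZ_{/\GL_2(\ZZ)}$ is the coinvariants of a \emph{point} against $\EpZ$ acting through $\EpZ \to \bcL(2)$, and the $\Ebraid$-action on everything is compatible with the $\EpZ$-action via $\Psi$, the whole diagram $\bigl({\sf Orbit}^{\sf fin}_{\TT^2}\bigr)_{/\Ebraid}$ straightens to the bar construction of the continuous monoid whose morphisms are pairs (element of $\Ebraid$, element of $\TT^2$) composed with the semidirect product rule dictated by $\Psi \colon \Ebraid \to \EpZ \lacts \TT^2$. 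That monoid is exactly $\TT^2 \rtimes \Ebraid$ by definition of the semi-direct product (using Observation~\ref{f5} and Lemma~\ref{t67} to recognize it), and the bar construction of a continuous monoid $N$ is $\fB N$ by definition. I would spell this out by writing ${\sf Orbit}^{\sf fin}_{\TT^2} = \coprod_{[C]} \fB\bigl({\sf Aut}(C)\bigr)$-style decomposition, noting ${\sf Aut}_{{\sf Orbit}^{\sf fin}_{\TT^2}}(\TT^2) = \TT^2$ (the identity orbit, with automorphisms = translations), then using the cofinality of the identity orbit up to the $\Ebraid$-action — every object $C$ receives a map \emph{from} the identity orbit $\TT^2$, and $\Ebraid$ acts transitively on the isomorphism classes of $C$'s in the sense that $\bcL(2) \cong \EpZ_{/\GL_2(\ZZ)}$ has, after group-completion à la Proposition~\ref{t59}, the single-orbit structure we need — to collapse $\bigl({\sf Orbit}^{\sf fin}_{\TT^2}\bigr)_{/\Ebraid}$ onto $\fB(\TT^2 \rtimes \Ebraid)$.

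\textbf{Main obstacle.} The hard part will be bookkeeping the interaction between the two "directions" of coinvariants: the $\TT^2$-torsor structure living \emph{inside} the morphism spaces of ${\sf Orbit}^{\sf fin}_{\TT^2}$, versus the $\Ebraid$-action which permutes objects (via $\EpZ$-action on lattices) while \emph{also} acting on $\TT^2$ itself through $\Psi$. Making precise that these two assemble into the \emph{single} semi-direct product $\TT^2 \rtimes \Ebraid$ — rather than some twisted or lax variant — requires care that the coherences match; I expect the efficient way around it is to invoke Lemma~\ref{t67}'s characterization of semi-direct products (the retraction $\TT^2\rtimes\Ebraid \rightleftarrows \Ebraid$ with kernel $\TT^2$) and check that the evident retraction $\bigl({\sf Orbit}^{\sf fin}_{\TT^2}\bigr)_{/\Ebraid} \to \bigl(\ast\bigr)_{/\Ebraid} = \fB\Ebraid$ has fiber $\fB\TT^2$ with the correct conjugation action, so that the target is forced to be $\fB(\TT^2\rtimes\Ebraid)$ by the universal property. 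The only real content beyond formal nonsense is the identification ${\sf Aut}_{{\sf Orbit}^{\sf fin}_{\TT^2}}(\TT^2/\{e\}) \simeq \TT^2$ together with the claim that the identity orbit is "cofinal relative to the $\Ebraid$-action," which follows from Observation~\ref{t56}-style inspection of $\bcM_{=2}^{\sfr}$ combined with Theorem~\ref{Theorem A}(2b) — indeed, once Corollary~\ref{t30} is in hand, this Proposition is close to a reformulation of it, and the proof can be phrased as: $\bigl({\sf Orbit}^{\sf fin}_{\TT^2}\bigr)_{/\Ebraid}$ is the bar construction of the endomorphism monoid of the identity orbit in the $\Ebraid$-coinvariant category, which is $\End_{\bcM_{=2}^{\sfr}}(\TT^2,\varphi) \simeq \Imm^{\fr}(\TT^2,\varphi)^{\op} \simeq (\TT^2 \rtimes \Ebraid)^{\op}$; taking $\fB$ and using $\fB(M^{\op}) \simeq \fB M$ for the final answer — but one must then double-check the variance, since $\fB$ of a monoid and of its opposite agree as spaces, so no $(-)^{\op}$ survives in the stated equivalence.
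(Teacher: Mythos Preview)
Your proposal correctly identifies several of the ingredients---${\sf Orbit}^{\sf fin}_{\TT^2}$ as a left fibration over the poset $\sP^{\sf fin}_{\TT^2}$ of finite subgroups with fibers $\sB(\TT^2/C)$, and the shape of the target as $\fB$ of a semi-direct product---but it does not assemble them into a proof. The ``main obstacle'' you name is the entire content, and none of your proposed workarounds resolves it: invoking Lemma~\ref{t67} on a putative retraction $\bigl({\sf Orbit}^{\sf fin}_{\TT^2}\bigr)_{/\Ebraid} \to \fB\Ebraid$ presupposes you already know the domain is $\fB$ of a monoid over $\Ebraid$; reading off endomorphisms via Observation~\ref{t56}(6) requires first identifying the coinvariant category with a piece of $\bcM^{\sfr}_{=2}$, which you nowhere establish; and ``cofinality of the identity orbit up to the $\Ebraid$-action'' is asserted without a precise statement or argument. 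The several false starts (restricting to the subgroups $C_n$, double bar constructions) are abandoned before they lead anywhere.

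The paper takes a genuinely different route that sidesteps the coherence bookkeeping entirely. Its key step---which has no analogue in your proposal---is an explicit $\Ebraid^{\op}$-equivariant isomorphism of posets
\[
\sP^{\sf fin}_{\TT^2} ~\simeq~ (\fB\Ebraid)^{\ast/},
\qquad C \longmapsto [A]~\text{ for any }~A\in\EpZ~\text{ with }~\Ker\bigl(\TT^2\xra{A}\TT^2\bigr)=C,
\]
a form of Pontrjagin duality. Once the base poset is recognized as a \emph{slice} of $\fB\Ebraid$, the paper exhibits ${\sf Orbit}^{\sf fin}_{\TT^2}$ as the pullback of $\Ar\bigl(\fB(\TT^2\rtimes\Ebraid)\bigr)$ along $(\fB\Ebraid)^{\ast/} \hookrightarrow \Ar(\fB\Ebraid)$, by checking that the two resulting left fibrations over $\sP^{\sf fin}_{\TT^2}$ have the same straightening $C\mapsto \sB(\TT^2/C)$. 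The fibrational framework then absorbs all the coherence: the $\Ebraid$-action on the slice has coinvariants $\fB\Ebraid$, and the pullback description transports this to the desired identification. What your approach would have bought, had it been completed, is a more hands-on description of the endomorphism monoid; what the paper's approach buys is that the coherence problem you flagged simply never arises.
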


\begin{proof}
Recall that $\Ebraid \subset \w{\GL}^+_2(\RR)$ is defined as a submonoid of a group.
As a result, the left-multiplication action by its maximal subgroup,
$\w{\GL}_2^+(\ZZ) \lacts \Ebraid$, is free.  
Consequently, the space of objects $\Obj\bigl( (\fB \Ebraid)^{\ast/} \bigr) \simeq \Ebraid_{/\w{\GL}^+_2(\ZZ)}\xra{\cong} \EpZ_{/\GL_2^+(\ZZ)}$ is simply the quotient set of $\Ebraid$ by its maximal subgroup acting via left-multiplication, which is bijective with the quotient of $\EpZ$ by its maximal subgroup via the canonical projection $\Ebraid \to \EpZ$.  
The space of morphisms between objects represented by $A, B\in \EpZ$,
\[
\Hom_{(\fB \Ebraid)^{\ast/}}\bigl( [A] , [B] \bigr)
~\simeq~
\Bigl\{X \in \EpZ \mid XA=B\Bigr\}
~\subset~ 
\EpZ
~,
\]
is simply the set of factorizations in $\EpZ$ of $B$ by $A$.
In particular, the $\infty$-category $(\fB \Ebraid)^{\ast/}$ is a poset.  
We now identify this poset essentially through Pontryagin duality.

Consider the poset $\sP^{\sf fin}_{\TT^2}$ of finite subgroups of $\TT^2$ ordered by inclusion.  
We now construct mutually-inverse functors between posets:
\begin{equation}
\label{e90}
(\fB \Ebraid)^{\ast/}
\xra{~[A]
\mapsto 
\Ker\bigl( \TT^2 \xra{A} \TT^2 \bigr)
~}
\sP^{\sf fin}_{\TT^2}
\qquad
\text{ and }
\qquad
\sP^{\sf fin}_{\TT^2}
\xra{~C
\mapsto 
\bigl[ \ZZ^2 \xra{A_C} \ZZ^2 \bigr]
~}
(\fB \Ebraid)^{\ast/}
~.
\end{equation}
The first functor assigns to $[A]$ the kernel of the endomorphism of $\TT^2$ induced by a representative $A \in \EpZ \lacts \TT^2$.   
The second functor assigns to $C$ the endomorphism $(\ZZ^2 \xra{A_C}\ZZ^2)\in \EpZ$ defined as follows.
The preimage $\ZZ^2 \subset \quot^{-1}(C) \subset \RR^2 \xra{\quot} \RR^2_{/\ZZ^2} =:\TT^2$ by the quotient is a lattice in $\RR^2$ that contains the standard lattice cofinitely.
There is a unique pair of non-negative-quadrant vectors $(u_1,u_2)\in (\RR_{\geq 0})^2\times (\RR_{\geq 0})^2$ that generate this lattice $\quot^{-1}(C)$ and agree with the standard orientation of $\RR^2$.
Then $A_C \in \EpZ$ is the unique matrix for which $A_C \vec{u}_i = \vec{e}_i$ for $i=1,2$.  
It is straight-forward to verify that the two assignments in~(\ref{e90}) indeed respect partial orders, and are mutually-inverse to one another.
Observe that the action~(\ref{e92}) descends as an action $\Ebraid^{\op}\lacts \sP^{\sf fin}_{\TT^2}$, with respect to which the equivalences~(\ref{e90}) are $\Ebraid^{\op}$-equivariant.

Next, reporting the stabilizer of a transitive $\TT^2$-space defines a functor
${\sf Orbit}^{\sf fin}_{\TT^2} \xra{ (\TT^2 \lacts T)\mapsto {\sf Stab}_{\TT^2}(t)} \sP^{\sf fin}_{\TT^2}$.
Evidently, this functor is conservative.
Notice also that this functor is a left fibration; 
its straightening is the composite functor
\begin{equation}
\label{e93}
\sP^{\sf fin}_{\TT^2}
\xra{~C\mapsto \frac{\TT}{C}~}
{\sf Groups}
\xra{~\sB~}
\Spaces
~.
\end{equation}

The result follows upon constructing a canonical filler in the diagram among $\infty$-categories witnessing a pullback:
\[
\xymatrix{
{\sf Orbit}_{\TT^2}^{\sf fin}
\ar@{-->}[rrrr]
\ar[d]
&&
&&
\Ar\bigl( \fB (\TT^2 \rtimes \Ebraid ) \bigr)
\ar[d]^-{\Ar(\fB {\sf proj}) }
\\
\sP^{\sf fin}_{\TT^2}
\ar[rr]^-{\simeq}_-{(\ref{e90})}
&&
(\fB \Ebraid)^{\ast/}
\ar[rr]^-{\rm forget}
&&
\Ar\bigl( \fB \Ebraid \bigr)
.
}
\]
By definition of semi-direct products,
the canonical functor $\fB (\TT^2 \rtimes \Ebraid) \xra{\fB \sf proj} \fB \Ebraid$ is a coCartesian fibration.
Because the $\infty$-category $\fB \TT^2 =\sB \TT^2$ is an $\infty$-groupoid, this coCartesian fibration is conservative, and therefore a left fibration.
Consequently, the functor 
\[
\Ar\bigl( \fB (\TT^2 \rtimes \Ebraid) \bigr)
\to
\Ar( \fB \Ebraid )
\]
is also a left fibration.
Therefore, the base-change of this left fibration along $
(\fB \Ebraid)^{\ast/}
\xra{\rm forget}
\Ar\bigl( \fB \Ebraid \bigr)
$ is again a left fibration:
\begin{equation}
\label{e91}
\Ar\bigl( \fB (\TT^2 \rtimes \Ebraid) \bigr)^{|\sB \TT^2}
\longrightarrow
(\fB \Ebraid)^{\ast/}
~\underset{(\ref{e90})}\simeq~
\sP^{\sf fin}_{\TT^2}
~.
\end{equation}
Direct inspection identifies the straightening of this left fibration~(\ref{e91}) as~(\ref{e93}).

\end{proof}


\begin{thebibliography}{9}

\bibitem[AF1]{old.fact}
Ayala, David; Francis, John.
Factorization homology of topological manifolds. 
J. Topol. 8 (2015), no. 4, 1045--1084.

\bibitem[AF2]{fib}
Ayala, David; Francis, John.
Fibrations of $\infty-$categories. 
High. Struct. 4 (2020), no. 1, 168--265.

\bibitem[AFR]{afr2}
Ayala, David; Francis, John; Rozenblyum, Nick.
Factorization homology I: Higher categories. 
Adv. Math. 333 (2018), 1042--1177.

\bibitem[AFT2]{AFT2} 
Ayala, David; Francis, John; Tanaka, Hiro Lee.
Factorization homology of stratified spaces. 
Selecta Math. (N.S.) 23 (2017), no. 1, 293--362.

\bibitem[AMGR1]{cyclo}
Ayala, David; Mazel-Gee, Aaron; Rozenblyum, Nick.
A naive approach to genuine G-spectra and cyclotomic spectra. Preprint (2017) available at \texttt{arXiv:1710.06416}.

\bibitem[AMGR2]{EIC}
Ayala, David; Francis, John; Mazel-Gee, Aaron; Rozenblyum, Nick.
Factorization homology of enriched $\infty$-categories. Preprint (2017) available at \texttt{arXiv:1710.06414}.

\bibitem[AMGR3]{non-com-geom}
Ayala, David; Mazel-Gee, Aaron; Rozenblyum, Nick.
Stratified noncommutative geometry. Preprint, 2020.
available at \texttt{arXiv:1910.14602}


\bibitem[BZBJ]{BZ}
Ben-Zvi, David; Brochier, Adrien; Jordan, David.
Integrating quantum groups over surfaces. 
J. Topol. 11 (2018), no. 4, 874--917.


\bibitem[BV]{boardman.vogt}
Boardman, J. M.; Vogt, R. M. Homotopy invariant algebraic structures on topological spaces. Lecture Notes in Mathematics, Vol. 347. Springer--Verlag, Berlin--New York, 1973.

\bibitem[BHM]{CT}
B\"{o}kstedt, M.; Hsiang, W. C.; Madsen, I. 
The cyclotomic trace and algebraic K-theory of spaces.
Invent. Math. 111 (1993), no. 3, 465--539.

\bibitem[CP]{CP}
Chari, Vyjayanthi; Pressley, Andrew.
A guide to quantum groups. Cambridge University Press, Cambridge, 1994. xvi+651 pp. ISBN: 0-521-43305-3

\bibitem[Co]{connes}
Connes, Alain.
Cohomologie cyclique et foncteurs Ext$^{n}$. 
C. R. Acad. Sci. Paris S\'er. I Math. 296 (1983), no. 23, 953--958.

\bibitem[CSS]{CSS}
Corrigan-Salter, Bruce R.; Staic, Mihai D.
Higher-order and secondary Hochschild cohomology. 
C. R. Math. Acad. Sci. Paris 354 (2016), no. 11, 1049--1054.

\bibitem[De]{Dehn}
Dehn, M.
Die Gruppe der Abbildungsklassen.
Das arithmetische Feld auf Fl\"{a}chen.
Acta Math. 69 (1938), no. 1, 135--206.

\bibitem[DGM]{DGM}
Dundas, Bj\o rn Ian; Goodwillie, Thomas G.; McCarthy, Randy.
The local structure of algebraic K-theory.
Algebra and Applications, 18. Springer-Verlag London, Ltd., London, 2013. xvi+435 pp. ISBN: 978-1-4471-4392-5; 978-1-4471-4393-2.

\bibitem[Du]{dunn}
Dunn, Gerald.
Tensor product of operads and iterated loop spaces.
J. Pure Appl. Algebra 50 (1988), no. 3, 237--258.

\bibitem[EE]{ee} Earle, C. J.; Eells, J.
The diffeomorphism group of a compact Riemann surface.
Bull. Amer. Math. Soc. 73 (1967), 557--559.

\bibitem[Go]{GG}
Goodwillie, Thomas G.
Relative algebraic K-theory and cyclic homology.
Ann. of Math. (2) 124 (1986), no. 2, 347--402.

\bibitem[Gr]{gramain}
Gramain, Andr\'e. Le type d’homotopie du groupe des diff\'eomorphismes d’une surface compacte, Ann. Sc. E.N.S., 6(1):53–66, 1973

\bibitem[Ha1]{hatcher.haken}
Hatcher, Allen.
Homeomorphisms of sufficiently large $P^2$-irreducible 3-manifolds.
Topology 15 (1976), no. 4, 343–347. 


\bibitem[Ha2]{hatcher}
Hatcher, Allen. 
A proof of the Smale conjecture, ${\sf Diff}(S^{3}) \simeq O(4)$.
Ann. of Math. (2) 117 (1983), no. 3, 553--607.

\bibitem[Ha3]{TT}
Hatcher, Allen.
The Kirby torus trick for surfaces. Preprint (2013) available at \texttt{arXiv:1312.3518}.

\bibitem[He]{L}
Hesselholt, Lars.
Stable topological cyclic homology is topological Hochschild homology. 
K-theory (Strasbourg, 1992).
Ast\'erisque No. 226 (1994), 8--9, 175--192.

\bibitem[HSS]{HSS}
Hoyois, Marc; Scherotzke, Sarah; Sibilla, Nicol\`o.
Higher traces, noncommutative motives, and the categorified Chern character. 
Adv. Math. 309 (2017), 97--154.

\bibitem[Ka]{kawazumi}
Kawazumi, Nariya.
The mapping class group orbits in the framings of compact surfaces. Q. J. Math. 69 (2018), no. 4, 1287–1302. 

\bibitem[La]{LJ}
Laubacher, Jacob.
Secondary Hochschild and cyclic (co)homologies.
Thesis (Ph.D.)--Bowling Green State University. 2017.

\bibitem[Lo]{loday}
Loday, Jean-Louis.
Cyclic homology.
Appendix E by María O. Ronco. Second edition. Chapter 13 by the author in collaboration with Teimuraz Pirashvili. Grundlehren der mathematischen Wissenschaften, 301. Springer--Verlag, Berlin, 1998.

\bibitem[Lu1]{HA}
Lurie, Jacob. Higher algebra. Preprint (2016) available at the author's website.

\bibitem[Lu2]{lurie.waldhausen} Lurie, Jacob. Rotation invariance in algebraic $K$-theory. Preprint (2015) available at the author's website.


\bibitem[MGS]{MGS}
Mazel-Gee, Aaron; Stern, Reuben.
A universal characterization of noncommutative motives and secondary algebraic K-theory. Preprint (2021) available at \texttt{arXiv:2104.04021}.

\bibitem[Mi]{mil} Milnor, John.
Introduction to algebraic K-theory.
Annals of Mathematics Studies, No. 72. Princeton University Press, Princeton, N.J.; University of Tokyo Press, Tokyo, 1971. xiii+184 pp.

\bibitem[Pi]{Pi}
Pirashvili, Teimuraz.
Hodge decomposition for higher order Hochschild homology.
Ann. Sci. \'Ecole Norm. Sup. (4) 33 (2000), no. 2, 151--179.

\bibitem[Ra]{rawn} Rawnsley, John.
On the universal covering group of the real symplectic group. 
J. Geom. Phys. 62 (2012), no. 10, 2044--2058.

\bibitem[Ro]{rolf} Rolfsen, Dale.
Knots and links.
Mathematics Lecture Series, No. 7. Publish or Perish, Inc., Berkeley, Calif., 1976. ix+439 pp. 55-01

\bibitem[Sm]{sm} Smale, Stephen
Diffeomorphisms of the 2-sphere.
Proc. Amer. Math. Soc. 10 (1959), 621--626.

\bibitem[St]{st}
Staic, Mihai D. 
Secondary Hochschild cohomology.
Algebr. Represent. Theory 19 (2016), no. 1, 47--56.

\bibitem[TV]{TV}
To\"{e}n, Bertrand; Vezzosi, Gabriele.
Chern character, loop spaces and derived algebraic geometry. Algebraic topology, 331--354,
Abel Symp., 4, Springer, Berlin, 2009.



\end{thebibliography}
\end{document}